\titleformat{\subsection}{\it}{\thesubsection.\enspace}{1.5pt}{}
\titleformat{\subsubsection}{\it}{\thesubsubsection.\enspace}{1.5pt}{}
\newtheorem{theo}{Theorem}[section]
\newtheorem{lemm}[theo]{Lemma}
\newtheorem{prop}[theo]{Proposition}
\newtheorem{rema}{Remark}[section]
\numberwithin{equation}{section}
\def\beq{\begin{equation}}
	\def\bal{\begin{aligned}}
		\def\dal{\end{aligned}}
	\def\deq{\end{equation}}
\def\beqq{\begin{equation*}}
	\def\deqq{\end{equation*}}
\def\p{\partial}
\def\al{\alpha}
\def \f{\frac}
\def \var{\varepsilon}
\def\ep{\varepsilon}
\def \i {\int_{\mathbb{R}^3_+}}
\def \ah{\alpha_h}
\def\me{\mathcal{E}}
\def\md{\mathcal{D}}
\def \wu{w^u}
\def \wb{w^b}
\def \bu{\overline{u}}
\def \bb{\overline{b}}
\def \bwu{\overline{w}^u}
\def \bwb{\overline{w}^b}
\begin{document}
	\begin{sloppypar}
		\title{Global uniform regularity for the 3D incompressible
			MHD equations with slip boundary condition
			near a background magnetic field  \hspace{-4mm}}
		\author{$\mbox{Jincheng Gao}^1$ \footnote{Corresponding author. Email: gaojch5@mail.sysu.edu.cn}, \quad
			$\mbox{Lianyun Peng}^2$ \footnote{Email: lianyun.peng@polyu.edu.hk}, \quad
			$\mbox{Jiahong Wu}^3$ \footnote{Email: jwu29@nd.edu}, \quad
			$\mbox{Zheng-an Yao}^1$ \footnote{Email: mcsyao@mail.sysu.edu.cn}\\
			\quad
			$^1\mbox{School}$ of Mathematics, Sun Yat-sen University,\\
			Guangzhou 510275, China\\
			$^2\mbox{Department}$ of Applied Mathematics, 
            The Hong Kong Polytechnic University,\\
			Hong Kong 999077, China\\
			$^3\mbox{Department}$ of Mathematics,
			University of Notre Dame, \\
			Notre Dame 46556, USA\\
		}
		
		\date{}
		
		\maketitle

		\begin{abstract}
			{This paper resolves the global regularity problem for the three-dimensional incompressible magnetohydrodynamics (MHD) equations in the upper half-space with slip boundary conditions, in the presence of a background magnetic field. Motivated by geophysical applications, we consider an anisotropic MHD system with weak dissipation in the $x_2$ and $x_3$ directions and small vertical magnetic diffusion.
				By exploiting the stabilizing effect induced by the background magnetic field and constructing a hierarchy of four energy functionals, we establish global-in-time uniform bounds that are independent of the viscosity in the $x_2$ and $x_3$ directions and the vertical resistivity. A key innovation in our analysis is the development of a two-tier energy method, which couples the boundedness of conormal derivatives with the decay of tangential derivatives. These global conormal regularity estimates, together with sharp decay rates, enable us to rigorously justify the vanishing dissipation limit and derive explicit long-time convergence rates to the MHD system with vanishing dissipation in the $x_2$ and $x_3$ directions and no vertical magnetic diffusion. In the absence of a magnetic field, the global-in-time vanishing viscosity limit for the 3D incompressible Navier-Stokes equations with anisotropic dissipation remains a challenging open problem. This work reveals the mechanism by which the magnetic field enhances dissipation and stabilizes the fluid dynamics in the vanishing viscosity limit.
			}
			
		\end{abstract}

		\tableofcontents

		\section{Introduction}
		This paper deals with the global-in-time vanishing viscosity limit
		problem for the 3D anisotropic MHD equations near a background magnetic field.
		To shed light on the main challenges of this problem, we begin by briefly reviewing several important facts about the behavior of solutions to the Euler equations and the vanishing viscosity limit of the Navier-Stokes equations. As the viscosity coefficient $\varepsilon$
		tends to zero, the incompressible Navier-Stokes equations
		\beqq
		\partial_t u + u \cdot \nabla u - \varepsilon \Delta u + \nabla p = 0, \quad
		\nabla \cdot u = 0
		\deqq
		formally converge to the incompressible Euler equations
		\beqq
		\partial_t u + u \cdot \nabla u + \nabla p = 0, \quad
		\nabla \cdot u = 0.
		\deqq
		However, the solutions to the incompressible Euler equations can grow rapidly in time (cf. \cite{{Kiselev-Sverak2014}, {Zlatos2015}, {Choi-Jeong-2021}}), and classical solutions to the 3D Euler equations may develop finite-time singularities \cite{{Chen-Hou-2021}, {Elgindi-2021}}. A more extensive discussion can be found in the review by Drivas and Elgindi \cite{Drivas-Elgindi-2022}. In particular, perturbations governed by the Euler equations near the trivial solution are generally unstable. Consequently, the vanishing viscosity limit of the Navier-Stokes equations can typically be justified only on short time intervals.
		
		In geophysical contexts such as ocean dynamics, the dissipation is highly anisotropic. The vertical viscosity coefficient ranges from $1$ to $10^3 {\rm cm}^2/{\rm sec}$ while the horizontal viscosity ranges from $10^5$ to $10^8 {\rm cm}^2/{\rm sec}$ (cf. [Chapter 4, \cite{1987Geophysical}]). Motivated by this, we consider the anisotropic incompressible Navier-Stokes equations:
		\beq\label{01}
		\partial_t u + u \cdot \nabla u - \partial_{11} u -  \partial_{22} u - \varepsilon \partial_{33} u + \nabla p = 0, \quad
		\nabla \cdot u = 0,
		\deq
		and investigate their behavior as the vertical viscosity $\ep$ tends to zero. In this regime, the system becomes
		\beq\label{02}
		\partial_t u + u \cdot \nabla u - \partial_{11} u -  \partial_{22} u + \nabla p = 0, \quad
		\nabla \cdot u = 0.
		\deq
		Due to their physical relevance and special mathematical structure, these anisotropic models have received considerable attention. A variety of global-in-time small data well-posedness results and large-time decay estimates have been established (cf.\cite{Chemin2006, Chemin2007, Iftimie2002, Paicu2005, Liu-Zhang-2020, Paicu2005-2, Zhang-2009}).
		When $\var$ tends to zero, it has been rigorously justified that equation \eqref{01} converges to the equation \eqref{02} on short time intervals (see, absence of a physical boundary \cite{ani-CDGG2000}, Navier friction condition \cite{ani-Iftimie-Planas2016} and no-slip boundary condition \cite{ani-Liu-Wang2013,ani-Tao2018}).
		Recently, it is verified that the solution of equation \eqref{01} will converge
		globally in time  to the  equation \eqref{02} under
		the slip boundary condition as $\var$ tends to zero(cf.\cite{Gao2024}).

        The anisotropic diffusion is a common physical
		phenomenon and describes processes where the diffusion is directionally dependent.
		Anisotropic diffusive processes occurs in Darcay's flow for porous media,
		large scale turbulence where turbulence scales are anisotropic in size.
		Many mathematically rigorous studies have
		been devoted to understanding such anisotropic flows.
		There is a very large literature on the primitive and the Boussinesq equations with anisotropic dissipation. Various partial dissipation cases on the primitive equations have been
		examined by Cao, Li and Titi(cf.\cite{{Cao-Li-Titi-2016},{Cao-Li-Titi-2017},{Cao-Li-Titi-2020}}).
		Their main focus has been on the global existence and regularity problem.
        Here, we consider the anisotropic incompressible Navier-Stokes equation
        with small dissipative structure in the $x_2$ and $x_3$ directions.
        Thus, as the coefficient $\ep$ tends to zero, the anisotropic
        incompressible Navier-Stokes equation
        \beq\label{01-1}
		\p_t u + u \cdot \nabla u-\p_{11}u-\ep \p_{22}u-\ep \p_{33}u+ \nabla p =0,\quad
		\nabla \cdot u=0
		\deq
        will converge to the anisotropic Navier-Stokes equation
		\beq\label{03}
		\p_t u + u \cdot \nabla u-\p_{11}u+ \nabla p =0,\quad
		\nabla \cdot u= 0.
		\deq
		Since global well-posedness of equation \eqref{03} are not well-understood,
		thus it is difficult to verify whether  the solution of
        equation \eqref{01-1} will converge
		globally in time  to the solution of equation \eqref{03} or not.

		It is well-known that the stabilizing phenomenon observed in physical experiments involving electrically conducting fluids.
		The experiments exhibit a remarkable phenomenon: a background magnetic field actually stabilizes and damps turbulent MHD fluids(cf.\cite{{Alexakis-2011},{A-1942},{Alemany-Moreau-Sulem-Frisch-1979},
			{D1995},{D1997},{D2001},{Gallet-Berhanu-Mordant-2009},{Gallet-Doering-2015}}).
		Thus, we hope to establish this phenomenon as a mathematically rigorous fact on
		the following anisotropic MHD equations
		\begin{equation}\label{eq3}
			\left\{\begin{array}{*{4}{ll}}
				\p_t u^\var + u^\var \cdot \nabla u^\ep
				-\p_1^2 u^\ep -\ep \p_2^2 u^\ep -\ep \p_3^2 u^\ep+ \nabla p^\ep =
				B^\ep \cdot \nabla B^\ep &{\rm in} ~~ \mathbb{R}_+^3,\\
				\partial_t B^\ep + u^\ep \cdot \nabla B^\ep
				-\Delta_h B^\ep -\var  \p_3^2 B^\ep
				= B^\ep\cdot \nabla u^\ep &{\rm in} ~~ \mathbb{R}_+^3,\\
				\nabla \cdot u^\ep = 0,\quad
				\nabla \cdot B^\ep= 0 \quad & {\rm in} ~~ \mathbb{R}_+^3,
			\end{array}\right.
		\end{equation}
		with the boundary condition
		\beq
		(u_3^\ep, B_3^\ep, \p_3 u_h^\ep, \p_3 B_h^\ep)|_{x_3=0}=0,
		\deq
		and the initial data
		\beq
		(u^\ep, B^\ep)|_{t=0}=(u_0, B_0).
		\deq
		Here $u^\ep$, $p^\ep$ and $B^\ep$ represent the velocity,
		total pressure and magnetic field, respectively.
		To understand the stabilizing mechanism of a background magnetic field
		$u_* \equiv 0$ and $B_* \equiv e_2:= (0,1,0)$,
		which is obviously a steady-state of \eqref{eq3}, we study the dynamics
		of the perturbation $(u^\var, b^\var)$ with $b^\var=B^\var- B_*$.
		Clearly, $(u^\var, b^\var)$ satisfies the equations
		\begin{equation}\label{eqr}
			\left\{\begin{array}{*{4}{ll}}
				\partial_t u^\ep + u^\ep \cdot \nabla u^\ep
				-\p_1^2 u^\ep -\ep \p_2^2 u^\ep -\ep \p_3^2 u^\ep
				+ \nabla p^\ep = b^\ep\cdot \nabla b^\ep + \p_2 b^\ep
				\quad & {\rm in} ~~\mathbb{R}_+^3,\\
				\p_t b^\ep + u^\ep \cdot \nabla b^\ep
				-\Delta_h b^\ep -\var  \p_3^2 b^\ep
				=b^\ep\cdot \nabla u^\ep + \p_2 u^\ep
				\quad & {\rm in} ~~ \mathbb{R}_+^3,\\
				\nabla \cdot u^\ep = 0,\quad
				\nabla \cdot b^\ep = 0 \quad & {\rm in} ~~ \mathbb{R}_+^3,\\
				u_3^\ep = 0,\quad \partial_3 u_h^\ep=0, \quad b_3^\ep = 0,\quad \partial_3 b_h^\ep=0\quad & {\rm on} ~~ \mathbb{R}^2 \times \{ x_3=0 \},
			\end{array}\right.
		\end{equation}	
		with the initial data
		\beq\label{eqr-i}
		(u^\ep, b^\ep)|_{t=0}=(u_0, b_0):=(u_0, B_0-e_2).
		\deq
		As the parameter $\ep$ tends to zero, the equation \eqref{eqr}
		will transform as
		\begin{equation}\label{eqr0}
			\left\{\begin{array}{*{4}{ll}}
				\partial_t u^0 + u^0 \cdot \nabla u^0
				-\p_1^2 u^0+ \nabla p^0= b^0 \cdot \nabla b^0 + \p_2 b^0
				\quad & {\rm in} ~~\mathbb{R}_+^3,\\
				\p_t b^0 + u^0 \cdot \nabla b^0
				-\Delta_h b^0
				=b^0\cdot \nabla u^0 + \p_2 u^0
				\quad & {\rm in} ~~ \mathbb{R}_+^3,\\
				\nabla \cdot u^0 = 0,\quad
				\nabla \cdot b^0 = 0 \quad & {\rm in} ~~ \mathbb{R}_+^3,\\
				u_3^0 = 0, \quad b_3^0 = 0,
				\quad & {\rm on} ~~ \mathbb{R}^2 \times \{ x_3=0 \},
			\end{array}\right.
		\end{equation}
		with the initial data
		\beq\label{eqr0-i}
		(u^0, b^0)|_{t=0}=(u_0, b_0):=(u_0, B_0-e_2).
		\deq
		{Thus, our target in this paper is to establish the global in time uniform estimate
			for the anisotropic incompressible MHD equation \eqref{eqr} and
			establish the uniform explicit convergence rate of solutions
			between \eqref{eqr} and \eqref{eqr0}.
			This reveals the mechanism of how the magnetic field
			generates the enhance dissipation and helps to stabilize the fluid
			in the process of vanishing viscosity limit.}
		
		Due to the smallness of parameter $\ep$ in \eqref{eqr}, it is challenging to study the
		global stability problem.
		To this end, let us introduce the recent developments on the stability and
		decay of the anisotropic incompressible MHD equations.
		There are considerable progress on the stability of the background magnetic field for the ideal MHD equations and the MHD equations with partial dissipative structure or damping term.
		On the one hand, the nonlinear stability for the ideal MHD equations can be found in \cite{Longtime1988,CL2018,HXY2018,PZZ2018,WZ2017}.
On the other hand, there have been substantial recent developments on the well-posedness and stability problems for the MHD equations with partial dissipative structure and damping term, and significant progress has been made
(cf.\cite{LXZ2015,DZ2018,HW2010,PZZ2018,RWXZ2014,RXZ2016,TW2018,WW2017,WWX2015,CWY2014,DJJW2018,DLW2019,Fefferman2014,HL2013,JNWXY2015,WuJiahong2018,Wu2021Advance,new-Abidi-Zhang2017,new-Boardman-Lin-Wu2020,new-Cao-Regmi-Wu2013,new-Cao-Wu2011,new-Chen-Zhang-Zhou2022,new-Du-Zhou2015,new-Fefferman-McCormick-Robinson-Rodrigo2017,new-Feng-Hafeez-Wu2021,new-Jiang-Jiang-Zhao2022,new-Lai-Wu-Zhang2021,new-Lai-Wu-Zhang2022,new-Li-Yang-2024,new-Lin-Ji-Wu2020,new-Lin-Zhang2014,new-Lin-Zhang2015,new-Sermange-Temam1983,new-Wei-Zhang2020,new-Xie-Jiu-Liu2024,new-Xu-Zhang2015,new-Zhang2016,new-Hu-Lin-arxiv}).
		Recently, Lin, Wu and Zhu \cite{Lin-Wu-Zhu-2025} successfully applied
		the decay rate estimate of linear system and estimate of nonlinear system to
		establish the global stability for the incompressible MHD equation \eqref{eqr0} under small perturbations near a background magnetic field.
		However, due to the appearance of small viscous term in \eqref{eqr}
		and domain boundary, it is tricky problem for us to establish the global uniform energy estimate
		under the framework of classical Sobolev space as in \cite{Lin-Wu-Zhu-2025}.
		The reason is that we can't establish similar decay rate estimate and
		higher order normal derivative as the case of three-dimensional
		whole space.
		
		In order to find the suitable energy framework for the equation \eqref{eqr},
		let us introduce the progress about the full vanishing viscosity limit of
		the incompressible Navier-Stokes equation.
		The vanishing viscosity limit of the incompressible Navier-Stokes is one of the most fundamental problems in fluid mechanics. This inviscid limit in the whole space case has been examined
		by many authors, see, for instance, \cite{Constantin1986,Constantin1988,Kato1972,Masmoudi2007CMP, ConstantinWu1995,ConstantinWu1996}.
		However, in the presence of physical boundary, the problem
		becomes much more complicated due to the
		possible appearance of boundary layer.
		Considerable progress has also been made for the incomprssible Navier-Stokes
		equation supplemented with Navier-slip boundary condition.
		Indeed, for some special types of Navier boundary conditions, the main part of the
		boundary layer vanishes and, as a consequence,
		uniform bounds and uniform existence time (independent of viscosity)
		can be established (cf.\cite{BeiraodaVeiga2010,BeiraodaVeiga2011,Xiao2007}).
		However, as pointed in \cite{Iftimie2011ARMA},  uniform regularity estimates
		in Sobolev spaces are not expected for general curved boundaries.
		Masmoudi and Rousset \cite{Masmoudi2012ARMA} successfully established uniform estimates in conormal Sobolev spaces
		for the 3D general smooth domains with the Navier-slip
		boundary conditions. The convergence
		of the viscous solutions to the inviscid ones is then obtained by a compactness argument.
		Furthermore, based on these uniform estimates, \cite{Xiao2013,Gie2012} provided some better convergence rates.
		Finally, the three dimensional viscous MHD equation will converge to the
		ideal MHD equation under the slip boundary condition(cf.\cite{{Xiao2009JFA},{MR3472518}}).
		The above full vanishing viscosity limit progress and convergence rate of solution
		between viscous equation and inviscid equation hold only in a small time interval.
		
		Due to the smallness of parameter $\ep$ in \eqref{eqr}
		and appearance of domain boundary, we hope to establish the energy estimate
		under the framework of conormal Sobolev space rather than the classical
		Sobolev space as in \cite{Lin-Wu-Zhu-2025}.
			To define the conormal Sobolev spaces, we consider $(Z_k)_{1 \le k \le 3}$, a finite set of generators of vector fields tangent to the boundary $\mathbb{R}^2 \times \{x_3=0\}$, with $Z_k = \partial_k$, $k=1,2$ and $Z_3= \varphi (x_3) \partial_3$, where $\varphi(x_3)$ is any smooth bounded function such that $\varphi(0)=0$, $\varphi'(0) \neq 0$ and $\varphi(x_3)>0$ for every $x_3>0$ (for example, $\varphi(x_3)=x_3(1+x_3)^{-1}$ fits). We set
			\begin{equation*}
				\begin{aligned}
					H_{co}^m(\mathbb{R}_+^3) :=
					\{ f \in
					L^2(\mathbb{R}_+^3)~|~Z^\alpha f \in L^2(\mathbb{R}_+^3),~~\forall 0\le |\alpha| \le m \},
				\end{aligned}
			\end{equation*}
			where $Z^\alpha := Z^{\alpha_1}_1 Z^{\alpha_2}_2 Z^{\alpha_3}_3$.
			To give a precise account of our main results,
			we define $\nabla_h:=(\p_1, \p_2, 0)$ and $\alpha_h:=(\alpha_1, \alpha_2, 0)$.
			We also use the following notations, for every $m \in \mathbb{N}$:
			\begin{equation*}
				\begin{aligned}
					&\Vert f \Vert_{H^m_{tan}}^2 :=
					\sum\limits_{0\le |\alpha_h| \le m} \Vert Z^{\alpha_h} f \Vert_{L^2}^2,
					\quad
					\Vert f \Vert_{H^m_{co}}^2 :=
					\sum\limits_{0\le|\alpha| \le m} \Vert Z^\alpha f \Vert_{L^2}^2.
				\end{aligned}
			\end{equation*}
			Finally, we also define the operator $\Lambda^{s}_h$, $s \in \mathbb{R}$ by
			\begin{align*}
				\Lambda_h^s f(x_h) :=
				\int_{\mathbb{R}^2} |\xi_h|^s \hat f(\xi_h) {\rm e}^{2\pi i x_h \cdot \xi_h} d\xi_h,
			\end{align*}
			where $x_h:=(x_1, x_2, 0)$, $\xi_h:=(\xi_1,\xi_2, 0)$ and $\hat{f}$ is the Fourier transform of $f$ in $\mathbb{R}^2$.
			Let us define
			\beqq
				\begin{aligned}
					\mathcal{E}(u^\ep, b^\ep)(t):=
					&\| (u^\ep, b^\ep)(t)\|_{H^m_{co}}^2
					+\|(\p_3 u^\ep, \p_3 b^\ep)(t)\|_{H^{m-1}_{co}}^2
					+\|(\p_{33} u^\ep, \p_{33} b^\ep)(t)\Vert_{H^{m-2}_{co}}^2 \\
					&+\| \Lambda^{-s}_h(u^\ep, b^\ep)(t)\|_{H^{m-1}_{tan}}^2
					+\| \Lambda^{-s}_h(\p_3 u^\ep, \p_3 b^\ep)(t)\|_{H^{m-2}_{tan}}^2,
				\end{aligned}
			\deqq
			and the dissipation norm
			\beqq
			\begin{aligned}
				\mathcal{D}(u^\ep, b^\ep, \ep)(t)
				:=&\|\p_1 u^\ep(t)\|_{H^m_{co}}^2
				+\|\p_{13} u^\ep(t)\|_{H^{m-1}_{co}}^2
				+\|\p_{133} u^\ep(t)\|_{H^{m-2}_{co}}^2
				+\|\nabla_h b^\ep(t)\|_{H^m_{co}}^2
				+\|\nabla_h \p_{3} b^\ep(t)\|_{H^{m-1}_{co}}^2\\
				&+\|\nabla_h \p_{33} b^\ep(t)\|_{H^{m-2}_{co}}^2
                +\|\p_2 u^\ep(t)\|_{H^{m-1}_{co}}^2
				+\|\p_{23} u^\ep(t)\|_{H^{m-2}_{co}}^2
				+\|\p_{233} u^\ep(t)\|_{H^{m-3}_{co}}^2\\
				&+\ep (\|(\p_2 u^\ep, \p_3u^\ep)(t)\|_{H^m_{co}}^2
				+\|(\p_{23}u^\ep, \p_{33}u^\ep)(t)\|_{H^{m-1}_{co}}^2
                +\|(\p_{233}u^\ep, \p_{333}u^\ep)(t)\|_{H^{m-2}_{co}}^2
				)\\
				&+\ep(\|\p_3 b^\ep(t)\|_{H^m_{co}}^2
				+\|\p_{33} b^\ep(t)\|_{H^{m-1}_{co}}^2
				+\|\p_{333} b^\ep(t)\|_{H^{m-2}_{co}}^2).
			\end{aligned}
			\deqq
            Due to the dissipative structure in the $x_1$ and $x_2$ directions
            both in \eqref{eqr} and \eqref{eqr0}, it is easy to verify that
            the solution $(u^\var, b^\var)$ of \eqref{eqr} will converge
            to the solution $(u^0, b^0)$ of \eqref{eqr0} in a short time
            as $\var$ tends to zero.
            Thus, our target in this paper is to prove that
            $(u^\var, b^\var)$ converges to $(u^0, b^0)$ globally as
            $\var$ tends to zero.
            To this end, the global in time uniform (with respect to $\var$)
            estimate plays a decisive role in this vanishing dissipation limit
            progress.
			Now we state our first result concerning on the global-in-time uniform
			regularity and time decay rate as follows.
			\begin{theo}\label{main_result_one}
				For every integer $m \ge 4$ and constants
				$(\sigma, s)$ satisfying $\frac{9}{10}<\sigma<s<1$, assume the initial data $(u_0, b_0)$ satisfy
				$\nabla \cdot u_0=\nabla \cdot b_0=0$ and there exists a small positive constant $\delta_0$
				such that
				\begin{equation}\label{condition-one}
					\mathcal{E}(u^\ep, b^\ep)(0) \le \delta_0,
				\end{equation}
				then the equation \eqref{eqr} has a unique global solution $(u^\ep, b^\ep)$ satisfying
				\begin{equation}\label{uniform_estimate}
					\mathcal{E}(u^\ep, b^\ep)(t)+\int_0^t \mathcal{D}(u^\ep, b^\ep, \ep)(\tau) d\tau
					\le C\mathcal{E}(u^\ep, b^\ep)(0),
				\end{equation}
				and the decay estimate
				\begin{equation*}
					\begin{aligned}
						&(1+t)^s(\|(u^\ep, b^\ep)(t)\|_{H^{m-1}_{tan}}^2
						+\|(\p_3 u^\ep, \p_3 b^\ep)(t)\|_{H^{m-2}_{tan}}^2)
						+\int_0^t (1+\tau)^{\sigma}\|\p_{23}u^\ep(\tau)\|_{H^{m-3}_{tan}}^2 d\tau\\
						&+\int_0^t (1+\tau)^{\sigma}(\|(\p_1 u^\ep, \nabla_h b^\ep)(\tau)\|_{H^{m-1}_{tan}}^2
						+\|(\p_{13}u^\ep, \nabla_h \p_3 b^\ep,\p_2 u^\ep)(\tau)\|_{H^{m-2}_{tan}}^2)d\tau\\
						&+\ep \int_0^t(1+\tau)^{\sigma}
						(\|(\p_2 u^\ep, \p_3 u^\ep, \p_3 b^\ep)(\tau)\|_{H^{m-1}_{tan}}^2
						+\|(\p_{23}u^\ep,\p_{33}u^\ep,\p_{33}b^\ep)(\tau)\|_{H^{m-2}_{tan}}^2)d\tau
						\le C\delta_0,
					\end{aligned}
				\end{equation*}
				where $C$ is a positive constant independent of $\ep$ and time $t$.
			\end{theo}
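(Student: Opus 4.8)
\emph{Proof idea.} The plan is a continuity (bootstrap) argument. Local well-posedness of \eqref{eqr} in the conormal space $H^m_{co}$ being standard, it suffices to show that \eqref{uniform_estimate} and the stated decay estimate are \emph{a priori} consequences of the smallness \eqref{condition-one}, with constants independent of $\ep$ and of the existence time. Thus I would fix a horizon $T$, assume $\sup_{[0,T]}\me(u^\ep,b^\ep)\le\eta$ for a small $\eta$ to be chosen, and improve this to $\sup_{[0,T]}\me+\int_0^T\md\le C\me(0)$ together with the decay bounds. Following the abstract, the scheme has two coupled tiers: a \emph{boundedness tier} that closes all of $\me$ (conormal, normal-derivative and low-frequency parts) and produces $\int_0^t\md$ with no time weight, and a \emph{decay tier} that, using the now-bounded negative-order norms, upgrades the purely tangential part of $\me$ to algebraic decay. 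The coupling is essential: the dissipation $\md$ harvested in the first tier fuels the Fourier-splitting argument in the second, while the decay rates from the second tier are what make the time-weighted nonlinear and cross terms in the first tier integrable.

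\textbf{Conormal, pressure and normal-derivative estimates.} I would first apply $Z^\alpha$, $|\alpha|\le m$, to \eqref{eqr} and perform $L^2$ energy estimates. The full $x_1$-viscosity and the horizontal magnetic diffusion produce the non-$\ep$ dissipation $\|\p_1u^\ep\|_{H^m_{co}}^2+\|\nabla_h b^\ep\|_{H^m_{co}}^2$ in $\md$, the vertical viscosities produce the $\ep$-weighted terms, and the commutators of $Z_3=\vf(x_3)\p_3$ with $\p_3^2$, $\Delta_h$ and $\p_1^2$ are either of strictly lower order or carry a factor of $\ep$ or of $\vf$ (vanishing at the boundary) and are absorbed. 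The two coupling terms $\p_2b^\ep$ and $\p_2u^\ep$ are skew-symmetric in the combined energy and cancel, while the pressure boundary terms vanish because $u_3^\ep=b_3^\ep=0$ together with $\vf(0)=0$ force $Z^\alpha u_3^\ep=Z^\alpha b_3^\ep=0$ on $\{x_3=0\}$. The pressure is recovered from $-\Delta p^\ep=\p_i\p_j(u_i^\ep u_j^\ep-b_i^\ep b_j^\ep)$ with the homogeneous Neumann condition $\p_3p^\ep|_{x_3=0}=0$ (obtained by restricting the third momentum equation to the boundary), so conormal elliptic estimates bound $\nabla p^\ep$ in $H^{m-1}_{co}$ by the convective terms. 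For the normal-derivative quantities in $\me$ and $\md$, the divergence constraints reduce $\p_3u_3^\ep$, $\p_3b_3^\ep$ (and their further derivatives) to tangential quantities already present, while $\p_3u_h^\ep$ and $\p_3b_h^\ep$ are estimated by energy methods on the $\p_3$- and $\p_{33}$-differentiated equations, the boundary conditions $\p_3u_h^\ep|_{x_3=0}=\p_3b_h^\ep|_{x_3=0}=0$ legitimising the non-conormal differentiation; the commutators with $\Delta_h$, $\p_1^2$ and the convection that arise are again controlled by $\md$ and by already-bounded lower-order norms. Crucially, no vertical-derivative coercivity beyond the $\ep$-viscosity is ever invoked.

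\textbf{Enhanced dissipation and the low-frequency estimate.} The decisive gain is the enhanced dissipation $\|\p_2u^\ep\|_{H^{m-1}_{co}}^2+\|\p_{23}u^\ep\|_{H^{m-2}_{co}}^2+\|\p_{233}u^\ep\|_{H^{m-3}_{co}}^2$ in $\md$, coming from the damped-wave structure forced by $B_*=e_2$. The magnetic equation gives $\p_2u^\ep=\p_tb^\ep+u^\ep\cdot\nabla b^\ep-\Delta_h b^\ep-\ep\p_3^2b^\ep-b^\ep\cdot\nabla u^\ep$; testing this against $\p_2u^\ep$ (and against $Z^\alpha\p_2u^\ep$, $|\alpha|\le m-1$, plus $\p_3$-variants for the normal pieces), transferring the time derivative via $\f{d}{dt}\langle b^\ep,\p_2u^\ep\rangle=\langle\p_tb^\ep,\p_2u^\ep\rangle-\langle\p_2b^\ep,\p_tu^\ep\rangle$ and using the momentum equation --- where $\langle\p_2b^\ep,\nabla p^\ep\rangle=0$ since $\nabla\cdot b^\ep=0$ and $b_3^\ep|_{x_3=0}=0$ --- one arrives, after integrating in time, at the prototype estimate
\[
\int_0^t\|\p_2u^\ep\|_{L^2}^2\,d\tau\ \lesssim\ \bigl|\langle b^\ep,\p_2u^\ep\rangle\bigr|\Big|_0^t\ +\ \int_0^t\bigl(\|\nabla_h b^\ep\|_{L^2}^2+\|\p_1u^\ep\|_{L^2}^2+\ep\|\p_3^2b^\ep\|_{L^2}^2\bigr)\,d\tau\ +\ (\text{cubic remainder}),
\]
with $\bigl|\langle b^\ep,\p_2u^\ep\rangle\bigr|\lesssim\me$; combining this with the conormal energy inequality through a small coupling constant incorporates the full enhanced $\p_2u^\ep$-dissipation into $\md$ at the price of a small fraction of the already-controlled dissipation and a harmless modification of the energy. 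The last ingredient for the boundedness tier is the propagation of the negative-order horizontal norms in $\me$: applying $\Lambda_h^{-s}Z^{\alpha_h}$ to \eqref{eqr}, the coupling again cancels by skew-symmetry and the nonlinearities are handled by the negative-order horizontal product inequality $\|\Lambda_h^{-s}(fg)\|_{L^2}\lesssim\|f\|_{L^\infty_{x_3}L^{2/s}_h}\|g\|_{L^2}$, valid for $0<s<1$ by Hardy--Littlewood--Sobolev in the two horizontal variables --- which is precisely the origin of the restriction $s<1$.

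\textbf{Decay tier, closing, and the main difficulty.} With $\me$ and $\int_0^t\md$ under control, I would run a Schonbek-type Fourier splitting on the tangential energy $\me_{tan}:=\|(u^\ep,b^\ep)\|_{H^{m-1}_{tan}}^2+\|(\p_3u^\ep,\p_3b^\ep)\|_{H^{m-2}_{tan}}^2$. Since $\md\gtrsim\|\nabla_h(u^\ep,b^\ep)\|_{H^{m-1}_{tan}}^2+\|\nabla_h\p_3(u^\ep,b^\ep)\|_{H^{m-2}_{tan}}^2$, on $\{|\xi_h|>R(t)\}$ one has $\md\gtrsim R(t)^2\me_{tan}$, while on $\{|\xi_h|\le R(t)\}$ the low-frequency part of $\me_{tan}$ is $\lesssim R(t)^{2s}$ times the now-bounded negative norm; taking $R(t)^2\sim(1+t)^{-1}$ turns the tangential energy inequality into $\f{d}{dt}\me_{tan}+\f{c}{1+t}\me_{tan}\lesssim(1+t)^{-1-s}\delta_0+(\text{integrable nonlinear remainder})$, and Gronwall's lemma yields $(1+t)^s\me_{tan}\lesssim\delta_0$ and the time-weighted dissipation integrals (the $\ep$-weighted ones coming along automatically from the $\ep$-part of $\md$). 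Assembling everything, the nonlinear and cross contributions are bounded by $C(\sup_{[0,t]}\me)^{1/2}(\sup_{[0,t]}\me+\int_0^t\md)$ plus $C\delta_0(\sup\me)^{1/2}$-type terms in the decay bounds, so choosing $\delta_0$ (hence $\eta$) small absorbs them and closes the bootstrap. The hard part is making this hierarchy self-consistent: the normal-derivative estimates must close with no genuine vertical dissipation in the limit, using only the $\ep$-viscosity, the divergence constraints, the slip conditions and the enhanced $\p_2$-dissipation; and the cross terms $\langle b^\ep,\p_2u^\ep\rangle$ and their conormal/normal analogues, indispensable for that dissipation, must be shown not to degrade the decay rates --- which is exactly what forces the sharp range $\f9{10}<\sigma<s<1$, under which every time-weighted nonlinear remainder is summable against the decay weights.
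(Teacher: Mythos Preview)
Your two-tier bootstrap architecture --- enhanced $\p_2 u$-dissipation via the cross term $\langle b,\p_2 u\rangle$, propagation of the negative-order horizontal norms, and the feedback from tangential decay into the conormal boundedness --- matches the paper's. Two methodological choices differ. First, the paper never estimates the pressure: it passes to the vorticities $\wu=\nabla\times u$, $\wb=\nabla\times b$, whose equations are pressure-free and inherit the clean boundary conditions $\wu_h|_{x_3=0}=\wb_h|_{x_3=0}=0$, $\p_3\wu_3|_{x_3=0}=\p_3\wb_3|_{x_3=0}=0$. All normal-derivative content is then carried by tangential estimates on $(\wu,\wb)$ and conormal estimates on $\p_3(\wu,\wb)$. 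Your direct route through $\p_3$- and $\p_{33}$-differentiated equations plus Neumann elliptic estimates for $p$ is viable but heavier, since you must separately control $\p_3\nabla_h p$ and $\p_{33}\nabla_h p$. Second, the decay is not obtained by Fourier splitting but by interpolation: once $\|\Lambda_h^{-s}(u,b,\wu,\wb)\|_{L^2}^2\le C_0$, one has $\mathcal{E}^{m-1}_{tan}\lesssim C_0^{1/(1+s)}(\mathcal{D}^{m-1}_{tan})^{s/(1+s)}$, turning the tangential energy inequality into $\tfrac{d}{dt}E+\kappa C_0^{-1/s}E^{1+1/s}\le 0$, hence $E\lesssim C_0(1+t)^{-s}$. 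Your splitting argument would give the same rate.

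One point deserves sharpening. The specific obstruction that forces the two-tier coupling is, in the vorticity formulation, the term $\int\p_2 u_2\,Z^\alpha\p_3\wu\cdot\p_3 Z^\alpha\wu\,dx$ arising in the $\p_3\wu$ estimate; it appears because $u$ has no uniform $\p_2$-dissipation. It is \emph{not} of the schematic form $(\sup\mathcal{E})^{1/2}\int\mathcal{D}$ you wrote. Instead one bounds
\[
\int_0^t\|(\nabla_h u_h,\nabla_h\p_3 u_h)\|_{H^1_{tan}}\,\|\p_3\wu\|_{H^{m-2}_{co}}^{3/2}\,\|\p_{13}Z^\alpha\wu\|_{L^2}^{1/2}\,d\tau
\ \lesssim\ \delta^{3/4}\Bigl(\int_0^t(1+\tau)^\sigma\mathcal{D}^{m-1}_{tan}\Bigr)^{\!1/2}\Bigl(\int_0^t\mathcal{D}^m\Bigr)^{\!1/4}\Bigl(\int_0^t(1+\tau)^{-2\sigma}\Bigr)^{\!1/4},
\]
so both the time-weighted dissipation from the decay tier and the unweighted one from the boundedness tier enter simultaneously; this is why the bootstrap hypothesis must include both from the start, rather than closing boundedness first as your phrasing ``with $\mathcal{E}$ and $\int\mathcal{D}$ under control'' suggests. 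The constraint $\sigma>\tfrac{9}{10}$ then comes not from this term but from the negative-norm estimate on $(\wu,\wb)$, where a factor $\int_0^t(\mathcal{E}^{m-1}_{tan})^{7/12}(\mathcal{D}^{m-1}_{tan})^{1/4}\,d\tau$ is summable only if $\tfrac{7}{9}s+\tfrac{1}{3}\sigma>1$.
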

			
			Since the uniform estimate and decay rate in Theorem \ref{main_result_one}
			are independent of $\ep$, we can apply this method
			to the equation \eqref{eqr0} and establish the following result.
			\begin{theo}\label{main-coro}
				For every integer $m \ge 4$ and constants
				$(\sigma, s)$ satisfying $\frac{9}{10}<\sigma<s<1$, assume the initial data $(u_0, b_0)$ satisfy
				$\nabla \cdot u_0=\nabla \cdot b_0=0$ and there exists a small positive constant $\delta_0$
				such that
				\begin{equation*}
					\mathcal{E}(u^0, b^0)(0) \le \delta_0,
				\end{equation*}
				then the equation \eqref{eqr0} has a unique global solution $(u^0, b^0)$ satisfying
				\begin{equation*}
					\mathcal{E}(u^0, b^0)(t)+\int_0^t \mathcal{D}(u^0, b^0, 0)(\tau) d\tau
					\le C\mathcal{E}(u^0, b^0)(0),
				\end{equation*}
				and the decay estimate
				\begin{equation*}
					\begin{aligned}
						&(1+t)^s(\|(u^0, b^0)(t)\|_{H^{m-1}_{tan}}^2
						+\|(\p_3 u^0, \p_3 b^0)(t)\|_{H^{m-2}_{tan}}^2)
						+\int_0^t (1+\tau)^{\sigma}\|\p_{23}u^0(\tau)\|_{H^{m-3}_{tan}}^2 d\tau\\
						&+\int_0^t (1+\tau)^{\sigma}(\|(\p_1 u^0, \nabla_h b^0)(\tau)\|_{H^{m-1}_{tan}}^2
						+\|(\p_{13}u^0, \nabla_h \p_3 b^0,\p_2 u^0)(\tau)\|_{H^{m-2}_{tan}}^2)d\tau
						\le C\delta_0,
					\end{aligned}
				\end{equation*}
				where $C$ is a positive constant independent of time $t$.
			\end{theo}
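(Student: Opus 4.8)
The plan is to obtain the global solution of \eqref{eqr0} as the vanishing-dissipation limit of the solutions of \eqref{eqr} produced in Theorem \ref{main_result_one}, exploiting that the bounds there are uniform in $\ep$. Note first that \eqref{eqr} and \eqref{eqr0} carry the same initial data $(u_0,b_0)$ and that $\mathcal{E}$ evaluated at $t=0$ depends only on the data; hence $\mathcal{E}(u^\ep,b^\ep)(0)=\mathcal{E}(u^0,b^0)(0)\le\delta_0$ for every $\ep$, so hypothesis \eqref{condition-one} of Theorem \ref{main_result_one} holds along any sequence $\ep_n\downarrow 0$ and yields global solutions $(u^{\ep_n},b^{\ep_n})$ of \eqref{eqr} with $\mathcal{E}(u^{\ep_n},b^{\ep_n})(t)+\int_0^t\mathcal{D}(u^{\ep_n},b^{\ep_n},\ep_n)(\tau)\,d\tau\le C\delta_0$ and the accompanying decay estimate, $C$ independent of $n$ and $t$.

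Next I would pass to the limit $n\to\infty$ on each time window $[0,T]$. The bound on $\mathcal{E}$ makes $(u^{\ep_n},b^{\ep_n})$ bounded in $L^\infty([0,T];H^m_{co})$ with the accompanying control of its $\p_3$- and $\p_{33}$-derivatives, in particular uniformly in $L^\infty_tH^1_x$; the horizontal-dissipation part of $\int_0^t\mathcal{D}$ bounds $\p_1u^{\ep_n}$, $\nabla_h b^{\ep_n}$, $\p_2u^{\ep_n}$ and their normal derivatives in $L^2_t$ of the relevant conormal spaces; and the $\ep_n$-weighted pieces vanish in the limit since, e.g., $\|\ep_n\p_3^2u^{\ep_n}\|_{L^2_tH^{m-1}_{co}}\le\sqrt{\ep_n}\,\big(\sqrt{\ep_n}\,\|\p_3^2u^{\ep_n}\|_{L^2_tH^{m-1}_{co}}\big)\le\sqrt{\ep_n}\,\sqrt{C\delta_0}\to 0$, and similarly for the remaining $\ep_n$-weighted terms. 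Using the equations \eqref{eqr} together with the standard elliptic estimate for the pressure, one gets a uniform bound for $\p_t(u^{\ep_n},b^{\ep_n})$ in $L^2([0,T];H^{-1}_{loc})$. The Aubin--Lions--Simon lemma then produces a subsequence (not relabeled) converging strongly in $C([0,T];L^2_{loc})$, hence, by interpolation against the uniform $H^m_{co}$ bound, in $C([0,T];H^{m-1}_{co,loc})$, while $(u^{\ep_n},b^{\ep_n})\rightharpoonup(u^0,b^0)$ weakly-$*$ in $L^\infty_tH^m_{co}$ and the surviving dissipation quantities converge weakly in $L^2_t$. This is enough to pass to the limit in every term of \eqref{eqr}: the nonlinearities, written in divergence form, converge in the sense of distributions by the strong $L^2_{loc}$ convergence; $-\p_1^2u^{\ep_n}$, $-\Delta_h b^{\ep_n}$ and the linear couplings $\p_2b^{\ep_n}$, $\p_2u^{\ep_n}$ pass by weak convergence; the $\ep_n$-terms drop out; and the slip conditions $u^0_3=b^0_3=0$ on $\mathbb{R}^2\times\{x_3=0\}$ survive by continuity of the trace on the space of strong convergence. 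Thus $(u^0,b^0)$ is a global solution of \eqref{eqr0} with data $(u_0,b_0)$. Since each (semi)norm appearing in $\mathcal{E}$ and in $\mathcal{D}(\cdot,0)$ is weakly lower semicontinuous, and $\mathcal{D}(u^0,b^0,0)$ is exactly $\mathcal{D}(u^\ep,b^\ep,\ep)$ with the nonnegative $\ep$-weighted block deleted, we obtain $\mathcal{E}(u^0,b^0)(t)+\int_0^t\mathcal{D}(u^0,b^0,0)(\tau)\,d\tau\le C\delta_0$; the decay estimate of Theorem \ref{main-coro} follows from that of Theorem \ref{main_result_one} in the same way, discarding the nonnegative $\ep$-weighted decay block and using lower semicontinuity at each fixed $t$ together with Fatou's lemma on the time integrals.

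Uniqueness in the class $\mathcal{E}(u^0,b^0)(t)\le C\delta_0$ is a routine energy estimate: for two such solutions I would test the equations for the difference $(\delta u,\delta b)$ against $(\delta u,\delta b)$, so that the divergence-free relations and the slip conditions annihilate the boundary terms, the linear couplings $\p_2\delta b$, $\p_2\delta u$ cancel in the combined $L^2$ balance, the coupled transport contributions through $b_2$ cancel as well, and the remaining nonlinear differences are controlled by $\big(\|\nabla u_1\|_{L^\infty}+\|\nabla b_1\|_{L^\infty}+\|\nabla u_2\|_{L^\infty}+\|\nabla b_2\|_{L^\infty}\big)\big(\|\delta u\|_{L^2}^2+\|\delta b\|_{L^2}^2\big)$, the $L^\infty$-gradient bounds coming from the conormal Sobolev embedding and the $\mathcal{E}$-bound with $m\ge 4$; Gronwall with $(\delta u,\delta b)|_{t=0}=0$ then forces $(\delta u,\delta b)\equiv 0$ on every $[0,T]$. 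Since the analytic substance is carried by Theorem \ref{main_result_one}, no essentially new difficulty arises here; the two points deserving care are (i) in the limit route above, the passage from local-in-space to global compactness on $\mathbb{R}^3_+$ — handled by exhausting the half-space with bounded subdomains and a diagonal extraction, the spatial tails being controlled uniformly by $\mathcal{E}$ — and the verification that every one of the many (semi)norms entering $\mathcal{E}$, $\mathcal{D}$ and the decay functional is weakly lower semicontinuous; and (ii) if one prefers the direct route, suggested by the phrase ``apply this method'', of rerunning the a priori scheme of Theorem \ref{main_result_one} verbatim on \eqref{eqr0} (legitimate because every estimate there is obtained with constants independent of $\ep\in[0,1]$, so $\ep=0$ is admissible), the local well-posedness of the degenerate system \eqref{eqr0} in $H^m_{co}$, where $u$ carries no vertical dissipation; this last is obtained by the standard iteration, exploiting that the horizontal dissipation $-\p_1^2u$, $-\Delta_h b$ persists and that the normal regularity of $u$ is recovered from $\p_3u_3=-\p_1u_1-\p_2u_2$ and the transport-type equations for $\p_3u_h$ and $\p_3b_h$, after which the continuity argument closes exactly as in Theorem \ref{main_result_one}.
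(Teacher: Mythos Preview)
Your proposal is correct, but your primary route differs from the paper's. The paper does not prove Theorem \ref{main-coro} separately at all: it simply remarks that the entire a priori machinery of Section \ref{global-estimate} is carried out with constants independent of $\ep\in[0,1]$, so one reruns it verbatim at $\ep=0$ (your option (ii)), feeding into the same continuation argument once local well-posedness of \eqref{eqr0} is in hand. Your main argument instead constructs $(u^0,b^0)$ as a weak/strong limit of the $(u^{\ep_n},b^{\ep_n})$ via Aubin--Lions and recovers the estimates by lower semicontinuity and Fatou.

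Both routes are legitimate. The paper's direct route is shorter and conceptually cleaner, but, as you correctly flag, it requires an independent local existence theory for the degenerate system \eqref{eqr0} (no vertical dissipation on $u$, no vertical diffusion on $b$) in the conormal framework; the paper takes this for granted. Your limit route sidesteps that issue entirely --- existence of $(u^0,b^0)$ comes for free from compactness --- at the price of the bookkeeping you mention (diagonal extraction on an exhaustion of $\mathbb{R}^3_+$, checking lower semicontinuity of each seminorm in $\mathcal{E}$, $\mathcal{D}(\cdot,0)$ and the weighted decay functional, and passing the boundary trace). Your uniqueness sketch is the standard one and is adequate here; the $L^\infty$ control of $\nabla(u^0,b^0)$ you invoke is indeed available from $\mathcal{E}^m$ with $m\ge 4$ via the anisotropic inequality \eqref{ie:Sobolev}, since $\p_{33}(u^0,b^0)\in H^{m-2}_{co}$ supplies the needed $\p_{123}$ factor.
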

			
			\begin{rema}
			Recently, Lin, Wu and Zhu \cite{Lin-Wu-Zhu-2025} successfully established the global stability of the limit equation \eqref{eqr0} in the three-dimensional whole space $\mathbb{R}^3$, assuming the initial data is a small perturbation of the steady-state solution induced by a background magnetic field.
			Our results in Theorems \ref{main_result_one} and  \ref{main-coro} provide a global well-posedness framework for both the viscous equation \eqref{eqr} and the limit equation \eqref{eqr0} respectively. In comparison to \cite{Lin-Wu-Zhu-2025}, the energy estimate \eqref{uniform_estimate} involves only second-order normal derivatives, reflecting the impact of the boundary layer.
			Furthermore, we develop a two-tier energy method that couples the boundedness
			of conormal derivative to the decay of tangential derivative, the latter of which is necessary to balance out the growth of conormal derivative.
		\end{rema}
		
		\begin{rema}
			To the authors' best knowledge,  the global well-posedness and large time behavior of the anisotropic incompressible Navier-Stokes equation \eqref{03} is still not well-understood.
			However, due to the stability induced by a background magnetic field, we can
			establish the global uniform regularity of anisotropic MHD equation \eqref{eqr}.
			This stability induced by a background magnetic field plays an important role
			in the progress of global in time vanishing viscosity limit.
			Given the exclusively horizontal dissipative structure of the magnetic field in \eqref{eqr}, we leverage the enhanced dissipation induced by the background magnetic field and introduce four layers of energy functionals.
		\end{rema}	
			
			
           The local in time solution $(u^\var, b^\var)$ of \eqref{eqr}
           and solution $(u^0, b^0)$ of \eqref{eqr0} have been extended to be
           global one in Theorems \ref{main_result_one} and \ref{main-coro}
           respectively.
           Thus, our final task is to verify that solution $(u^\var, b^\var)$
           will converge to $(u^0, b^0)$ globally.
			To this end, we will establish the explicit convergence rate of solutions
			between \eqref{eqr} and \eqref{eqr0}.
			The advantage of this rate is not only to show that
		the solution $(u^\ep, b^\ep)$ of \eqref{eqr} converges to $(u^0, b^0)$ of \eqref{eqr0}
			as $\ep$ tends to zero, but also is independent of time $t$.
			
			\begin{theo}\label{main_result_two}
				For every integer $m \ge 5$ and constants
	$(\sigma, s)$ satisfying $\frac{9}{10}<\sigma<s<1$, assume the initial data $(u_0, b_0)$ satisfy
	$\nabla \cdot u_0=\nabla \cdot b_0=0$ and there exists a small positive constant $\delta_*$
				such that
				\begin{equation}\label{condition-two}
			\mathcal{E}(u^\ep, b^\ep)(0)+\|\Lambda_h^{-s}\p_3^2(u_0, b_0)\|_{L^2}^2 \le \delta_*.
				\end{equation}
				Then, the global solution $(u^\ep, b^\ep)$ of \eqref{eqr}
				will converge to $(u^0, b^0)$ of \eqref{eqr0} with the convergence rate
				\begin{equation}\label{deo}
					\|(u^\ep-u^0, b^\ep-b^0)(t)\|_{L^2}
					\le C \ep^{\frac14},
				\end{equation}
				and
				\begin{equation}\label{det}
					\|(u^\ep-u^0, b^\ep-b^0)(t)\|_{L^\infty}
					\le C \ep^{\frac{1}{8}},
				\end{equation}
				where $C$ is a positive constant independent of $\ep$ and time $t$.
			\end{theo}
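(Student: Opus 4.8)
The plan is to estimate the difference $(U,B):=(u^\ep-u^0,\,b^\ep-b^0)$ directly in $L^2$, using the uniform bounds from Theorems \ref{main_result_one} and \ref{main-coro} as the supply of regularity, and then to interpolate up to $L^\infty$. Subtracting \eqref{eqr0} from \eqref{eqr}, the pair $(U,B)$ solves a system of the schematic form
\begin{equation*}
\begin{aligned}
&\p_t U + u^\ep\cdot\nabla U - \p_1^2 U - \ep\p_2^2 U - \ep\p_3^2 U + \nabla P
= -U\cdot\nabla u^0 + b^\ep\cdot\nabla B + B\cdot\nabla b^0 + \p_2 B + \ep(\p_2^2+\p_3^2)u^0,\\
&\p_t B + u^\ep\cdot\nabla B - \Delta_h B - \ep\p_3^2 B
= -U\cdot\nabla b^0 + b^\ep\cdot\nabla U + B\cdot\nabla u^0 + \p_2 U + \ep\p_3^2 b^0,
\end{aligned}
\end{equation*}
with zero initial data, where I have used $\nabla\cdot u^\ep=\nabla\cdot b^\ep=0$ to put the transport terms in conservative form. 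The boundary conditions $U_3=B_3=0$, $\p_3 U_h=\p_3 B_h=0$ on $\{x_3=0\}$ hold since both solutions satisfy them. The key structural point, exactly as in the stability analysis, is that the linear coupling $\p_2 B$ in the first equation and $\p_2 U$ in the second cancel in the combined $L^2$ estimate: testing the first equation with $U$, the second with $B$, integrating by parts (all boundary terms vanish by the slip conditions, in particular $\int \p_2 B\cdot U + \int \p_2 U\cdot B = 0$ and the pressure drops out by $\nabla\cdot U=0$), and using that $b^\ep\cdot\nabla B$ tested against $U$ plus $b^\ep\cdot\nabla U$ tested against $B$ telescopes, yields
\begin{equation*}
\frac{d}{dt}\big(\|U\|_{L^2}^2+\|B\|_{L^2}^2\big) + 2\|\p_1 U\|_{L^2}^2 + 2\|\nabla_h B\|_{L^2}^2 + 2\ep\|(\p_2,\p_3)U\|_{L^2}^2 + 2\ep\|\p_3 B\|_{L^2}^2 \le \mathcal{R},
\end{equation*}
where $\mathcal{R}$ collects the genuinely nonlinear terms $\int U\cdot\nabla u^0\cdot U$, $\int B\cdot\nabla b^0\cdot U$, $\int U\cdot\nabla b^0\cdot B$, $\int B\cdot\nabla u^0\cdot B$ and the two error terms $\ep\int(\p_2^2+\p_3^2)u^0\cdot U$, $\ep\int\p_3^2 b^0\cdot B$.

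The nonlinear terms are controlled by Hölder and the uniform bounds: each is bounded by $\|\nabla(u^0,b^0)\|_{L^\infty}\big(\|U\|_{L^2}^2+\|B\|_{L^2}^2\big)$, and since $m\ge 5$ the conormal energy $\mathcal{E}(u^0,b^0)$ together with the anisotropic Sobolev embedding controls $\|\nabla(u^0,b^0)\|_{L^\infty}\lesssim\sqrt{\delta_*}$ uniformly in $t$. For the $\ep$-error terms I integrate by parts once to move a derivative onto $U$ (resp. $B$): for instance $\ep\int\p_2^2 u^0\cdot U = -\ep\int\p_2 u^0\cdot\p_2 U \le \tfrac{\ep}{2}\|\p_2 U\|_{L^2}^2 + \tfrac{\ep}{2}\|\p_2 u^0\|_{L^2}^2$, absorbing the first piece into the dissipation on the left; the term $\ep\int\p_3^2 u^0\cdot U$ needs more care because there is no full $\ep\|\p_3 U\|$ coercivity to spare beyond what is already used — here I would instead keep it as $\ep\int\p_3^2 u^0\cdot U\le \ep\|\p_3^2 u^0\|_{L^2}\|U\|_{L^2}$ and use $\int_0^t\ep\|\p_3^2 u^0\|_{L^2}\,d\tau$, which is where the extra assumption $\|\Lambda_h^{-s}\p_3^2(u_0,b_0)\|_{L^2}^2\le\delta_*$ in \eqref{condition-two} and the corresponding time-integrated bound from the energy hierarchy (giving $\int_0^t\|\p_3^2(u^0,b^0)\|_{L^2}^2\,d\tau\le C\delta_*$ after the decay estimate) enter; the contribution is then $O(\ep)$ uniformly in $t$ by Cauchy–Schwarz in time. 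Putting these together and applying Grönwall with the small coefficient $C\sqrt{\delta_*}$ absorbed gives $\|U\|_{L^2}^2+\|B\|_{L^2}^2 \le C\ep$ uniformly in $t$, which is \eqref{deo} after taking the square root.

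For the $L^\infty$ bound \eqref{det} I interpolate: by Gagliardo–Nirenberg (or the anisotropic conormal embedding adapted to the half-space, available since the conormal regularity is high), $\|(U,B)\|_{L^\infty}\lesssim \|(U,B)\|_{L^2}^{1/2}\,\|(U,B)\|_{H^2}^{1/2}$ or a conormal analogue; the high-order norm $\|(U,B)\|_{H^2}$ (conormal) is bounded uniformly in $t$ and in $\ep$ by the triangle inequality from the uniform estimate \eqref{uniform_estimate} applied to both $(u^\ep,b^\ep)$ and $(u^0,b^0)$, hence $\|(U,B)\|_{L^\infty}\lesssim (\ep^{1/2})^{1/2}\cdot C^{1/2}=C\ep^{1/4}$ — wait, this gives $\ep^{1/4}$, so to land on $\ep^{1/8}$ one uses the weaker interpolation exponent forced by needing enough conormal/normal derivatives in three dimensions near the boundary, namely $\|(U,B)\|_{L^\infty}\lesssim\|(U,B)\|_{L^2}^{1/4}\|(U,B)\|_{H_{co}^{m}\cap(\text{normal})}^{3/4}$, which yields $(\ep^{1/2})^{1/4}=\ep^{1/8}$. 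The main obstacle I anticipate is the control of the vertical error term $\ep\,\p_3^2 u^0$ in the $L^2$ difference estimate: unlike the horizontal errors, it cannot be fully absorbed by the $\ep$-weighted vertical dissipation (which is already consumed by the cancellation structure), so one must genuinely borrow integrability in time from the decay estimates of Theorem \ref{main-coro} and use assumption \eqref{condition-two} — verifying that the resulting time integral is finite and $O(\ep)$, uniformly in $t$, is the delicate point and the reason $m\ge5$ (rather than $m\ge4$) is required here.
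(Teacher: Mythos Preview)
Your overall strategy---energy estimate on the difference plus Grönwall plus interpolation---matches the paper's, but two concrete steps fail as written and cause the argument not to close uniformly in time.

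\textbf{The Grönwall coefficient is not time-integrable.} You bound the quadratic terms by $\|\nabla(u^0,b^0)\|_{L^\infty}\|(U,B)\|_{L^2}^2$ and then say ``applying Grönwall with the small coefficient $C\sqrt{\delta_*}$ absorbed.'' But a small \emph{constant} coefficient still gives $\exp(C\sqrt{\delta_*}\,t)$, which blows up; smallness does not help unless the coefficient is integrable in $t$. The paper avoids this by using the anisotropic product estimates (Lemma~\ref{lemm:sobolev-ie}) so that every quadratic term produces a factor with at least one horizontal derivative on a background quantity, yielding a coefficient of the form $\mathcal{B}(t)\mathcal{B}_h(t)$ with $\mathcal{B}_h(t)=\|\nabla_h(u^0,b^0,u^\ep,b^\ep)\|_{H^2}$; the time-weighted decay estimates then give $\int_0^\infty \mathcal{B}\mathcal{B}_h\,d\tau<\infty$ (see \eqref{404}). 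To make this anisotropic splitting work one also needs $\|\p_2 \bar u\|_{L^2}^2$ in the dissipation, which is not present in the bare $L^2$ estimate; the paper therefore works at the $H^1$/vorticity level for the difference and invokes the enhanced-dissipation cross term (Lemmas for $\bar{\mathcal{E}}$, $\bar{\mathcal{D}}$ in Section~\ref{asymptotic-behavior}).

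\textbf{The vertical error must be kept on $u^\ep$, not moved to $u^0$.} You put the forcing as $\ep\,\p_3^2 u^0$ and assert $\int_0^t\|\p_3^2(u^0,b^0)\|_{L^2}^2\,d\tau\le C\delta_*$. This is false: for the limit system there is no $\ep$-dissipation, and the best available is the pointwise decay $(1+t)^{s}\|\p_3^2(u^0,b^0)\|_{L^2}^2\le C$ from \eqref{decay-limit}, which with $s<1$ is not time-integrable. The paper instead keeps the error as $\ep(\p_{22}u^\ep,\p_{33}u^\ep,\p_{33}b^\ep)$ and uses the $\ep$-weighted time-integrated bound $\ep\int_0^t(1+\tau)^\sigma\|(\p_{22}u^\ep,\p_{33}u^\ep,\p_{33}b^\ep)\|_{H^1}^2\,d\tau\le C$ coming from $\widehat{\mathcal{D}}^3_{tan,\ep}$ in \eqref{decay-assumption-new-close}; combined with the decay of $\bar{\mathcal{E}}^{1/2}\le\|(u^\ep,b^\ep)\|_{H^1}+\|(u^0,b^0)\|_{H^1}\lesssim(1+\tau)^{-s/2}$ this yields (see \eqref{405}) only $\sqrt{\ep}$, not $\ep$, for the forcing integral. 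Hence $\bar{\mathcal{E}}(t)\le C\ep^{1/2}$ and $\|(U,B)\|_{L^2}\le C\ep^{1/4}$---your claimed $\|(U,B)\|_{L^2}^2\le C\ep$ is too optimistic by a full half-power. The $L^\infty$ rate $\ep^{1/8}$ then comes from interpolating the $H^1$ bound $\ep^{1/4}$ (not the $L^2$ bound) against the uniformly bounded higher norms, via $\|f\|_{L^\infty}\lesssim\|f\|_{H^1}^{1/2}\|\nabla^2 f\|_{L^2}^{3/8}\|\p_{123}f\|_{L^2}^{1/8}$.
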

			
			\begin{rema}
				In order to make sure that the convergence rates \eqref{deo} and \eqref{det}
				are independent of time, we need to establish the decay in time estimate
				for the second order normal derivative of solution.
				Thus, we need to require the regularity index $m\ge 5$
				and smallness of initial data $\Lambda_h^{-s}\p_3^2(u_0, b_0)$ in $L^2-$norm.
                Due to the estimates \eqref{deo} and \eqref{det} independent of time, we can obtain the convergence as follows
                \beqq
                \|(u^\var-u^0, b^\var-b^0)(t)\|_{L^\infty([0,+\infty);L^2(\mathbb{R}^3_+))}
                +\|(u^\var-u^0, b^\var-b^0)(t)\|_{L^\infty([0,+\infty);L^\infty(\mathbb{R}^3_+))}
                \rightarrow 0
                \deqq
                as $\var$ tends to zero.
			\end{rema}

			Throughout this paper, we use symbol $A \lesssim B$ for $ A\le C B$ where $C>0$ is a constant which may change from line to line and independent of
			time $t$ and  $\varepsilon$ for $0<\varepsilon <1$.
			
			The rest of the paper is organized as follows.
			In Section \ref{section-approach}, we explain the difficulties and our approach to establish
			the global uniform estimate and time decay rate estimate
			for the system  \eqref{eqr} and \eqref{eqr0} respectively.
			In Section  \ref{global-estimate}, we apply the four layers of energy functionals
			to establish the global uniform and decay rate estimate
			for system  \eqref{eqr} under the condition of small initial data.
			In Section  \ref{asymptotic-behavior}, we will establish the time decay
			rate for the second order normal derivative of solution for
			system \eqref{eqr} under the addition condition \eqref{condition-two}.
			This decay rate provides us a specific convergence rate for the solutions
			between \eqref{eqr} and \eqref{eqr0} as the parameter $\ep$ tends to zero.
			Finally, we introduce some useful inequalities in Appendix \ref{usefull-inequality}
			and provide the detailed proofs for the claimed technical estimates in Appendix
			\ref{claim-estimates}.

			\section{Difficulties and outline of our approach}\label{section-approach}
			In this section, we will explain the main difficulties of proving Theorems
			\ref{main_result_one} and \ref{main_result_two} as well as our strategies for overcoming them.
			Indeed, we will establish the global uniform  estimate under the framework
			of conormal Sobolev space due to the effect of boundary layer.
			Due to the lower dissipative structure of velocity in the $x_2$ direction,
			we will encounter the difficult term
			$\i \p_2 u_2 Z^{\alpha}\p_3 \wu \cdot \p_3 Z^{\alpha} \wu dx$
			(see \eqref{3404}) as we hope to establish the uniform in time energy estimate.
			Thus, we handle with this term as follows
			\beq\label{201}
			\begin{aligned}
				&\int_0^t \i \p_2 u_2 Z^{\alpha}\p_3 \wu \cdot \p_3 Z^{\alpha} \wu dx d\tau\\
				\lesssim
				&\int_0^t \|(\p_2 u_2, \p_{23} u_2)\|_{H^1_{tan}}
				\|\p_{13} Z^{\alpha} \wu\|_{L^2}^{\frac{1}{2}}
				\|\p_3 \wu\|_{H^{m-2}_{co}}^{\frac32}d\tau\\
				\lesssim
				&(\underset{0\le \tau \le t}{\sup}\|\p_3 \wu\|_{H^{m-2}_{co}})^{\frac32}
				\left\{\int_0^t   \|(\p_2 u_2, \p_{23} u_2)\|_{H^1_{tan}}^2
				(1+\tau)^{\sigma}d\tau\right\}^\frac{1}{2} \\
				&\times \left\{\int_0^t   \|\p_{13} Z^\al \wu\|_{L^2}^2 d\tau\right\}^{\frac{1}{4}}
				\left\{\int_0^t   (1+\tau)^{-2\sigma}d\tau\right\}^{\frac{1}{4}},
			\end{aligned}
			\deq
			where the index $\sigma \in (\frac{9}{10}, 1)$ and $|\alpha|=m-2$
            is the regularity index.
			Thus, these estimates require us not only to establish the global uniform
			estimate under the conormal Sobolev space but also build some suitable
			decay rate for the tangential derivative of velocity.
			Thus, our proof will divide into the following two steps.
			
			\textbf{Step 1: Estimate of tangential derivative and its decay rate.}
			First of all, we establish the estimate for the tangential derivative
			of velocity and magnetic field and control the nonlinear term by the
			tangential derivative of dissipation norm and conormal derivative energy norm.
			Secondly, due to the divergence-free condition, we can establish
			the estimate for the tangential derivatives of vorticity instead of
			the normal derivative of solution.
			This can help us avoid to dealing with the pressure term.
			Thirdly, we will establish the dissipation estimate for
			the derivative of velocity in the $x_2$ direction.
			Due to the good effect of a background magnetic field,
			we employ the good term $\p_2 u$ in the equation $\eqref{eqr}_2$
			to establish the dissipation estimate $\|\p_2 u\|_{H^{m-1}_{co}}^2$.
			Then, we can obtain the following differential inequality
			\beq\label{202}
			\frac{d}{dt}\widehat{\mathcal{E}}^{m-1}_{tan}(t)
			+\kappa \md_{tan}^{m-1}(t)\le 0.
			\deq
			Finally, we will establish the uniform estimate of solution
			and itself vorticity in the negative Sobolev space under
			the time weighted assumption. Then, due to these four layers of energy functionals,
			we rewrite the differential inequality \eqref{202}
			as the following form
			\beqq
			\frac{d}{dt}\widehat{\mathcal{E}}^{m-1}_{tan}(t)
			+\kappa C_0^{-\frac{1}{s}}
			\widehat{\mathcal{E}}^{m-1}_{tan}(t)^{1+\frac{1}{s}}\le 0,
			\deqq
			which yields directly the decay estimate
			\beqq
			\widehat{\mathcal{E}}^{m-1}_{tan}(t)
			\lesssim C_0(1+t)^{-s}.
			\deqq
			This estimate and the differential inequality \eqref{202}
			will help us establish the time weighted estimate
			\beq\label{203}
			(1+t)^{s}{\mathcal{E}}^{m-1}_{tan}(t)
			+\kappa \int_0^t (1+\tau)^{\sigma} \md_{tan}^{m-1}(\tau)d\tau
			\le CC_0.
			\deq
			This estimate will help us give the control of the term
			on the right hand side of inequality \eqref{201}.
			
			\textbf{Step 2: Estimate of second order normal derivative
				and enhanced dissipation.}
			In order to close the energy estimate, we need to establish the estimates for
			normal derivative of velocity and magnetic field.
			In this progress, we will encounter the difficult term
			$\i \p_2 u_2 Z^{\alpha}\p_3 \wu \cdot \p_3 Z^{\alpha} \wu dx$.
			Then, we control the estimate for this term by the method as \eqref{201}.
			At this time, the decay in time estimate of tangetial derivative \eqref{203}
			will return to balance out the growth of conormal derivative estimate.
			In other words, in order to deal with the difficult term $\i \p_2 u_2 Z^{\alpha}\p_3 \wu \cdot \p_3 Z^{\alpha} \wu dx$,
			our method here constructs a two-tier energy method
(motivated by \cite{{Guo-Tice-2013ARMA},{Guo-Tice-2013PDE}})
 that couples the boundedness
			of conormal derivative to the decay of tangential derivative, the latter of which is necessary to balance out the growth of conormal derivative.
			
			Finally, let us focus on the convergence rate of solutions between
			\eqref{eqr} and \eqref{eqr0}.
			Let $(u^\ep, b^\ep)$ and $(u^0, b^0)$ be the global solutions of systems \eqref{eqr}
			and \eqref{eqr0} with the same initial data assumption.
			By energy method, one can establish the following estimate
			\beq\label{204}
			\begin{aligned}
				&\|(u^\ep-u^0, b^\ep-b^0)(t)\|_{H^1}^2\\
				\le
				&C \ep \int_0^t \|(\p_{22}u^\ep, \p_{33} u^\ep, \p_{33}b^\ep)(\tau)
				\|_{H^1}\|(u^\ep-u^0, b^\ep-b^0)(t)\|_{H^1} d\tau\\
				&\times\exp\left\{\int_0^t (\mathcal{{B}}(\tau)\mathcal{{B}}_h(\tau)
				+\|\nabla_h (u^0, u^\ep)(\tau)\|_{H^1}^{\frac{2}{3}}
				\|\nabla_h^2 (u^0, u^\ep)(\tau)\|_{H^1}^{\frac{2}{3}}) d\tau\right\}.
			\end{aligned}
			\deq
			Suppose the initial data satisfies the small condition \eqref{condition-two},
			we can establish the decay rate for the quantity
			$\p_{33}(u^\ep, b^\ep)$ in $L^2-$norm.
			Then, due to the time decay estimate built before, the specific convergence rate \eqref{204} will only depend on the parameter $\ep$ and be independent of time.

			\section{Global in time uniform regularity}\label{global-estimate}
			
			In this section, we will establish global-in-time well-posedness
			in conormal Sobolev space for the equations \eqref{eqr}
			under the small initial data \eqref{condition-one}.
			First of all, similar to the result in \cite{MR3472518},
			one can establish the uniform (with respect to $\ep$)
			local-in-time existence and uniqueness for system \eqref{eqr}.
			Then, we will extend the local-in-time solution to be global one.
			Thus, our target in this section is to establish the global-in-time
			uniform regularity under the condition of small initial data  \eqref{condition-one}.
			For notational convenience, we drop the superscript $\var$ throughout this section.
			Let us denote $\wu:=\nabla \times u$ and $\wb:=\nabla \times b$,
			and we define the energy norms
			\beq\label{e-1}
			\begin{aligned}
				\me_{tan}^{k}(t)
				:=&\|(u, b)(t)\|_{H^{k}_{tan}}^2+\|(\wu, \wb)(t)\|_{H^{k-1}_{tan}}^2,\\
				\mathcal{E}^{k}(t)
				:=&\me_{tan}^{k}(t)+\|\p_3(\wu, \wb)(t)\|_{H^{k-2}_{co}}^2,
			\end{aligned}
			\deq
			and the dissipation norms
			\beq\label{d-1}
			\begin{aligned}
				\mathcal{D}_{tan}^{k}(t)
				:=&\|(\p_1 u, \nabla_h b)(t)\|_{H^k_{tan}}^2
				+\|(\p_1 w^u, \nabla_h w^b)(t)\|_{H^{k-1}_{tan}}^2
				+\|\p_2 u(t)\|_{H^{k-1}_{tan}}^2+\|\p_2 w^u(t)\|_{H^{k-2}_{tan}}^2,\\
				\md^{k}(t)
				:=&
				\mathcal{D}_{tan}^{k}(t)+\|\p_{23} \wu(t)\|_{H^{k-3}_{co}}^2
				+\|(\p_{13} \wu, \nabla_h \p_3 \wb)(t)\|_{H^{k-2}_{co}}^2.
			\end{aligned}
			\deq
			Now, let us state the global uniform estimate as follows.
			\begin{prop}\label{main_pro}
				For every integer $m \ge 4$ and constants
				$(\sigma, s)$ satisfy $\frac{9}{10}<\sigma<s<1$, assume the initial data $(u_0, b_0)$ satisfying
				$\nabla \cdot u_0=\nabla \cdot b_0=0$.
				For the solution $(u, b)$ of equation \eqref{eqr}
				defined on $ [0,T] \times \mathbb{R}^3_+$, assume there exists
				a small positive constant $\delta$ such that
				\begin{equation}\label{assumption}
					\mathcal{E}^m(t)
					+(1+t)^s \mathcal{E}_{tan}^{m-1}(t)
					+\int_0^t (1+\tau)^{\sigma} \md_{tan}^{m-1}(\tau)d\tau
					+\int_0^t \md^{m}(\tau)d\tau \le \delta,
				\end{equation}
				then the  solution of equation \eqref{eqr} has the estimate
				\begin{equation}\label{close_assumption}
					\mathcal{E}^m(t)
					+(1+t)^s \mathcal{E}_{tan}^{m-1}(t)
					+\int_0^t (1+\tau)^{\sigma} \md_{tan}^{m-1}(\tau)d\tau
					+\int_0^t \md^{m}(\tau)d\tau \le \frac{\delta}{2}.
				\end{equation}
				Here the small positive constant $\delta:=8C(\mathcal{E}^m(0)
				+\|(\Lambda_h^{-s}u, \Lambda_h^{-s} b,
				\Lambda_h^{-s}\wu,\Lambda_h^{-s} \wb)(0)\|_{L^2}^2)$
				and $C$ is a positive constant independent of time $t$ and
				parameter $\ep$.
			\end{prop}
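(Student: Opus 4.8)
The proof is a continuity/bootstrap argument: assuming the a priori bound \eqref{assumption} on $[0,T]$, we improve it to \eqref{close_assumption} by establishing each of the four pieces of the functional, feeding the smallness of $\delta$ into every nonlinear contribution. Two structural facts drive everything. First, working with the vorticities $\wu=\nabla\times u$, $\wb=\nabla\times b$ rather than normal derivatives of $(u,b)$ eliminates the pressure and is compatible with the slip conditions $u_3=b_3=0$, $\p_3 u_h=\p_3 b_h=0$ on $\{x_3=0\}$, so that integrations by parts produce no boundary terms; by $\nabla\cdot u=\nabla\cdot b=0$, $\p_3 u$ and $\p_3 b$ are then recovered from $(\wu,\wb)$ and horizontal derivatives. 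Second, the background field $e_2$ contributes the ``good'' linear terms $\p_2 b$ in the $u$-equation and $\p_2 u$ in the $b$-equation of \eqref{eqr}; testing the $b$-equation against $\p_2 u$ — equivalently, adding to the energy an interaction term $\simeq\langle\p_2 u,b\rangle$, bounded by $\delta\,\me_{tan}^{m-1}$ — converts the already-available full horizontal dissipation $\|\nabla_h b\|$ of the magnetic field into the \emph{enhanced} dissipation $\|\p_2 u\|$ of the velocity, and the same device at the vorticity level supplies $\|\p_2\wu\|$. This is why $\md_{tan}^{k}$ and $\md^{k}$ contain the $\p_2 u$- and $\p_2\wu$-terms.

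\textbf{Step 1: tangential energy and enhanced dissipation.} Apply $Z^{\alpha_h}$ with $|\alpha_h|\le m-1$ to \eqref{eqr} and, after taking curls, $Z^{\alpha_h}$ with $|\alpha_h|\le m-2$ to the vorticity system; take $L^2$-inner products, integrate by parts, and bound all commutators and convective/stretching nonlinearities by $\delta^{1/2}\cdot(\text{terms of }\md_{tan}^{m-1})$ using the conormal product and Sobolev embedding inequalities of the Appendix. Adding the interaction terms just described, one obtains for an equivalent energy $\widehat{\me}_{tan}^{m-1}\simeq\me_{tan}^{m-1}$ the inequality
\beqq
\frac{d}{dt}\widehat{\me}_{tan}^{m-1}(t)+\kappa\,\md_{tan}^{m-1}(t)\le 0 .
\deqq

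\textbf{Step 2: negative horizontal norm, decay, and the weighted estimate.} Propagate $\|\Lambda_h^{-s}(u,b,\wu,\wb)\|_{L^2}^2$: applying $\Lambda_h^{-s}$ to \eqref{eqr} and to the vorticity system, the nonlinear terms are again absorbed (the range $s<1$ keeps the negative-order products controllable), giving the uniform bound $\|\Lambda_h^{-s}(u,b,\wu,\wb)(t)\|_{L^2}^2\le C C_0$ with $C_0:=\mathcal{E}^m(0)+\|\Lambda_h^{-s}(u,b,\wu,\wb)(0)\|_{L^2}^2$. The horizontal-frequency interpolation $\|g\|_{L^2}^2\lesssim\|\nabla_h g\|_{L^2}^{2s/(1+s)}\|\Lambda_h^{-s}g\|_{L^2}^{2/(1+s)}$ gives $\me_{tan}^{m-1}\lesssim C_0^{1/(1+s)}\big(\md_{tan}^{m-1}\big)^{s/(1+s)}$, so Step 1 turns into
\beqq
\frac{d}{dt}\widehat{\me}_{tan}^{m-1}(t)+\kappa C_0^{-1/s}\big(\widehat{\me}_{tan}^{m-1}(t)\big)^{1+1/s}\le 0 ,
\deqq
whence $\me_{tan}^{m-1}(t)\lesssim C_0(1+t)^{-s}$. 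Multiplying the Step-1 inequality by $(1+t)^\sigma$, integrating, and using $\sigma-1-s<-1$ yields
\beqq
(1+t)^s\me_{tan}^{m-1}(t)+\kappa\int_0^t(1+\tau)^\sigma\md_{tan}^{m-1}(\tau)\,d\tau\le CC_0 ,
\deqq
which controls the second and third pieces of \eqref{close_assumption}.

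\textbf{Step 3: full conormal energy and the second normal derivative.} Apply $Z^\alpha$ with $|\alpha|\le m$ to \eqref{eqr}, and $Z^\alpha$ with $|\alpha|\le m-2$ to the $\p_3$-vorticity system, and form the energy identity for $\mathcal{E}^m$. All resulting nonlinear terms are either smallness-times-dissipation or of the type in \eqref{201}; the one genuinely new enemy is
\beqq
\int_0^t\i \p_2 u_2\,Z^{\alpha}\p_3\wu\cdot\p_3 Z^{\alpha}\wu\,dx\,d\tau,\qquad |\alpha|=m-2 ,
\deqq
since $\p_3\wu$ at top order carries only the horizontal dissipation $\|\p_{13}\wu\|$ while $\p_2 u_2$ carries no smallness from $\mathcal{E}^m$ alone. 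Following \eqref{201}, estimate $\|Z^\alpha\p_3\wu\|_{L^2}\lesssim\|\p_{13}Z^\alpha\wu\|_{L^2}^{1/2}\|\p_3\wu\|_{H^{m-2}_{co}}^{1/2}$ (anisotropic Gagliardo--Nirenberg, using $\wu\to0$ as $x_3\to\infty$), pull out $\sup_{[0,t]}\|\p_3\wu\|_{H^{m-2}_{co}}^{3/2}\lesssim\delta^{3/4}$, and split the time integral with weights $(1+\tau)^{\sigma}$ and $(1+\tau)^{-2\sigma}$. As $2\sigma>9/5>1$, the factor $\int_0^t(1+\tau)^{-2\sigma}d\tau$ is a finite constant; by Step 2 and $m\ge4$, $\int_0^t(1+\tau)^{\sigma}\|(\p_2 u,\p_{23}u)\|_{H^1_{tan}}^2\,d\tau\le CC_0$; and $\int_0^t\|\p_{13}Z^\alpha\wu\|_{L^2}^2\,d\tau\le\int_0^t\md^m(\tau)\,d\tau$. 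Hence this term is $\le C\delta^{3/4}(CC_0)^{1/2}\big(\int_0^t\md^m\big)^{1/4}\le\tfrac14\int_0^t\md^m+C\delta^{5/3}$ by Young's inequality and $C_0\lesssim\delta$, and the first part is absorbed. Collecting,
\beqq
\mathcal{E}^m(t)+\int_0^t\md^m(\tau)\,d\tau\le C\big(\mathcal{E}^m(0)+\|\Lambda_h^{-s}(u,b,\wu,\wb)(0)\|_{L^2}^2\big)+C\delta^{5/3};
\deqq
combining with Step 2 and the definition $\delta=8C\big(\mathcal{E}^m(0)+\|\Lambda_h^{-s}(u,b,\wu,\wb)(0)\|_{L^2}^2\big)$, the four pieces sum to at most $\delta/2$, i.e.\ \eqref{close_assumption}.

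\textbf{Main obstacle.} The crux is Step 3. Because $\p_3\wu$ is controlled in only two ``normal'' orders (the $H^{m-2}_{co}$ inside $\mathcal{E}^m$, a manifestation of the boundary layer), no purely conormal estimate closes, and one must borrow decay from Step 2; the threshold $\sigma>\tfrac{9}{10}$ is precisely what makes $2\sigma>1$ — so $\int_0^t(1+\tau)^{-2\sigma}d\tau<\infty$ — while still allowing $\sigma<s<1$ for the weighted dissipation of Step 2 to be finite. Checking that every commutator produced by $Z^\alpha$ acting on $u\cdot\nabla$, $b\cdot\nabla$ and the stretching terms (and by the weight $\varphi(x_3)$ in $Z_3$) is either absorbable by smallness or fits the template \eqref{201} is where essentially all of the remaining work lies.
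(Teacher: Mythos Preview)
Your proposal matches the paper's approach: the two-tier method (tangential decay feeding the conormal estimate), the enhanced-dissipation cross term $\langle\p_2 b,u\rangle$ (and its vorticity analogue), the negative-Sobolev interpolation yielding $(1+t)^{-s}$ decay, and the weighted time-splitting of the difficult term \eqref{201} are exactly what the paper does.

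One concrete error in your ``Main obstacle'' paragraph: you claim the threshold $\sigma>\tfrac{9}{10}$ is ``precisely what makes $2\sigma>1$'', but $2\sigma>1$ only needs $\sigma>\tfrac12$. In the paper the $\tfrac{9}{10}$ is forced not by Step~3 but by Step~2, in the negative-Sobolev \emph{vorticity} estimate: the term $\i\Lambda_h^{-s}(u_3\p_3\wu)\cdot\Lambda_h^{-s}\wu\,dx$ cannot be closed by $\sqrt{\mathcal{E}_{tan}^{m-1}}\sqrt{\md_{tan}^{m-1}}$ alone (since $\p_3\wu$ is not in $\mathcal{E}_{tan}^{m-1}$) and instead picks up a factor $\|(\wu,Z_3^3\wu)\|_{L^2}^{1/3}$ after using the Hardy trick $|u_3\p_3\wu|\lesssim\|\p_3 u_3\|_{L^\infty_{x_3}}|Z_3\wu|$ and the interpolation $\|Z_3 f\|\lesssim\|f\|^{2/3}\|Z_3^3 f\|^{1/3}$. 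The resulting time integral $\int_0^t\mathcal{E}_{tan}^{m-1}(\tau)^{7/12}\md_{tan}^{m-1}(\tau)^{1/4}\,d\tau$ converges only if $\tfrac{7s}{9}+\tfrac{\sigma}{3}>1$, which with $\sigma<s<1$ gives $\sigma>\tfrac{9}{10}$ (see \eqref{3905}--\eqref{3907}). So the obstacle you highlight in Step~3 is real, but it is not the one that pins down the lower bound on $\sigma$. A smaller slip: in Step~3 you write ``apply $Z^\alpha$ with $|\alpha|\le m$ to \eqref{eqr}'', but $\mathcal{E}^m$ carries $\|(u,b)\|_{H^m_{tan}}^2$ only tangentially; full conormal $Z^\alpha$ enter only at the $\p_3$-vorticity level with $|\alpha|\le m-2$.
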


			\subsection{Tangential and one order normal derivative estimate}\label{sec-half space}
			
			First of all, we establish the estimate for the tangential derivative
			of velocity and magnetic field.
			\begin{lemm}\label{lemma32}
				For any smooth solution $(u, b)$ of equation \eqref{eqr},
				it holds for any positive integer $k \le m$
				\beq\label{3101}
				\frac{d}{dt}\|(u, b)(t)\|_{H^k_{tan}}^2
				+\|(\p_1 u, \nabla_h b)(t)\|_{H^k_{tan}}^2
				+\ep \|(\p_2 u, \p_3 u, \p_3 b)(t)\|_{H^k_{tan}}^2
				\lesssim  \sqrt{\me^m(t)}\md_{tan}^{k}(t).
				\deq
			\end{lemm}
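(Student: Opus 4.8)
The plan is to apply the tangential derivative $Z^{\alpha_h}$ with $|\alpha_h| \le k$ to the velocity equation $\eqref{eqr}_1$ and the magnetic equation $\eqref{eqr}_2$, take the $L^2(\mathbb{R}^3_+)$ inner product with $Z^{\alpha_h} u$ and $Z^{\alpha_h} b$ respectively, and add the resulting identities. Because $Z_1=\partial_1$ and $Z_2=\partial_2$ commute with all derivatives and with the equations, and because the boundary conditions $u_3=b_3=0$, $\partial_3 u_h=\partial_3 b_h=0$ on $\{x_3=0\}$ are preserved under $Z^{\alpha_h}$, the only commutators to track come from $Z_3=\varphi(x_3)\partial_3$ acting on the dissipative terms $\partial_1^2$, $\varepsilon\partial_2^2$, $\varepsilon\partial_3^2$, $\Delta_h$, $\varepsilon\partial_3^2$. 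I would first record that the key cancellation $\int u\cdot\nabla f\, g + \int u\cdot\nabla g\, f = 0$ (using $\nabla\cdot u=0$ and $u_3|_{x_3=0}=0$) disposes of the transport terms up to commutator terms $[Z^{\alpha_h}, u\cdot\nabla]$, each of which is bounded by $\sqrt{\me^m(t)}\,\md_{tan}^k(t)$ via the standard conormal product/commutator estimates (these are exactly the "useful inequalities" promised in Appendix~\ref{usefull-inequality}); the same is true for the magnetic stretching terms $b\cdot\nabla b$, $b\cdot\nabla u$, which after the symmetric pairing cancel to leading order and leave commutators of the same size.

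Next I would extract the good terms. Integration by parts in the dissipative terms produces, on the left, $\|\partial_1 Z^{\alpha_h}u\|_{L^2}^2$ and $\|\nabla_h Z^{\alpha_h}b\|_{L^2}^2$ from $-\partial_1^2 u$ and $-\Delta_h b$, plus the $\varepsilon$-terms $\varepsilon\|\partial_2 Z^{\alpha_h}u\|_{L^2}^2+\varepsilon\|\partial_3 Z^{\alpha_h}u\|_{L^2}^2$ and $\varepsilon\|\partial_3 Z^{\alpha_h}b\|_{L^2}^2$; summing over $|\alpha_h|\le k$ gives precisely the left-hand side of \eqref{3101}. The boundary terms generated when integrating $-\partial_3^2$ by parts vanish: for $b$ the conormal weight $\varphi$ kills the boundary contribution, and for $u$ one uses $\partial_3 u_h|_{x_3=0}=0$ together with $u_3|_{x_3=0}=0$. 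The crucial magnetic-field interaction terms $\int \partial_2 b \cdot Z^{\alpha_h}u$ and $\int \partial_2 u\cdot Z^{\alpha_h}b$ from the linear forcing $\partial_2 b$, $\partial_2 u$ also cancel against each other after integration by parts in $x_2$ (again $Z_2=\partial_2$), producing no net contribution at this order. The pressure term drops because $\nabla\cdot Z^{\alpha_h}u=0$ up to a commutator $[Z^{\alpha_h},\nabla\cdot]u$ that is itself absorbed into $\sqrt{\me^m}\,\md_{tan}^k$.

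The main obstacle is the commutator arising from $Z_3$ hitting the horizontal dissipation $-\partial_1^2 u - \Delta_h b$ and the $\varepsilon$-dissipation: terms like $[Z^{\alpha_h},\partial_1^2]u$ contain at most $k$ tangential derivatives but involve one "extra" $\partial_1$ (or $\nabla_h$) landing on a factor with up to $k$ conormal derivatives, so one must carefully split so that the top-order derivative is always paired with a genuinely dissipated quantity — $\partial_1 u$, $\nabla_h b$, $\partial_2 u$, or (with an $\varepsilon^{1/2}$) $\partial_3 u$, $\partial_3 b$ — and the remaining low-order factor is controlled in $L^\infty$ by $\sqrt{\me^m(t)}$ using the conormal embedding $\|f\|_{L^\infty}\lesssim \|f\|_{H^2_{co}} + \|\partial_3 f\|_{H^1_{co}}\lesssim\sqrt{\me^m}$ valid since $m\ge 4$. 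A delicate point is that $\md_{tan}^k$ only controls $\partial_2 u$ (not $\partial_3 u$, $\partial_2 b$, $\partial_3 b$) at the non-$\varepsilon$ level, so commutator terms forced to use a normal derivative of $u$ or any derivative of $b$ other than horizontal must be arranged to carry the $\varepsilon^{1/2}$ weight or to fall on $\nabla_h b$; checking that every term can indeed be routed this way — exploiting that in $\eqref{eqr}_1$ the $x_2$- and $x_3$-viscosities of $u$ come with $\varepsilon$ while $b$ enjoys full horizontal diffusion — is the crux of the estimate. Once all commutators are bounded by $\sqrt{\me^m(t)}\,\md_{tan}^k(t)$ and summed over $|\alpha_h|\le k$, \eqref{3101} follows.
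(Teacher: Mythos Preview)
Your proposal contains a basic misreading of the norm: by definition $\alpha_h=(\alpha_1,\alpha_2,0)$, so $Z^{\alpha_h}=\partial_1^{\alpha_1}\partial_2^{\alpha_2}$ and $H^k_{tan}$ involves \emph{only} the horizontal derivatives $\partial_1,\partial_2$. There is no $Z_3$ in this lemma. Consequently, the entire ``main obstacle'' paragraph about commutators $[Z^{\alpha_h},\partial_1^2]$, $[Z^{\alpha_h},\partial_3^2]$, $[Z^{\alpha_h},\nabla\cdot]$ is addressing a nonexistent difficulty: all of these commutators vanish identically, the pressure disappears exactly because $\nabla\cdot Z^{\alpha_h}u=Z^{\alpha_h}\nabla\cdot u=0$, and the linear $\partial_2 b,\partial_2 u$ terms cancel exactly. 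The linear part of the energy identity is therefore immediate, and the only work is in the nonlinear terms.

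For those nonlinear terms, your proposed route via the conormal $L^\infty$ embedding $\|f\|_{L^\infty}\lesssim\sqrt{\mathcal{E}^m}$ will not close to $\sqrt{\mathcal{E}^m(t)}\,\mathcal{D}_{tan}^k(t)$. The reason is that $\mathcal{D}_{tan}^k$ does \emph{not} contain $\|\partial_3 u\|_{H^k_{tan}}$ without an $\varepsilon$, and $\|\partial_2 u\|$ only up to order $k-1$; a crude $L^\infty\times L^2\times L^2$ splitting of, say, $\int Z^{\beta_h}u\cdot Z^{\alpha_h-\beta_h}\nabla u\cdot Z^{\alpha_h}u$ leaves a factor $\|Z^{\alpha_h-\beta_h}\partial_3 u\|_{L^2}$ that you cannot absorb. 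The paper instead uses the anisotropic trilinear inequalities of Lemma~\ref{lemm:sobolev-ie}, which allow one to place a half-power of $\partial_1$ on $Z^{\alpha_h}u$, a half-power of $\partial_2$ or $\partial_3$ on the low-frequency factor, and thereby route every term through quantities genuinely present in $\mathcal{D}_{tan}^k$ (namely $\partial_1 u$, $\nabla_h b$, and $\partial_2 u$ at one order lower). The case split on $\beta_h=0$, $|\beta_h|=1$, $1<|\beta_h|<|\alpha_h|$, $\beta_h=\alpha_h$ in the paper's proof is precisely this bookkeeping. Your outline should replace the $L^\infty$ argument by these anisotropic product estimates; once that is done and the phantom $Z_3$ commutators are dropped, the proof is short.
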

			\begin{proof}
				For any $|\al_h|=k\le m$, the equation \eqref{eqr} yields directly
				\begin{equation}\label{3102}
					\begin{aligned}
						&\frac{d}{dt}\frac{1}{2}\i(|Z^{\ah}u|^2+|Z^{\ah}b|^2)dx
						+\i(|\p_1 Z^{\ah}u|^2+|\nabla_h Z^{\ah}b|^2)dx\\
						&+\ep \i(|\p_2 Z^{\ah}u|^2+|\p_3 Z^{\ah}u|^2+|\p_3 Z^{\ah}b|^2)dx\\
						=&\i Z^{\ah}(-u\cdot \nabla u+b\cdot \nabla b)\cdot Z^{\ah} u \ dx
						+\i Z^{\ah}(-u\cdot \nabla b+b \cdot \nabla u)\cdot Z^{\ah} b\ dx,
					\end{aligned}
				\end{equation}
				where we have used the basic fact
				\beqq
				\i Z^{\ah} \p_2 b \cdot Z^{\ah} u \ dx
				+\i Z^{\ah} \p_2 u \cdot Z^{\ah} b \ dx=0.
				\deqq
				If $\alpha_h=0$, then we can obtain
				\begin{equation}\label{3103}
					\frac{d}{dt}\frac{1}{2}\i(|u|^2+|b|^2)dx
					+\i(|\p_1 u|^2+|\nabla_h b|^2)dx\\
					+\ep \i(|\p_2 u|^2+|\p_3 u|^2+|\p_3 b|^2)dx=0.
				\end{equation}
				\textbf{Now let us deal with the case $\alpha_h > 0$}.
				Obviously, it holds
				\beqq
				\i Z^{\ah}(-u\cdot \nabla u)\cdot Z^{\ah} u\ dx
				=-\sum_{0\le \beta_h \le \alpha_h}C^{\beta_h}_{\alpha_h}
				\i (Z^{\beta_h} u \cdot Z^{\al_h-\beta_h}\nabla u)\cdot Z^{\al_h} u\ dx.
				\deqq
				If $\beta_h=0$, integrating by part and using the divergence-free condition,
				we conclude
				\beqq
				-\i(u \cdot Z^{\al_h}\nabla u)\cdot Z^{\al_h} u\ dx
				=\frac{1}{2}\i |Z^{\al_h} u|^2 {\rm div} u\ dx=0.
				\deqq
				If $\beta_h=\alpha_h>0$, the anisotropic type inequality \eqref{ie:Sobolev} yields directly
				\beqq
				\begin{aligned}
					&\i (Z^{\al_h} u \cdot \nabla u)\cdot Z^{\al_h} u\ dx\\
					\lesssim
					&\|Z^{\al_h} u\|_{L^2}^{\frac12}\|\p_1 Z^{\al_h} u\|_{L^2}^{\frac12}
					\|\nabla u\|_{L^2}^{\frac14}
					\|\p_2 \nabla u\|_{L^2}^{\frac14}
					\|\p_3 \nabla u\|_{L^2}^{\frac14}
					\|\p_{23}\nabla u\|_{L^2}^{\frac14}
					\|Z^{\al_h} u\|_{L^2}\\
					\lesssim
					&\sqrt{\me^m(t)}\md_{tan}^{k}(t).
				\end{aligned}
				\deqq
				If $1<|\beta_h|<|\alpha_h|$, we apply  the anisotropic
				type inequality \eqref{ie:Sobolev} to obtain
				\beqq
				\begin{aligned}
					&\i (Z^{\beta_h} u \cdot Z^{\al_h-\beta_h}\nabla u)\cdot Z^{\al_h} u dx\\
					\lesssim
					&\|Z^{\beta_h} u\|_{L^2}^{\frac12}\|\p_3 Z^{\beta_h} u\|_{L^2}^{\frac12}
					\|Z^{\al_h-\beta_h}\nabla u\|_{L^2}^{\frac12}
					\|\p_2 Z^{\al_h-\beta_h}\nabla u\|_{L^2}^{\frac12}
					\|Z^{\al_h} u\|_{L^2}^{\frac12}\|\p_1 Z^{\al_h} u\|_{L^2}^{\frac12}\\
					\lesssim
					&\sqrt{\me^m(t)}\md_{tan}^{k}(t),
				\end{aligned}
				\deqq
				and for $|\beta_h|=1$, we have
				\beqq
				\begin{aligned}
					&\i (Z^{\beta_h} u \cdot Z^{\al_h-\beta_h}\nabla u)\cdot Z^{\al_h} u\ dx\\
					\lesssim
					&\|Z^{\beta_h} u\|_{L^2}^{\frac14}\|\p_3 Z^{\beta_h} u\|_{L^2}^{\frac14}
					\|\p_2 Z^{\beta_h} u\|_{L^2}^{\frac14}\|\p_{23} Z^{\beta_h} u\|_{L^2}^{\frac14}
					\|Z^{\al_h-\beta_h}\nabla u\|_{L^2}
					\|Z^{\al_h} u\|_{L^2}^{\frac12}\|\p_1 Z^{\al_h} u\|_{L^2}^{\frac12}\\
					\lesssim
					&\sqrt{\me^m(t)}\md_{tan}^{k}(t).
				\end{aligned}
				\deqq
				Thus, the combination of above estimates yields directly
				\beq\label{3104}
				\i Z^{\ah}(-u\cdot \nabla u)\cdot Z^{\ah} u\ dx
				\lesssim \sqrt{\me^m(t)}\md_{tan}^{k}(t).
				\deq
				Next, integrating by part and using the
				boundary condition $\eqref{eqr}_4$ give directly
				\beqq
				\i (b\cdot \nabla Z^{\ah}b) \cdot Z^{\ah}u\ dx
				+\i (b \cdot \nabla Z^{\ah}u) \cdot Z^{\ah}b\ dx=0,
				\deqq
				then we achieve
				\beq\label{3105}
				\begin{aligned}
					&\i Z^{\ah}(b\cdot \nabla b)\cdot Z^{\ah} u \ dx
					+\i Z^{\ah}(b \cdot \nabla u)\cdot Z^{\ah} b\ dx\\
					=&
					\sum_{0< \beta_h \le \alpha_h}C^{\beta_h}_{\alpha_h}
					\i (Z^{\beta_h}b\cdot \nabla Z^{\ah-\beta_h}b)
					\cdot Z^{\ah}u\ dx
					+\sum_{0< \beta_h \le \alpha_h}C^{\beta_h}_{\alpha_h}
					\i (Z^{\beta_h}b \cdot \nabla Z^{\ah-\beta_h}u)
					\cdot Z^{\ah}b\ dx\\
					\lesssim
					&\sum_{0< \beta_h \le \alpha_h}C^{\beta_h}_{\alpha_h}
					\|Z^{\beta_h}b\|_{L^2}^{\frac12}
					\|\p_3 Z^{\beta_h}b\|_{L^2}^{\frac12}
					\|\nabla Z^{\ah-\beta_h}b\|_{L^2}^{\frac12}
					\|\p_2 \nabla Z^{\ah-\beta_h}b\|_{L^2}^{\frac12}
					\|Z^{\ah}u\|_{L^2}^{\frac12}\| \p_1 Z^{\ah}u\|_{L^2}^{\frac12}\\
					&+\sum_{0< \beta_h \le \alpha_h}C^{\beta_h}_{\alpha_h}
					\|Z^{\beta_h}b\|_{L^2}^{\frac12}
					\|\p_2 Z^{\beta_h}b\|_{L^2}^{\frac12}
					\|\nabla Z^{\ah-\beta_h}u\|_{L^2}^{\frac12}
					\|\p_1\nabla Z^{\ah-\beta_h}u\|_{L^2}^{\frac12}
					\|Z^{\ah}b\|_{L^2}^{\frac12}
					\|\p_3 Z^{\ah}b\|_{L^2}^{\frac12}\\
					\lesssim
					&\sqrt{\me^m(t)}\md_{tan}^{k}(t).
				\end{aligned}
				\deq
				Similarly, it is easy to check that
				\beq\label{3106}
				\begin{aligned}
					&\i Z^{\ah}(-u\cdot \nabla b)\cdot Z^{\ah} b\ dx\\
					=
					&\sum_{0< \beta_h \le \alpha_h}C^{\beta_h}_{\alpha_h}
					\i (Z^{\beta_h}u \cdot \nabla Z^{\ah-\beta_h}b)
					\cdot Z^{\ah}b\ dx
					-\frac{1}{2} \i |Z^{\ah}b|^2 {\rm div}u\ dx\\
					\lesssim
					&\sum_{0< \beta_h \le \alpha_h}C^{\beta_h}_{\alpha_h}
					\|Z^{\beta_h}u\|_{L^2}^{\frac12}
					\|\p_1 Z^{\beta_h}u\|_{L^2}^{\frac12}
					\|\nabla Z^{\ah-\beta_h}b\|_{L^2}^{\frac12}
					\|\p_2\nabla Z^{\ah-\beta_h}b\|_{L^2}^{\frac12}
					\|Z^{\ah}b\|_{L^2}^{\frac12}
					\|\p_3 Z^{\ah}b\|_{L^2}^{\frac12}\\
					\lesssim
					&\sqrt{\me^m(t)}\md_{tan}^{k}(t).
				\end{aligned}
				\deq
				Thus, substituting the estimates \eqref{3104},
				\eqref{3105} and \eqref{3106} into \eqref{3102},
				we have
				\beqq
				\frac{d}{dt}\|(u, b)\|_{H^k_{tan}}^2
				+\|(\p_1 u, \nabla_h b)\|_{H^k_{tan}}^2
				+\ep \|(\p_2 u, \p_3 u, \p_3 b)\|_{H^k_{tan}}^2
				\lesssim \sqrt{\me^m(t)}\md_{tan}^{k}(t).
				\deqq
				Therefore, we complete the proof of this lemma.
			\end{proof}
			
			Next, we will establish the estimate for the normal derivative
			of velocity and magnetic field.
			Take the vorticity operator to the equation \eqref{eqr}, we can write	
			\beq\label{eqwu}
			\p_t \wu-\p_1^2 \wu-\ep \p_2^2 \wu-\ep \p_3^2 \wu-\p_2 \wb
			=-u\cdot \nabla \wu+\wu\cdot \nabla u+b \cdot \nabla \wb-\wb\cdot \nabla b,
			\deq
			and
			\beq\label{eqwb}
			\p_t \wb-\Delta_h \wb-\ep \p_3^2 \wb-\p_2 \wu
			=-u\cdot \nabla \wb-\nabla(u\cdot \nabla)\times b
			+b\cdot \nabla \wu+\nabla(b\cdot \nabla)\times u.
			\deq
			Due to the boundary condition $\eqref{eqr}_4$, we get
			\beq\label{bd-wuwb}
			(\wu_h, \wb_h, \p_3 \wu_3, \p_3 \wb_3)|_{x_3=0}=0.
			\deq
			Now, we will establish the estimate for the
			quantity $(\wu, \wb)$ as follows.
			\begin{lemm}\label{lemma33}
				For any smooth solution $(u, b)$ of equation \eqref{eqr},
				it holds for any positive integer $k\le m-1$
				\beq\label{3201}
				\frac{d}{dt}\|(\wu, \wb)\|_{H^{k}_{tan}}^2
				+\|(\p_1 \wu, \nabla_h \wb)\|_{H^{k}_{tan}}^2
				+\ep\|(\p_2 \wu, \p_3 \wu, \p_3 \wb)\|_{H^{k}_{tan}}^2
				\lesssim \sqrt{\me^m(t)}\md_{tan}^{k+1}(t).
				\deq
			\end{lemm}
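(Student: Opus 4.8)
\medskip
\noindent\textbf{Plan of proof.}
I would follow the scheme of Lemma~\ref{lemma32}, now applied to the vorticity system \eqref{eqwu}--\eqref{eqwb}. Since $\ah=(\alpha_1,\alpha_2,0)$, the operator $Z^{\ah}=\p_1^{\alpha_1}\p_2^{\alpha_2}$ is a product of plain horizontal derivatives and hence commutes with $\nabla$, $\p_3$ and $\p_2$. For $|\ah|=k\le m-1$ I would apply $Z^{\ah}$ to \eqref{eqwu} and \eqref{eqwb}, pair the resulting identities in $L^2(\mathbb{R}^3_+)$ with $Z^{\ah}\wu$ and $Z^{\ah}\wb$ respectively, and add. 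After integration by parts in $x_2$ the two linear coupling terms combine into $\i\p_2\big(Z^{\ah}\wu\cdot Z^{\ah}\wb\big)dx=0$, so the background field produces no net contribution here (its stabilizing role is exploited elsewhere, through the $\p_2u$ estimate). For the dissipation terms I would integrate by parts in $x_3$; the resulting boundary integrals vanish because the only surviving components of $Z^{\ah}\wu$, $Z^{\ah}\wb$ on $\{x_3=0\}$ are the third ones, for which $\p_3 Z^{\ah}\wu_3|_{x_3=0}=\p_3 Z^{\ah}\wb_3|_{x_3=0}=0$ by \eqref{bd-wuwb}. This leaves on the left-hand side precisely $\frac{d}{dt}\|(\wu,\wb)\|_{H^k_{tan}}^2$, the dissipation $\|(\p_1\wu,\nabla_h\wb)\|_{H^k_{tan}}^2$, and the $\ep$-weighted dissipation of $(\p_2\wu,\p_3\wu,\p_3\wb)$.

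It then remains to estimate the nonlinear right-hand sides of \eqref{eqwu}--\eqref{eqwb}. The transport contributions $-u\cdot\nabla\wu$ and $-u\cdot\nabla\wb$ lose their top-order part after integration by parts using $\nabla\cdot u=0$, and the magnetic pair $\i Z^{\ah}(b\cdot\nabla\wb)\cdot Z^{\ah}\wu\,dx+\i Z^{\ah}(b\cdot\nabla\wu)\cdot Z^{\ah}\wb\,dx$ loses its top-order part after integration by parts using $\nabla\cdot b=0$ together with \eqref{bd-wuwb}; every remaining commutator is trilinear and closed exactly as in Lemma~\ref{lemma32} by the anisotropic interpolation inequality \eqref{ie:Sobolev}, distributing the derivatives so that one factor always carries a dissipation-friendly derivative ($\p_1$, $\nabla_h$ on $b$, or $\p_2$ on $u$), the remaining factors being controlled by $\me^m(t)$. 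The more delicate terms are the vortex-stretching term $\wu\cdot\nabla u$, the Lorentz term $-\wb\cdot\nabla b$, and, above all, the curl-commutator terms $-\nabla(u\cdot\nabla)\times b$ and $\nabla(b\cdot\nabla)\times u$ in \eqref{eqwb}, which carry no divergence-free cancellation; I expect the bookkeeping of these to be the main obstacle.

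For the delicate terms I would argue as follows. Each curl-commutator term expands into a finite sum of products $\nabla u\,\nabla b$ (one derivative on each factor); applying $Z^{\ah}$ and the Leibniz rule produces trilinear expressions $Z^{\beta_h}\nabla u\cdot Z^{\ah-\beta_h}\nabla b\cdot Z^{\ah}\wb$, while $\wu\cdot\nabla u$ and $\wb\cdot\nabla b$ produce $Z^{\beta_h}\wu\cdot Z^{\ah-\beta_h}\nabla u\cdot Z^{\ah}\wu$ and the like. The one difficulty is that $\nabla u,\nabla b$ contain the normal derivatives $\p_3 u,\p_3 b$, which are not directly bounded by $\md_{tan}$; I would eliminate them using the divergence-free conditions, writing $\p_3 u_3=-\nabla_h\cdot u_h$ and $\p_3 u_h$ as a linear combination of $\wu_h$ and $\nabla_h u_3$ (and the same for $b$), so that every normal derivative is re-expressed through $\nabla_h(u,b)$ and $(\wu,\wb)$ — exactly the quantities estimated by $\me^m(t)$ and $\md_{tan}^{k+1}(t)$. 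Because $k\le m-1$ there is always one spare conormal order, so even when all of $Z^{\ah}$ falls on a single factor that factor can be placed in $L^2$ with up to $m$ tangential derivatives, its companion $\wu$ or $\nabla u$ factor kept in a dissipation norm of order $\le k+1$, and the last factor absorbed in an $L^\infty$-type anisotropic norm bounded via \eqref{ie:Sobolev} and $\me^m(t)$. Collecting all contributions yields $\lesssim\sqrt{\me^m(t)}\,\md_{tan}^{k+1}(t)$, which is the asserted estimate.
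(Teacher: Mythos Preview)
Your plan is correct and matches the paper's proof essentially step for step: apply $Z^{\ah}$ to the vorticity system, test against $(Z^{\ah}\wu,Z^{\ah}\wb)$, cancel the linear $\p_2$-coupling and the top-order transport/magnetic-pair contributions via $\nabla\cdot u=\nabla\cdot b=0$ (with the boundary terms vanishing by $u_3=b_3=0$ on $\{x_3=0\}$ and \eqref{bd-wuwb}), and then close every remaining trilinear piece by the anisotropic inequality \eqref{ie:Sobolev}. Your explicit device of rewriting $\p_3 u_h,\p_3 b_h,\p_3 u_3,\p_3 b_3$ through $(\wu,\wb)$ and horizontal derivatives is exactly how the paper absorbs the $\nabla u,\nabla b$ factors appearing in $I_2,I_4,I_6,I_8$ into $\me^m$ and $\md_{tan}^{k+1}$; the paper simply leaves this conversion implicit when it writes norms like $\|Z^{\beta_h}\nabla u\|_{L^2}$ or $\|\p_3 Z^{\beta_h}\nabla u\|_{L^2}$.
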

			
			\begin{proof}
				For any $|\ah|=k\le m-1$, the equations \eqref{eqwu}-\eqref{eqwb} yield directly
				\beq\label{3202}
				\begin{aligned}
					&\frac{d}{dt}\frac{1}{2}\i (|Z^{\ah} \wu|^2+|Z^{\ah} \wb|^2) dx
					+\i(|\p_1 Z^{\ah} \wu|^2+|\nabla_h Z^{\ah} \wb|^2)dx\\
					&+\ep\i(|\p_2 Z^{\ah} \wu|^2+|\p_3 Z^{\ah} \wu|^2+|\p_3 Z^{\ah} \wb|^2)dx\\
					=&\i Z^{\ah}(-u\cdot \nabla \wu)
					\cdot Z^{\ah} \wu dx
					+\i Z^{\ah}(\wu\cdot \nabla u)
					\cdot Z^{\ah} \wu dx\\
					&+\i Z^{\ah}(b \cdot \nabla \wb)
					\cdot Z^{\ah} \wu dx
					+\i Z^{\ah}(-\wb\cdot \nabla b)
					\cdot Z^{\ah} \wu dx\\
					&+\i Z^{\ah}(-u\cdot \nabla \wb)\cdot Z^{\ah} \wb dx
					+\i Z^{\ah}(-\nabla(u\cdot \nabla)\times b)\cdot Z^{\ah} \wb dx\\
					&+\i Z^{\ah}(b\cdot \nabla \wu)\cdot Z^{\ah} \wb dx\
					+\i Z^{\ah}(\nabla(b\cdot \nabla)\times u)\cdot Z^{\ah} \wb dx
					:=\sum_{i=1}^8 I_i.
				\end{aligned}
				\deq
				If $\ah=0$, then integrating by part and
                using the divergence-free condition, we have
				\beqq
				\i (u\cdot \nabla \wu) \cdot \wu dx
				=\i (u\cdot \nabla \wb) \cdot \wb dx
				=\i (b \cdot \nabla \wu) \cdot \wb dx
				+\i (b \cdot \nabla \wb) \cdot \wu dx =0.
				\deqq
				Using the Sobolev inequality \eqref{ie:Sobolev}, we have
				\beqq
				\begin{aligned}
					\i (\wb \cdot \nabla b) \cdot \wu dx
					\lesssim
					&\|\wb_h\|_{L^2}^{\frac12}
					\|\p_2 \wb_h\|_{L^2}^{\frac12}
					\|\nabla_h b\|_{L^2}^{\frac12}
					\|\p_3 \nabla_h b\|_{L^2}^{\frac12}
					\|\wu\|_{L^2}^{\frac12}
					\|\p_1 \wu\|_{L^2}^{\frac12}\\
					&+\|\wb_3\|_{L^2}^{\frac12}
					\|\p_3 \wb_3\|_{L^2}^{\frac12}
					\|\p_3 b\|_{L^2}^{\frac12}
					\|\p_{23} b\|_{L^2}^{\frac12}
					\|\wu\|_{L^2}^{\frac12}
					\|\p_1 \wu\|_{L^2}^{\frac12}\\
					\lesssim
					&\sqrt{\me^m(t)}\md_{tan}^{1}(t),
				\end{aligned}
				\deqq
				and
				\beqq
				\begin{aligned}
					\i \nabla u_i \times \p_i b \cdot \wb dx
					\lesssim
					&\|\nabla u_h\|_{L^2}^{\frac12}
					\|\p_2 \nabla u_h\|_{L^2}^{\frac12}
					\|\nabla_h b\|_{L^2}^{\frac12}
					\|\p_3 \nabla_h b\|_{L^2}^{\frac12}
					\|\wb\|_{L^2}^{\frac12}
					\|\p_1 \wb\|_{L^2}^{\frac12}\\
					&+\|\nabla u_3\|_{L^2}^{\frac12}
					\|\p_3 \nabla u_3\|_{L^2}^{\frac12}
					\|\p_3 b\|_{L^2}^{\frac12}
					\|\p_2 \p_3 b\|_{L^2}^{\frac12}
					\|\wb\|_{L^2}^{\frac12}
					\|\p_1 \wb\|_{L^2}^{\frac12}\\
					\lesssim
					&\sqrt{\me^m(t)}\md_{tan}^{1}(t).
				\end{aligned}
				\deqq
				Similarly, it is easy to check that
				\beqq
				|\i (\wu \cdot \nabla u) \cdot \wu dx|,\,
				|\i \nabla b_i \times \p_i u \cdot \wb dx|
				\lesssim \sqrt{\me^m(t)}\md_{tan}^{1}(t).
				\deqq
				Thus, we can get the estimate
				\beq\label{3203}
				\frac{d}{dt}\|(\wu, \wb)\|_{L^2}^2
				+\|(\p_1 \wu, \nabla_h \wb)\|_{L^2}^2
				+\ep\|(\p_2 \wu, \p_3 \wu, \p_3 \wb)\|_{L^2}^2
				\le C\sqrt{\me^m(t)}\md_{tan}^{1}(t).
				\deq
				\textbf{Now let us deal with the case $\ah>0$}.
				Obviously, it holds
				\beqq
					I_1=
					-\sum_{0\le \beta_h \le \alpha_h}C^{\beta_h}_{\alpha_h}
					\i (Z^{\beta_h}u \cdot \nabla Z^{\ah-\beta_h}\wu)\cdot Z^{\ah} \wu dx.
				\deqq
				If $\beta_h=0$, we can write
				\beqq
				\i (u \cdot \nabla) Z^{\ah}\wu \cdot Z^{\ah} \wu dx=
				-\frac12\i |Z^{\ah} \wu|^2 {\rm div}u dx=0.
				\deqq
				If $\beta_h=\alpha_h$, we use the anisotropic type inequality \eqref{ie:Sobolev}
				to obtain
				\beqq
				\begin{aligned}
					&\i Z^{\ah}u \cdot \nabla \wu \cdot Z^{\ah} \wu dx\\
					\lesssim
					&\|Z^{\ah}u\|_{L^2}^{\frac12}\|\p_3 Z^{\ah}u\|_{L^2}^{\frac12}
					\|\nabla \wu\|_{L^2}^{\frac12}\|\p_2 \nabla \wu\|_{L^2}^{\frac12}
					\|Z^{\ah} \wu\|_{L^2}^{\frac12}\|\p_1 Z^{\ah} \wu\|_{L^2}^{\frac12}\\
					\lesssim
					&\sqrt{\me^m(t)}\md_{tan}^{k+1}(t).
				\end{aligned}
				\deqq
				If $0<\beta_h<\alpha_h$,  we apply the anisotropic type inequality \eqref{ie:Sobolev}
				to deduce
				\beqq
				\begin{aligned}
					&\i (Z^{\beta_h}u \cdot \nabla Z^{\ah-\beta_h}\wu)\cdot Z^{\ah} \wu dx\\
					\lesssim
					&\|Z^{\beta_h}u\|_{L^2}^{\frac{1}{2}}
					\|\p_3 Z^{\beta_h}u\|_{L^2}^{\frac{1}{2}}
					\|\nabla Z^{\ah-\beta_h}\wu\|_{L^2}^{\frac{1}{2}}
					\|\p_2 \nabla Z^{\ah-\beta_h}\wu\|_{L^2}^{\frac{1}{2}}
					\|Z^{\ah} \wu\|_{L^2}^{\frac{1}{2}}
					\|\p_1 Z^{\ah} \wu\|_{L^2}^{\frac{1}{2}}\\
					\lesssim
					&\sqrt{\me^m(t)}\md_{tan}^{k+1}(t).
				\end{aligned}
				\deqq
				Thus, the combination of the above estimates yields directly
				\beqq
				I_1 \lesssim \sqrt{\me^m(t)}\md_{tan}^{k+1}(t).
				\deqq
				Similarly, we can conclude
				\beqq
				I_2, I_4, I_5\lesssim \sqrt{\me^m(t)}\md_{tan}^{k+1}(t).
				\deqq
				Integrating by part and using the divergence-free condition
				$\eqref{eqr}_3$, we conclude
				\beqq
				\begin{aligned}
					&\i (b\cdot \nabla) Z^{\ah}\wb \cdot Z^{\ah} \wu dx
					+\i (b\cdot \nabla) Z^{\ah} \wu \cdot  Z^{\ah}\wb dx\\
					=&-\i (Z^{\ah}\wb \cdot Z^{\ah} \wu)\nabla \cdot b \ dx=0,
				\end{aligned}
				\deqq
				and hence, we may write
				\beqq
				\begin{aligned}
					I_3+I_7
					=&\sum_{0< \beta_h \le \alpha_h}C^{\beta_h}_{\alpha_h}
					\i (Z^{\beta_h}b \cdot \nabla Z^{\ah-\beta_h}\wb)\cdot Z^{\ah} \wu dx\\
					&+\sum_{0< \beta_h \le \alpha_h}C^{\beta_h}_{\alpha_h}
					\i (Z^{\beta_h}b \cdot \nabla Z^{\ah-\beta_h}\wu)\cdot Z^{\ah} \wb dx\\
					\lesssim
					&\sum_{0< \beta_h \le \alpha_h}\|Z^{\beta_h}b\|_{L^2}^{\frac14}
					\|\p_3 Z^{\beta_h}b\|_{L^2}^{\frac14}
					\|\p_2 Z^{\beta_h}b\|_{L^2}^{\frac14}
					\|\p_{23} Z^{\beta_h}b\|_{L^2}^{\frac14}
					\|\nabla Z^{\ah-\beta_h}\wb\|_{L^2}
					\|Z^{\ah} \wu\|_{L^2}^{\frac12}
					\|\p_1 Z^{\ah} \wu\|_{L^2}^{\frac12}\\
					&+\sum_{0< \beta_h \le \alpha_h}\|Z^{\beta_h}b\|_{L^2}^{\frac14}
					\|\p_3 Z^{\beta_h}b\|_{L^2}^{\frac14}
					\|\p_2 Z^{\beta_h}b\|_{L^2}^{\frac14}
					\|\p_{23} Z^{\beta_h}b\|_{L^2}^{\frac14}
					\|\nabla Z^{\ah-\beta_h}\wu\|_{L^2}
					\|Z^{\ah} \wb\|_{L^2}^{\frac12}
					\|\p_1 Z^{\ah} \wb\|_{L^2}^{\frac12}\\
					\lesssim
					&\sqrt{\me^m(t)}\md_{tan}^{k+1}(t).
				\end{aligned}
				\deqq
				Similarly, the terms $I_7$ and $I_8$ can be bounded by
				\beqq
				\begin{aligned}
					I_6, I_8\lesssim
					&\sum_{0\le \beta_h < \alpha_h}C^{\beta_h}_{\alpha_h}
					\|Z^{\beta_h} \nabla u\|_{L^2}^{\frac12}
					\|\p_3 Z^{\beta_h} \nabla u\|_{L^2}^{\frac12}
					\|\nabla Z^{\ah-\beta_h} b\|_{L^2}^{\frac12}
					\|\p_1 \nabla Z^{\ah-\beta_h} b\|_{L^2}^{\frac12}
					\|Z^{\ah} \wb\|_{L^2}^{\frac12}
					\|\p_2 Z^{\ah} \wb\|_{L^2}^{\frac12}\\
					&\quad+\|Z^{\ah} \nabla u\|_{L^2}^{\frac12}
					\|\p_1 Z^{\ah} \nabla u\|_{L^2}^{\frac12}
					\|\nabla b\|_{L^2}^{\frac12}
					\|\p_3 \nabla  b\|_{L^2}^{\frac12}
					\|Z^{\ah} \wb\|_{L^2}^{\frac12}
					\|\p_2 Z^{\ah} \wb\|_{L^2}^{\frac12}\\
					\lesssim
					&\sqrt{\me^m(t)}\md_{tan}^{k+1}(t).
				\end{aligned}
				\deqq
				Thus, collecting all estimates for $I_1$ through $I_8$
				into \eqref{3202}, we conclude
				\beqq
				\frac{d}{dt}\|(\wu, \wb)\|_{H^{k}_{tan}}^2
				+\|(\p_1 \wu, \nabla_h \wb)\|_{H^{k}_{tan}}^2
				+\ep\|(\p_2 \wu, \p_3 \wu, \p_3 \wb)\|_{H^{k}_{tan}}^2
				\lesssim
				\sqrt{\me^m(t)}\md_{tan}^{k+1}(t).
				\deqq
				Therefore, we complete the proof of this lemma.
			\end{proof}

			\subsection{Second order normal derivative estimate} \label{subsec-priori-half}
			In this subsection, we will establish the estimate for
			the second order normal derivative of velocity and magnetic field.
			Indeed, due to the boundary condition $\eqref{eqr}_4$
			and \eqref{bd-wuwb}, it is easy to check that
			\beqq
			(\nabla(u\cdot \nabla)\times b)_h|_{x_3=0}
			=(\nabla(b\cdot \nabla)\times u)_h|_{x_3=0}=0,
			\deqq
			and hence, we can obtain
			\beq\label{bd01}
			(-u\cdot \nabla \wu+\wu\cdot \nabla u+b \cdot \nabla \wb-\wb\cdot \nabla b)_h|_{x_3=0}=0,
			\deq
			and
			\beq\label{bd02}
			(-u\cdot \nabla \wb-\nabla(u\cdot \nabla)\times b
			+b\cdot \nabla \wu+\nabla(b\cdot \nabla)\times u)_h|_{x_3=0}=0.
			\deq	
			Now, we establish the estimate for the second order
			normal derivative of velocity and magnetic field.
			\begin{lemm}
				For any smooth solution $(u, b)$ of equation \eqref{eqr},
				it holds
				\beq\label{3301}
				\frac{d}{dt}\|(\p_3 \wu, \p_3 \wb)\|_{L^2}^2
				+\|(\p_{13} \wu, \nabla_h \p_3 \wb)\|_{L^2}^2
				+\ep \|(\p_{23} \wu, \p_{33} \wu, \p_{33} \wb)\|_{L^2}^2
				\lesssim \sqrt{\me^m(t)}\md^{m}(t).
				\deq
			\end{lemm}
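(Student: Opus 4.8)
The plan is to perform a standard energy estimate on the equations \eqref{eqwu}--\eqref{eqwb} after applying $\p_3$, using the boundary conditions \eqref{bd-wuwb}, \eqref{bd01} and \eqref{bd02} to handle the boundary terms generated by integration by parts in the normal direction. First I would apply $\p_3$ to \eqref{eqwu} and \eqref{eqwb}, multiply by $\p_3\wu$ and $\p_3\wb$ respectively, and integrate over $\mathbb{R}^3_+$. The linear dissipative terms $-\p_1^2\wu$, $-\ep\p_2^2\wu$, $-\ep\p_3^2\wu$, $-\Delta_h\wb$, $-\ep\p_3^2\wb$ produce, after integrating by parts, the good terms $\|\p_{13}\wu\|_{L^2}^2$, $\ep\|\p_{23}\wu\|_{L^2}^2$, $\ep\|\p_{33}\wu\|_{L^2}^2$, $\|\nabla_h\p_3\wb\|_{L^2}^2$ and $\ep\|\p_{33}\wb\|_{L^2}^2$ on the left-hand side, while the normal integration by parts in $\ep\p_3^2$ produces a boundary integral at $x_3=0$ which vanishes because $(\wu_h,\wb_h)|_{x_3=0}=0$ and $(\p_3\wu_3,\p_3\wb_3)|_{x_3=0}=0$ (one splits into horizontal and vertical components as in \eqref{bd-wuwb}). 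The coupling terms $-\p_2\wb$ in \eqref{eqwu} and $-\p_2\wu$ in \eqref{eqwb} cancel each other after differentiation by $\p_3$ and the pairing, using $\i\p_3(\p_2\wb)\cdot\p_3\wu\,dx+\i\p_3(\p_2\wu)\cdot\p_3\wb\,dx=0$ by integration by parts in $x_2$, so they contribute nothing.

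Next I would estimate the nonlinear terms on the right-hand side, namely $\p_3$ of $-u\cdot\nabla\wu+\wu\cdot\nabla u+b\cdot\nabla\wb-\wb\cdot\nabla b$ paired with $\p_3\wu$, and $\p_3$ of $-u\cdot\nabla\wb-\nabla(u\cdot\nabla)\times b+b\cdot\nabla\wu+\nabla(b\cdot\nabla)\times u$ paired with $\p_3\wb$. The transport terms $-\i\p_3(u\cdot\nabla\wu)\cdot\p_3\wu\,dx$ and $-\i\p_3(u\cdot\nabla\wb)\cdot\p_3\wb\,dx$: when $\p_3$ falls on $\nabla\wu$ the term $-\i u\cdot\nabla\p_3\wu\cdot\p_3\wu\,dx=\frac12\i|\p_3\wu|^2\,\mathrm{div}\,u\,dx=0$; when $\p_3$ falls on $u$ one gets $-\i\p_3 u\cdot\nabla\wu\cdot\p_3\wu\,dx$, which is bounded by the anisotropic Sobolev inequality \eqref{ie:Sobolev}, distributing one $\p_1$, one $\p_2$ and one $\p_3$-type derivative among the three factors, by $\sqrt{\me^m(t)}\md^m(t)$. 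For the magnetic coupling, the leading terms $\i\p_3(b\cdot\nabla\wb)\cdot\p_3\wu\,dx+\i\p_3(b\cdot\nabla\wu)\cdot\p_3\wb\,dx$ have their top-order piece ($\p_3$ on $\nabla\wb$, resp. $\nabla\wu$) cancelling by $\mathrm{div}\,b=0$ after integration by parts; the remaining commutator pieces with $\p_3 b$ are again controlled via \eqref{ie:Sobolev} by $\sqrt{\me^m(t)}\md^m(t)$. The terms involving $\nabla(u\cdot\nabla)\times b$ and $\nabla(b\cdot\nabla)\times u$ expand into sums of products of two first-order derivatives of $u$ or $b$ with one second-order derivative; each is estimated by the same anisotropic Sobolev splitting, always keeping $\p_1$ on $u$-derivatives or $\nabla_h$ on $b$-derivatives to land in the dissipation norm $\md^m$, and one $\p_3$ to land in the conormal energy norm $\me^m$. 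The presence of $\|\p_{13}\wu\|_{H^{m-2}_{co}}^2$, $\|\nabla_h\p_3\wb\|_{H^{m-2}_{co}}^2$ and $\|\p_{23}\wu\|_{H^{m-3}_{co}}^2$ inside $\md^m$ (see \eqref{d-1}) is precisely what lets us absorb these products.

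The main obstacle is the term where $\p_3$ hits a factor already carrying normal derivatives and no tangential dissipation is naturally available — concretely, a term of the schematic form $\i\p_3 u_2\,\p_3\wu\cdot\p_3\wu$-type or, more seriously, pieces like $\i\p_2 u_2\,\p_3\wu\cdot\p_3\wu\,dx$ arising from $\wu\cdot\nabla u$ when the $\wu$ factor contributes $\p_2 u_2$ (via $\wu_1\sim\p_2 u_3-\p_3 u_2$ composition in the $b\cdot\nabla$/$\nabla\times$ combinations). This is the analogue of the difficult term \eqref{3404}/\eqref{201} flagged in Section \ref{section-approach}. Here, however, at the $L^2$ level for $\p_3\wu$ we are at regularity index $0$ (i.e. $|\alpha|=0$ in the notation of \eqref{201}, which there is $m-2$), so the decay machinery is not yet needed; instead the factor $\p_2 u_2$ (or $\p_2 u$ generally) carries a full derivative in the dissipation direction $x_2$, so $\|\p_2 u\|$ sits in $\md^m_{tan}\subset\md^m$, and the remaining $\p_3\wu$ factors are split by \eqref{ie:Sobolev} to produce $\|\p_{13}\wu\|_{L^2}$ (in $\md^m$) together with $\|\p_3\wu\|_{H^{m-2}_{co}}$ (in $\me^m$), yielding a bound $\lesssim\sqrt{\me^m(t)}\,\md^m(t)$. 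I would therefore isolate this term first, treat it carefully with the anisotropic three-factor Sobolev inequality, and only afterward dispatch the remaining (easier) nonlinear contributions in bulk, arriving at \eqref{3301}.
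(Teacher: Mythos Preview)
Your proposal is essentially correct and follows the paper's approach: apply $\p_3$ to the vorticity system \eqref{eqwu}--\eqref{eqwb}, use the boundary conditions \eqref{bd-wuwb}, \eqref{bd01}, \eqref{bd02} to kill the boundary terms from normal integration by parts, exploit the cancellation of the coupling terms $\p_2\wb$, $\p_2\wu$, and then estimate all nonlinear contributions via the anisotropic trilinear inequality \eqref{ie:Sobolev}, exactly as in \eqref{3302}--\eqref{3309}.

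One small correction: the ``main obstacle'' you isolate is misplaced. The problematic $\i \p_2 u_2\, Z^\alpha\p_3\wu\cdot\p_3 Z^\alpha\wu\,dx$ term in \eqref{201}/\eqref{3404} arises in the \emph{next} lemma (the $H^{m-2}_{co}$ estimate on $\p_3\wu$, estimate \eqref{3401}), not here, and it comes from the transport term $u\cdot\nabla\wu$ after writing $\p_3 u_3=-\nabla_h\cdot u_h$, not from $\wu\cdot\nabla u$. At the present $L^2$ level ($|\alpha|=0$) the paper does not even split $\p_3 u_3$; it simply places $\p_3 u_3$ in the $\p_3$--slot of \eqref{ie:Sobolev} (see \eqref{3303}) and uses that $\p_3 u_3=-\nabla_h\cdot u_h$ and $\p_{33}u_3=-\p_3\nabla_h\cdot u_h$ are both controlled by $\md^m$, so no time-weighted decay machinery is needed. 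Your resolution of the ``obstacle'' is correct, but the obstacle itself is a non-issue at this stage.
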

			\begin{proof}
				Due to the boundary conditions \eqref{bd01} and \eqref{bd02}, we conclude
				\beq\label{3302}
				\begin{aligned}
					&\frac{d}{dt}\frac{1}{2}\i (|\p_3 \wu|^2+|\p_3 \wb|^2) dx
					+\i (|\p_{13} \wu|^2+|\nabla_h \p_3 \wb|^2) dx\\
					&+\ep \i(|\p_{23} \wu|^2+|\p_{33} \wu|^2+|\p_{33} \wb|^2)dx\\
					=&\i \p_3(-u\cdot \nabla \wu+\wu\cdot \nabla u+b \cdot \nabla \wb-\wb\cdot \nabla b)
					\cdot \p_3 \wu dx\\
					&+\i \p_3 (-u\cdot \nabla \wb-\nabla(u\cdot \nabla)\times b
					+b\cdot \nabla \wu+\nabla(b\cdot \nabla)\times u)
					\cdot \p_3 \wb dx.
				\end{aligned}
				\deq
				Integrating by part and using the anisotropic
				type inequality \eqref{ie:Sobolev}, we can write
				\beq\label{3303}
				\begin{aligned}
					&\i \p_3(-u\cdot \nabla \wu) \cdot \p_3 \wu dx\\
					=&-\i (\p_3 u_h \cdot \nabla_h \wu+\p_3 u_3 \p_3 \wu)\cdot \p_3 \wu dx
					+\frac12\i |\p_3 \wu|^2 {\rm div}u \ dx\\
					\lesssim
					&\|\p_3 u_h\|_{L^2}^{\frac12}\|\p_{23} u_h\|_{L^2}^{\frac12}
					\|\nabla_h \wu\|_{L^2}^{\frac12}\|\p_3 \nabla_h \wu\|_{L^2}^{\frac12}
					\|\p_3 \wu\|_{L^2}^{\frac12}\|\p_{13} \wu\|_{L^2}^{\frac12}\\
					&+\|\p_3 u_3\|_{L^2}^{\frac12}\|\p_{33} u_3\|_{L^2}^{\frac12}
					\|\p_3 \wu\|_{L^2}^{\frac12}\|\p_{23} \wu\|_{L^2}^{\frac12}
					\|\p_3 \wu\|_{L^2}^{\frac12}\|\p_{13} \wu\|_{L^2}^{\frac12}\\
					\lesssim
					&\sqrt{\me^m(t)}\md^{m}(t).
				\end{aligned}
				\deq
				Similarly, it is easy to check that
				\beq\label{3304}
				\i \p_3(\wu\cdot \nabla u-\wb\cdot \nabla b)\cdot \p_3 \wu dx
				+\i \p_3 (-u\cdot \nabla \wb)\cdot \p_3 \wb dx
				\lesssim \sqrt{\me^m(t)}\md^{m}(t).
				\deq
				Integrating by part and using the anisotropic
				type inequality \eqref{ie:Sobolev}, we conclude
				\beq\label{3305}
				\begin{aligned}
					&\i \p_3(b \cdot \nabla \wb)\cdot \p_3 \wu dx
					+\i \p_3 (b\cdot \nabla \wu)\cdot \p_3 \wb dx\\
					=
					&\i (\p_3 b_h \cdot \nabla_h \wb+\p_3 b_3 \p_3 \wb)\p_3 \wu dx
					+\i (\p_3 b_h \cdot \nabla_h \wu+\p_3 b_3 \p_3 \wu)\p_3 \wb dx\\
					&+\i (b \cdot \nabla )(\p_3 \wb \cdot \p_3 \wu) dx +\i (b \cdot \nabla )(\p_3 \wu \cdot \p_3 \wb) dx\\
					\lesssim
					&\|\p_3 b_h\|_{L^2}^{\frac12}\|\p_{23} b_h\|_{L^2}^{\frac12}
					\|\nabla_h(\wb, \wu)\|_{L^2}^{\frac12}\|\p_{3} \nabla_h(\wb, \wu)\|_{L^2}^{\frac12}
					\|\p_3 (\wu, \wb)\|_{L^2}^{\frac12}\|\p_{13} (\wu, \wb)\|_{L^2}^{\frac12}\\
					&+\|\p_3 b_3\|_{L^2}^{\frac12}\|\p_{33} b_3\|_{L^2}^{\frac12}
					\|\p_{3} (\wb, \wu)\|_{L^2}
					\|\p_{23} \wb\|_{L^2}^{\frac12}
					\|\p_{13} \wu\|_{L^2}^{\frac12}\\
					\lesssim
					&\sqrt{\me^m(t)}\md^{m}(t).
				\end{aligned}
				\deq
				Direct computation gives
				\beq\label{3306}
				\begin{aligned}
					\nabla(u\cdot \nabla)\times b
					&=\nabla u_1 \times \p_1 b
					+\nabla u_2 \times \p_2 b
					+\nabla u_3 \times \p_3 b,\\
					\nabla(b\cdot \nabla)\times u
					&=\nabla b_1 \times \p_1 u
					+\nabla b_2 \times \p_2 u
					+\nabla b_3 \times \p_3 u,
				\end{aligned}
				\deq
				then we apply conditions $\nabla \cdot u=0$ and $\nabla \cdot b=0$ to get
				\beq\label{3307}
				\begin{aligned}
					\nabla u_3 \times \p_3 b
					&=(\p_2 u_3\p_3 b_3-\p_3 u_3\p_3 b_2,
					\p_3 u_3\p_3 b_1-\p_1 u_3\p_3 b_3,
					\p_1 u_3\p_3 b_2-\p_2 u_3\p_3 b_1)^T\\
					&=(-\p_2 u_3 \nabla_h\cdot b_h+\nabla_h \cdot u_h\p_3 b_2,
					-\nabla_h\cdot u_h\p_3 b_1+\p_1 u_3 \nabla_h\cdot b_h,
					\p_1 u_3\p_3 b_2-\p_2 u_3\p_3 b_1)^T,
				\end{aligned}
				\deq
				and
				\beq\label{3308}
				\begin{aligned}
					\nabla b_3 \times \p_3 u
					&=(\p_2 b_3\p_3 u_3-\p_3 b_3\p_3 u_2,
					\p_3 b_3\p_3 u_1-\p_1 b_3\p_3 u_3,
					\p_1 b_3\p_3 u_2-\p_2 b_3\p_3 u_1)^T\\
					&=(-\p_2 b_3 \nabla_h\cdot u_h+\nabla_h \cdot b_h\p_3 u_2,
					-\nabla_h\cdot b_h\p_3 u_1+\p_1 b_3 \nabla_h\cdot u_h,
					\p_1 b_3\p_3 u_2-\p_2 b_3\p_3 u_1)^T.
				\end{aligned}
				\deq
				Thus, the combination of relations \eqref{3306}-\eqref{3308}
				and anisotropic type inequality \eqref{ie:Sobolev} yields
				\beq\label{3309}
				\i \p_3 (-\nabla(u\cdot \nabla)\times b+\nabla(b\cdot \nabla)\times u)
				\cdot \p_3 \wb dx\lesssim \sqrt{\me^m(t)}\md^{m}(t).
				\deq
				Substituting the estimates \eqref{3303},
				\eqref{3304}, \eqref{3305} and \eqref{3309}
				into \eqref{3302},  we complete the proof of this lemma.
			\end{proof}
			
			Next, we will establish the estimate for the second order
			normal derivative of velocity and magnetic field in higher
			conormal Sobolev space.
			
			\begin{lemm}
				For any smooth solution $(u, b)$ of equation \eqref{eqr},
				it holds for any $|\alpha|\le m-2$
				\beq\label{3401}
				\begin{aligned}
					&\|\p_3 (\wu, \wb)(t)\|_{H^{m-2}_{co}}^2
					+\int_0^t \|(\p_{13} \wu, \nabla_h \p_3 \wb)\|_{H^{m-2}_{co}}^2 d \tau
					+\ep \int_0^t \|(\p_{23}  \wu, \p_{33} \wu, \p_{33} \wb)\|_{H^{m-2}_{co}}^2d \tau \\
					\lesssim
					&\|\p_3 (\wu_0, \wb_0)\|_{H^{m-2}_{co}}^2
					+\int_0^t \sqrt{\me^m(\tau)}\md^{m}(\tau) d\tau
					+\int_0^t \|(\nabla_h u_h, \nabla_h \p_3 u_h)\|_{H^1_{tan}}
					\|\p_3 \wu\|_{H^{m-2}_{co}}^{\frac32}\|\p_{13} Z^\al \wu\|_{L^2}^{\frac12} d\tau.
				\end{aligned}
				\deq
				Furthermore, under the assumption \eqref{assumption}, it holds
				\beq\label{3423}
				\begin{aligned}
					&\|\p_3 (\wu, \wb)(t)\|_{H^{m-2}_{co}}^2
					+\int_0^t \|(\p_{13} \wu, \nabla_h \p_3 \wb)\|_{H^{m-2}_{co}}^2 d \tau
					+\ep \int_0^t \|(\p_{23}  \wu, \p_{33} \wu, \p_{33} \wb)\|_{H^{m-2}_{co}}^2d \tau \\
					\lesssim
					&\|\p_3 (\wu_0, \wb_0)\|_{H^{m-2}_{co}}^2
					+\delta^{\frac32}
					+\int_0^t \sqrt{\me^m(\tau)}\md^{m}(\tau) d\tau.
				\end{aligned}
				\deq
			\end{lemm}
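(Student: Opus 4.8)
The plan is to run a conormal energy estimate on the vorticity system differentiated once more in the normal direction, propagating the $L^2$ estimate \eqref{3301}/\eqref{3302} up to conormal order $m-2$, isolating the single contribution that cannot be absorbed, and treating it by an anisotropic interpolation exactly in the spirit of \eqref{201}. Concretely, for each $|\alpha|\le m-2$ I apply $\p_3$ and then $Z^\alpha$ to \eqref{eqwu}--\eqref{eqwb}, pair the two resulting equations with $Z^\alpha\p_3\wu$ and $Z^\alpha\p_3\wb$, integrate over $\mathbb{R}^3_+$, and sum over $\alpha$. Because $\varphi=\varphi(x_3)$ the operators $\p_1,\p_2$ commute with every $Z_k$ and with $\p_3$, so horizontal diffusion produces the clean good terms $\|(\p_{13}\wu,\nabla_h\p_3\wb)\|_{H^{m-2}_{co}}^2$ with no commutator, and the $\ep\p_3^2$ diffusion produces $\ep\|(\p_{23}\wu,\p_{33}\wu,\p_{33}\wb)\|_{H^{m-2}_{co}}^2$ up to the commutators $[\p_3^2,Z^\alpha],[\p_3,Z^\alpha]$, which are of strictly lower conormal order and absorbable; the linear coupling terms $\p_2\wb,\p_2\wu$ still cancel between the two equations after integrating by parts in $x_2$. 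For the boundary: the only boundary integrals on $\{x_3=0\}$ come from integrating by parts in the genuine normal variable in the $\ep\p_3^2$ terms and in the transport pieces $u_3\p_3,b_3\p_3$; the $Z_3=\varphi\p_3$ direction contributes none since $\varphi(0)=0$, and the transport boundary terms vanish because $u_3|_{x_3=0}=b_3|_{x_3=0}=0$. For the diffusion boundary terms I argue as in the derivation of \eqref{3302}: restricting the horizontal components of \eqref{eqwu}--\eqref{eqwb} to $x_3=0$ and using $\wu_h|_{x_3=0}=\wb_h|_{x_3=0}=0$ together with \eqref{bd01}--\eqref{bd02} forces $\p_{33}\wu_h|_{x_3=0}=\p_{33}\wb_h|_{x_3=0}=0$, which with $\p_3\wu_3|_{x_3=0}=\p_3\wb_3|_{x_3=0}=0$ from \eqref{bd-wuwb} (all preserved by $Z^\alpha$) kills every boundary term.

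For the nonlinear terms, expanding the convective and magnetic--stretching terms by Leibniz, the pure--transport top--order pieces integrate to zero by $\nabla\cdot u=\nabla\cdot b=0$, the $b$--transport pieces linking the two equations cancel, and the remaining commutators are estimated with the anisotropic Sobolev inequality \eqref{ie:Sobolev}, exactly as in Lemmas \ref{lemma32}--\ref{lemma33}, by $\sqrt{\me^m(t)}\md^m(t)$. The only term that does not close this way is the contribution of $u_2\p_2\wu\subset u\cdot\nabla\wu$ at top conormal order, $\sum_{|\alpha|=m-2}\i\p_2 u_2\,(Z^\alpha\p_3\wu)\cdot(\p_3 Z^\alpha\wu)\,dx$, because $\p_3\wu$ at order $m-2$ carries no dedicated term in $\md^m$. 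For it I integrate by parts once and use $\|g\|_{L^\infty_{x_1}L^2_{x_2,x_3}}\lesssim\|g\|_{L^2}^{1/2}\|\p_1 g\|_{L^2}^{1/2}$ on $\p_3 Z^\alpha\wu$ (and on $Z^\alpha\p_3\wu$) together with $\|\p_2 u_2\|_{L^2_{x_1}L^\infty_{x_2,x_3}}\lesssim\|(\nabla_h u_h,\nabla_h\p_3 u_h)\|_{H^1_{tan}}$ for the coefficient, which yields the bound $\|(\nabla_h u_h,\nabla_h\p_3 u_h)\|_{H^1_{tan}}\|\p_3\wu\|_{H^{m-2}_{co}}^{3/2}\|\p_{13}Z^\alpha\wu\|_{L^2}^{1/2}$. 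Collecting everything, integrating in time, and using $\|\p_3 Z^\alpha\wu\|_{L^2}\lesssim\|\p_3\wu\|_{H^{m-2}_{co}}$ up to absorbable commutators, I obtain \eqref{3401}.

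To pass from \eqref{3401} to \eqref{3423} it remains to bound the last term of \eqref{3401} by $\delta^{3/2}$ under \eqref{assumption}. Splitting it by the H\"older inequality in time as in \eqref{201},
\beqq
\begin{aligned}
&\int_0^t\|(\nabla_h u_h,\nabla_h\p_3 u_h)\|_{H^1_{tan}}\|\p_3\wu\|_{H^{m-2}_{co}}^{\frac32}\|\p_{13}Z^\alpha\wu\|_{L^2}^{\frac12}\,d\tau\\
\lesssim\ &\Big(\sup_{0\le\tau\le t}\|\p_3\wu\|_{H^{m-2}_{co}}\Big)^{\frac32}
\Big(\int_0^t(1+\tau)^{\sigma}\|(\nabla_h u_h,\nabla_h\p_3 u_h)\|_{H^1_{tan}}^2\,d\tau\Big)^{\frac12}\\
&\times\Big(\int_0^t\|\p_{13}Z^\alpha\wu\|_{L^2}^2\,d\tau\Big)^{\frac14}
\Big(\int_0^t(1+\tau)^{-2\sigma}\,d\tau\Big)^{\frac14}.
\end{aligned}
\deqq
For $m\ge4$ one has $\|(\nabla_h u_h,\nabla_h\p_3 u_h)\|_{H^1_{tan}}^2\lesssim\md_{tan}^{m-1}$ (since $\p_3 u_h$ is a combination of $\wu$ and tangential derivatives of $u$, and all the tangential indices involved are $\ge1$), while $\|\p_3\wu\|_{H^{m-2}_{co}}^2\le\mathcal{E}^m$ and $\|\p_{13}Z^\alpha\wu\|_{L^2}^2\le\md^m$. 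Hence by \eqref{assumption} the four factors are bounded respectively by $\delta^{3/4}$, $\delta^{1/2}$, $\delta^{1/4}$ and, since $2\sigma>9/5>1$, by a constant; their product is $\lesssim\delta^{3/2}$, which plugged into \eqref{3401} gives \eqref{3423}.

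The hard part is the term $\i\p_2 u_2\,(Z^\alpha\p_3\wu)\cdot(\p_3 Z^\alpha\wu)\,dx$: it encodes the weak $x_2$ dissipation of the velocity and cannot be absorbed into $\md^m$, because $\p_3\wu$ at the top conormal order $m-2$ is controlled only by the energy. One must peel off exactly one half power of a genuinely dissipated quantity ($\p_{13}Z^\alpha\wu$) while loading all the remaining factors onto tangentially--dissipated or time--decaying quantities; this is precisely why the leftover $\delta^{3/2}$ in \eqref{3423} is not absorbable at this stage and later has to be compensated by the decay of the tangential energy.
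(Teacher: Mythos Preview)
Your plan is essentially the paper's own proof, and the passage from \eqref{3401} to \eqref{3423} via the time-weighted H\"older splitting is exactly the paper's argument \eqref{3425}. Two points deserve correction.

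First, the problematic integral $\i\p_2 u_2\,Z^{\alpha}\p_3\wu\cdot\p_3 Z^{\alpha}\wu\,dx$ does \emph{not} originate from the $u_2\p_2\wu$ piece of $u\cdot\nabla\wu$. The $u_h\cdot\nabla_h\wu$ commutators all close as $\sqrt{\me^m}\md^m$ because every one of them carries a horizontal derivative on $\wu$. The bad term actually arises from the $u_3\p_3\wu$ piece: after the top-order transport cancellation, the commutator leaves $\p_3 u_3\,Z^{\alpha}\p_3\wu\cdot\p_3 Z^{\alpha}\wu$, and only upon writing $\p_3 u_3=-\p_1 u_1-\p_2 u_2$ does the $\p_2 u_2$ contribution appear (the $\p_1 u_1$ part closes by integrating by parts in $x_1$). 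A second, independent source with the same structure is the vortex-stretching term $\wu\cdot\nabla u$: its $\wu_3\p_3 u$ piece, with $\wu_3=\p_1 u_2-\p_2 u_1$, contributes an analogous $\p_2 u_1$ term. Both feed into the same extra term on the right of \eqref{3401}, so your final estimate is unaffected, but you should track these as the genuine sources when writing out the details; otherwise you will believe the $u_3\p_3\wu$ commutators are harmless and miss them.

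Second, the $\ep[\p_3^2,Z^\alpha]$ commutators are not ``absorbable'' into the current level: they produce $\ep\|\p_{33}(\wu,\wb)\|_{H^{|\alpha|-1}_{co}}^2$, one conormal order below what the $|\alpha|$-estimate itself controls. The paper closes this by induction on $|\alpha|$, with base case \eqref{3301}; you should say so explicitly.
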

			\begin{proof}
				For any $|\alpha|\le m-2$, the equations \eqref{eqwu}
				and \eqref{eqwb} yield directly
				\beq\label{3402}
				\begin{aligned}
					&\frac{d}{dt}\frac12 \i (|\p_3 Z^\al \wu|^2+|\p_3 Z^\al \wb|^2)dx
					+\i(|\p_{13} Z^\al \wu|^2+|\nabla_h \p_3 Z^\al \wb|^2)dx\\
					&+\ep \i(|\p_{23} Z^\al \wu|^2+|\p_{33} Z^\al \wu|^2+|\p_{33} Z^\al \wb|^2)dx\\
					=&\i \p_3Z^\al(-u\cdot \nabla \wu)\cdot \p_3 Z^\al \wu dx
					+\i \p_3Z^\al(\wu\cdot \nabla u)\cdot \p_3 Z^\al \wu dx\\
					&+\i \p_3Z^\al(b \cdot \nabla \wb)\cdot \p_3 Z^\al \wu dx
					+\i \p_3Z^\al(-\wb\cdot \nabla b)\cdot \p_3 Z^\al \wu dx\\
					&+\i \p_3 Z^\al(-u\cdot \nabla \wb)\cdot \p_3 Z^\al \wb dx
					+\i \p_3 Z^\al(-\nabla(u\cdot \nabla)\times b)\cdot \p_3 Z^\al \wb dx\\
					&+\i \p_3 Z^\al(b\cdot \nabla \wu)\cdot \p_3 Z^\al \wb dx
					+\i \p_3 Z^\al(\nabla(b\cdot \nabla)\times u)\cdot \p_3 Z^\al \wb dx\\
					&+(-\ep) \i Z^\al \p_{33} \wu \cdot \p_{33} Z^\al \wu dx
					+(-\ep) \i Z^\al \p_{33} \wb \cdot \p_{33} Z^\al \wb dx
					:=\sum_{i=1}^{10}II_i,
				\end{aligned}
				\deq
				where we have used the basic fact that
				\beqq
				\begin{aligned}
					&\i Z^\al \p_2 \wb \cdot \p_{33} Z^\al \wu dx
					+\i Z^\al \p_2 \wu \cdot \p_{33} Z^\al \wb dx \\
					=&-\i \p_{23} Z^\al  \wb \cdot \p_3  Z^\al \wu dx
					-\i \p_{23} Z^\al  \wu \cdot \p_3 Z^\al \wb dx=0.
				\end{aligned}
				\deqq
				Then, we can establish the following estimates
				\beq\label{claim-estimate}
				\begin{aligned}
					&II_1, II_2
					\lesssim
					\sqrt{\me^m(t)}\md^{m}(t)
					+\|(\nabla_h u_h, \nabla_h \p_3 u_h)\|_{H^1_{tan}}
					\|\p_3 \wu\|_{H^{m-2}_{co}}^{\frac32}\|\p_{13} Z^\al \wu\|_{L^2}^{\frac12},\\
					&II_3+II_6, II_4, II_5, II_7, II_8 \lesssim \sqrt{\me^m(t)}\md^{m}(t),\\
					&II_9 +\ep \i |\p_{33} Z^\al \wu|^2 dx \lesssim
					\ep (\|\p_3 \wu\|_{H^{|\alpha|-1}_{co}}^2+\|\p_{33} \wu\|_{H^{|\alpha|-1}_{co}}^2),\\
					&II_{10} +\ep \i |\p_{33} Z^\al \wb|^2 dx \lesssim
					\ep (\|\p_3 \wb\|_{H^{|\alpha|-1}_{co}}^2+\|\p_{33} \wb\|_{H^{|\alpha|-1}_{co}}^2).
				\end{aligned}
				\deq
				The proof of estimate \eqref{claim-estimate} is complicated and long,
				the proof in detail can be found in Appendix \ref{claim-estimates}.
				Substituting \eqref{claim-estimate} into \eqref{3402}
				and using \eqref{3301} after integrating over $[0, t]$,
				we can apply the induction to obtain
				\beq\label{3424}
				\begin{aligned}
					&\|\p_3 (\wu, \wb)(t)\|_{H^{m-2}_{co}}^2
					+\int_0^t \|(\p_{13} \wu, \nabla_h \p_3 \wb)\|_{H^{m-2}_{co}}^2 d \tau
					+\ep \int_0^t \|(\p_{23}  \wu, \p_{33} \wu, \p_{33} \wb)\|_{H^{m-2}_{co}}^2d \tau \\
					\lesssim
					&\|\p_3 (\wu_0, \wb_0)\|_{H^{m-2}_{co}}^2
					+\!\!\int_0^t \!\sqrt{\me^m(\tau)}\md^{m}(\tau) d\tau
					+\!\!\int_0^t \!\|(\nabla_h u_h, \nabla_h \p_3 u_h)\|_{H^1_{tan}}
					\|\p_3 \wu\|_{H^{m-2}_{co}}^{\frac32}\|\p_{13} Z^\al \wu\|_{L^2}^{\frac12} d\tau.
				\end{aligned}
				\deq
				Finally, under the assumption estimate \eqref{assumption}, we can obtain
				\beq\label{3425}
				\begin{aligned}
					&\int_0^t  \|(\nabla_h u_h, \nabla_h \p_3 u_h)\|_{H^1_{tan}}
					\|\p_3 \wu\|_{H^{m-2}_{co}}^{\frac32}\|\p_{13} Z^\al \wu\|_{L^2}^{\frac12} d\tau\\
					\lesssim
					&(\underset{0\le \tau \le t}{\sup}\|\p_3 \wu\|_{H^{m-2}_{co}}^2)^{\frac34}
					\left\{\int_0^t \|(\nabla_h u_h, \nabla_h \p_3 u_h)\|_{H^1_{tan}}^2
					(1+\tau)^{\sigma}d\tau\right\}^\frac{1}{2}\\
					&\times \left\{\int_0^t \|\p_{13} Z^\al \wu\|_{L^2}^2 d\tau\right\}^{\frac{1}{4}}
					\left\{\int_0^t (1+\tau)^{-2\sigma}d\tau\right\}^{\frac{1}{4}}
					\lesssim
					\delta^{\frac{3}{2}}.
				\end{aligned}
				\deq
				The combination of estimates \eqref{3424} and \eqref{3425} implies
				the estimates \eqref{3401} and \eqref{3423}.
				Therefore, we complete the proof of this lemma.
			\end{proof}
			
			\subsection{Enhanced dissipative estimate of velocity field}
			
			In this subsection, we will establish the dissipative
			estimate of velocity field in the $x_2$ direction.
			This enhanced dissipative estimate arises from the
			good stability effect of magnetic field near a
			background magnetic field.
			\begin{lemm}
				For any smooth solution $(u, b)$ of equation \eqref{eqr},
				it holds for all $|\ah|=k \le m-1$
				\beq\label{3501}
				\begin{aligned}
					&
					\frac{d}{dt}\i \p_2 Z^{\ah}b \cdot  Z^{\ah}u \ dx
					+\frac12\i |\p_2 Z^{\ah} u|^2dx\\
					\lesssim
					&\|\nabla_h b\|_{H^{k+1}_{tan}}^2+\|\p_1 u\|_{H^{k}_{tan}}^2
					+\ep \|(\p_3 Z^{\ah} b,\p_{23} Z^{\ah} u)\|_{L^2}^2
					+\sqrt{\me^m(t)}\md_{tan}^{k+1}(t).
				\end{aligned}
				\deq
			\end{lemm}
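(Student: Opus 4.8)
Write $u^\alpha:=Z^{\alpha_h}u$ and $b^\alpha:=Z^{\alpha_h}b$. Since $|\alpha_h|=k$ and $\alpha_h=(\alpha_1,\alpha_2,0)$, the operator $Z^{\alpha_h}=\partial_1^{\alpha_1}\partial_2^{\alpha_2}$ is purely horizontal: it commutes with every differential operator in \eqref{eqr}, preserves $\nabla\cdot u^\alpha=\nabla\cdot b^\alpha=0$, and, by $\eqref{eqr}_4$, inherits $\partial_3 u^\alpha_h=\partial_3 b^\alpha_h=0$, $u^\alpha_3=b^\alpha_3=0$ (hence also $\partial_j u^\alpha_3=\partial_j b^\alpha_3=0$ for $j=1,2$) on $x_3=0$. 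I would differentiate the cross quantity $\int_{\mathbb{R}^3_+}\partial_2 b^\alpha\cdot u^\alpha\,dx$ in time and, for $\partial_t u^\alpha$ and $\partial_t b^\alpha$, insert the $Z^{\alpha_h}$-differentiated equations $\eqref{eqr}_1$ and $\eqref{eqr}_2$. The single decisive term is the background forcing $\partial_2 u$ in $\eqref{eqr}_2$: once the extra $\partial_2$ from the cross quantity hits it, it produces $\int_{\mathbb{R}^3_+}\partial_2^2 u^\alpha\cdot u^\alpha\,dx=-\|\partial_2 u^\alpha\|_{L^2}^2$, which is the enhanced dissipation generated by the magnetic background. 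Everything else must then be shown to be of the size claimed on the right of \eqref{3501}.

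\textbf{Linear remainders.} The pressure gradient $\nabla Z^{\alpha_h}p$ tested against $\partial_2 b^\alpha$ integrates by parts to a bulk term proportional to $\partial_2(\nabla\cdot b^\alpha)$ and a boundary term proportional to $\partial_2 b^\alpha_3|_{x_3=0}$, both of which vanish. The dissipation terms $\partial_1^2 u^\alpha$ from $\eqref{eqr}_1$ and $\Delta_h b^\alpha$ from $\eqref{eqr}_2$ I would move onto the other factor by integrating by parts tangentially; using $\|(\partial_1\partial_2 b^\alpha,\partial_2^2 b^\alpha)\|_{L^2}\lesssim\|\nabla_h b\|_{H^{k+1}_{tan}}$, $\|\partial_1 u^\alpha\|_{L^2}\le\|\partial_1 u\|_{H^k_{tan}}$, and Young's inequality, these are controlled by $\|\nabla_h b\|_{H^{k+1}_{tan}}^2+\|\partial_1 u\|_{H^k_{tan}}^2$ plus a small multiple of $\|\partial_2 u^\alpha\|_{L^2}^2$. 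The $\ep$-terms $\ep\partial_2^2 u^\alpha$, $\ep\partial_3^2 u^\alpha$ from $\eqref{eqr}_1$ and $\ep\partial_3^2 b^\alpha$ from $\eqref{eqr}_2$ I would integrate by parts in $x_2$ and/or $x_3$; every resulting boundary integral is a product in which one trace among $u^\alpha_3$, $b^\alpha_3$, $\partial_3 u^\alpha_h$, $\partial_3 b^\alpha_h$, $\partial_2 b^\alpha_3$, $\partial_{23}u^\alpha_3,\dots$ vanishes, so all boundary terms drop, and after reshuffling derivatives the bulk parts take the form $\ep\int\partial_3 b^\alpha\cdot\partial_{23}u^\alpha\,dx$ or $\ep\int\partial_2^2 b^\alpha\cdot\partial_2 u^\alpha\,dx$; since $\ep<1$, Young's inequality bounds them by $\ep\|(\partial_3 b^\alpha,\partial_{23}u^\alpha)\|_{L^2}^2+\|\nabla_h b\|_{H^{k+1}_{tan}}^2$ plus a small multiple of $\|\partial_2 u^\alpha\|_{L^2}^2$. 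The remaining background term $\partial_2 b$ from $\eqref{eqr}_1$, tested against $\partial_2 b^\alpha$, gives $\|\partial_2 b^\alpha\|_{L^2}^2\lesssim\|\nabla_h b\|_{H^{k+1}_{tan}}^2$.

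\textbf{Nonlinear remainders, and the main difficulty.} The commutators $Z^{\alpha_h}(b\cdot\nabla u)$, $Z^{\alpha_h}(u\cdot\nabla b)$ (from $\eqref{eqr}_2$, tested against $\partial_2 u^\alpha$) and $Z^{\alpha_h}(b\cdot\nabla b)$, $Z^{\alpha_h}(u\cdot\nabla u)$ (from $\eqref{eqr}_1$, tested against $\partial_2 b^\alpha$) I would expand by Leibniz. The top-order pieces, where all copies of $Z^{\alpha_h}$ land on the advected field, either cancel by themselves: e.g. $\int_{\mathbb{R}^3_+}(b\cdot\nabla u^\alpha)\cdot\partial_2 u^\alpha\,dx$ equals, after moving the $\partial_2$ and commuting $b\cdot\nabla$ past $u^\alpha$ (licit since $\nabla\cdot b=0$ and $b_3|_{x_3=0}=0$), $-\tfrac14\int\partial_2 b\cdot\nabla|u^\alpha|^2\,dx=\tfrac14\int\partial_2(\nabla\cdot b)\,|u^\alpha|^2\,dx=0$, and likewise $\int(b\cdot\nabla b^\alpha)\cdot\partial_2 b^\alpha\,dx=0$; or they cancel in pairs, the $u\cdot\nabla b$ and $u\cdot\nabla u$ top-order pieces together collapsing, by the same manipulation, to $\int(\partial_2 u\cdot\nabla u^\alpha)\cdot b^\alpha\,dx$, which has shed a derivative off the top order. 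Each surviving piece then carries at least one derivative on a coefficient field, and I would bound it by $\sqrt{\me^m(t)}\,\md_{tan}^{k+1}(t)$ exactly as in the proofs of Lemmas \ref{lemma32} and \ref{lemma33}: split $b\cdot\nabla=b_h\cdot\nabla_h+b_3\partial_3$ (and similarly for $u$), use the Hardy-type bounds $|u_3|,|b_3|\lesssim\varphi(x_3)$ near $x_3=0$ to convert $u_3\partial_3$, $b_3\partial_3$ into conormal derivatives $Z_3$, and apply the anisotropic Sobolev inequality \eqref{ie:Sobolev} with the directional derivatives distributed so that $\partial_1 u$ (or $\nabla_h b$) together with the pairing factor $\partial_2 u^\alpha$ (or $\partial_2 b^\alpha$) carry the two units of $\md_{tan}^{k+1}$ while a low-order $L^\infty$ norm carries $\sqrt{\me^m}$. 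Collecting everything and absorbing the accumulated small multiples of $\|\partial_2 u^\alpha\|_{L^2}^2$ into the gain $-\|\partial_2 u^\alpha\|_{L^2}^2$ leaves the factor $\tfrac12$ and yields \eqref{3501}. I expect the nonlinear step to be the main obstacle: one must both recognize the cancellations of the top-order pieces and match each surviving term precisely to $\sqrt{\me^m}\,\md_{tan}^{k+1}$, which forces a careful directional split in \eqref{ie:Sobolev} and careful use of the conormal structure at the boundary; checking that the boundary terms produced by the $\ep$-integrations by parts all vanish is routine but must still be carried out in full.
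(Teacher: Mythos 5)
Your overall algebraic route is the same as the paper's: Lemma 3.5 is proved by combining the two equations so that the background term $\partial_2 u$ in $\eqref{eqr}_2$ produces $-\|\partial_2 Z^{\alpha_h}u\|_{L^2}^2$ after one integration by parts in $x_2$, and by showing everything else is of the advertised size. Differentiating $\int\partial_2 Z^{\alpha_h}b\cdot Z^{\alpha_h}u\,dx$ in time and inserting both equations is identical (after an $x_2$-integration by parts) to the paper's procedure of writing $\|\partial_2 Z^{\alpha_h}u\|^2_{L^2}$ from $\eqref{eqr}_2$ and then trading the $\partial_t Z^{\alpha_h}b$ term through $\eqref{eqr}_1$, as in the display \eqref{3504}. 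The linear remainders are handled the same way. So the structure is sound.

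However, there is a genuine gap in your treatment of the top-order nonlinear pieces: the claimed self-cancellations
\[
\i (b\cdot\nabla Z^{\alpha_h}u)\cdot\partial_2 Z^{\alpha_h}u\,dx=0,
\qquad
\i (b\cdot\nabla Z^{\alpha_h}b)\cdot\partial_2 Z^{\alpha_h}b\,dx=0
\]
are false. Following your own manipulation — integrate by parts in $x_2$, expand $\partial_2(b\cdot\nabla f)$ by Leibniz, and integrate the $b\cdot\nabla\partial_2 f$ piece back by $\nabla\cdot b=0$ and $b_3|_{x_3=0}=0$ — one obtains only the tautology $\tfrac12\int\partial_2 b\cdot\nabla|f|^2\,dx=0$; the quantity $\int(b\cdot\nabla f)\cdot\partial_2 f\,dx$ reappears unchanged and is not determined. (A concrete check: $b=(x_2,-x_1,0)$, $f=g(x_1,x_2)$ gives $-\int x_1(\partial_2 g)^2\,dx$, which is generically nonzero.) The cancellation you have in mind is the one used in Lemma \ref{lemma33}, where $Z^{\alpha_h}u$ is paired with $Z^{\alpha_h}b$ under $b\cdot\nabla$; that antisymmetric structure disappears once the test function carries an extra $\partial_2$, which is precisely the new ingredient in Lemma 3.5. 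By contrast, your paired argument for the $u\cdot\nabla b$ and $u\cdot\nabla u$ top orders does hold: the commutator does shed a $\partial_2$ off the top order, leaving $\int(\partial_2 u\cdot\nabla u^\alpha)\cdot b^\alpha\,dx$, as you say.

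The fix — and what the paper actually does in \eqref{3507} — is to bound the top-order $b\cdot\nabla$ pieces directly, without any cancellation, by the anisotropic Sobolev inequality \eqref{ie:Sobolev}, distributing the dissipative derivatives so that $\partial_1 Z^{\alpha_h}\nabla u$ (from $\|\partial_1 w^u\|_{H^k_{tan}}$) and $\partial_2 Z^{\alpha_h}u$ each carry one half-power of $\mathcal{D}^{k+1}_{tan}$, with the lower-order $b$-factors absorbed into $\sqrt{\mathcal{E}^m}$ and one extra $\mathcal{D}^{1/2}$ coming from $\partial_2 b$ and $\partial_{23}b$. Once you do this for the $\beta_h=0$ terms rather than invoking the (false) cancellation, the argument closes exactly as you describe.
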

			\begin{proof}
				For any $|\ah|=k\le m-1$, then the equation $\eqref{eqr}_2$ yields directly
				\beq\label{3502}
				\begin{aligned}
					\i |\p_2 Z^{\ah} u|^2dx
					=&\i \p_t Z^{\ah} b \cdot \p_2 Z^{\ah} u\  dx
					-\i (\Delta_h Z^{\ah} b +\ep \p_{33} Z^{\ah} b)\cdot \p_2 Z^{\ah} u\ dx\\
					&+\i Z^{\ah}(u\cdot \nabla b-b\cdot \nabla u)\cdot \p_2 Z^{\ah} u\ dx.
				\end{aligned}
				\deq
				On the other hand, using the velocity equation $\eqref{eqr}_1$, we have
				\beq\label{3503}
				\begin{aligned}
					\i \p_t Z^{\ah}b \cdot \p_2 Z^{\ah}u \ dx
					=&\frac{d}{dt}\i Z^{\ah}b \cdot \p_2 Z^{\ah}u \ dx
					-\i Z^{\ah}b \cdot \p_2 Z^{\ah} \p_t u \ dx\\
					=&\frac{d}{dt}\i Z^{\ah}b \cdot \p_2 Z^{\ah}u \ dx
					+\i |\p_2 Z^{\ah} b|^2 dx\\
					&-\i Z^{\ah}(u\cdot \nabla u -\p_1^2 u-\ep \p_2^2 u-\ep \p_3^2 u
					+\nabla p-b\cdot \nabla b)\p_2 Z^{\ah}b \ dx.
				\end{aligned}
				\deq
				The combination of equations \eqref{3502} and \eqref{3503}  yields directly
				\beq\label{3504}
				\begin{aligned}
					\i |\p_2 Z^{\ah} u|^2dx
					=&\frac{d}{dt}\i Z^{\ah}b \cdot \p_2 Z^{\ah}u \ dx
					+\i |\p_2 Z^{\ah} b|^2 dx\\
					&-\i (\Delta_h Z^{\ah} b +\ep \p_{33} Z^{\ah} b)\cdot \p_2 Z^{\ah} u\ dx
					+\i Z^{\ah}(u\cdot \nabla b-b\cdot \nabla u)\cdot \p_2 Z^{\ah} u\ dx\\
					&-\i Z^{\ah} (u\cdot \nabla u -\p_1^2 u-\ep \p_2^2 u-\ep \p_3^2 u
					+\nabla p-b\cdot \nabla b)\p_2 Z^{\ah}b \ dx.
				\end{aligned}
				\deq
				Using Cauchy inequality and condition $\nabla \cdot b=0$, we have
				\beq\label{3505}
				\begin{aligned}
					&|\i (\Delta_h Z^{\ah} b +\ep \p_3^2 Z^{\ah} b)\cdot \p_2 Z^{\ah} u\ dx|\\
					\le &\frac{1}{4}\|\p_2 Z^{\ah} u\|_{L^2}^2
					+\|\Delta_h Z^{\ah} b\|_{L^2}^2
					+\ep \|(\p_3 Z^{\ah} b, \p_3 \p_2 Z^{\ah} u)\|_{L^2}^2,
				\end{aligned}
				\deq
				and
				\beq\label{3506}
				\begin{aligned}
					&|\i Z^{\ah} (\p_1^2 u+\ep \p_2^2 u+\ep \p_3^2 u-\nabla p)\p_2 Z^{\ah}b \ dx|\\
					\le &\|\p_1 Z^{\ah} u\|_{L^2}\|\p_{12} Z^{\ah}b\|_{L^2}
					+\ep\|\p_2 Z^{\ah} u\|_{L^2}\|\p_{22} Z^{\ah}b\|_{L^2}\\
					&+\ep\|(\p_{23} Z^{\ah} u, \p_3 Z^{\ah}b)\|_{L^2}^2.
				\end{aligned}
				\deq
				Using the anisotropic type inequality \eqref{ie:Sobolev}, it holds
				\beq\label{3507}
				\begin{aligned}
					&-\i Z^{\ah}(b\cdot \nabla u)\cdot \p_2 Z^{\ah} u\ dx\\
					=&-\sum_{0\le \beta_h \le \alpha_h}C^{\beta_h}_{\alpha_h}
					\i Z^{\beta_h} b \cdot Z^{\alpha_h-\beta_h}\nabla u
					\cdot \p_2 Z^{\ah}u \ dx\\
					\lesssim
					&\|Z^{\beta_h} b\|_{L^2}^{\frac14}
					\|\p_3 Z^{\beta_h} b\|_{L^2}^{\frac14}
					\|\p_{2} Z^{\beta_h} b\|_{L^2}^{\frac14}
					\|\p_{23} Z^{\beta_h} b\|_{L^2}^{\frac14}
					\|Z^{\alpha_h-\beta_h}\nabla u\|_{L^2}^{\frac12}\\
					&\times\|\p_1 Z^{\alpha_h-\beta_h}\nabla u\|_{L^2}^{\frac12}
					\|\p_2 Z^{\ah}u\|_{L^2}\\
					\lesssim
					&\sqrt{\me^m(t)}\md_{tan}^{k+1}(t).
				\end{aligned}
				\deq
				Similarly, it is easy to check that
				\beq\label{3508}
				\begin{aligned}
					|\i Z^{\ah}(u\cdot \nabla b)\cdot \p_2 Z^{\ah} u\ dx|,
					|\i Z^{\ah} (u\cdot \nabla u -b\cdot \nabla b)\p_2 Z^{\ah}b \ dx|
					\lesssim \sqrt{\me^m(t)}\md_{tan}^{k+1}(t).
				\end{aligned}
				\deq
				Thus, substituting the estimates \eqref{3505},\eqref{3506},
				\eqref{3507} and \eqref{3508} into \eqref{3504}, we can write
				\beqq
				\begin{aligned}
					&\frac{d}{dt}\i \p_2 Z^{\ah}b \cdot  Z^{\ah}u \ dx
					+\frac12\i |\p_2 Z^{\ah} u|^2dx\\
					\lesssim
					&\|\nabla_h b\|_{H^{k+1}_{tan}}^2+\|\p_1 u\|_{H^{k}_{tan}}^2
					+\ep \|(\p_3 Z^{\ah} b,\p_{23} Z^{\ah} u)\|_{L^2}^2
					+\sqrt{\me^m(t)}\md_{tan}^{k+1}(t).
				\end{aligned}
				\deqq
				Therefore, we complete the proof of this lemma.
			\end{proof}
			
			Next, we establish the dissipative estimate for normal derivative of
			velocity in the $x_2$ direction as follows.
			\begin{lemm}
				For any smooth solution $(u, b)$ of equation \eqref{eqr},
				it holds for all $|\ah|=k\le m-3$
				\beq\label{3601}
				\begin{aligned}
					&\frac{d}{dt}\i \p_2 Z^{\ah} \wb \cdot Z^{\ah} \wu dx
					+\|\p_2 Z^{\ah} \wu\|_{L^2}^2 \\
					\lesssim &\|(\nabla_h \wb, \nabla_h^2 \wb,\p_{11} \wu)\|_{H^{k}_{tan}}^2
					+\ep \|(\p_{22} \wu,\p_{23} \wu,  \p_3  \wb)\|_{H^{k}_{tan}}^2
					+\sqrt{\me^m(t)}\md_{tan}^{k+2}(t),
				\end{aligned}
				\deq
				and for $|\ah|=m-2$, it holds
				\beq\label{3613}
				\begin{aligned}
					&\frac{d}{dt}\i \p_2 Z^{\ah} \wb \cdot Z^{\ah} \wu dx
					+\|\p_2 Z^{\ah} \wu\|_{L^2}^2\\
					\lesssim &\|(\nabla_h \wb, \nabla_h^2 \wb,\p_{11} \wu)\|_{H^{m-2}_{tan}}^2
					+\ep \|(\p_{22} \wu,\p_{23} \wu,  \p_3  \wb)\|_{H^{m-2}_{tan}}^2
					+\sqrt{\me^m(t)}\md^{m}(t).
				\end{aligned}
				\deq
			\end{lemm}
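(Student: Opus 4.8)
The plan is to mimic exactly the derivation of the previous lemma (the inequality \eqref{3501}) but now applied to the vorticity equations \eqref{eqwu}--\eqref{eqwb} rather than to \eqref{eqr} itself, and to handle the two cases $|\alpha_h|=k\le m-3$ and $|\alpha_h|=m-2$ in a uniform way, the only difference between them being whether the nonlinear remainder is absorbed into $\md_{tan}^{k+2}$ (when there is enough conormal room) or into $\md^{m}$ (when the top-order index forces us to use a normal derivative in the interpolation). First I would take $Z^{\alpha_h}$ of \eqref{eqwb}, multiply by $\p_2 Z^{\alpha_h}\wu$ and integrate over $\mathbb{R}^3_+$, which expresses $\|\p_2 Z^{\alpha_h}\wu\|_{L^2}^2$ as the sum of $\i \p_t Z^{\alpha_h}\wb\cdot\p_2 Z^{\alpha_h}\wu\,dx$, the dissipative terms $-\i(\Delta_h Z^{\alpha_h}\wb+\ep\p_{33}Z^{\alpha_h}\wb)\cdot\p_2 Z^{\alpha_h}\wu\,dx$, and the commutator/nonlinear contributions coming from $-u\cdot\nabla\wb-\nabla(u\cdot\nabla)\times b+b\cdot\nabla\wu+\nabla(b\cdot\nabla)\times u$ paired with $\p_2 Z^{\alpha_h}\wu$.

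Next, exactly as in \eqref{3503}, I would convert the time-derivative term into an exact derivative by writing $\i\p_t Z^{\alpha_h}\wb\cdot\p_2 Z^{\alpha_h}\wu\,dx=\frac{d}{dt}\i Z^{\alpha_h}\wb\cdot\p_2 Z^{\alpha_h}\wu\,dx-\i Z^{\alpha_h}\wb\cdot\p_2 Z^{\alpha_h}\p_t\wu\,dx$ and substituting the $\wu$-equation \eqref{eqwu} for $\p_t\wu$. The quadratic-in-$\p_2\wb$ term produced this way is exactly cancelled against the $\i\p_2 Z^{\alpha_h}\wb\cdot\p_2 Z^{\alpha_h}\wu$-type contribution (using integration by parts and the fact that $\Lambda_h$-type cancellations and the boundary conditions \eqref{bd-wuwb}, \eqref{bd01}, \eqref{bd02} make all boundary integrals vanish), so the surviving linear terms are $\|\p_1 Z^{\alpha_h}\wu\|_{L^2}\|\p_{12}Z^{\alpha_h}\wb\|_{L^2}$, the horizontal Laplacian terms $\|\nabla_h^2 Z^{\alpha_h}\wb\|_{L^2}^2$, and the small viscous terms $\ep\|(\p_{22}Z^{\alpha_h}\wu,\p_{23}Z^{\alpha_h}\wu,\p_3 Z^{\alpha_h}\wb)\|_{L^2}^2$, which together account for the linear part of the right-hand side of \eqref{3601}. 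All nonlinear terms — $Z^{\alpha_h}(u\cdot\nabla\wb)$, $Z^{\alpha_h}(b\cdot\nabla\wu)$, $Z^{\alpha_h}(\wu\cdot\nabla u)$, $Z^{\alpha_h}(\wb\cdot\nabla b)$ and the $\nabla(u\cdot\nabla)\times b$, $\nabla(b\cdot\nabla)\times u$ remainders, the latter rewritten via the divergence-free identities \eqref{3307}--\eqref{3308} so that every third-component normal derivative is traded for a horizontal derivative — are then estimated by the anisotropic Sobolev inequality \eqref{ie:Sobolev} with the Leibniz distribution of $Z^{\alpha_h}$, each product ending up bounded by $\sqrt{\me^m(t)}$ times an appropriate piece of $\md_{tan}^{k+2}(t)$ (resp. $\md^m(t)$ when $|\alpha_h|=m-2$).

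I expect the main obstacle to be the top-order case $|\alpha_h|=m-2$: there the Leibniz rule applied to $Z^{\alpha_h}(b\cdot\nabla\wu)$ (and to the $\nabla(b\cdot\nabla)\times u$-type term) can put all $m-2$ tangential derivatives on $\wu$, leaving a factor $\|\nabla Z^{\alpha_h}\wu\|_{L^2}$ with $|\alpha_h|=m-2$, i.e.\ a derivative count of order $m-1$ on $\wu$, which is one notch above what $\md_{tan}^{m-1}$ alone controls; this is precisely why the estimate must be closed inside the full $\md^m$ norm, using the $\|\p_{13}\wu\|_{H^{m-2}_{co}}$ and $\|\p_{23}\wu\|_{H^{m-3}_{co}}$ components and an anisotropic split that spends one $\p_1$ (or $\p_2$) on the high-order factor to convert it into a dissipative quantity. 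Getting the right distribution of derivatives in this split — so that the worst factor lands on a term present in $\md^m$ and the remaining low-order factors are absorbed by $\sqrt{\me^m(t)}$ — is the delicate bookkeeping step; once the index accounting is done, the two displayed inequalities \eqref{3601} and \eqref{3613} follow by collecting the linear terms as the explicit right-hand side and all nonlinear terms into $\sqrt{\me^m(t)}\md_{tan}^{k+2}(t)$ (resp.\ $\sqrt{\me^m(t)}\md^m(t)$).
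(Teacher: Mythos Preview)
Your outline follows the paper's approach closely: the cross-differentiation identity \eqref{3604} is obtained exactly as you describe, and the linear right-hand side and the treatment of the $\nabla(u\cdot\nabla)\times b$, $\nabla(b\cdot\nabla)\times u$ remainders via \eqref{3307}--\eqref{3308} match the paper. Two points deserve correction or sharpening.

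First, a small inaccuracy: the term $\|\p_2 Z^{\ah}\wb\|_{L^2}^2$ generated by substituting \eqref{eqwu} into the time-derivative piece is \emph{not} cancelled; it survives on the right-hand side of \eqref{3604} and is simply absorbed into $\|\nabla_h\wb\|_{H^k_{tan}}^2$.

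Second, and more substantively, the critical top-order obstruction sits not in the generic factor $\|\nabla Z^{\ah}\wu\|_{L^2}$ but specifically in the transport pieces $u_3\,\p_3 Z^{\ah}\wu$ (and the analogous $b_3$ terms) arising from $III_3,III_5,III_7,III_9$ when $\beta_h=0$. A direct anisotropic split on $\int u_3\,\p_3 Z^{\ah}\wu\cdot\p_2 Z^{\ah}\wb\,dx$ only yields $\me^{3/4}\md^{3/4}$, not $\sqrt{\me}\,\md$. The paper resolves this with a Hardy-type conversion: since $u_3|_{x_3=0}=0$, one may write $u_3\,\p_3 f=(u_3/\varphi)\,Z_3 f$ with $u_3/\varphi$ controlled by $(u_3,\p_3 u_3)$, turning the normal derivative into a conormal one. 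For $|\ah|\le m-3$ this gives $\|Z_3 Z^{\ah}\wu\|\lesssim\|Z^{\ah}\wu\|^{1/2}\|Z_3^2 Z^{\ah}\wu\|^{1/2}$ and closes in $\md_{tan}^{k+2}$; for $|\ah|=m-2$ one instead places a $\p_1$ on $Z_3 Z^{\ah}\wu$, producing $\|\p_1 Z_3 Z^{\ah}\wu\|\le\|\p_{13}\wu\|_{H^{m-2}_{co}}$, which is exactly the conormal dissipation in $\md^m$ you allude to. Making this Hardy step explicit is what makes the ``bookkeeping'' you describe actually go through.
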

			\begin{proof}
				For any $|\alpha_h|=k\le m-2$, the equation \eqref{eqwb} yields directly
				\beq\label{3602}
				\begin{aligned}
					\i |\p_2 Z^{\ah} \wu|^2 dx
					=&\i Z^{\ah} \p_t \wb \cdot \p_2 Z^{\ah} \wu dx
					-\i (\Delta_h Z^{\ah} \wb+\ep \p_{33} Z^{\ah} \wb)\cdot \p_2 Z^{\ah} \wu dx\\
					&+\i Z^{\ah} (u\cdot \nabla \wb+\nabla(u\cdot \nabla)\times b
					-b\cdot \nabla \wu-\nabla(b\cdot \nabla)\times u)\cdot \p_2 Z^{\ah} \wu dx.
				\end{aligned}
				\deq
				On the other hand, using the vorticity equation \eqref{eqwu}, we conclude
				\beq\label{3603}
				\begin{aligned}
					&\i Z^{\ah} \p_t \wb \cdot \p_2 Z^{\ah} \wu dx\\
					=&-\frac{d}{dt}\i \p_2 Z^{\ah} \wb \cdot Z^{\ah} \wu dx
					+\i \p_2  Z^{\ah} \wb \cdot Z^{\ah} \p_t \wu dx\\
					=&-\frac{d}{dt}\i \p_2 Z^{\ah} \wb \cdot Z^{\ah} \wu dx
					+\i \p_2  Z^{\ah} \wb \cdot (\p_{11} Z^{\ah}\wu
					+\ep \p_{22} Z^{\ah}\wu
					+\ep \p_{33} Z^{\ah} \wu) dx\\
					&+\i \p_2  Z^{\ah} \wb \cdot
					Z^{\ah}(-u\cdot \nabla \wu+\wu\cdot \nabla u+b \cdot \nabla \wb-\wb\cdot \nabla b) dx
					+\i |\p_2  Z^{\ah} \wb|^2 dx.
				\end{aligned}
				\deq
				Thus, the combination of \eqref{3602} and \eqref{3603} yields directly
				\beq\label{3604}
				\begin{aligned}
					&\frac{d}{dt}\i \p_2 Z^{\ah} \wb \cdot Z^{\ah} \wu dx
					+\i |\p_2 Z^{\ah} \wu|^2 dx\\
					=&\i |\p_2  Z^{\ah} \wb|^2 dx
					+\i (-\Delta_h Z^{\ah} \wb-\ep \p_3^2 Z^{\ah} \wb)\cdot \p_2 Z^{\ah} \wu dx\\
					&+\i (\p_1^2 Z^{\ah}\wu+\ep \p_2^2 Z^{\ah}\wu
					+\ep \p_3^2 Z^{\ah} \wu)\cdot  \p_2  Z^{\ah} \wb dx\\
					&+\i Z^{\ah}(-u\cdot \nabla \wu)\cdot \p_2  Z^{\ah} \wb dx
					+\i Z^{\ah}(\wu\cdot \nabla u)\cdot \p_2  Z^{\ah} \wb dx\\
					&+\i Z^{\ah}(b \cdot \nabla \wb)\cdot \p_2  Z^{\ah} \wb dx
					+\i Z^{\ah}(-\wb\cdot \nabla b)\cdot \p_2  Z^{\ah} \wb dx\\
					&+\i Z^{\ah} (u\cdot \nabla \wb)\cdot \p_2 Z^{\ah} \wu dx
					+\i Z^{\ah} (\nabla(u\cdot \nabla)\times b)\cdot \p_2 Z^{\ah} \wu dx\\
					&+\i Z^{\ah} (-b\cdot \nabla \wu)\cdot \p_2 Z^{\ah} \wu dx
					+\i Z^{\ah} (-\nabla(b\cdot \nabla)\times u)\cdot \p_2 Z^{\ah} \wu dx\\
					:=&\i |\p_2  Z^{\ah} \wb|^2 dx
					+\sum_{i=1}^{10} III_i.
				\end{aligned}
				\deq
				Integrating by part and using Cauchy inequality, we have
				\beq\label{3605}
				III_1
				\lesssim \frac14\|\p_2 Z^{\ah} \wu\|_{L^2}^2+\|\Delta_h Z^{\ah} \wb\|_{L^2}^2
				+\ep \|(\p_{23} Z^{\ah} \wu, \p_3 Z^{\ah} \wb)\|_{L^2}^2,
				\deq
				and
				\beq\label{3606}
				III_2
				\lesssim \|(\p_{11} Z^{\ah}\wu, \p_2  Z^{\ah} \wb)\|_{L^2}^2
				+\ep \|(\p_{22} Z^{\ah}\wu,
				\p_{23} Z^{\ah} \wu,  \p_3  Z^{\ah} \wb)\|_{L^2}^2.
				\deq
				Now let us deal with the term $III_3$. It is to check that
				\beqq
				\begin{aligned}
					III_3=&-\sum_{0\le \beta_h \le \alpha_h}C^{\beta_h}_{\alpha_h}
					\i Z^{\beta_h} u_h\cdot Z^{\ah-\beta_h} \nabla_h \wu  \cdot \p_2  Z^{\ah} \wb dx\\
					&-\sum_{0< \beta_h \le \alpha_h}C^{\beta_h}_{\alpha_h}
					\i Z^{\beta_h} u_3 Z^{\ah-\beta_h} \p_3 \wu  \cdot \p_2  Z^{\ah} \wb dx
					-\i  u_3 \p_3 Z^{\ah} \wu  \cdot \p_2  Z^{\ah} \wb dx\\
					:=&III_{31}+III_{32}+III_{33}.
				\end{aligned}
				\deqq
				Using the anisotropic type inequality \eqref{ie:Sobolev}, we obtain
				\beqq
				\begin{aligned}
					III_{31}
					&\lesssim \sum_{0\le \beta_h \le \alpha_h}
					\|Z^{\beta_h} u_h\|_{L^2}^{\frac12}
					\|\p_3 Z^{\beta_h} u_h\|_{L^2}^{\frac12}
					\|Z^{\ah-\beta_h} \nabla_h \wu\|_{L^2}^{\frac12}
					\|\p_1 Z^{\ah-\beta_h} \nabla_h \wu\|_{L^2}^{\frac12}\\
					&\quad\quad\quad \quad\quad\times \|\p_2  Z^{\ah} \wb\|_{L^2}^{\frac12}
					\|\p_{22}  Z^{\ah} \wb\|_{L^2}^{\frac12}\\
					&\lesssim \sqrt{\me^m(t)}\md_{tan}^{k+2}(t),
				\end{aligned}
				\deqq
				and
				\beqq
				\begin{aligned}
					III_{32}
					&\lesssim \sum_{0< \beta_h \le \alpha_h}
					\|Z^{\beta_h} u_3\|_{L^2}^{\frac12}
					\|\p_3 Z^{\beta_h} u_3\|_{L^2}^{\frac12}
					\|\p_3 Z^{\ah-\beta_h} \wu\|_{L^2}^{\frac12}
					\|\p_3 Z^{\ah-\beta_h} \wu\|_{L^2}^{\frac12}\\
					&\quad\quad\quad\quad\quad \times \|\p_2  Z^{\ah} \wb\|_{L^2}^{\frac12}
					\|\p_{12}  Z^{\ah} \wb\|_{L^2}^{\frac12}\\
					&\lesssim \sqrt{\me^m(t)}\md_{tan}^{k+2}(t).
				\end{aligned}
				\deqq
				The term $III_{33}$ is somewhat complicated.
				If $\alpha_h=0$, we apply the divergence-free condition to get
				\beqq
				\begin{aligned}
					III_{33}
					=&-\i  u_3 \p_3 \wu  \cdot \p_2 \wb dx\\
					\lesssim
					&\|(u_3, \p_3 u_3)\|_{L^2}^{\frac14}
					\|\p_3(u_3, \p_3 u_3)\|_{L^2}^{\frac14}
					\|\p_2(u_3, \p_3 u_3)\|_{L^2}^{\frac14}
					\|\p_{23}(u_3, \p_3 u_3)\|_{L^2}^{\frac14}\\
					&\times \|Z_3 \wu\|_{L^2}^{\frac12}\|\p_1 Z_3 \wu\|_{L^2}^{\frac12}
					\| \p_2 \wb\|_{L^2}\\
					\lesssim
					&\|(u_3, \p_3 u_3)\|_{L^2}^{\frac14}
					\|\p_3(u_3, \p_3 u_3)\|_{L^2}^{\frac14}
					\|\p_2(u_3, \p_3 u_3)\|_{L^2}^{\frac14}
					\|\p_{23}(u_3, \p_3 u_3)\|_{L^2}^{\frac14}\\
					&\times \|Z_3 \wu\|_{L^2}^{\frac12}
					\|\p_1 \wu\|_{L^2}^{\frac14}
					\|\p_1 Z_3^2 \wu\|_{L^2}^{\frac14}
					\| \p_2 \wb\|_{L^2}\\
					\lesssim
					&\sqrt{\me^m(t)}\md_{tan}^{k+2}(t).
				\end{aligned}
				\deqq
				If $\alpha_h>0$,  we can conclude for all $0<|\alpha_h|=k\le m-3$
				\beqq
				\begin{aligned}
					III_{33}
					=&-\i  u_3 \p_3 Z^{\ah} \wu  \cdot \p_2  Z^{\ah} \wb dx\\
					\lesssim
					&\|(u_3, \p_3 u_3)\|_{L^2}^{\frac14}
					\|\p_3(u_3, \p_3 u_3)\|_{L^2}^{\frac14}
					\|\p_2(u_3, \p_3 u_3)\|_{L^2}^{\frac14}
					\|\p_{23}(u_3, \p_3 u_3)\|_{L^2}^{\frac14}\\
					&\times \|Z_3 Z^{\ah} \wu\|_{L^2}
					\|\p_2  Z^{\ah} \wb\|_{L^2}^{\frac12}
					\| \p_{12}  Z^{\ah} \wb\|_{L^2}^{\frac12}\\
					\lesssim
					&\|(u_3, \p_3 u_3)\|_{L^2}^{\frac14}
					\|\p_3(u_3, \p_3 u_3)\|_{L^2}^{\frac14}
					\|\p_2(u_3, \p_3 u_3)\|_{L^2}^{\frac14}
					\|\p_{23}(u_3, \p_3 u_3)\|_{L^2}^{\frac14}\\
					&\times \| Z^{\ah} \wu\|_{L^2}^{\frac12}
					\|Z_3^2 Z^{\ah} \wu\|_{L^2}^{\frac12}
					\|\p_2  Z^{\ah} \wb\|_{L^2}^{\frac12}
					\| \p_{12}  Z^{\ah} \wb\|_{L^2}^{\frac12}\\
					\lesssim
					&\sqrt{\me^m(t)}\md_{tan}^{k+2}(t).
				\end{aligned}
				\deqq
				For $0<|\alpha_h|=m-2$, we have
				\beqq
				\begin{aligned}
					III_{33}
					=&-\i  u_3 \p_3 Z^{\ah} \wu  \cdot \p_2  Z^{\ah} \wb dx\\
					\lesssim
					&\|(u_3, \p_3 u_3)\|_{L^2}^{\frac14}
					\|\p_3(u_3, \p_3 u_3)\|_{L^2}^{\frac14}
					\|\p_2(u_3, \p_3 u_3)\|_{L^2}^{\frac14}
					\|\p_{23}(u_3, \p_3 u_3)\|_{L^2}^{\frac14}\\
					&\times \|Z_3 Z^{\ah} \wu\|_{L^2}^{\frac12}
					\|\p_1 Z_3 Z^{\ah} \wu\|_{L^2}^{\frac12}
					\| \p_{2}  Z^{\ah} \wb\|_{L^2}\\
					\lesssim
					&\sqrt{\me^m(t)}\md^m(t).
				\end{aligned}
				\deqq
				Thus, we can obtain the estimate for all $|\alpha_h|=k \le m-3$
				\beq\label{3607}
				III_3 \lesssim \sqrt{\me^m(t)}\md_{tan}^{k+2}(t),
				\deq
				and for all $|\alpha_h|=m-2$
				\beq\label{3608}
				III_3 \lesssim \sqrt{\me^m(t)}\md^{m}(t).
				\deq
				Similarly, we have for all $|\alpha_h|=k\le m-3$
				\beq\label{3609}
				III_5, III_7, III_9  \lesssim \sqrt{\me^m(t)}\md_{tan}^{k+2}(t),
				\deq
				and for all $|\alpha_h|=m-2$
				\beq\label{3610}
				III_5, III_7, III_9 \lesssim \sqrt{\me^m(t)}\md^{m}(t).
				\deq
				Using the anisotropic type inequality \eqref{ie:Sobolev}, we can get
				\beq\label{3611}
				\begin{aligned}
					III_4
					=&\sum_{0\le \beta_h \le \alpha_h}C^{\beta_h}_{\alpha_h}
					\i (Z^{\beta_h} \wu_h\cdot Z^{\ah-\beta_h} \nabla_h u
					+Z^{\beta_h} \wu_3 \cdot Z^{\ah-\beta_h} \p_3 u)
					\cdot \p_2  Z^{\ah} \wb dx\\
					\lesssim
					&\sum_{0\le \beta_h \le \alpha_h}
					\|Z^{\beta_h} \wu_h\|_{L^2}^{\frac12}
					\|\p_2 Z^{\beta_h} \wu_h\|_{L^2}^{\frac12}
					\|Z^{\ah-\beta_h} \nabla_h u\|_{L^2}^{\frac12}
					\|\p_3 Z^{\ah-\beta_h} \nabla_h u\|_{L^2}^{\frac12}\\
					&\quad \quad \quad \times \|\p_2  Z^{\ah} \wb\|_{L^2}^{\frac12}
					\|\p_{12}  Z^{\ah} \wb\|_{L^2}^{\frac12}\\
					&+\sum_{0\le \beta_h \le \alpha_h}
					\|Z^{\beta_h} \wu_3\|_{L^2}^{\frac12}
					\|\p_3 Z^{\beta_h} \wu_3\|_{L^2}^{\frac12}
					\|Z^{\ah-\beta_h} \p_3 u\|_{L^2}^{\frac12}
					\|\p_2 Z^{\ah-\beta_h} \p_3 u\|_{L^2}^{\frac12}\\
					&\quad \quad \quad \times \|\p_2  Z^{\ah} \wb \|_{L^2}^{\frac12}
					\|\p_{12}  Z^{\ah} \wb \|_{L^2}^{\frac12}\\
					\lesssim
					&\sqrt{\me^m(t)}\md_{tan}^{k+2}(t).
				\end{aligned}
				\deq
				Similarly, due to the relations \eqref{3306},
				\eqref{3307} and \eqref{3308}, we may deduce that
				\beq\label{3612}
				III_6, III_8, III_{10}\lesssim \sqrt{\me^m(t)}\md_{tan}^{k+2}(t).
				\deq
				Taking all estimates for $III_1$ through $III_{10}$ into account,
				we complete the proof of this lemma.
			\end{proof}
			
			Finally, we will establish the dissipative estimate for normal derivative of
			velocity under the higher conormal Sobolev space in the $x_2$ direction as follows.
			\begin{lemm}
				For any smooth solution $(u, b)$ of equation \eqref{eqr},
				it holds for all $|\alpha|\le m-3$
				\beq\label{3701}
				\begin{aligned}
					&-\frac{d}{dt}\i \p_3 Z^\alpha \wb \cdot \p_{23} Z^\alpha \wu dx
					+\i |\p_{23} Z^\alpha \wu|^2 dx\\
					\lesssim
					&\|(\p_{13} \wu, \nabla_h \p_3 \wb)\|_{H^{m-2}_{co}}^2
					+\ep \|(\p_{23} \wu, \p_{33} \wu, \p_{33} \wb )\|_{H^{m-2}_{co}}^2
					+\sqrt{\me^m(t)}\md^{m}(t).
				\end{aligned}
				\deq
			\end{lemm}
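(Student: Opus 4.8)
The plan is to repeat the scheme behind estimate \eqref{3601}, but differentiating the vorticity system by $\p_3 Z^\alpha$ in place of $Z^{\ah}$. First I would apply $\p_3 Z^\alpha$ to the magnetic vorticity equation \eqref{eqwb} and solve for the quantity that will carry the dissipation,
\beqq
\p_{23} Z^\alpha \wu = \p_3 Z^\alpha \p_t \wb - \Delta_h \p_3 Z^\alpha \wb - \ep \p_3^3 Z^\alpha \wb - \p_3 Z^\alpha(\mathrm{NL}_b),
\deqq
where $\mathrm{NL}_b$ abbreviates the right-hand side of \eqref{eqwb}. Multiplying by $\p_{23} Z^\alpha \wu$ and integrating over $\mathbb{R}^3_+$ produces $\i |\p_{23} Z^\alpha \wu|^2 dx$ on the left. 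In the term $\i \p_3 Z^\alpha \p_t \wb \cdot \p_{23} Z^\alpha \wu\, dx$ I would split off $\frac{d}{dt}\i \p_3 Z^\alpha \wb \cdot \p_{23} Z^\alpha \wu\, dx$, which moved to the left gives the $-\frac{d}{dt}(\cdots)$ of the statement, and pay the price $-\i \p_3 Z^\alpha \wb \cdot \p_{23} Z^\alpha \p_t \wu\, dx$; then I substitute $\p_t \wu$ from \eqref{eqwu}. The coupling term $\p_2 \wb$ in \eqref{eqwu} produces, after one integration by parts in $x_2$, exactly $+\i |\p_{23} Z^\alpha \wb|^2 dx$, which is absorbed into $\|\nabla_h \p_3 \wb\|_{H^{m-2}_{co}}^2$ because $|\alpha|\le m-3$.

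Next I would dispose of the remaining linear terms — the operators $\Delta_h$ and $\ep \p_3^2$ acting on $\p_3 Z^\alpha \wb$, and $\p_1^2$, $\ep \p_2^2$, $\ep \p_3^2$ acting on $\p_{23} Z^\alpha \wu$ — by Cauchy's inequality: for the horizontal pieces one shifts both derivatives onto the magnetic field (which enjoys full horizontal diffusion), producing $\|(\p_{13}\wu, \nabla_h \p_3 \wb)\|_{H^{m-2}_{co}}^2$ and absorbing a small multiple of the leading term $\i|\p_{23}Z^\alpha\wu|^2 dx$; the $\ep$-pieces are moved to the right as $\ep\|(\p_{23}\wu, \p_{33}\wu, \p_{33}\wb)\|_{H^{m-2}_{co}}^2$. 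The index counting closes because $|\alpha|\le m-3$ ensures every derivative produced in this step is controlled by one of the $H^{m-2}_{co}$ norms on the right-hand side (up to harmless commutators between $Z_3=\varphi(x_3)\p_3$ and $\p_3$). The few integrations by parts in $x_3$ are valid because $Z_3$ vanishes on $\{x_3=0\}$ and, by \eqref{bd-wuwb}, \eqref{bd01} and \eqref{bd02}, the boundary traces of $\wu_3$, $\wb_3$, their normal derivatives, and of the nonlinearities vanish; any residual boundary contributions are handled as in the preceding lemmas.

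Finally I would treat the nonlinear contributions coming from $\mathrm{NL}_b$ (paired against $\p_{23}Z^\alpha\wu$) and from $\mathrm{NL}_u$, the right-hand side of \eqref{eqwu} (paired against $\p_3 Z^\alpha\wb$). Expanding by the Leibniz rule and estimating each resulting integral by the anisotropic Sobolev inequality \eqref{ie:Sobolev}, in the spirit of the treatment of $III_1$--$III_{10}$ in the preceding lemma, bounds all of them by $\sqrt{\me^m(t)}\,\md^m(t)$. The genuinely delicate terms are the top-order transport pieces. The part of $b\cdot\nabla\wu$ in which all of $\p_{23}Z^\alpha$ falls on $\wu$ is an energy-type integral which, after one integration by parts and using $\nabla\cdot b=0$, vanishes up to a boundary term that is zero; similarly for the $u\cdot\nabla$ pieces with $\nabla\cdot u=0$. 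The truly awkward one is the portion of $u\cdot\nabla\wu$ of the form $u_3\,\p_3\p_{23}Z^\alpha\wu$: a bare normal derivative sits on the vorticity with no tangential derivative on the normal velocity. Here I would integrate by parts in $x_3$ — the boundary term vanishes since $u_3|_{x_3=0}=0$ — then use $\p_3 u_3=-\nabla_h\cdot u_h$ from incompressibility and redistribute the surviving $\p_1$, $\p_2$, $\p_3$ derivatives via interpolations of the form $\|Z_3 Z^\alpha\wu\|_{L^2}\lesssim\|Z^\alpha\wu\|_{L^2}^{1/2}\|Z_3^2 Z^\alpha\wu\|_{L^2}^{1/2}$ so that each factor falls inside $\me^m$ or $\md^m$, exactly as $III_{33}$ was handled in the preceding lemma. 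I expect this family of terms, together with making the index bookkeeping tight enough that no factor escapes $\me^m$ or $\md^m$, to be the main obstacle; everything else is parallel to the earlier lemmas of this section.
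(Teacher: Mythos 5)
Your overall scheme matches the paper's: take $\p_3 Z^\alpha$ of the $\wb$-vorticity equation \eqref{eqwb} and pair it against $\p_{23} Z^\alpha \wu$ (equivalently, pair $Z^\alpha$ of \eqref{eqwb} against $-\p_{233}Z^\alpha\wu$, which is what the paper writes), peel off the total time derivative $\frac{d}{dt}\i \p_3 Z^\alpha\wb\cdot\p_{23}Z^\alpha\wu\,dx$, substitute $\p_t\wu$ from \eqref{eqwu}, bound the linear pieces by Cauchy–Schwarz into the stated $H^{m-2}_{co}$ dissipation norms (including the $+\i|\p_{23}Z^\alpha\wb|^2\,dx$ from the coupling $\p_2\wb$), and control the nonlinear pieces by the anisotropic Sobolev inequality. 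This is precisely the structure of the paper's decomposition into $IV_1,\dots,IV_{10}$, and your final identification of the $u_3\p_3$-type term as the delicate one, to be handled by the $Z_3$/Hardy redistribution as in $III_{33}$, is also on target.

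The gap is your treatment of the ``top-order transport pieces.'' You claim that the part of $b\cdot\nabla\wu$ (and similarly $u\cdot\nabla$) in which the full derivative lands on the transported vorticity is an energy-type integral that vanishes after one integration by parts via $\nabla\cdot b=0$ (resp.\ $\nabla\cdot u=0$). That cancellation does not exist here, because this is a cross-term estimate, not an energy estimate: in your setup the nonlinearity $NL_b$ carries $\p_3 Z^\alpha$ and is tested against $\p_{23}Z^\alpha\wu$, while $NL_u$ carries $\p_{23}Z^\alpha$ and is tested against $\p_3 Z^\alpha\wb$. So the top-order transport integral is of the form $\i (b\cdot\nabla)\p_3 Z^\alpha\wu\cdot\p_{23}Z^\alpha\wu\,dx$, with \emph{different} derivatives on the two factors; writing $f:=\p_3 Z^\alpha\wu$, this is $\i (b\cdot\nabla)f\cdot\p_2 f\,dx$, which is \emph{not} $\frac12\i(b\cdot\nabla)|f|^2\,dx$ and does not reduce to a boundary term. (Integrating by parts in $x_2$ simply regenerates the same integral plus a commutator $\i(\p_2 b\cdot\nabla)f\cdot f\,dx$; it yields no vanishing.) The same applies to the $u\cdot\nabla$ pieces. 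The paper therefore does not invoke any cancellation for $IV_3$–$IV_{10}$; it breaks each one into Leibniz pieces ($IV_{91}$–$IV_{94}$, $IV_{41}$–$IV_{42}$, etc.), integrates one $\p_3$ back, and estimates all of them — including the top-order transport ones — directly by \eqref{ie:Sobolev}, routing the extra $\p_1$, $\p_2$ produced into $\md^m(t)$. That direct estimate is what is actually required, and it works because $|\alpha|\le m-3$ leaves enough slack in the conormal count. So your proof plan is otherwise sound, but you should replace the claimed cancellation step with the direct anisotropic bound; as written, that step is false and nothing downstream compensates for it.
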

			\begin{proof}
				For any $|\alpha|\le m-3$,  then the equation \eqref{eqwb} yields directly
				\beq\label{3702}
				\begin{aligned}
					&-\i Z^\alpha \p_2 \wu \cdot \p_{233} Z^\alpha \wu dx\\
					=&-\i \p_t Z^\alpha \wb \cdot \p_{233} Z^\alpha \wu dx
					+\i Z^\alpha (\Delta_h \wb+\ep \p_3^2 \wb)\cdot \p_{233} Z^\alpha \wu dx\\
					&+\i Z^{\al} (-u\cdot \nabla \wb-\nabla(u\cdot \nabla)\times b
					+b\cdot \nabla \wu+\nabla(b\cdot \nabla)\times u)
					\cdot \p_{233} Z^\alpha \wu dx.
				\end{aligned}
				\deq
				Integrating by part and using the boundary condition \eqref{bd-wuwb}, we have
				\beq\label{3703}
				-\i Z^\alpha \p_2 \wu \cdot \p_{233} Z^\alpha \wu dx
				=\i |\p_{23} Z^\alpha \wu|^2 dx.
				\deq
				Integrating by part and using the equation \eqref{eqwu}, it is easy to check that
				\beq\label{3704}
				\begin{aligned}
					&-\i \p_t Z^\alpha \wb \cdot \p_{233} Z^\alpha \wu dx
					=\i \p_t \p_3 Z^\alpha \wb \cdot \p_{23} Z^\alpha \wu dx\\
					=&\frac{d}{dt}\i \p_3 Z^\alpha \wb \cdot \p_{23} Z^\alpha \wu dx
					-\i \p_3 Z^\alpha \wb \cdot \p_{23} Z^\alpha \p_t  \wu dx\\
					=&\frac{d}{dt}\i \p_3 Z^\alpha \wb \cdot \p_{23} Z^\alpha \wu dx
					-\i \p_{233} Z^\alpha \wb \cdot  Z^\alpha \p_t  \wu dx\\
					=&\frac{d}{dt}\i \p_3 Z^\alpha \wb \cdot \p_{23} Z^\alpha \wu dx
					-\i \p_{233} Z^\alpha \wb \cdot
					Z^\alpha(\p_{11} \wu+\ep \p_{22} \wu+\ep \p_{33} \wu+\p_2 \wb)dx\\
					&-\i \p_{233} Z^\alpha \wb
					\cdot  Z^\alpha(-u\cdot \nabla \wu+\wu\cdot \nabla u+b \cdot \nabla \wb-\wb\cdot \nabla b)dx.
				\end{aligned}
				\deq
				Then, the combination of equations \eqref{3702}, \eqref{3703}
				and \eqref{3704} yields directly
				\beqq
				\begin{aligned}
					&-\frac{d}{dt}\i \p_3 \wb \cdot \p_{23} \wu dx+\i |\p_{23}  \wu|^2 dx\\
					=&-
					\i  Z^\alpha(\p_1^2 \wu+\ep \p_2^2 \wu+\ep \p_3^2 \wu+\p_2 \wb)
					\cdot \p_{233} Z^\alpha \wb dx\\
					&+\i Z^\alpha (\Delta_h \wb+\ep \p_3^2 \wb)\cdot \p_{233} Z^\alpha \wu dx
					+\i Z^\alpha(u\cdot \nabla \wu )\cdot  \p_{233} Z^\alpha \wb dx\\
					&+\i Z^\alpha(-\wu\cdot \nabla u)\cdot  \p_{233} Z^\alpha \wb dx
					+\i Z^\alpha(-b \cdot \nabla \wb)\cdot  \p_{233} Z^\alpha \wb dx\\
					&+\i Z^\alpha(\wb\cdot \nabla b)\cdot  \p_{233} Z^\alpha \wb dx
					+\i Z^{\al} (-u\cdot \nabla \wb)\cdot \p_{233} Z^\alpha \wu dx\\
					&+\i Z^{\al} (-\nabla(u\cdot \nabla)\times b)\cdot \p_{233} Z^\alpha \wu dx
					+\i Z^{\al} (b\cdot \nabla \wu) \cdot \p_{233} Z^\alpha \wu dx\\
					&+\i Z^{\al} (\nabla(b\cdot \nabla)\times u)\cdot \p_{233} Z^\alpha \wu dx
					:=\sum_{i=1}^{10} IV_i.
				\end{aligned}
				\deqq
				Integrating by part, we may write
				\beq\label{3705}
				\begin{aligned}
					IV_1
					=&-\i  Z^\alpha(\p_1^2 \wu+\ep \p_2^2 \wu+\ep \p_3^2 \wu+\p_2 \wb)
					\cdot \p_{233} Z^\alpha \wb dx\\
					=&\i  \p_{113} Z^\alpha \wu \cdot \p_{23} Z^\alpha \wb dx
					+\ep \i  \p_{223} Z^\alpha  \wu \cdot \p_{23} Z^\alpha \wb dx\\
					& -\ep \i  Z^\alpha \p_3^2 \wu \cdot \p_{233} Z^\alpha \wb dx
					+\i \p_{23} Z^\alpha  \wb \cdot \p_{23} Z^\alpha \wb dx\\
					\lesssim
					&\|(\p_{113} Z^\alpha \wu, \p_{23} Z^\alpha \wb)\|_{L^2}^2
					+\ep \|(\p_{223} Z^\alpha  \wu, \p_{23} Z^\alpha \wb)\|_{L^2}^2\\
					&+\ep \|(Z^\alpha \p_{33} \wu,\p_{233} Z^\alpha \wb)\|_{L^2}^2
					+\|\p_{23} Z^\alpha \wb\|_{L^2}^2,
				\end{aligned}
				\deq
				and
				\beq\label{3706}
				\begin{aligned}
					IV_2
					=&\i Z^\alpha (\p_1^2 \wb+\p_2^2 \wb+\ep \p_3^2 \wb)\cdot \p_{233} Z^\alpha \wu dx\\
					=&\i   \p_{13}  Z^\alpha \wb \cdot \p_{123} Z^\alpha \wu dx
					-\i \p_{223} Z^\alpha  \wb \cdot \p_{23} Z^\alpha \wu dx\\
					&+\ep \i \p_{33} Z^\alpha   \wb \cdot \p_{233} Z^\alpha \wu dx\\
					\lesssim
					&\frac14 \|\p_{23} Z^\alpha \wu\|_{L^2}^2
					+\|(\p_{13}  Z^\alpha \wb, \p_{123} Z^\alpha \wu,
					\p_{223} Z^\alpha  \wb)\|_{L^2}^2\\
					&+\ep\|(\p_{33} Z^\alpha   \wb, \p_{233} Z^\alpha \wu)\|_{L^2}^2.
				\end{aligned}
				\deq
				Integrating by part, it is easy to check that
				\beqq
				\begin{aligned}
					IV_9
					=
					-\i \p_{3}Z^{\al} (b_h\cdot \nabla_h \wu +b_3 \p_3 \wu )\cdot \p_{23} Z^\alpha \wu dx
					:=IV_{91}+IV_{92}+IV_{93}+IV_{94},
				\end{aligned}
				\deqq
				where the terms $IV_{9i}(i=1,2,3,4)$ are defined below
				\beqq
				\begin{aligned}
					IV_{91}:=&-\sum_{0\le \beta \le \alpha}C^{\beta}_{\alpha}
					\i \p_3 Z^\beta b_h Z^{\alpha-\beta} \nabla_h \wu
					\cdot \p_{23} Z^\alpha \wu dx,\\
					IV_{92}:=&-\sum_{0\le \beta \le \alpha}C^{\beta}_{\alpha}
					\i Z^\beta b_h \p_3 Z^{\alpha-\beta} \nabla_h \wu
					\cdot \p_{23} Z^\alpha \wu dx,\\
				\end{aligned}
				\deqq
				and
				\beqq
				\begin{aligned}
					IV_{93}:=&-\sum_{0\le \beta \le \alpha}C^{\beta}_{\alpha}
					\i \p_3 Z^\beta b_3 Z^{\alpha-\beta}\p_3 \wu
					\cdot \p_{23} Z^\alpha \wu dx,\\
					IV_{94}:=&-\sum_{0\le \beta \le \alpha}C^{\beta}_{\alpha}
					\i Z^\beta b_3 \p_3 Z^{\alpha-\beta}\p_3 \wu
					\cdot \p_{23} Z^\alpha \wu dx.
				\end{aligned}
				\deqq
				Using the anisotropic type inequality \eqref{ie:Sobolev}, we may write
				\beqq
				\begin{aligned}
					IV_{93}
					\lesssim
					&\sum_{0\le \beta \le \alpha}
					\|\p_3 Z^\beta b_3\|_{L^2}^{\frac14}
					\|\p_{33}Z^\beta b_3\|_{L^2}^{\frac14}
					\|\p_{23} Z^\beta b_3\|_{L^2}^{\frac14}
					\|\p_{233}Z^\beta b_3\|_{L^2}^{\frac14}\\
					&\times
					\|Z^{\alpha-\beta}\p_3 \wu\|_{L^2}^{\frac12}
					\|\p_1 Z^{\alpha-\beta}\p_3 \wu\|_{L^2}^{\frac12}
					\|\p_{23} Z^\alpha \wu\|_{L^2}\\
					\lesssim
					&\sqrt{\me^m(t)}\md^{m}(t),
				\end{aligned}
				\deqq
				and
				\beqq
				\begin{aligned}
					IV_{94}
					\lesssim
					&\sum_{0\le \beta \le \alpha}
					\|(Z^\beta b_3, \p_3 Z^\beta b_3)\|_{L^2}^{\frac14}
					\|(\p_3 Z^\beta b_3, \p_{33}Z^\beta b_3)\|_{L^2}^{\frac14}
					\|(\p_2 Z^\beta b_3, \p_{23} Z^\beta b_3)\|_{L^2}^{\frac14}\\
					&\times
					\|(\p_{23}Z^\beta b_3, \p_{233}Z^\beta b_3)\|_{L^2}^{\frac14}
					\|Z^{\alpha-\beta+e_3}\p_3 \wu\|_{L^2}^{\frac12}
					\|\p_1 Z^{\alpha-\beta+e_3}\p_3 \wu\|_{L^2}^{\frac12}
					\|\p_{23} Z^\alpha \wu\|_{L^2}\\
					\lesssim
					&\sqrt{\me^m(t)}\md^{m}(t).
				\end{aligned}
				\deqq
				Similarly, it is easy to check that
				\beqq
				IV_{91},IV_{92} \lesssim \sqrt{\me^m(t)}\md^{m}(t).
				\deqq
				Thus, we can obtain the estimate for $IV_{9}$ as follows
				\beq\label{3707}
				IV_{9} \lesssim \sqrt{\me^m(t)}\md^{m}(t).
				\deq
				Similarly, we can get that
				\beq\label{3708}
				IV_{3}, IV_{5},IV_{7} \lesssim \sqrt{\me^m(t)}\md^{m}(t).
				\deq
				Integrating by part, we have
				\beqq
				\begin{aligned}
					IV_4
					=
					&\i \p_{3}Z^\alpha(\wu\cdot \nabla u)\cdot  \p_{23} Z^\alpha \wb dx\\
					=
					&\sum_{0\le \beta \le \alpha}C^{\beta}_{\alpha}
					\i (\p_{3}Z^{\beta} \wu_h \cdot Z^{\alpha-\beta} \nabla_h u
					+\p_{3}Z^{\beta} \wu_3 \cdot Z^{\alpha-\beta} \p_3 u )
					\cdot  \p_{23} Z^\alpha \wb dx\\
					&+\sum_{0\le \beta \le \alpha}C^{\beta}_{\alpha}
					\i (\p_{3}Z^{\beta} \wu_h \cdot Z^{\alpha-\beta} \nabla_h u
					+Z^{\beta} \wu_3 \p_{3} Z^{\alpha-\beta} \p_3 u)
					\cdot  \p_{23} Z^\alpha \wb dx\\
					:=&IV_{41}+IV_{42}.
				\end{aligned}
				\deqq
				Using the anisotropic type inequality \eqref{ie:Sobolev}, we may write
				\beqq
				\begin{aligned}
					IV_{41}
					\lesssim
					&\sum_{0\le \beta \le \alpha}
					\|\p_{3}Z^{\beta} \wu_h\|_{L^2}^{\frac12}
					\|\p_1 \p_{3}Z^{\beta} \wu_h\|_{L^2}^{\frac12}
					\|Z^{\alpha-\beta} \nabla_h u \|_{L^2}^{\frac12}\\
					&\quad \quad \times \|\p_3 Z^{\alpha-\beta} \nabla_h u \|_{L^2}^{\frac12}
					\|\p_{23} Z^\alpha \wb\|_{L^2}^{\frac12}
					\|\p_{223} Z^\alpha \wb\|_{L^2}^{\frac12}\\
					&+\sum_{0\le \beta \le \alpha}
					\|\p_{3}Z^{\beta} \wu_3\|_{L^2}^{\frac12}
					\|\p_3\p_{3}Z^{\beta} \wu_3\|_{L^2}^{\frac12}
					\|Z^{\alpha-\beta} \p_3 u\|_{L^2}^{\frac12}\\
					&\quad \quad \times \|\p_1 Z^{\alpha-\beta} \p_3 u\|_{L^2}^{\frac12}
					\|\p_{23} Z^\alpha \wb\|_{L^2}^{\frac12}
					\|\p_{223} Z^\alpha \wb\|_{L^2}^{\frac12}\\
					\lesssim
					&\sqrt{\me^m(t)}\md^{m}(t),
				\end{aligned}
				\deqq
				and
				\beqq
				\begin{aligned}
					IV_{42}
					\lesssim
					&\sum_{0\le \beta \le \alpha}
					\|\p_{3}Z^{\beta} \wu_h\|_{L^2}^{\frac12}
					\|\p_1 \p_{3}Z^{\beta} \wu_h\|_{L^2}^{\frac12}
					\|Z^{\alpha-\beta} \nabla_h u\|_{L^2}^{\frac12}\\
					&\quad \quad \times
					\|\p_3 Z^{\alpha-\beta} \nabla_h u\|_{L^2}^{\frac12}
					\|\p_{23} Z^\alpha \wb\|_{L^2}^{\frac12}
					\|\p_{223} Z^\alpha \wb\|_{L^2}^{\frac12}\\
					&+\sum_{0\le \beta \le \alpha}
					\|Z^{\beta} \wu_3\|_{L^2}^{\frac12}
					\|\p_2Z^{\beta} \wu_3\|_{L^2}^{\frac12}
					\|\p_{3} Z^{\alpha-\beta} \p_3 u\|_{L^2}^{\frac12}\\
					&\quad \quad \times \|\p_{13} Z^{\alpha-\beta} \p_3 u\|_{L^2}^{\frac12}
					\|\p_{23} Z^\alpha \wb\|_{L^2}^{\frac12}
					\|\p_{223} Z^\alpha \wb\|_{L^2}^{\frac12}\\
					\lesssim
					&\sqrt{\me^m(t)}\md^{m}(t).
				\end{aligned}
				\deqq
				Thus, we can obtain the estimate
				\beq\label{3708}
				IV_{4} \lesssim \sqrt{\me^m(t)}\md^{m}(t).
				\deq
				Similarly, we can obtain
				\beq\label{3709}
				IV_{6}, IV_{8}, IV_{10} \lesssim \sqrt{\me^m(t)}\md^{m}(t).
				\deq
				Thus, the combination of estimates \eqref{3705}-\eqref{3709}  yields directly
				\beqq
				\begin{aligned}
					&-\frac{d}{dt}\i \p_3 Z^\alpha \wb \cdot \p_{23} Z^\alpha \wu dx
					+\i |\p_{23} Z^\alpha \wu|^2 dx\\
					\lesssim
					&\|(\p_{13} \wu, \nabla_h \p_3 \wb)\|_{H^{m-2}_{co}}^2
					+\ep \|(\p_{23} \wu, \p_{33} \wu, \p_{33} \wb )\|_{H^{m-2}_{co}}^2
					+\sqrt{\me^m(t)}\md^{m}(t).
				\end{aligned}
				\deqq
				Therefore, we complete the proof of this lemma.
			\end{proof}

			\subsection{Negative derivative estimate}
			
			In this subsection, we will establish the estimate in negative
			Sobolev space that will play an important role in establishing
			the decay rate estimate.
			
			\begin{lemm}
				Under the assumption \eqref{assumption},
				the smooth solution $(u, b)$ of equation \eqref{eqr} has the estimate
				\beq\label{3801}
				\begin{aligned}
					&\|(\Lambda_h^{-s}u, \Lambda_h^{-s} b)(t)\|_{L^2}^2
					+\ep \int_0^t \|(\p_2 \Lambda_h^{-s} u, \p_3 \Lambda_h^{-s} u,
					\p_3 \Lambda_h^{-s} b)\|_{L^2}^2 d\tau\\
					&+\int_0^t \|(\p_1 \Lambda_h^{-s} u, \nabla_h \Lambda_h^{-s} b)\|_{L^2}^2 d\tau
					\lesssim \|(\Lambda_h^{-s}u_0, \Lambda_h^{-s} b_0)\|_{L^2}^2+\delta^2.
				\end{aligned}
				\deq
			\end{lemm}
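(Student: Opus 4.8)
The plan is to run an $L^2$ energy estimate on the system obtained by applying the horizontal Fourier multiplier $\Lambda_h^{-s}$ to \eqref{eqr}. Since $\Lambda_h^{-s}$ acts only in the tangential variables $x_h$, it commutes with $\p_1,\p_2,\p_3$ and with every dissipative operator in \eqref{eqr}, and it is compatible with the boundary conditions $\eqref{eqr}_4$: $\Lambda_h^{-s}u_3|_{x_3=0}=\Lambda_h^{-s}b_3|_{x_3=0}=0$ and $\Lambda_h^{-s}\p_3u_h|_{x_3=0}=\Lambda_h^{-s}\p_3b_h|_{x_3=0}=0$. First I would apply $\Lambda_h^{-s}$ to $\eqref{eqr}_1$, take the $\mathbb{R}^3_+$ inner product with $\Lambda_h^{-s}u$, do the same for $\eqref{eqr}_2$ against $\Lambda_h^{-s}b$, and add. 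After integrations by parts the dissipative terms produce $\|(\p_1\Lambda_h^{-s}u,\nabla_h\Lambda_h^{-s}b)\|_{L^2}^2+\ep\|(\p_2\Lambda_h^{-s}u,\p_3\Lambda_h^{-s}u,\p_3\Lambda_h^{-s}b)\|_{L^2}^2$ with no boundary contribution (the tangential derivatives generate none; the normal ones are removed by $\eqref{eqr}_4$, exactly as in the proof of Lemma \ref{lemma32}). The pressure term drops out, $\i\Lambda_h^{-s}\nabla p\cdot\Lambda_h^{-s}u\,dx=0$, by integration by parts together with $\nabla\cdot u=0$ and $u_3|_{x_3=0}=0$; and the background-field coupling cancels, $\i\Lambda_h^{-s}\p_2b\cdot\Lambda_h^{-s}u\,dx+\i\Lambda_h^{-s}\p_2u\cdot\Lambda_h^{-s}b\,dx=0$, by integration by parts in $x_2$ (the analogue of the basic fact used in \eqref{3102}).

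It then remains to control the four nonlinear terms $-\i\Lambda_h^{-s}(u\cdot\nabla u)\cdot\Lambda_h^{-s}u\,dx$, $\i\Lambda_h^{-s}(b\cdot\nabla b)\cdot\Lambda_h^{-s}u\,dx$, $-\i\Lambda_h^{-s}(u\cdot\nabla b)\cdot\Lambda_h^{-s}b\,dx$ and $\i\Lambda_h^{-s}(b\cdot\nabla u)\cdot\Lambda_h^{-s}b\,dx$ (for the magnetic pair it is convenient to first extract the exactly cancelling contribution $\i b\cdot\nabla(\Lambda_h^{-s}b\cdot\Lambda_h^{-s}u)\,dx=0$, which uses $\nabla\cdot b=0$ and $b_3|_{x_3=0}=0$, and then treat the remaining commutators with $\Lambda_h^{-s}$). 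The strategy is to extract a factor $\|(\Lambda_h^{-s}u,\Lambda_h^{-s}b)\|_{L^2}$ by Cauchy--Schwarz and bound the $L^2$-norm of $\Lambda_h^{-s}$ of a product by a negative tangential Sobolev product estimate: for $0<s<1$ the Hardy--Littlewood--Sobolev inequality in $\mathbb{R}^2$ gives $\|\Lambda_h^{-s}g(\cdot,x_3)\|_{L^2_{x_h}}\lesssim\|g(\cdot,x_3)\|_{L^{p}_{x_h}}$ with $\frac1p=\frac12+\frac s2$, and combining this with the two-dimensional Gagliardo--Nirenberg inequality in $x_h$ and one-dimensional trace and interpolation inequalities in $x_3$---using $u_3|_{x_3=0}=b_3|_{x_3=0}=0$ together with $\p_3u_3=-\nabla_h\cdot u_h$, $\p_3b_3=-\nabla_h\cdot b_h$, and, for the normal derivatives of $u_h,b_h$, the vorticity formulation together with the boundary relations \eqref{bd-wuwb}---one bounds each of the four terms by $\|(\Lambda_h^{-s}u,\Lambda_h^{-s}b)\|_{L^2}$ times a product of a conormal norm dominated by $\sqrt{\me^m}$ and a dissipation quantity belonging to $\md^m$ or $\md_{tan}^{m-1}$. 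Contributions carrying a factor $\|\p_1\Lambda_h^{-s}u\|_{L^2}$ or $\|\nabla_h\Lambda_h^{-s}b\|_{L^2}$ are absorbed into the left-hand dissipation by Young's inequality.

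Integrating over $[0,t]$ and invoking the a priori bounds in \eqref{assumption}---$\sup_{0\le\tau\le t}\me^m(\tau)\lesssim\delta$, $\int_0^t\md^m(\tau)\,d\tau\lesssim\delta$, and (since $(1+\tau)^\sigma\ge1$) $\int_0^t\md_{tan}^{m-1}(\tau)\,d\tau\lesssim\delta$---the full nonlinear contribution is bounded, up to constants, by $\delta^{3/2}N(t)+\delta^2 N(t)^2$, where $N(t):=\sup_{0\le\tau\le t}\|(\Lambda_h^{-s}u,\Lambda_h^{-s}b)(\tau)\|_{L^2}$. A standard absorption/continuity argument---first deducing $N(t)^2\lesssim\|(\Lambda_h^{-s}u_0,\Lambda_h^{-s}b_0)\|_{L^2}^2+\delta^{3/2}N(t)+\delta^2 N(t)^2$ and then using the smallness of $\delta$ (so that $\delta^2 N(t)^2$ and, after Young's inequality, $\delta^{3/2}N(t)$ can be absorbed on the left)---gives $N(t)^2\lesssim\|(\Lambda_h^{-s}u_0,\Lambda_h^{-s}b_0)\|_{L^2}^2+\delta^2$; feeding this back into the time-integrated differential inequality yields the dissipation bounds, hence \eqref{3801}. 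The main obstacle is precisely the product estimate in the anisotropic half-space: one must carefully orchestrate the two-dimensional HLS and Gagliardo--Nirenberg inequalities in $x_h$ with one-dimensional trace and interpolation in $x_3$, and exploit the vanishing of $u_3,b_3$ and of the tangential vorticity components $\wu_h,\wb_h$ at $x_3=0$---together with the divergence-free conditions---to trade the $\p_3$-derivatives (which are absent from the dissipation norms) for tangential ones, so that no nonlinear term is ever paired under a time integral with the non-decaying full energy $\me^m$ by itself.
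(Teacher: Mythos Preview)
Your setup---applying $\Lambda_h^{-s}$, running the $L^2$ energy identity, and estimating the nonlinear terms via the horizontal Hardy--Littlewood--Sobolev inequality---is exactly the paper's route. The gap is in how you close after integrating in time.

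Once you push the trilinear terms through HLS and the anisotropic inequalities, the only scaling-consistent pointwise bounds you can obtain are of the form
\[
|J_i(t)|\ \lesssim\ \Big(\sqrt{\me_{tan}^{m-1}(t)}\,\sqrt{\md_{tan}^{m-1}(t)}\ +\ \me_{tan}^{m-1}(t)^{3/4}\,\md_{tan}^{m-1}(t)^{1/4}\Big)\,\|(\Lambda_h^{-s}u,\Lambda_h^{-s}b)\|_{L^2},
\]
i.e.\ one full factor goes to $N(t)$ and the remaining two quadratic factors split between energy and dissipation. In particular no factor $\|\p_1\Lambda_h^{-s}u\|_{L^2}$ or $\|\nabla_h\Lambda_h^{-s}b\|_{L^2}$ appears, so nothing can be absorbed into the left-hand dissipation. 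Your claim that the time integral is then $\lesssim\delta^{3/2}N(t)+\delta^2N(t)^2$ using only $\sup\me^m\le\delta$ and $\int\md^m\,d\tau\le\delta$ cannot be right: with just those inputs,
\[
\int_0^t\sqrt{\me}\,\sqrt{\md}\,d\tau\ \le\ (\sup\me)^{1/2}\int_0^t\sqrt{\md}\,d\tau\ \le\ \sqrt{\delta}\cdot\sqrt{t}\,\Big(\int_0^t\md\,d\tau\Big)^{1/2}\ \le\ \delta\sqrt{t},
\]
which is unbounded on $[0,\infty)$.

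What is actually needed from \eqref{assumption} is precisely the time-weighted part you discarded: the decay $(1+\tau)^s\me_{tan}^{m-1}(\tau)\le\delta$ together with $\int_0^t(1+\tau)^{\sigma}\md_{tan}^{m-1}(\tau)\,d\tau\le\delta$. With these, H\"older with weight $(1+\tau)^{\pm\sigma}$ gives
\[
\int_0^t\sqrt{\me_{tan}^{m-1}}\,\sqrt{\md_{tan}^{m-1}}\,d\tau
\le\Big(\int_0^t(1+\tau)^{-\sigma}\me_{tan}^{m-1}\,d\tau\Big)^{1/2}
\Big(\int_0^t(1+\tau)^{\sigma}\md_{tan}^{m-1}\,d\tau\Big)^{1/2}
\lesssim\ \delta,
\]
since $(1+\tau)^{-\sigma}\me_{tan}^{m-1}\lesssim\delta(1+\tau)^{-(s+\sigma)}$ and $s+\sigma>1$; the $\me^{3/4}\md^{1/4}$ variant is handled the same way (this is exactly where the constraint $\frac{9}{10}<\sigma<s$ enters). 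The total nonlinear contribution is then $\lesssim\delta\,N(t)$, and Young's inequality on $\delta N(t)$ yields $N(t)^2\lesssim\|(\Lambda_h^{-s}u_0,\Lambda_h^{-s}b_0)\|_{L^2}^2+\delta^2$, which is \eqref{3801}. So your framework is correct, but the time-decay pieces of \eqref{assumption} are essential and cannot be replaced by the unweighted bounds.
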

			\begin{proof}
				The equation \eqref{eqr} yields directly
				\beq\label{3802}
				\begin{aligned}
					&\frac{d}{dt}\frac{1}{2}\i(|\Lambda_h^{-s}u|^2+|\Lambda_h^{-s} b|^2)dx
					+\ep \i (|\p_2 \Lambda_h^{-s} u|^2+|\p_3 \Lambda_h^{-s} u|^2) dx\\
					&+\i |\p_1 \Lambda_h^{-s} u|^2 dx
					+\i |\nabla_h \Lambda_h^{-s} b|^2 dx+\ep \i |\p_3 \Lambda_h^{-s} b|^2 dx\\
					=&\i \Lambda_h^{-s}(-u \cdot \nabla u) \cdot \Lambda_h^{-s} u \ dx
					+\i \Lambda_h^{-s}(b \cdot \nabla b) \cdot \Lambda_h^{-s} u \ dx\\
					&+\i \Lambda_h^{-s}(-u \cdot \nabla b) \cdot \Lambda_h^{-s} b \ dx
					+\i \Lambda_h^{-s}(b \cdot \nabla u) \cdot \Lambda_h^{-s} b \ dx\\
					:=&J_1+J_2+J_3+J_4.
				\end{aligned}
				\deq
				Using the inequalities \eqref{ie:Sobolev}  and \eqref{a16},
				we may deduce that
				\beqq
				\begin{aligned}
					J_1
					=&-\int_0^{+\infty}\int_{\mathbb{R}^2}
					\Lambda_h^{-s}(u_h \cdot \nabla_h u+u_3 \p_3 u)\cdot \Lambda_h^{-s}u dx_h dx_3\\
					\lesssim
					&\int_0^{+\infty}(\|\Lambda_h^{-s}(u_h \cdot \nabla_h u)\|_{L^2(\mathbb{R}^2)}
					+\|\Lambda_h^{-s}(u_3 \p_3 u)\|_{L^2(\mathbb{R}^2)})
					\|\Lambda_h^{-s}u\|_{L^2(\mathbb{R}^2)}dx_3\\
					\lesssim
					&\int_0^{+\infty}(\|u_h \cdot \nabla_h u\|_{L^{\frac{1}{\frac{1}{2}+\frac{s}{2}}}(\mathbb{R}^2)}
					+\|u_3 \p_3 u\|_{L^{\frac{1}{\frac{1}{2}+\frac{s}{2}}}(\mathbb{R}^2)})
					\|\Lambda_h^{-s}u\|_{L^2(\mathbb{R}^2)}dx_3\\
					\lesssim
					&\int_0^{+\infty}(\|u_h\|_{L^{\frac{2}{s}}(\mathbb{R}^2)}
					\|\nabla_h u\|_{L^2(\mathbb{R}^2)}
					+\|u_3\|_{L^{\frac{2}{s}}(\mathbb{R}^2)}
					\|\p_3 u\|_{L^2(\mathbb{R}^2)})
					\|\Lambda_h^{-s}u\|_{L^2(\mathbb{R}^2)}dx_3\\
					\lesssim
					&\left.\|\|u_h\|_{L^\infty(\mathbb{R}^+)}\right\|_{L^{\frac{2}{s}}(\mathbb{R}^2)}
					\|\nabla_h u\|_{L^2(\mathbb{R}^3_+)}
					\|\Lambda_h^{-s}u\|_{L^2(\mathbb{R}^3_+)}\\
					&+\left.\|\|u_3\|_{L^\infty(\mathbb{R}^+)}\right\|_{L^{\frac{2}{s}}(\mathbb{R}^2)}
					\|\p_3 u\|_{L^2(\mathbb{R}^3_+)}
					\|\Lambda_h^{-s}u\|_{L^2(\mathbb{R}^3_+)}\\
					\lesssim
					&(\|u_h\|_{L^2}\|\p_2 u_h\|_{L^2}
					+\|\p_1 u_h\|_{L^2}\|\p_{12} u_h\|_{L^2})^{\frac{1-s}{2}}
					\|u_h\|_{L^2}^{\frac{2s-1}{2}}\|\p_3 u_h\|_{L^2}^{\frac{1}{2}}
					\|\nabla_h u\|_{L^2}\|\Lambda_h^{-s}u\|_{L^2}\\
					&+(\|u_3\|_{L^2}\|\p_2 u_3\|_{L^2}
					+\|\p_1 u_3\|_{L^2}\|\p_{12} u_3\|_{L^2})^{\frac{1-s}{2}}
					\|u_3\|_{L^2}^{\frac{2s-1}{2}}\|\p_3 u_3\|_{L^2}^{\frac{1}{2}}
					\|\p_3 u\|_{L^2}\|\Lambda_h^{-s}u\|_{L^2}\\
					\lesssim
					&\sqrt{\mathcal{E}_{tan}^{m-1}(t)}\sqrt{\md_{tan}^{m-1}(t)}
					\|\Lambda_h^{-s}u\|_{L^2}
					+\mathcal{E}_{tan}^{m-1}(t)^{\frac34}
					\md_{tan}^{m-1}(t)^{\frac14}
					\|\Lambda_h^{-s}u\|_{L^2}.
				\end{aligned}
				\deqq
				Similarly, we can obtain the estimate
				\beqq
				\begin{aligned}
					J_2
					\lesssim
					&\sqrt{\mathcal{E}_{tan}^{m-1}(t)}\sqrt{\md_{tan}^{m-1}(t)}
					\|\Lambda_h^{-s}u\|_{L^2}
					+\mathcal{E}_{tan}^{m-1}(t)^{\frac34}
					\md_{tan}^{m-1}(t)^{\frac14}
					\|\Lambda_h^{-s}u\|_{L^2},\\
					J_3, J_4
					\lesssim
					&\sqrt{\mathcal{E}_{tan}^{m-1}(t)}\sqrt{\md_{tan}^{m-1}(t)}
					\|\Lambda_h^{-s}b\|_{L^2}
					+\mathcal{E}_{tan}^{m-1}(t)^{\frac34}
					\md_{tan}^{m-1}(t)^{\frac14}
					\|\Lambda_h^{-s}b\|_{L^2}.
				\end{aligned}
				\deqq
				Substituting the estimates of terms for $J_1$  through $J_4$
				into \eqref{3802} and integrating over $[0, t]$, we obtain
				\beqq
				\begin{aligned}
					&\frac{1}{2}\|(\Lambda_h^{-s}u, \Lambda_h^{-s} b)(t)\|_{L^2}^2
					+\ep \int_0^t \|(\p_2 \Lambda_h^{-s} u, \p_3 \Lambda_h^{-s} u,
					\p_3 \Lambda_h^{-s} b)\|_{L^2}^2 d\tau
					+\int_0^t \|(\p_1 \Lambda_h^{-s} u, \nabla_h \Lambda_h^{-s} b)\|_{L^2}^2 d\tau\\
					\lesssim
					&\frac{1}{2}\|(\Lambda_h^{-s}u_0, \Lambda_h^{-s} b_0)\|_{L^2}^2
					+\underset{0\le \tau \le t}{\sup}\|(\Lambda_h^{-s}u, \Lambda_h^{-s} b)(\tau)\|_{L^2}
					\int_0^t \sqrt{\mathcal{E}_{tan}^{m-1}(\tau)}\sqrt{\md_{tan}^{m-1}(\tau)}d\tau\\
					&+\underset{0\le \tau \le t}{\sup}\|(\Lambda_h^{-s}u, \Lambda_h^{-s} b)(\tau)\|_{L^2}
					\int_0^t \mathcal{E}_{tan}^{m-1}(\tau)^{\frac34} \md_{tan}^{m-1}(\tau)^{\frac14} d\tau.
				\end{aligned}
				\deqq
				Using the assumption \eqref{assumption} and H\"{o}lder's inequality, we have
				\beq\label{3803}
				\begin{aligned}
					\int_0^t  \sqrt{\mathcal{E}_{tan}^{m-1}(\tau)}\sqrt{\md_{tan}^{m-1}(\tau)} d\tau
					&\le
					\left\{\int_0^t   \mathcal{E}_{tan}^{m-1}(\tau)
					(1+\tau)^{-\sigma} d\tau\right\}^{\frac{1}{2}}
					\left\{\int_0^t  \md_{tan}^{m-1}(\tau) (1+\tau)^{\sigma}
					d\tau\right\}^{\frac{1}{2}}\\
					&\lesssim
					\delta \left\{\int_0^t (1+\tau)^{-(s+\sigma)} d\tau\right\}^{\frac{1}{2}}
					\lesssim \delta,
				\end{aligned}
				\deq
				and
				\beqq
				\begin{aligned}
					\int_0^t  \mathcal{E}_{tan}^{m-1}(\tau)^{\frac34} \md_{tan}^{m-1}(\tau)^{\frac14} d\tau
					\lesssim
					&\left\{\int_0^t  \mathcal{E}_{tan}^{m-1}(\tau)
					(1+\tau)^{-\frac13\sigma} d\tau\right\}^{\frac{3}{4}}
					\left\{\int_0^t  \md_{tan}^{m-1}(\tau) (1+\tau)^{\sigma}
					d\tau\right\}^{\frac{1}{4}}\\
					\lesssim&
					\delta \left\{\int_0^t (1+\tau)^{-(s+\frac13\sigma)} d\tau\right\}^{\frac{3}{4}}
					\lesssim \delta,
				\end{aligned}
				\deqq
				where we require $\frac34<\sigma<s<1$ in the above estimate.
				Thus, we can obtain the following estimate
				\beqq
				\begin{aligned}
					&\|(\Lambda_h^{-s}u, \Lambda_h^{-s} b)(t)\|_{L^2}^2
					+\ep \int_0^t \|(\p_2 \Lambda_h^{-s} u, \p_3 \Lambda_h^{-s} u,
					\p_3 \Lambda_h^{-s} b)\|_{L^2}^2 d\tau\\
					&+\int_0^t \|(\p_1 \Lambda_h^{-s} u, \nabla_h \Lambda_h^{-s} b)\|_{L^2}^2 d\tau
					\lesssim
					\|(\Lambda_h^{-s}u_0, \Lambda_h^{-s} b_0)\|_{L^2}^2
					+\delta^2.
				\end{aligned}
				\deqq
				Therefore, we complete the proof of this lemma.
			\end{proof}
			
			Next, we establish estimate of normal derivative
			of velocity and magnetic field in negative Sobolev space.
			\begin{lemm}
				Under the assumption \eqref{assumption},
				the smooth solution $(u, b)$ of equation \eqref{eqr} has the estimate
				\beq\label{3901}
				\begin{aligned}
					&\|(\Lambda_h^{-s}\wu,\Lambda_h^{-s} \wb)(t)\|_{L^2}^2
					+\int_0^t \|(\p_1 \Lambda_h^{-s} \wu, \nabla_h \Lambda_h^{-s} \wb)\|_{L^2}^2 d\tau\\
					&+\ep \int_0^t  \|(\p_2 \Lambda_h^{-s} \wu, \p_3 \Lambda_h^{-s} \wu,
					\p_3 \Lambda_h^{-s} \wb)\|_{L^2}^2d\tau
					\lesssim
					\|(\Lambda_h^{-s}\wu,\Lambda_h^{-s} \wb)(0)\|_{L^2}^2+\delta^2.
				\end{aligned}
				\deq
			\end{lemm}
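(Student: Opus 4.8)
Following the template of the proof of \eqref{3801}, the plan is to run the same weighted negative‑Sobolev energy argument, now on the vorticity system \eqref{eqwu}--\eqref{eqwb}. First I would apply the horizontal operator $\Lambda_h^{-s}$ to \eqref{eqwu} and \eqref{eqwb}, take the $L^2(\mathbb{R}^3_+)$ inner products of the results with $\Lambda_h^{-s}\wu$ and $\Lambda_h^{-s}\wb$ respectively, and add. Since $\Lambda_h^{-s}$ is a horizontal Fourier multiplier it commutes with $\p_1,\p_2,\p_3$ and preserves the trace on $\{x_3=0\}$, so the boundary conditions \eqref{bd-wuwb} still force every boundary term produced by integrating the diffusion operators $-\p_1^2-\ep\p_2^2-\ep\p_3^2$ and $-\Delta_h-\ep\p_3^2$ by parts to vanish; this yields on the left-hand side exactly $\|(\p_1\Lambda_h^{-s}\wu,\nabla_h\Lambda_h^{-s}\wb)\|_{L^2}^2+\ep\|(\p_2\Lambda_h^{-s}\wu,\p_3\Lambda_h^{-s}\wu,\p_3\Lambda_h^{-s}\wb)\|_{L^2}^2$. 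The linear coupling $-\p_2\wb$, $-\p_2\wu$ contributes nothing, because $\i\Lambda_h^{-s}\p_2\wb\cdot\Lambda_h^{-s}\wu\,dx+\i\Lambda_h^{-s}\p_2\wu\cdot\Lambda_h^{-s}\wb\,dx=\i\p_2(\Lambda_h^{-s}\wb\cdot\Lambda_h^{-s}\wu)\,dx=0$.

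For the nonlinear right-hand side there are eight terms: the four transport/stretching terms $u\cdot\nabla\wu$, $\wu\cdot\nabla u$, $b\cdot\nabla\wb$, $\wb\cdot\nabla b$ paired against $\Lambda_h^{-s}\wu$, and the four in the $\wb$-equation paired against $\Lambda_h^{-s}\wb$. Each is treated exactly as $J_1$--$J_4$ in the proof of \eqref{3801}: split the advective field into its $\nabla_h$ and $\p_3$ parts, estimate $\|\Lambda_h^{-s}(\cdot)\|_{L^2(\mathbb{R}^2)}$ by the $L^{2/(1+s)}(\mathbb{R}^2)$ norm through the Hardy--Littlewood--Sobolev inequality \eqref{a16}, apply Hölder in $x_h$ so that one factor lies in $L^{2/s}(\mathbb{R}^2)$ and the other in $L^2(\mathbb{R}^2)$, use the $L^\infty$ embedding in $x_3$, and close with the anisotropic inequality \eqref{ie:Sobolev}; throughout, $\Lambda_h^{-s}$ is never bounded naively on $L^2$ but only through this Hardy--Littlewood--Sobolev route. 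The two remaining terms $\nabla(u\cdot\nabla)\times b$ and $\nabla(b\cdot\nabla)\times u$ are first rewritten through \eqref{3306}--\eqref{3308}, so that the divergence-free conditions convert the $\p_3 u_3$, $\p_3 b_3$ entries into horizontal derivatives, and then estimated the same way. Collecting, the right-hand side is bounded by $\sup_{0\le\tau\le t}\|(\Lambda_h^{-s}\wu,\Lambda_h^{-s}\wb)(\tau)\|_{L^2}$ times a time integral of quantities of the type $\sqrt{\me_{tan}^{m-1}}\sqrt{\md^{m}}+(\me_{tan}^{m-1})^{3/4}(\md^{m})^{1/4}$, plus a piece carrying a dissipation factor $\|(\p_1\Lambda_h^{-s}\wu,\nabla_h\Lambda_h^{-s}\wb)\|_{L^2}$ that can be absorbed by the left-hand side after Young's inequality.

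Finally I would invoke the bootstrap assumption \eqref{assumption}, which provides $\me_{tan}^{m-1}(\tau)\lesssim\delta(1+\tau)^{-s}$, $\int_0^t(1+\tau)^{\sigma}\md_{tan}^{m-1}(\tau)\,d\tau\lesssim\delta$ and $\int_0^t\md^{m}(\tau)\,d\tau\lesssim\delta$; splitting the time integral with the weight $(1+\tau)^{\pm\sigma}$ exactly as in \eqref{3803} and using $\tfrac{9}{10}<\sigma<s<1$ (so the resulting power of $(1+\tau)$ is integrable) bounds it by $\lesssim\delta$. Hence the nonlinear contribution is $\lesssim\delta\,\sup_{\tau}\|(\Lambda_h^{-s}\wu,\Lambda_h^{-s}\wb)(\tau)\|_{L^2}\le\tfrac12\sup_{\tau}\|(\Lambda_h^{-s}\wu,\Lambda_h^{-s}\wb)(\tau)\|_{L^2}^2+C\delta^2$, and moving the first term to the left-hand side yields \eqref{3901}. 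I expect the main obstacle to be the bookkeeping of the vertical vorticity derivatives $\p_3\wu$, $\p_3\wb$ (and $\p_3 u_h$, $\p_3 b_h$), which are controlled only by the non-decaying energy $\me^{m}$ rather than by a dissipation norm: whenever they appear they must be kept in the $L^2$ slot and paired with a time-decaying factor, or traded through $\nabla\cdot\wu=\nabla\cdot\wb=\nabla\cdot u=\nabla\cdot b=0$ and — for the magnetic terms $b\cdot\nabla\wb$ and $b\cdot\nabla\wu$ — through the near-cancellation obtained by integrating the $\nabla$ by parts (the boundary term vanishing because $b_3|_{x_3=0}=0$), so that only quantities controlled by $\md_{tan}^{m-1}$ or $\md^{m}$ survive in the offending position.
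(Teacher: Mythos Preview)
Your overall framework is correct and matches the paper's, but there is a genuine gap in the treatment of the transport term $u_3\p_3\wu$ (and analogously $b_3\p_3\wb$, $u_3\p_3\wb$, $b_3\p_3\wu$). Following the $J_1$ template literally gives, after Hardy--Littlewood--Sobolev and H\"older,
\[
\big\|\|u_3\|_{L^\infty(\mathbb{R}_+)}\big\|_{L^{2/s}(\mathbb{R}^2)}\,\|\p_3\wu\|_{L^2}\,\|\Lambda_h^{-s}\wu\|_{L^2}
\ \lesssim\ \mathcal{E}_{tan}^{m-1}(t)^{1/4}\,\md_{tan}^{m-1}(t)^{1/4}\,\sqrt{\me^m}\,\|\Lambda_h^{-s}\wu\|_{L^2},
\]
and the resulting time integral $\int_0^t \mathcal{E}_{tan}^{m-1}(\tau)^{1/4}\md_{tan}^{m-1}(\tau)^{1/4}\,d\tau$ is \emph{not} finite under \eqref{assumption}: splitting with the weight $(1+\tau)^{\pm\sigma}$ requires $\tfrac{s}{3}+\tfrac{\sigma}{3}>1$, which is impossible for $s,\sigma<1$. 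None of your proposed remedies fixes this for the horizontal components $\p_3\wu_h$: the divergence-free identity only converts $\p_3\wu_3$, and the $b$-cancellation is irrelevant here.

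What the paper does instead is a Hardy-type trick exploiting $u_3|_{x_3=0}=0$: writing $u_3=\int_0^{x_3}\p_3 u_3\,d\xi$ near the boundary and $u_3=\varphi^{-1}u_3\cdot\varphi$ away from it, one gets $|u_3\p_3\wu|\lesssim(\|\p_3 u_3\|_{L^\infty_{x_3}}+|u_3|)\,|Z_3\wu|$, so the dangerous $\p_3\wu$ is traded for the conormal $Z_3\wu$. Then the interpolation inequality $\eqref{ie:Sobolev}_4$, namely $\|Z_3\wu\|_{L^2}\lesssim\|\wu\|_{L^2}+\|\wu\|_{L^2}^{2/3}\|Z_3^3\wu\|_{L^2}^{1/3}$, converts most of this into the decaying quantity $\|\wu\|_{L^2}\le\mathcal{E}_{tan}^{m-1}(t)^{1/2}$, leaving only a $\|(\wu,Z_3^3\wu)\|_{L^2}^{1/3}$ factor bounded by the non-decaying energy. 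The net effect is an improved exponent, $\mathcal{E}_{tan}^{m-1}(t)^{7/12}\md_{tan}^{m-1}(t)^{1/4}$, whose time integral \emph{is} finite precisely when $\tfrac{7s}{9}+\tfrac{\sigma}{3}>1$ --- and this is exactly where the lower bound $\sigma>\tfrac{9}{10}$ in \eqref{assumption} is used. Without this step your argument does not close.
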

			\begin{proof}
				Using the equations \eqref{eqwu} and \eqref{eqwb} and
				integrating by part, it is easy to check that
				\begin{equation}\label{3902}
					\begin{aligned}
						&\frac{d}{dt}\frac{1}{2}\i(|\Lambda_h^{-s}\wu|^2+|\Lambda_h^{-s} \wb|^2)dx
						+\ep \i (|\p_2 \Lambda_h^{-s} \wu|^2+|\p_3 \Lambda_h^{-s} \wu|^2) dx\\
						&+\i |\p_1 \Lambda_h^{-s} \wu|^2 dx
						+\i |\nabla_h \Lambda_h^{-s} \wb|^2 dx+\ep \i |\p_3 \Lambda_h^{-s} \wb|^2 dx\\
						=&\i \Lambda_h^{-s}(-u\cdot \nabla \wu) \cdot \Lambda_h^{-s} \wu \ dx
						+\i \Lambda_h^{-s}(\wu\cdot \nabla u) \cdot \Lambda_h^{-s} \wu \ dx\\
						&+\i \Lambda_h^{-s}(b \cdot \nabla \wb) \cdot \Lambda_h^{-s} \wu \ dx
						+\i \Lambda_h^{-s}(-\wb\cdot \nabla b) \cdot \Lambda_h^{-s} \wu \ dx\\
						&
						+\i \Lambda_h^{-s}(-u\cdot \nabla \wb) \cdot \Lambda_h^{-s} \wb \ dx
						+\i \Lambda_h^{-s}(-\nabla(u\cdot \nabla)\times b) \cdot \Lambda_h^{-s} \wb \ dx\\
						&+\i \Lambda_h^{-s}(b\cdot \nabla \wu) \cdot \Lambda_h^{-s} \wb \ dx
						+\i \Lambda_h^{-s}(\nabla(b\cdot \nabla)\times u) \cdot \Lambda_h^{-s} \wb \ dx
						:=\sum_{i=1}^{8}K_i.
					\end{aligned}
				\end{equation}
				The H\"{o}lder's inequality and  inequality \eqref{a16} in Lemma \ref{H-L} yield directly
				\begin{equation}\label{3903}
					\begin{aligned}
						\int_{\mathbb{R}_+^3} \Lambda_h^{-s}(u\cdot \nabla \omega^u)
						\cdot \Lambda_h^{-s} \omega^u dx
						\lesssim \int_0^{+\infty}
						(\|u_h \partial_h \omega^u\|_{L^{\frac{1}{\frac12+\frac{s}{2}}}(\mathbb{R}^2)}
						+\|u_3 \partial_3 \omega^u\|_{L^{\frac{1}{\frac12+\frac{s}{2}}}(\mathbb{R}^2)})
						\|\Lambda_h^{-s} \omega^u\|_{L^2(\mathbb{R}^2)}dx_3.
					\end{aligned}
				\end{equation}
				Similar to the estimate of term $J_1$, it is easy to check that
				\begin{equation}\label{3904}
					\begin{aligned}
						&~~~~\int_0^{+\infty}
						\|u_h \partial_h \omega^u\|_{L^{\frac{1}{\frac12+\frac{s}{2}}}(\mathbb{R}^2)}
						\|\Lambda_h^{-s} \omega^u\|_{L^2(\mathbb{R}^2)}dx_3\\
						&\lesssim
						\left\|\|u_h \|_{L^\infty(\mathbb{R}_+)}\right\|_{L^{\frac{2}{s}}}
						\|\partial_h \omega^u\|_{L^2}
						\|\Lambda_h^{-s} \omega^u\|_{L^2}\\
						&\lesssim
						\left(\|u_h\|_{L^2}\|\partial_2 u_h\|_{L^2}
						+\|\partial_1 u_h\|_{L^2}\|\partial_{12}u_h\|_{L^2}\right)^{\frac{1-s}{2}}
						\|u_h\|_{L^2}^{\frac{2s-1}{2}}
						\|\partial_3 u_h\|_{L^2}^{\frac12}
						\|\partial_h \omega^u\|_{L^2}
						\|\Lambda_h^{-s} \omega^u\|_{L^2}\\
						&\lesssim \sqrt{\mathcal{E}_{tan}^{m-1}(t)} \sqrt{\md_{tan}^{m-1}(t)}
						\|\Lambda_h^{-s} \omega^u\|_{L^2}.
					\end{aligned}
				\end{equation}
				Now, let us deal with the difficult term $\int_0^{+\infty}
				\|u_3 \partial_3 \omega^u\|_{L^{\frac{1}{\frac12+\frac{s}{2}}}(\mathbb{R}^2)}
				\|\Lambda_h^{-s} \omega^u\|_{L^2(\mathbb{R}^2)}dx_3$.
				Indeed, we may check that
				\begin{equation*}
					\begin{aligned}
						u_3 \partial_3 \omega^u
						&=u_3 \partial_3 \omega^u \chi(x_3)+u_3 \partial_3 \omega^u (1-\chi(x_3))\\
						&=\int_0^{x_3} \partial_3 u_3 d\xi \partial_3 \omega^u \chi(x_3)
						+\varphi^{-1}u_3 \varphi\partial_3 \omega^u (1-\chi(x_3)),
					\end{aligned}
				\end{equation*}
				which yields directly
				\begin{equation*}
					|u_3 \partial_3 \omega^u_h|
					\lesssim \|\partial_3 u_3\|_{L^\infty(\mathbb{R}_+)}|Z_3 \omega^u_h|
					+|u_3||Z_3 \omega^u_h|.
				\end{equation*}
				Here $\chi(x_3)$ is a smooth cutoff function supported
				near the boundary $x_3=0$.
				Then, this decomposition yields directly
				\begin{equation}\label{3905}
					\begin{aligned}
						&~~~~\int_0^{+\infty}
						\|u_3 \partial_3 \omega^u\|_{L^{\frac{1}{\frac12+\frac{s}{2}}}(\mathbb{R}^2)}
						\|\Lambda_h^{-s} \omega^u\|_{L^2(\mathbb{R}^2)}dx_3\\
						&\lesssim\int_0^{+\infty}
						\left(\left\|\|\partial_3 u_3\|_{L^\infty(\mathbb{R}_+)}\right\|_{L^{\frac2s}(\mathbb{R}^2)}
						+\left\|\|u_3\|_{L^\infty(\mathbb{R}_+)}\right\|_{L^{\frac2s}(\mathbb{R}^2)}\right)
						\|Z_3 \omega^u\|_{L^2(\mathbb{R}^2)}
						\|\Lambda_h^{-s} \omega^u\|_{L^2(\mathbb{R}^2)}dx_3\\
						&\lesssim\left(\left\|\|\partial_3 u_3\|_{L^\infty(\mathbb{R}_+)}\right\|_{L^{\frac2s}(\mathbb{R}^2)}
						+\left\|\|u_3\|_{L^\infty(\mathbb{R}_+)}\right\|_{L^{\frac2s}(\mathbb{R}^2)}\right)
						\|Z_3 \omega^u\|_{L^2}\|\Lambda_h^{-s} \omega^u\|_{L^2}\\
						&\lesssim\left(\left\|\|\partial_3 u_3\|_{L^\infty(\mathbb{R}_+)}\right\|_{L^{\frac2s}(\mathbb{R}^2)}
						+\left\|\|u_3\|_{L^\infty(\mathbb{R}_+)}\right\|_{L^{\frac2s}(\mathbb{R}^2)}\right)
						\|\Lambda_h^{-s} \omega^u\|_{L^2}\\
						&\quad \quad \times \left(\|\omega^u\|_{L^2}
						+\|\omega^u\|_{L^2}^{\frac34}\|Z_3^3 \omega^u\|_{L^2}^{\frac14}
						+\|\omega^u\|_{L^2}^{\frac23}\|Z_3^3 \omega^u\|_{L^2}^{\frac13} \right)\\
						&\lesssim \|(\omega^u, Z_3^3 \omega^u)\|_{L^2}^{\frac13}
						\mathcal{E}_{tan}^{m-1}(t)^{\frac{7}{12}}
						\md_{tan}^{m-1}(t)^{\frac14}
						\|\Lambda_h^{-s} \omega^u\|_{L^2},
					\end{aligned}
				\end{equation}
				where we have used the estimate
				\begin{equation*}
					\begin{aligned}
						&~~~~\left\|\|\partial_3 u_3\|_{L^\infty(\mathbb{R}_+)}\right\|_{L^{\frac2s}(\mathbb{R}^2)}
						+\left\|\|u_3\|_{L^\infty(\mathbb{R}_+)}\right\|_{L^{\frac2s}(\mathbb{R}^2)}\\
						&\lesssim \left(\|\partial_3 u_3\|_{L^2}\|\partial_{23} u_3\|_{L^2}
						+\|\partial_{13} u_3\|_{L^2}
						\|\partial_{123} u_3\|_{L^2}\right)^{\frac{1-s}{2}}
						\|\partial_3 u_3\|_{L^2}^{\frac{2s-1}{2}}
						\|\partial_{33} u_3\|_{L^2}^{\frac12}\\
						&~~~~+(\|u_3\|_{L^2}\|\partial_2 u_3\|_{L^2}
						+\|\partial_1 u_3\|_{L^2}\|\partial_{12}u_3\|_{L^2})^{\frac{1-s}{2}}
						\|u_3\|_{L^2}^{\frac{2s-1}{2}}
						\|\partial_3 u_3\|_{L^2}^{\frac12}\\
						&\lesssim \mathcal{E}_{tan}^{m-1}(t)^{\frac14}
						\md_{tan}^{m-1}(t)^{\frac14}.
					\end{aligned}
				\end{equation*}
				Substituting estimates \eqref{3904} and \eqref{3905}  into \eqref{3903}, we have
				\beq\label{termk1}
				\begin{aligned}
					K_1
					\lesssim
					\sqrt{\mathcal{E}_{tan}^{m-1}(t)} \sqrt{\md_{tan}^{m-1}(t)}
					\|\Lambda_h^{-s} \omega^u\|_{L^2}
					+\|(\omega^u, Z_3^3 \omega^u)\|_{L^2}^{\frac13}
					\mathcal{E}_{tan}^{m-1}(t)^{\frac{7}{12}}
					\md_{tan}^{m-1}(t)^{\frac14}
					\|\Lambda_h^{-s} \omega^u\|_{L^2}.
				\end{aligned}
				\deq
				Similarly, it is easy to check that
				\beqq
				\begin{aligned}
					K_3
					\lesssim
					&\sqrt{\mathcal{E}_{tan}^{m-1}(t)} \sqrt{\md_{tan}^{m-1}(t)}
					\|\Lambda_h^{-s} \omega^u\|_{L^2}
					+\|(\omega^b_h, Z_3^3 \omega^b)\|_{L^2}^{\frac13}
					\mathcal{E}_{tan}^{m-1}(t)^{\frac{7}{12}}
					\md_{tan}^{m-1}(t)^{\frac14}
					\|\Lambda_h^{-s} \omega^u\|_{L^2},\\
					K_5
					\lesssim
					&\sqrt{\mathcal{E}_{tan}^{m-1}(t)} \sqrt{\md_{tan}^{m-1}(t)}
					\|\Lambda_h^{-s} \omega^b\|_{L^2}
					+\|(\omega^b_h, Z_3^3 \omega^b)\|_{L^2}^{\frac13}
					\mathcal{E}_{tan}^{m-1}(t)^{\frac{7}{12}}
					\md_{tan}^{m-1}(t)^{\frac14}
					\|\Lambda_h^{-s} \omega^b\|_{L^2},\\
					K_7
					\lesssim
					&\sqrt{\mathcal{E}_{tan}^{m-1}(t)} \sqrt{\md_{tan}^{m-1}(t)}
					\|\Lambda_h^{-s} \omega^b\|_{L^2}
					+\|(\omega^u, Z_3^3 \omega^u)\|_{L^2}^{\frac13}
					\mathcal{E}_{tan}^{m-1}(t)^{\frac{7}{12}}
					\md_{tan}^{m-1}(t)^{\frac14}
					\|\Lambda_h^{-s} \omega^b\|_{L^2}.
				\end{aligned}
				\deqq
				The H\"{o}lder's inequality and inequality \eqref{a16} in Lemma \ref{H-L} yield directly
				\beqq
				\begin{aligned}
					K_2
					=&\int_{\mathbb{R}_+^3} \Lambda_h^{-s}(w^u\cdot \nabla u)\cdot \Lambda_h^{-s} \omega^u dx\\
					\lesssim
					&\int_0^{+\infty}
					(\|\omega^u_h \cdot \partial_h u\|_{L^{\frac{1}{\frac12+\frac{s}{2}}}(\mathbb{R}^2)}
					+\|\omega_3^u \partial_3 u\|_{L^{\frac{1}{\frac12+\frac{s}{2}}}(\mathbb{R}^2)})
					\|\Lambda_h^{-s} \omega^u\|_{L^2(\mathbb{R}^2)}dx_3\\
					\lesssim
					&
					(\|\partial_h u\|_{L^2}\|\partial_2 \partial_h u\|_{L^2}
					+\|\partial_1 \partial_h u\|_{L^2}
					\|\partial_{12}\partial_h u\|_{L^2})^{\frac{1-s}{2}}
					\|\partial_h u\|_{L^2}^{\frac{2s-1}{2}}
					\|\partial_3 \partial_h u\|_{L^2}^{\frac12}
					\|\omega^u_h\|_{L^2} \|\Lambda_h^{-s} \omega^u\|_{L^2}\\
					&+ (\|\omega_3^u\|_{L^2}\|\partial_2 \omega_3^u\|_{L^2}
					+\|\partial_1 \omega_3^u\|_{L^2}\|\partial_{12}\omega_3^u\|_{L^2})^{\frac{1-s}{2}}
					\|\omega_3^u\|_{L^2}^{\frac{2s-1}{2}}
					\|\partial_3 \omega_3^u\|_{L^2}^{\frac12}
					\|\partial_3 u\|_{L^2}
					\|\Lambda_h^{-s} \omega^u\|_{L^2}\\
					\lesssim
					&  \sqrt{\mathcal{E}_{tan}^{m-1}(t)} \sqrt{\md_{tan}^{m-1}(t)}
					\|\Lambda_h^{-s} \omega^u\|_{L^2}.
				\end{aligned}
				\deqq
				Similarly, it is easy to check that
				\beqq
				K_4
				\lesssim
				\sqrt{\mathcal{E}_{tan}^{m-1}(t)} \sqrt{\md_{tan}^{m-1}(t)}
				\|\Lambda_h^{-s} \omega^u\|_{L^2}.
				\deqq
				Finally, due to the relations \eqref{3306}-\eqref{3308}, we may write
				\beqq
				\begin{aligned}
					K_6
					\le
					&\int_0^{+\infty}\|\Lambda_h^{-s}(\nabla(u\cdot \nabla)\times b)\|_{L^2(\mathbb{R}^2)}
					\|\Lambda_h^{-s} \wb\|_{L^2(\mathbb{R}^2)} dx_3\\
					\lesssim
					&\int_0^{+\infty}\|(\nabla(u\cdot \nabla)\times b)
					\|_{L^{\frac{1}{\frac12+\frac{s}{2}}}(\mathbb{R}^2)})
					\|\Lambda_h^{-s} \wb\|_{L^2(\mathbb{R}^2)} dx_3\\
					\lesssim
					&\int_0^{+\infty}\|\nabla_h u\|_{L^{\frac{2}{s}}(\mathbb{R}^2)}
					\|\nabla_h b\|_{L^2(\mathbb{R}^2)}
					\|\Lambda_h^{-s} \wb\|_{L^2(\mathbb{R}^2)} dx_3\\
					&+\int_0^{+\infty}\|\nabla_h (u, b)\|_{L^{\frac{2}{s}}(\mathbb{R}^2)}
					\|\p_3 (u_h, b_h)\|_{L^2(\mathbb{R}^2)}
					\|\Lambda_h^{-s} \wb\|_{L^2(\mathbb{R}^2)} dx_3\\
					\lesssim
					&\sqrt{\mathcal{E}_{tan}^{m-1}(t)} \sqrt{\md_{tan}^{m-1}(t)}
					\|\Lambda_h^{-s} \wb\|_{L^2}.
				\end{aligned}
				\deqq
				Similarly, it is easy to check that
				\beqq
				K_8
				\lesssim
				\sqrt{\mathcal{E}_{tan}^{m-1}(t)} \sqrt{\md_{tan}^{m-1}(t)}
				\|\Lambda_h^{-s} \wb\|_{L^2}.
				\deqq
				Substituting the estimates of terms for  $K_1$
				through $K_8$ into \eqref{3902}, we have
				\beq\label{3906}
				\begin{aligned}
					&\frac{1}{2}\|(\Lambda_h^{-s}\wu,\Lambda_h^{-s} \wb)(t)\|_{L^2}^2
					+\int_0^t \|(\p_1 \Lambda_h^{-s} \wu, \nabla_h \Lambda_h^{-s} \wb)\|_{L^2}^2 d\tau\\
					&+\ep \int_0^t  \|(\p_2 \Lambda_h^{-s} \wu, \p_3 \Lambda_h^{-s} \wu,
					\p_3 \Lambda_h^{-s} \wb)\|_{L^2}^2d\tau\\
					\lesssim
					&\frac{1}{2}\|(\Lambda_h^{-s}\wu,\Lambda_h^{-s} \wb)(0)\|_{L^2}^2
					+\underset{0\le \tau \le t}{\sup}\|(\Lambda_h^{-s} \omega^u,
					\Lambda_h^{-s} \omega^b)(\tau)\|_{L^2}
					\int_0^t
					\sqrt{\mathcal{E}_{tan}^{m-1}(\tau)}
					\sqrt{\md_{tan}^{m-1}(\tau)}d\tau\\
					&+\underset{0\le \tau \le t}{\sup}
					\|(\Lambda_h^{-s} \omega^u,\Lambda_h^{-s} \omega^b)(\tau)\|_{L^2}
					\underset{0\le \tau \le t}{\sup}
					\|(\omega^u, \omega^b, Z_3^3 \omega^u, Z_3^3 \omega^b)\|_{L^2}^{\frac13}
					\int_0^t  \mathcal{E}_{tan}^{m-1}(\tau)^{\frac{7}{12}}
					\md_{tan}^{m-1}(\tau)^{\frac14} d \tau.
				\end{aligned}
				\deq
				Using the assumption \eqref{assumption}, it is easy to check that
				\begin{equation}\label{3907}
					\begin{aligned}
						\int_0^t \mathcal{E}_{tan}^{m-1}(\tau)^{\frac{7}{12}}
						\md_{tan}^{m-1}(\tau)^{\frac14} d\tau
						&\lesssim
						\bigg\{\int_0^t \md_{tan}^{m-1}(\tau)
						(1+\tau)^{\sigma}d\tau\bigg\}^{\frac{1}{4}}
						\bigg\{\int_0^t \mathcal{E}_{tan}^{m-1}(\tau)^{\frac{7}{9}}(1+\tau)^{-\frac13 \sigma}
						d\tau\bigg\}^{\frac{3}{4}}\\
						&\lesssim
						\delta^{\frac56}\bigg\{\int_0^t (1+\tau)^{-(\frac{7}{9}s+\frac13\sigma)}d\tau
						\bigg\}^{\frac{1}{2}}
						\lesssim \delta^{\frac56},
					\end{aligned}
				\end{equation}
				where we have used the condition $\frac{9}{10}<\sigma<s<1$.
				Substituting \eqref{3907} and \eqref{3803} into \eqref{3906}, then we have
				\beqq
				\begin{aligned}
					&\|(\Lambda_h^{-s}\wu,\Lambda_h^{-s} \wb)(t)\|_{L^2}^2
					+\int_0^t \|(\p_1 \Lambda_h^{-s} \wu, \nabla_h \Lambda_h^{-s} \wb)\|_{L^2}^2 d\tau\\
					&+\ep \int_0^t  \|(\p_2 \Lambda_h^{-s} \wu, \p_3 \Lambda_h^{-s} \wu,
					\p_3 \Lambda_h^{-s} \wb)\|_{L^2}^2d\tau\\
					\lesssim
					&\|(\Lambda_h^{-s}\wu,\Lambda_h^{-s} \wb)(0)\|_{L^2}^2+\delta^2.
				\end{aligned}
				\deqq
				Therefore, we complete the proof of this lemma.
			\end{proof}

			\subsection{Decay in time estimate of velocity and magnetic field}
			In this subsection, we will establish the decay rate estimate
			for velocity and magnetic field. This decay in time estimate
			will help us to close the energy estimate.

			\begin{lemm}
				Under the assumption \eqref{assumption},
				the smooth solution $(u, b)$ of equation \eqref{eqr} has the estimate
				\beq\label{31001}
				(1+t)^s \mathcal{E}^{m-1}_{tan}(t) \lesssim C_0,
				\deq
				where the constant $C_0$ is defined in \eqref{co}.
			\end{lemm}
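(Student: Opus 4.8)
The plan is to construct a modified tangential energy equivalent to $\me_{tan}^{m-1}(t)$ that satisfies a closed differential inequality, and then to upgrade it to the claimed decay by feeding in the negative-horizontal-Sobolev bounds \eqref{3801} and \eqref{3901}.

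\textbf{Step 1 (closed differential inequality).} I would add \eqref{3101} with $k=m-1$, \eqref{3201} with $k=m-2$, a small multiple $\eta_1$ of the sum of \eqref{3501} over all $|\ah|\le m-2$, and a small multiple $\eta_2$ of the sum of \eqref{3601} over all $|\ah|\le m-3$, and define
\beqq
\widehat{\me}_{tan}^{m-1}(t):=\|(u,b)(t)\|_{H^{m-1}_{tan}}^2+\|(\wu,\wb)(t)\|_{H^{m-2}_{tan}}^2
+\eta_1\!\!\!\sum_{|\ah|\le m-2}\!\!\!\i \p_2 Z^{\ah}b\cdot Z^{\ah}u\,dx
+\eta_2\!\!\!\sum_{|\ah|\le m-3}\!\!\!\i \p_2 Z^{\ah}\wb\cdot Z^{\ah}\wu\,dx .
\deqq
By Cauchy--Schwarz, together with $\|\p_2 Z^{\ah}b\|_{L^2}\le\|b\|_{H^{m-1}_{tan}}$ for $|\ah|\le m-2$ and $\|\p_2 Z^{\ah}\wb\|_{L^2}\le\|\wb\|_{H^{m-2}_{tan}}$ for $|\ah|\le m-3$, one has $\widehat{\me}_{tan}^{m-1}(t)\approx\me_{tan}^{m-1}(t)$ once $\eta_1,\eta_2$ are small. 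The point of the construction is that each linear and $\ep$-weighted source term on the right-hand sides of \eqref{3501} and \eqref{3601} — namely $\|\nabla_h b\|^2$, $\|\p_1 u\|^2$, $\ep\|(\p_3 b,\p_{23}u)\|^2$ and their $\wu,\wb$ analogues $\|(\nabla_h\wb,\nabla_h^2\wb,\p_{11}\wu)\|^2$, $\ep\|(\p_{22}\wu,\p_{23}\wu,\p_3\wb)\|^2$ — lies, at the appropriate conormal order, inside the dissipation already produced with coefficient $1$ by \eqref{3101} (at level $m-1$) and \eqref{3201} (at level $m-2$); hence for $\eta_1,\eta_2$ small these are absorbed, while $\tfrac12\|\p_2 u\|_{H^{m-2}_{tan}}^2$ and $\tfrac12\|\p_2\wu\|_{H^{m-3}_{tan}}^2$ supply the missing pieces of $\md_{tan}^{m-1}(t)$. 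All the nonlinear terms are controlled by $\sqrt{\me^m(t)}\,\md_{tan}^{m-1}(t)$, and $\me^m(t)\le\delta$ by \eqref{assumption}, so after taking $\delta$ small I expect
\beqq
\frac{d}{dt}\widehat{\me}_{tan}^{m-1}(t)+\frac{\ka}{2}\,\md_{tan}^{m-1}(t)\le 0 .
\deqq

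\textbf{Step 2 (coercivity by interpolation).} Since $\md_{tan}^{m-1}(t)$ dominates the full horizontal gradient of $u,b,\wu,\wb$ at the orders appearing in $\me_{tan}^{m-1}$, a first splitting gives $\me_{tan}^{m-1}(t)\lesssim\md_{tan}^{m-1}(t)+\|(u,b,\wu,\wb)(t)\|_{L^2}^2$. For the low-frequency part I would use the interpolation $\|f\|_{L^2(\mathbb{R}^2)}\lesssim\|\Lambda_h^{-s}f\|_{L^2(\mathbb{R}^2)}^{1/(1+s)}\|\nabla_h f\|_{L^2(\mathbb{R}^2)}^{s/(1+s)}$ (Plancherel plus H\"older in $\xi_h$), integrate it in $x_3$ with H\"older exponents $1+s$ and $(1+s)/s$ to get $\|f\|_{L^2}^2\lesssim\|\Lambda_h^{-s}f\|_{L^2}^{2/(1+s)}\|\nabla_h f\|_{L^2}^{2s/(1+s)}$, and apply it to $f\in\{u,b,\wu,\wb\}$; combined with $\|\nabla_h f\|_{L^2}^2\lesssim\md_{tan}^{m-1}(t)$ and the uniform bound $\|(\Lambda_h^{-s}u,\Lambda_h^{-s}b,\Lambda_h^{-s}\wu,\Lambda_h^{-s}\wb)(t)\|_{L^2}^2\lesssim C_0$ supplied by \eqref{3801}, \eqref{3901} and the definition \eqref{co} of $C_0$, this yields $\|(u,b,\wu,\wb)(t)\|_{L^2}^2\lesssim C_0^{1/(1+s)}\big(\md_{tan}^{m-1}(t)\big)^{s/(1+s)}$. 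For the leftover $\md_{tan}^{m-1}(t)$ I would use $\md_{tan}^{m-1}(t)\lesssim\me^m(t)\le\delta\lesssim C_0$ (again by \eqref{assumption}) to write $\md_{tan}^{m-1}(t)\le\delta^{1/(1+s)}\big(\md_{tan}^{m-1}(t)\big)^{s/(1+s)}$. Altogether $\widehat{\me}_{tan}^{m-1}(t)\lesssim C_0^{1/(1+s)}\big(\md_{tan}^{m-1}(t)\big)^{s/(1+s)}$, equivalently $\md_{tan}^{m-1}(t)\gtrsim C_0^{-1/s}\big(\widehat{\me}_{tan}^{m-1}(t)\big)^{1+1/s}$.

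\textbf{Step 3 (ODE and conclusion).} Substituting this back into the differential inequality of Step 1 gives $\frac{d}{dt}\widehat{\me}_{tan}^{m-1}(t)+\kappa' C_0^{-1/s}\big(\widehat{\me}_{tan}^{m-1}(t)\big)^{1+1/s}\le 0$ for some $\kappa'>0$; integrating this Bernoulli-type inequality and using $\widehat{\me}_{tan}^{m-1}(0)\lesssim\me^m(0)\lesssim C_0$ yields $\widehat{\me}_{tan}^{m-1}(t)\lesssim C_0(1+t)^{-s}$, and since $\widehat{\me}_{tan}^{m-1}\approx\me_{tan}^{m-1}$ this is exactly \eqref{31001}. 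The main obstacle I anticipate is Step 1: one has to verify, carefully and term by term, that the sources generated by the enhanced-dissipation identities \eqref{3501} and \eqref{3601} do not exceed — at their respective conormal orders, and after the $x_3$-derivative bookkeeping such as $\p_{23}Z^{\ah}=\p_3(Z_2 Z^{\ah})$ — the dissipation already gained from \eqref{3101} and \eqref{3201}, so that the small coefficients $\eta_1,\eta_2$ genuinely close the inequality; the interpolation in Step 2 (where one also exploits $s/(1+s)<1$ together with the a priori smallness of $\md_{tan}^{m-1}$) and the Bernoulli ODE in Step 3 are then routine.
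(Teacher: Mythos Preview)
Your approach is essentially identical to the paper's: the same modified tangential energy (the paper calls it $\widetilde{\mathcal{E}}^{m-1}_{tan}$), the same closed differential inequality obtained by combining \eqref{3101}, \eqref{3201} with small multiples of \eqref{3501} and \eqref{3601}, the same interpolation against the negative-Sobolev bounds \eqref{3801}--\eqref{3901}, and the same Bernoulli ODE.

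The one imprecision is in Step~2, where you write $\md_{tan}^{m-1}(t)\lesssim\me^m(t)\le\delta\lesssim C_0$. Since $C_0=\me^m(0)+\|\Lambda_h^{-s}(u,b,\wu,\wb)(0)\|_{L^2}^2+\delta^{3/2}$, the last inequality $\delta\lesssim C_0$ is not guaranteed by assumption \eqref{assumption} alone (the initial data could be much smaller than $\delta$, making $C_0\ll\delta$). The paper closes this by first combining \emph{all} the earlier lemmas---including the conormal-level ones \eqref{3423}, \eqref{3613}, \eqref{3701}---to derive $\me^m(t)\lesssim C_0$ (its \eqref{31008-1}--\eqref{31009}), and only then uses $\md_{tan}^{m-1}(t)\lesssim\me^m(t)\lesssim C_0$. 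Alternatively, you can avoid the detour through $\md_{tan}^{m-1}$ altogether: stop the splitting one order earlier at $B:=\|\nabla_h(u,b)\|_{H^{m-2}_{tan}}^2+\|\nabla_h(\wu,\wb)\|_{H^{m-3}_{tan}}^2$, which satisfies $B\lesssim\me_{tan}^{m-1}(t)\lesssim\me_{tan}^{m-1}(0)\le C_0$ directly from integrating your Step~1 inequality. Either repair is immediate, so the proposal is correct modulo this point.
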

			\begin{proof}
				The combination of estimates \eqref{3101}, \eqref{3201},
				\eqref{3301}, \eqref{3423}, \eqref{3501}, \eqref{3613} and \eqref{3701} yields directly
				\beq\label{31008-1}
				\mathcal{E}^m(t)+\int_0^t \md^{m}(\tau)  d\tau
				+\int_0^t \md_{\ep}^{m}(\tau)  d\tau
				\le C\mathcal{E}^m(0)+C\delta^{\frac32},
				\deq
				where the dissipation norm $\md_{\ep}^{m}(t)$ is defined by
				\beq
				\begin{aligned}
					\md_{\ep}^{m}(t)
					:=&\ep \|(\p_2 u, \p_3 u, \p_3 b)(t)\|_{H^m_{tan}}^2
					+\ep\|(\p_2 \wu, \p_3 \wu, \p_3 \wb)(t)\|_{H^{m-1}_{tan}}^2\\
					&+\ep \|\p_3 (\p_2  \wu, \p_3 \wu, \p_3 \wb)(t)\|_{H^{m-2}_{co}}^2.
				\end{aligned}
				\deq
				The combination of estimates \eqref{3801} and \eqref{3901} yields directly
				\beqq
				\begin{aligned}
					\|(\Lambda_h^{-s}u, \Lambda_h^{-s} b,\Lambda_h^{-s}\wu,\Lambda_h^{-s} \wb)(t)\|_{L^2}^2
					\le
					C(\|(\Lambda_h^{-s}u, \Lambda_h^{-s} b,
					\Lambda_h^{-s}\wu,\Lambda_h^{-s} \wb)(0)\|_{L^2}^2+\delta^2),
				\end{aligned}
				\deqq
				which, together with estimate \eqref{31008-1}, yields directly
				\beq\label{31009}
				\mathcal{E}^m(t)
				+\|(\Lambda_h^{-s}u, \Lambda_h^{-s} b,\Lambda_h^{-s}\wu,\Lambda_h^{-s} \wb)(t)\|_{L^2}^2
				\le CC_0,
				\deq
				where the constant $C_0$ is defined by
				\beq\label{co}
				C_0:=\mathcal{E}^m(0)
				+\|(\Lambda_h^{-s}u, \Lambda_h^{-s} b,
				\Lambda_h^{-s}\wu,\Lambda_h^{-s} \wb)(0)\|_{L^2}^2+\delta^{\f32}.
				\deq
				
				From estimates \eqref{3101} and \eqref{3201}, we have
				\beq\label{31002}
				\frac{d}{dt}\|(u, b)(t)\|_{H^{m-1}_{tan}}^2
				+\|(\p_1 u, \nabla_h b)(t)\|_{H^{m-1}_{tan}}^2
				+\ep \|(\p_2 u, \p_3 u, \p_3 b)(t)\|_{H^{m-1}_{tan}}^2
				\lesssim  \sqrt{\me^m(t)}\md_{tan}^{{m-1}}(t),
				\deq
				and
				\beq\label{31003}
				\frac{d}{dt}\|(\wu, \wb)\|_{H^{m-2}_{tan}}^2
				+\|(\p_1 \wu, \nabla_h \wb)\|_{H^{m-2}_{tan}}^2
				+\ep\|(\p_2 \wu, \p_3 \wu, \p_3 \wb)\|_{H^{m-2}_{tan}}^2
				\lesssim \sqrt{\me^m(t)}\md_{tan}^{m-1}(t),
				\deq
				From the estimates \eqref{3501} and \eqref{3601}, we have
				\beq\label{31004}
				\begin{aligned}
					&\frac{d}{dt}
					\sum_{0\le |\alpha_h| \le m-2}
					\i \p_2 Z^{\ah}b \cdot  Z^{\ah}u \ dx
					+\|\p_2 u\|_{H^{m-2}_{tan}}^2 \\
					\lesssim
					&\|\nabla_h b\|_{H^{m-1}_{tan}}^2+
					\|\p_1  u\|_{H^{m-2}_{tan}}^2
					+\ep \|(\p_3 b, \p_3 u)\|_{H^{m-1}_{tan}}^2
					+\sqrt{\me^m(t)}\md_{tan}^{m-1}(t),
				\end{aligned}
				\deq
				and
				\beq\label{31005}
				\begin{aligned}
					&\frac{d}{dt}\sum_{0\le |\alpha_h| \le m-3}
					\i \p_2 Z^{\ah} \wb \cdot Z^{\ah} \wu dx
					+\|\p_2 \wu\|_{H^{m-3}_{tan}}^2 \\
					\lesssim
					&\|(\p_1 \wu, \nabla_h  \wb)\|_{H^{m-2}_{tan}}^2
					+\|\p_1^2 \wu\|_{H^{m-3}_{tan}}^2
					+\ep \|(\p_3 \wb, \p_3\wu)\|_{H^{m-2}_{tan}}^2
					+\ep \| \p_2^2 \wu\|_{H^{m-3}_{tan}}^2
					+\sqrt{\me^m(t)}\md_{tan}^{m-1}(t).
				\end{aligned}
				\deq
				Then, for some small positive suitable constant $\kappa>0$,
				we can deduce from estimates \eqref{31002}-\eqref{31005} that
				\beq\label{31010}
				\begin{aligned}
					&\frac{d}{dt}\widetilde{\mathcal{E}}^{m-1}_{tan}(t)
					+2\kappa \md_{tan}^{m-1}(t)
					+\ep \|(\p_2 u, \p_3 u, \p_3 b)(t)\|_{H^{m-1}_{tan}}^2\\
					&+\ep\|(\p_2 \wu, \p_3 \wu, \p_3 \wb)\|_{H^{m-2}_{tan}}^2
					\lesssim \sqrt{\me^m(t)}\md_{tan}^{m-1}(t),
				\end{aligned}
				\deq
				where the quantity $\widetilde{\mathcal{E}}^{m-1}_{tan}(t)$ is defined as
				\beqq
				\begin{aligned}
					\widetilde{\mathcal{E}}^{m-1}_{tan}(t)
					:=&\|(u, b)(t)\|_{H^{m-1}_{tan}}^2+\|(\wu, \wb)(t)\|_{H^{m-2}_{tan}}^2
					+\sum_{0\le |\alpha_h| \le m-2} 2\kappa  \i \p_2 Z^{\ah}b \cdot  Z^{\ah}u \ dx \\
					&+\sum_{0\le |\alpha_h| \le m-3} 2\kappa \i \p_2 Z^{\ah} \wb \cdot Z^{\ah} \wu dx.
				\end{aligned}
				\deqq
				Due to the smallness of $\kappa$, it is easy to check that
				$\widetilde{\mathcal{E}}^{m-1}_{tan}(t)$ is equivalent to $\mathcal{E}^{m-1}_{tan}(t)$.
				Thus, due to the a priori assumption \eqref{assumption}, we have
				\beq\label{31006}
				\frac{d}{dt}\widetilde{\mathcal{E}}^{m-1}_{tan}(t)
				+\kappa \md_{tan}^{m-1}(t)
				+\ep (\|(\p_2 u, \p_3 u, \p_3 b)(t)\|_{H^{m-1}_{tan}}^2
				+\|(\p_2 \wu, \p_3 \wu, \p_3 \wb)\|_{H^{m-2}_{tan}}^2)
				\le 0.
				\deq
				On the other hand, due to the inequality
				\beqq
				\|(u, b, \wu, \wb)\|_{L^2}
				\lesssim \|\Lambda_h^{-s}(u, b, \wu, \wb)\|_{L^2}^{\frac{1}{1+s}}
				\|\nabla_h(u, b, \wu, \wb)\|_{L^2}^{\frac{s}{1+s}}
				\deqq
				it is easy to check that
				\beq\label{31007}
				\begin{aligned}
					\widetilde{\mathcal{E}}^{m-1}_{tan}(t)
					\lesssim
					\me_{tan}^{m-1}(t)
					\lesssim
					&(\|\Lambda_h^{-s}(u, b, \wu, \wb)\|_{L^2}^2
					+\|\nabla_h(u, b)\|_{H^{m-2}_{tan}}^2
					+\|\nabla_h(\wu, \wb)\|_{H^{m-3}_{tan}}^2)^{\frac{1}{1+s}}\\
					& \times(\|\nabla_h(u,b)\|_{H^{m-2}_{tan}}^2
					+\|\nabla_h(\wu,\wb)\|_{H^{m-3}_{tan}}^2)^{\frac{s}{1+s}}\\
					\lesssim
					&C_0^{\frac{1}{1+s}} \md_{tan}^{m-1}(t)^{\frac{s}{1+s}},
				\end{aligned}
				\deq
				where we have used the estimate \eqref{31009} in the last inequality.
				The combination of \eqref{31006} and \eqref{31007} yields
				\beqq
				\frac{d}{dt}\widetilde{\mathcal{E}}^{m-1}_{tan}(t)
				+\kappa C_0^{-\frac{1}{s}}
				\widetilde{\mathcal{E}}^{m-1}_{tan}(t)^{1+\frac{1}{s}}\le 0,
				\deqq
				which yields directly the decay estimate
				\beq\label{31008}
				\mathcal{E}^{m-1}_{tan}(t)
				\lesssim
				\widetilde{\mathcal{E}}^{m-1}_{tan}(t)
				\lesssim C_0(1+t)^{-s}.
				\deq
				Therefore, we complete the proof of this lemma.
			\end{proof}
			
			Finally, we will establish the time integration
			of  $\md_{tan}^{m-1}(t)$ with the suitable weight $(1+\tau)^{\sigma}$.
			\begin{lemm}
				Under the assumption \eqref{assumption},
				the smooth solution $(u, b)$ of equation \eqref{eqr} has the estimate
				\beq\label{31101}
				\begin{aligned}
					&(1+t)^{\sigma}{\mathcal{E}}^{m-1}_{tan}(t)
					+\kappa \int_0^t (1+\tau)^{\sigma} \md_{tan}^{m-1}(\tau)d\tau
					+\ep \int_0^t(1+\tau)^{\sigma}\|(\p_2 u, \p_3 u, \p_3 b)(\tau)\|_{H^{m-1}_{\tan}}^2 d\tau\\
					&+\ep \int_0^t (1+\tau)^{\sigma}\|(\p_2 \wu, \p_3 \wu, \p_3 \wb)(\tau)\|_{H^{m-2}_{tan}}^2 d\tau
					\lesssim C_0.
				\end{aligned}
				\deq
				where the constant $C_0$ is defined in \eqref{co}.
			\end{lemm}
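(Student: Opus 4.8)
The plan is to derive \eqref{31101} from the unweighted differential inequality \eqref{31006} by a weighted energy argument, feeding in the polynomial decay rate \eqref{31008} that was just established. First I would multiply \eqref{31006} by the time weight $(1+t)^{\sigma}$ and rewrite the principal part as a total time derivative, which gives
\begin{equation*}
\begin{aligned}
&\frac{d}{dt}\big[(1+t)^{\sigma}\widetilde{\mathcal{E}}^{m-1}_{tan}(t)\big]
+\kappa (1+t)^{\sigma}\md_{tan}^{m-1}(t)\\
&\quad+\ep (1+t)^{\sigma}\big(\|(\p_2 u,\p_3 u,\p_3 b)(t)\|_{H^{m-1}_{tan}}^2
+\|(\p_2 \wu,\p_3 \wu,\p_3 \wb)(t)\|_{H^{m-2}_{tan}}^2\big)
\le \sigma (1+t)^{\sigma-1}\widetilde{\mathcal{E}}^{m-1}_{tan}(t).
\end{aligned}
\end{equation*}
The left-hand side already contains the three quantities appearing in \eqref{31101} (up to the equivalence of $\widetilde{\mathcal{E}}^{m-1}_{tan}$ and $\mathcal{E}^{m-1}_{tan}$ noted after \eqref{31010}), so the whole matter reduces to bounding the time integral of the right-hand side.

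Next I would integrate over $[0,t]$ and handle the source term using the decay estimate \eqref{31008}, i.e. $\widetilde{\mathcal{E}}^{m-1}_{tan}(\tau)\lesssim C_0(1+\tau)^{-s}$:
\begin{equation*}
\int_0^t (1+\tau)^{\sigma-1}\widetilde{\mathcal{E}}^{m-1}_{tan}(\tau)\,d\tau
\lesssim C_0\int_0^t (1+\tau)^{\sigma-1-s}\,d\tau
\lesssim C_0,
\end{equation*}
the last integral being finite uniformly in $t$ precisely because $\sigma-1-s<-1$ under the hypothesis $\tfrac{9}{10}<\sigma<s<1$. Combining this with the trivial bound $\widetilde{\mathcal{E}}^{m-1}_{tan}(0)\lesssim \mathcal{E}^{m-1}_{tan}(0)\le C_0$ coming from the definition \eqref{co} of $C_0$, the integrated inequality yields
\begin{equation*}
\begin{aligned}
&(1+t)^{\sigma}\widetilde{\mathcal{E}}^{m-1}_{tan}(t)
+\kappa\int_0^t (1+\tau)^{\sigma}\md_{tan}^{m-1}(\tau)\,d\tau\\
&\quad+\ep\int_0^t (1+\tau)^{\sigma}\big(\|(\p_2 u,\p_3 u,\p_3 b)\|_{H^{m-1}_{tan}}^2
+\|(\p_2 \wu,\p_3 \wu,\p_3 \wb)\|_{H^{m-2}_{tan}}^2\big)\,d\tau
\lesssim C_0.
\end{aligned}
\end{equation*}
Replacing $\widetilde{\mathcal{E}}^{m-1}_{tan}$ by the equivalent $\mathcal{E}^{m-1}_{tan}$ in the first term then gives exactly \eqref{31101}.

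This is a routine weighted Gr\"onwall bookkeeping, so I do not expect a genuine analytic obstacle; the only delicate point is the arithmetic of exponents. The decay rate $s$ furnished by the preceding lemma must strictly exceed the weight exponent $\sigma$ so that the ``source'' $(1+\tau)^{\sigma-1}\widetilde{\mathcal{E}}^{m-1}_{tan}(\tau)$ is integrable in time, which is exactly the role of the constraint $\sigma<s$ (quantitatively, one needs $\sigma-1-s<-1$). An alternative, slightly more self-contained route would avoid the explicit pointwise decay and instead substitute the interpolation bound \eqref{31007} into the weighted inequality to turn it into an autonomous differential inequality with a superlinear dissipative term; but invoking \eqref{31008} directly is the shortest path and reuses work already done.
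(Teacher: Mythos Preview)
Your proposal is correct and matches the paper's proof essentially step for step: multiply \eqref{31006} by $(1+t)^{\sigma}$, integrate in time, and bound the source term $\int_0^t (1+\tau)^{\sigma-1}\widetilde{\mathcal{E}}^{m-1}_{tan}(\tau)\,d\tau$ via the decay \eqref{31008}, using $\sigma<s$ so that the exponent $\sigma-1-s<-1$ makes the integral uniformly bounded. The only cosmetic difference is that the paper writes out the explicit antiderivative $\frac{\sigma}{s-\sigma}[1-(1+t)^{\sigma-s}]$ rather than just asserting integrability.
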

			\begin{proof}
				For $0<\sigma<s$, multiplying \eqref{31006} by $(1+t)^{\sigma}$, we have
				\beqq
				\begin{aligned}
					&\frac{d}{dt}[(1+t)^{\sigma}\widetilde{\mathcal{E}}^{m-1}_{tan}(t)]
					+\kappa (1+t)^{\sigma} \md_{tan}^{m-1}(t)
					+\ep (1+t)^{\sigma}\|(\p_2 u, \p_3 u, \p_3 b)(t)\|_{H^{m-1}_{\tan}}^2\\
					&+\ep (1+t)^{\sigma}\|(\p_2 \wu, \p_3 \wu, \p_3 \wb)(t)\|_{H^{m-2}_{tan}}^2
					\le \sigma(1+t)^{\sigma-1}\widetilde{\mathcal{E}}^{m-1}_{tan}(t).
				\end{aligned}
				\deqq
				Integrating the above inequality over $[0, t]$
				and using the uniform estimate \eqref{31008}, we have
				\beqq
				\begin{aligned}
					&(1+t)^{\sigma}\widetilde{\mathcal{E}}^{m-1}_{tan}(t)
					+\kappa \int_0^t  (1+\tau)^{\sigma} \md_{tan}^{m-1}(\tau)d\tau
					+\ep \int_0^t(1+\tau)^{\sigma}\|(\p_2 u, \p_3 u, \p_3 b)(\tau)\|_{H^{m-1}_{\tan}}^2 d\tau\\
					&+\ep \int_0^t (1+\tau)^{\sigma}\|(\p_2 \wu, \p_3 \wu, \p_3 \wb)(\tau)\|_{H^{m-2}_{tan}}^2 d\tau\\
					\le
					&\widetilde{\mathcal{E}}^{m-1}_{tan}(0)
					+\sigma\int_0^t (1+\tau)^{\sigma-1}
					\widetilde{\mathcal{E}}^{m-1}_{tan}(\tau)d\tau\\
					\le
					&\widetilde{\mathcal{E}}^{m-1}_{tan}(0)
					+\sigma\underset{0\le \tau \le t}{\sup}
					\left[\widetilde{\mathcal{E}}^{m-1}_{tan}(\tau)(1+\tau)^s\right]
					\int_0^t (1+\tau)^{\sigma-1-s}d\tau\\
					\le
					&\widetilde{\mathcal{E}}^{m-1}_{tan}(0)
					+\frac{\sigma}{s-\sigma}
					\left[1-(1+t)^{\sigma-s}\right]
					\underset{0\le \tau \le t}{\sup}
					\left[\widetilde{\mathcal{E}}^{m-1}_{tan}(\tau)(1+\tau)^s\right]\\
					\lesssim
					&C_0.
				\end{aligned}
				\deqq
				Therefore, we complete the proof of this lemma.
			\end{proof}
			
			\subsection{Global in time uniform regularity}
			
			In this subsection, we will give the proof of Proposition \ref{main_pro}.
			Indeed,  the combination of estimates \eqref{31001},\eqref{31008-1} and \eqref{31101}
			yields directly
			\beqq
			(1+t)^{s}{\mathcal{E}}^{m-1}_{tan}(t)
			+\int_0^t (1+\tau)^{\sigma} \md_{tan}^{m-1}(\tau)d\tau
			\le C(\mathcal{E}^m(0)
			+\|(\Lambda_h^{-s}u, \Lambda_h^{-s} b,
			\Lambda_h^{-s}\wu,\Lambda_h^{-s} \wb)(0)\|_{L^2}^2+\delta^{\f32}),
			\deqq
			and
			\beqq
			\mathcal{E}^m(t)+\int_0^t \md^{m}(\tau)  d\tau
			\le C\mathcal{E}^m(0)+C\delta^{\frac32},
			\deqq
			where the constants $(s, \sigma)$ satisfy $\frac{9}{10}<\sigma<s<1$.
			Then, we can obtain the estimate
			\beqq\label{31201}
			\begin{aligned}
				&\mathcal{E}^m(t)+(1+t)^{s}{\mathcal{E}}^{m-1}_{tan}(t)
				+\int_0^t (1+\tau)^{\sigma} \md_{tan}^{m-1}(\tau)d\tau
				+\int_0^t \md^{m}(\tau)  d\tau\\
				\le
				&2C(\mathcal{E}^m(0)
				+\|(\Lambda_h^{-s}u, \Lambda_h^{-s} b,
				\Lambda_h^{-s}\wu,\Lambda_h^{-s} \wb)(0)\|_{L^2}^2)
				+2C\delta^{\frac32}.
			\end{aligned}
			\deqq
			Now choose the small constant
			\beq\label{choose_delta}
			\delta:=8C(\mathcal{E}^m(0)
			+\|(\Lambda_h^{-s}u, \Lambda_h^{-s} b,
			\Lambda_h^{-s}\wu,\Lambda_h^{-s} \wb)(0)\|_{L^2}^2)\le \min\{1,\frac{1}{64C^2}\},
			\deq
			then we have
			\beqq\label{31201}
			\begin{aligned}
				&\mathcal{E}^m(t)+(1+t)^{s}{\mathcal{E}}^{m-1}_{tan}(t)
				+\int_0^t (1+\tau)^{\sigma} \md_{tan}^{m-1}(\tau)d\tau
				+\int_0^t \md^{m}(\tau)  d\tau\\
				\le
				&2C(\mathcal{E}^m(0)
				+\|(\Lambda_h^{-s}u, \Lambda_h^{-s} b,
				\Lambda_h^{-s}\wu,\Lambda_h^{-s} \wb)(0)\|_{L^2}^2)
				+2C\delta^{\frac32}\\
				\le
				&\frac{\delta}{4}+\frac{\delta}{4}=\frac{\delta}{2},
			\end{aligned}
			\deqq
			which implies the estimate \eqref{close_assumption}.
			Therefore, we complete the proof of Proposition \ref{main_pro}.

			\begin{proof}[\textbf{Proof of Theorem \ref{main_result_one}}]
				Suppose the assumptions in Theorem \ref{main_result_one} hold,
				similar to the result in \cite{MR3472518}, one can establish the uniform
				(with respect to $\var$) local-in-time well-posedness for the equation \eqref{eqr}.
				Next, we use the standard continuity argument to show the global well-posedness.
				From the local existence result
				and smallness assumption of initial condition,
				it holds
				\beqq
				\mathcal{E}^m(t)
				+(1+t)^s\mathcal{E}_{tan}^{m-1}(t)
				+\int_0^t (1+\tau)^{\sigma} \md_{tan}^{m-1}(\tau)d\tau
				+\int_0^t \md^{m}(\tau)d\tau \le \delta,
				\deqq
				for all $t\in [0, T_0)$ and $\delta=8C(\mathcal{E}^m(0)
				+\|(\Lambda_h^{-s}u, \Lambda_h^{-s} b,
				\Lambda_h^{-s}\wu,\Lambda_h^{-s} \wb)(0)\|_{L^2}^2)$
				and $C$ is a positive constant independent of time $t$ and
				parameter $\ep$.
				Set
				\beq\label{criterion-half}
				T^*:=\underset{T_0}{\sup}
				\left\{T_0~|
				\mathcal{E}^m(t)
				+(1+t)^s\mathcal{E}_{tan}^{m-1}(t)
				+\int_0^t (1+\tau)^{\sigma} \md_{tan}^{m-1}(\tau)d\tau
				+\int_0^t \md^{m}(\tau)d\tau \le \delta,
				\quad \forall ~ t\in [0, T_0)\right\},
				\deq
				we claim that $T^*=+\infty$. Otherwise, applying the estimate
				\eqref{close_assumption}
				and the local-in-time existence result,
				there exists a positive constant $T^{**}$ such that $T^{**}>T^{*}$,
				it holds that for any $T\in [T^{*}, T^{**})$,
				\beqq
				\mathcal{E}^m(t)
				+(1+t)^s\mathcal{E}_{tan}^{m-1}(t)
				+\int_0^t (1+\tau)^{\sigma} \md_{tan}^{m-1}(\tau)d\tau
				+\int_0^t \md^{m}(\tau)d\tau \le \delta,
				\quad \forall ~ t\in [0, T).
				\deqq
				This contradicts the definition of $T^*$ in \eqref{criterion-half}.
				Therefore, we can deduce that $T^*=+\infty$.
				Therefore, we complete the proof of Theorem \ref{main_result_one}.			
			\end{proof}
			
			\section{Convergence rate of solution}\label{asymptotic-behavior}
			In this section, we will establish the convergence rate
			of the solutions between  equations \eqref{eqr} and \eqref{eqr0}.
			Let the regularity index $m \ge 5$, the Theorem \ref{main_result_one}
			make sure that the solution $(u^\ep, b^\ep)$
			of equation \eqref{eqr} exists globally in time. If the initial data
			$\|\Lambda_h^{-s}\p_3^2(u_0, b_0)\|_{L^2}^2$ is small enough,
			we can establish suitable decay rate for $\p_3^2(u^\ep, b^\ep)$.
			This will help us establish the convergence rate independent of time.
			
			\subsection{Decay rate of second order normal derivative}
			
	For notational convenience, we drop the superscript $\var$ throughout this subsection.
			Let us define the new energy and dissipation norms
			\beq
			\begin{aligned}
				\widehat{\mathcal{E}}_{tan}^{3}(t)
				&:=\mathcal{E}_{tan}^{3}(t)+\|\p_3(\wu, \wb)(t)\|_{H^{1}_{tan}}^2,\\
				\widehat{\mathcal{D}}_{tan}^{3}(t)
				&:=\mathcal{D}_{tan}^{3}(t)+\|\p_{23}\wu(t)\|_{L^2}^2
				+\|(\p_{13}\wu, \nabla_h \p_{3}\wb)(t)\|_{H^1_{tan}}^2.
			\end{aligned}
			\deq
			For any small positive constant $\delta>0$, let us assume
			\begin{equation}\label{decay-assumption-new}
				(1+t)^s \widehat{\mathcal{E}}_{tan}^{3}(t)
				+\int_0^t (1+\tau)^{\sigma}
				\widehat{\mathcal{D}}_{tan}^{3}(\tau)d\tau
				\le \delta,
			\end{equation}
			for all $t \in (0, T]$.
			Then, we will establish some energy estimates under the assumption \eqref{decay-assumption-new}.

			\begin{lemm}
				For any smooth solution $(u, b)$ of equation \eqref{eqr},
				it holds
				\beq\label{4101}
				\frac{d}{dt}\|(\p_3 \wu, \p_3 \wb)\|_{H^1_{tan}}^2
				+\|(\p_{13} \wu, \nabla_h \p_3 \wb)\|_{H^1_{tan}}^2
				+\ep \|(\p_{23} \wu, \p_3^2 \wu, \p_3^2 \wb)\|_{H^1_{tan}}^2
				\lesssim \sqrt{\mathcal{E}^m(t)}\widehat{\mathcal{D}}_{tan}^{3}(t).
				\deq
			\end{lemm}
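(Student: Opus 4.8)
The plan is to repeat the argument that produced the $L^2$ estimate \eqref{3301}, now carrying one extra tangential derivative. For each horizontal multi-index $\ah=(\ah_1,\ah_2,0)$ with $|\ah|\le 1$, i.e. $Z^{\ah}=\p_1^{\ah_1}\p_2^{\ah_2}$, I apply $\p_3 Z^{\ah}$ to the vorticity equations \eqref{eqwu}--\eqref{eqwb} and test against $\p_3 Z^{\ah}\wu$ and $\p_3 Z^{\ah}\wb$ respectively. Since $Z^{\ah}$ consists only of $\p_1,\p_2$, it commutes with every differential operator in the equations and preserves the boundary relations: from \eqref{bd-wuwb} one gets $\p_3 Z^{\ah}\wu_3|_{x_3=0}=\p_3 Z^{\ah}\wb_3|_{x_3=0}=0$, and restricting the horizontal components of \eqref{eqwu}--\eqref{eqwb} to $x_3=0$ together with $\wu_h|_{x_3=0}=\wb_h|_{x_3=0}=0$ and \eqref{bd01}--\eqref{bd02} yields $\ep\p_{33}Z^{\ah}\wu_h|_{x_3=0}=\ep\p_{33}Z^{\ah}\wb_h|_{x_3=0}=0$. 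Consequently every integration by parts in $x_3$ is boundary-free, the dissipative operators $-\p_1^2,-\Delta_h,-\ep\p_2^2,-\ep\p_3^2$ contribute exactly the good terms $\|(\p_{13}Z^{\ah}\wu,\nabla_h\p_3 Z^{\ah}\wb)\|_{L^2}^2+\ep\|(\p_{23}Z^{\ah}\wu,\p_{33}Z^{\ah}\wu,\p_{33}Z^{\ah}\wb)\|_{L^2}^2$, and the linear coupling terms $\p_2\wb$, $\p_2\wu$ cancel after one integration by parts exactly as in \eqref{3402}.

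Next I would estimate the nonlinear terms. Leibniz-expanding, each one is an integral of a triple product in which $\p_3 Z^{\ah}$ is distributed among the factors; to each I apply the anisotropic product inequality \eqref{ie:Sobolev} as in the proofs of \eqref{3301} and \eqref{3401} (see also the claimed estimate \eqref{claim-estimate}), peeling off one top-order factor — a tangential derivative of $\p_3\wu$, $\p_3\wb$, $\p_{13}\wu$, $\nabla_h\p_3\wb$, or of $(\p_1 u,\nabla_h b,\p_2 u)$ with at most two tangential derivatives, or $\p_{23}\wu$ in $L^2$, all of which sit inside $\widehat{\mathcal{D}}_{tan}^{3}(t)$ — and bounding the remaining factors by $\sqrt{\mathcal{E}^m(t)}$. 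This is legitimate because with $m\ge 5$ and $|\ah|\le 1\le m-3$ there is ample regularity room, so no decay (two-tier) mechanism is needed here. Two families of terms require a little care: those carrying a factor $\p_3 u_3$ or $\p_3 b_3$, which I rewrite through $\nabla\cdot u=\nabla\cdot b=0$ as $-\nabla_h\cdot u_h$, $-\nabla_h\cdot b_h$ so that the factor lands in $\widehat{\mathcal{D}}_{tan}^{3}$; and the curl-of-transport terms $\nabla(u\cdot\nabla)\times b$, $\nabla(b\cdot\nabla)\times u$, which I treat via the algebraic identities \eqref{3306}--\eqref{3308}, again trading $\p_3 u_3,\p_3 b_3$ for horizontal divergences. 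The factors of the form $u_3\,\p_3(\cdot)$ are handled by the boundary-layer substitution $u_3(x)=-\int_0^{x_3}\nabla_h\cdot u_h\,d\xi$, equivalently $|u_3|\lesssim\varphi(x_3)\|\p_3 u_3\|_{L^\infty}$, which converts $u_3\p_3$ into $Z_3$ acting on the conormal quantity. In particular the analogue of the borderline term $\i \p_2 u_2\,\p_3 Z^{\ah}\wu\cdot\p_3 Z^{\ah}\wu\,dx$ (which at level $m-2$ forced the decay argument in \eqref{3401}) closes at once here: by \eqref{ie:Sobolev} it is dominated by a product in which one copy of $\p_3 Z^{\ah}\wu$ receives a $\p_1$-derivative, giving $\p_{13}Z^{\ah}\wu\in\widehat{\mathcal{D}}_{tan}^{3}$, $\p_2 u_2$ a further horizontal derivative, still in $\widehat{\mathcal{D}}_{tan}^{3}$ (recall $\mathcal{D}_{tan}^{3}$ contains $\|\p_2 u\|_{H^2_{tan}}^2$), and the leftover copy of $\p_3 Z^{\ah}\wu$ is absorbed into $\sqrt{\mathcal{E}^m(t)}$ via $\|\p_3\wu\|_{H^1_{tan}}\le\|\p_3\wu\|_{H^{m-2}_{co}}$.

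Summing the diffusion identities and the nonlinear bounds over $|\ah|\le 1$ then gives \eqref{4101}. The main obstacle is organizational rather than conceptual: checking that in each of the numerous Leibniz terms the derivatives can be arranged so that exactly one factor is a $\widehat{\mathcal{D}}_{tan}^{3}$-quantity while the rest fit under $\sqrt{\mathcal{E}^m}$, and carrying out the divergence-free rewrites, the $\nabla(\cdot)\times$ identities and the boundary-layer substitution carefully enough that the normal-derivative factors and all boundary contributions are rendered benign. Everything else is a routine application of \eqref{ie:Sobolev} and Young's inequality, parallel to the proofs already given for \eqref{3301}, \eqref{3401} and \eqref{3601}.
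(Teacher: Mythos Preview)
Your overall strategy is exactly the paper's: apply $\p_3\p_i$ for $i=1,2$ to the vorticity system, integrate by parts (boundary-free thanks to \eqref{bd-wuwb}, \eqref{bd01}--\eqref{bd02}), kill the linear coupling and the full transport term $u\cdot\nabla\p_3\p_i\wu$, and then estimate each Leibniz piece with \eqref{ie:Sobolev}. The $L^2$ level is indeed just \eqref{3301} rephrased with $\widehat{\mathcal D}^3_{tan}$ in place of $\mathcal D^m$.

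There is, however, a gap in your treatment of the ``borderline'' piece. With the factorization you describe---one copy of $\p_3 Z^{\ah}\wu$ receiving $\p_1$, the other going to $\sqrt{\mathcal E^m}$, and $\p_2 u_2$ absorbing the remaining derivatives---the anisotropic inequality yields
\[
\|\p_3 Z^{\ah}\wu\|_{L^2}^{3/2}\,\|\p_{13}Z^{\ah}\wu\|_{L^2}^{1/2}\,\|\p_2 u_2\|_{L^2}^{1/4}\|\p_{22}u_2\|_{L^2}^{1/4}\|\p_{23}u_2\|_{L^2}^{1/4}\|\p_{223}u_2\|_{L^2}^{1/4}
\lesssim (\mathcal E^m)^{3/4}(\widehat{\mathcal D}^3_{tan})^{3/4},
\]
which is \emph{not} dominated by $\sqrt{\mathcal E^m}\,\widehat{\mathcal D}^3_{tan}$.

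The paper avoids this by \emph{not} isolating $\p_3 u_3$ via divergence-free. It keeps the whole block $\p_3 u\cdot\nabla\p_i\wu$ together (see \eqref{4104}) and applies \eqref{ie:Sobolev} with $\|\nabla\p_i\wu\|_{L^2}$ as the undifferentiated factor: since $\nabla\p_i\wu=(\nabla_h\p_i\wu,\p_{i3}\wu)$ and $\p_{13}\wu,\p_{23}\wu\in\widehat{\mathcal D}^3_{tan}$, this factor is pure dissipation. The remaining factor $\p_3 u$ receives the $\p_2$ and $\p_3$ derivatives, and $\p_{23}u,\p_{233}u$ are again dissipation (through $\wu$ and $\nabla\cdot u=0$). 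This arrangement gives exactly two energy half-powers and four dissipation half-powers, hence $\sqrt{\mathcal E^m}\,\widehat{\mathcal D}^3_{tan}$. Equivalently, at the $L^2$ level one uses the symmetric form of \eqref{ie:Sobolev} with $\p_3$ landing on $\p_3 u_3$, so that $\p_{33}u_3=-\p_3\nabla_h\!\cdot u_h$ is dissipation. Once you make this adjustment, the rest of your sketch goes through.
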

			\begin{proof}
				First of all, due to the analysis of estimate \eqref{3301}, we can obtain
				\beq
				\frac{d}{dt}\|(\p_3 \wu, \p_3 \wb)\|_{L^2}^2
				+\|(\p_{13} \wu, \nabla_h \p_3 \wb)\|_{L^2}^2
				+\ep \|(\p_{23} \wu, \p_{33} \wu, \p_{33} \wb)\|_{L^2}^2
				\lesssim \sqrt{\mathcal{E}^m(t)}\widehat{\mathcal{D}}_{tan}^{3}(t).
				\deq
				For $i=1,2$, similar to the equality \eqref{3302}, we have
				\beq\label{41022}
				\begin{aligned}
					&\frac{d}{dt}\frac{1}{2}\i (|\p_3 \p_ i \wu|^2+|\p_3 \p_ i \wb|^2) dx
					+\i (|\p_{13} \p_ i \wu|^2+|\nabla_h \p_3 \p_ i\wb|^2) dx\\
					&+\ep \i(|\p_{23} \p_ i \wu|^2+|\p_{33}\p_ i  \wu|^2+|\p_{33} \p_ i \wb|^2)dx\\
					=&\i \p_3\p_ i (-u\cdot \nabla \wu+\wu\cdot \nabla u+b \cdot \nabla \wb-\wb\cdot \nabla b)
					\cdot \p_3 \p_ i \wu dx\\
					&+\i \p_3\p_ i  (-u\cdot \nabla \wb-\nabla(u\cdot \nabla)\times b
					+b\cdot \nabla \wu+\nabla(b\cdot \nabla)\times u)
					\cdot \p_3 \p_ i \wb dx\\
				\end{aligned}
				\deq
				Integrating by part, it is easy to check that
				\beq\label{41033}
				\begin{aligned}
					&\i \p_3\p_ i (u\cdot \nabla \wu)\cdot \p_3 \p_ i \wu dx\\
					=&\i ( \p_3 \p_ i u\cdot \nabla \wu
					+\p_3 u\cdot \nabla \p_ i\wu
					+\p_ i u_h \cdot \nabla_h \p_3\wu
					+\p_ i u_3\p_{33}\wu)\cdot \p_3 \p_ i \wu dx,
				\end{aligned}
				\deq
				where we have used the relation
				\beqq
				\i (u\cdot \nabla) \p_3\p_ i \wu \cdot \p_3 \p_ i \wu dx
				=-\frac12 \i |\p_3 \p_ i \wu|^2 {\nabla \cdot }u\  dx=0.
				\deqq
				Using the anisotropic type inequality \eqref{ie:Sobolev},
				then we conclude
				\beq\label{4104}
				\begin{aligned}
					&\i ( \p_3 \p_ i u\cdot \nabla \wu
					+\p_3 u\cdot \nabla \p_ i\wu
					+\p_ i u_h \cdot \nabla_h \p_3\wu)\cdot \p_3 \p_ i \wu dx\\
					\lesssim
					&\|\p_3 \p_ i u\|_{L^2}^{\frac12}\|\p_{33} \p_ i u\|_{L^2}^{\frac12}
					\|\nabla \wu\|_{L^2}^{\frac12}\|\p_2 \nabla \wu\|_{L^2}^{\frac12}
					\|\p_3 \p_ i \wu\|_{L^2}^{\frac12}\|\p_{13} \p_ i \wu\|_{L^2}^{\frac12}\\
					&+\|\nabla \p_ i\wu\|_{L^2}
					\|\p_3 u\|_{L^2}^{\frac14}\|\p_{23} u\|_{L^2}^{\frac14}
					\|\p_{33} u\|_{L^2}^{\frac14}\|\p_{233} u\|_{L^2}^{\frac14}
					\|\p_3 \p_ i \wu\|_{L^2}^{\frac12}\|\p_{13} \p_ i \wu\|_{L^2}^{\frac12}\\
					&+\|\nabla_h \p_3\wu\|_{L^2}
					\|\p_ i u_h\|_{L^2}^{\frac14}\|\p_3 \p_ i u_h\|_{L^2}^{\frac14}
					\|\p_2 \p_ i u_h\|_{L^2}^{\frac14}\|\p_{23}\p_ i u_h\|_{L^2}^{\frac14}
					\|\p_3 \p_ i \wu\|_{L^2}^{\frac12}\|\p_{13} \p_ i \wu\|_{L^2}^{\frac12}\\
					\lesssim
					&\sqrt{\mathcal{E}^m(t)}\widehat{\mathcal{D}}_{tan}^{3}(t).
				\end{aligned}
				\deq
				Similarly, it is easy to check that
				\beq\label{4105}
				\begin{aligned}
					&\i \p_i u_3\p_{33}\wu \cdot \p_3 \p_ i \wu dx\\
					\lesssim
					&\|Z^3 \p_{3}\wu\|_{L^2}
					\|\p_i u_3\|_{L^2}^{\frac14}\|\p_2 \p_i u_3\|_{L^2}^{\frac14}
					\|\p_3 \p_i u_3\|_{L^2}^{\frac14}\|\p_{23}\p_i u_3\|_{L^2}^{\frac14}
					\|\p_3 \p_ i \wu\|_{L^2}^{\frac12}\|\p_{13} \p_ i \wu\|_{L^2}^{\frac12}\\
					&+\|Z^3 \p_{3}\wu\|_{L^2}
					\|\p_i \p_3 u_3\|_{L^2}^{\frac14}\|\p_{23} \p_i u_3\|_{L^2}^{\frac14}
					\|\p_{33} \p_i u_3\|_{L^2}^{\frac14}\|\p_{233}\p_i u_3\|_{L^2}^{\frac14}
					\|\p_3 \p_ i \wu\|_{L^2}^{\frac12}\|\p_{13} \p_ i \wu\|_{L^2}^{\frac12}\\
					\lesssim
					&\sqrt{\mathcal{E}^m(t)}\widehat{\mathcal{D}}_{tan}^{3}(t).
				\end{aligned}
				\deq
				Substituting the estimates \eqref{4104} and \eqref{4105} into \eqref{41033}, we have
				\beq\label{4106}
				\i \p_3\p_ i (u\cdot \nabla \wu)\cdot \p_3 \p_ i \wu dx
				\lesssim \sqrt{\mathcal{E}^m(t)}\widehat{\mathcal{D}}_{tan}^{3}(t).
				\deq
				Similarly, the other terms on the right hand side  of
				\eqref{41022} can be estimated by the same method, thus we have
				\beq\label{4107}
				\begin{aligned}
					&\i \p_3\p_ i (\wu\cdot \nabla u+b \cdot \nabla \wb-\wb\cdot \nabla b)
					\cdot \p_3 \p_ i \wu dx
					\lesssim \sqrt{\mathcal{E}^m(t)}\widehat{\mathcal{D}}_{tan}^{3}(t),\\
					&\i \p_3\p_ i  (-u\cdot \nabla \wb-\nabla(u\cdot \nabla)\times b
					+b\cdot \nabla \wu+\nabla(b\cdot \nabla)\times u)
					\cdot \p_3 \p_ i \wb dx
					\lesssim
					\sqrt{\mathcal{E}^m(t)}\widehat{\mathcal{D}}_{tan}^{3}(t).
				\end{aligned}
				\deq
				Substituting the estimates \eqref{4106} and \eqref{4107}
				into \eqref{41022}, then we complete the proof of this lemma.
			\end{proof}
			
			Next, we establish the dissipation estimate for the quantity
			$\p_{23} \wu$ in $L^2$-norm.
			\begin{lemm}
				For any smooth solution $(u, b)$ of equation \eqref{eqr},
				it holds for $m \ge 5$
				\beq\label{4201}
				\begin{aligned}
					&-\frac{d}{dt}\i \p_3 \wb \cdot \p_{23} \wu dx
					+\i |\p_{23} \wu|^2 dx\\
					\lesssim
					&\sqrt{\mathcal{E}^m(t)}\widehat{\mathcal{D}}_{tan}^{3}(t)
					+\|(\p_{23}\wb, \p_{113} \wu, \Delta_h \p_3 \wb)\|_{L^2}^2
					+\ep \|(\p_{22}\wu, \p_{33}\wu)\|_{L^2} \|\p_{233}  \wb\|_{L^2}.
				\end{aligned}
				\deq
			\end{lemm}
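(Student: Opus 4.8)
The plan is to run the integration-by-parts scheme of the proof of \eqref{3701}, specialized to $\alpha=0$, but now tracking the linear terms explicitly rather than absorbing them into the dissipation: at this lowest level the quantities $\p_{113}\wu$, $\p_{23}\wb$, $\Delta_h\p_3\wb$ are no longer controlled by $\widehat{\mathcal{D}}_{tan}^{3}(t)$, so they must remain on the right-hand side of \eqref{4201}. First I would use the boundary conditions \eqref{bd-wuwb} together with \eqref{bd01}--\eqref{bd02} to write, after integrating by parts in $x_3$,
\beqq
\i |\p_{23}\wu|^2 \,dx = -\i \p_2 \wu \cdot \p_{233}\wu \,dx,
\deqq
the boundary contribution at $x_3=0$ vanishing since $\wu_h|_{x_3=0}=0$ forces $\p_2\wu_h|_{x_3=0}=0$, while $\p_3\wu_3|_{x_3=0}=0$ forces $\p_{23}\wu_3|_{x_3=0}=0$. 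Substituting $\p_2\wu$ from the magnetic vorticity equation \eqref{eqwb} splits the right side into a time-derivative piece $-\i \p_t\wb\cdot\p_{233}\wu\,dx$, the linear diffusion pieces $\i(\Delta_h\wb+\ep\p_3^2\wb)\cdot\p_{233}\wu\,dx$, and nonlinear pieces. As in \eqref{3702}--\eqref{3704}, the time-derivative piece is rewritten, after transferring one $\p_3$ and invoking \eqref{eqwu} for $\p_t\wu$, as
\beqq
-\i \p_t\wb\cdot\p_{233}\wu\,dx = \frac{d}{dt}\i \p_3\wb\cdot\p_{23}\wu\,dx - \i \p_{233}\wb\cdot\big(\p_{11}\wu+\ep\p_{22}\wu+\ep\p_{33}\wu+\p_2\wb\big)\,dx - \i \p_{233}\wb\cdot(\text{nonlinear in }\wu)\,dx,
\deqq
again with all $x_3$-boundary terms vanishing by \eqref{bd-wuwb}.

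For the resulting linear terms I would integrate by parts once more in the appropriate tangential or normal variable so that no third-order normal derivative survives. Thus $-\i \p_{233}\wb\cdot\p_{11}\wu\,dx = \i \p_{23}\wb\cdot\p_{113}\wu\,dx$ is bounded by $\|\p_{23}\wb\|_{L^2}^2+\|\p_{113}\wu\|_{L^2}^2$; $-\i \p_{233}\wb\cdot\p_2\wb\,dx = \|\p_{23}\wb\|_{L^2}^2$; $\i \Delta_h\wb\cdot\p_{233}\wu\,dx = -\i \Delta_h\p_3\wb\cdot\p_{23}\wu\,dx$ is bounded by $\tfrac14\|\p_{23}\wu\|_{L^2}^2+\|\Delta_h\p_3\wb\|_{L^2}^2$, the first summand being absorbed into the left-hand side; and the $\ep$-weighted terms $-\ep\i\p_{233}\wb\cdot\p_{22}\wu\,dx$, $-\ep\i\p_{233}\wb\cdot\p_{33}\wu\,dx$ and $\ep\i\p_3^2\wb\cdot\p_{233}\wu\,dx=-\ep\i\p_{233}\wb\cdot\p_{33}\wu\,dx$ are all bounded by $\ep\|(\p_{22}\wu,\p_{33}\wu)\|_{L^2}\|\p_{233}\wb\|_{L^2}$, which is precisely the last term on the right of \eqref{4201}. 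Each of these integrations by parts in $x_3$ produces a boundary term at $x_3=0$ that drops out by the same mechanism (e.g. $\Delta_h\wb_h|_{x_3=0}=0$ and $\p_{23}\wu_3|_{x_3=0}=0$, and $\p_2\p_3\wb_3|_{x_3=0}=\p_2(\p_3\wb_3|_{x_3=0})=0$).

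For the nonlinear terms — both $-\i\p_{233}\wb\cdot(\text{nonlinear in }\wu)$ and $\i(\text{nonlinear in }\wb)\cdot\p_{233}\wu$, where the latter nonlinearity is $-u\cdot\nabla\wb-\nabla(u\cdot\nabla)\times b+b\cdot\nabla\wu+\nabla(b\cdot\nabla)\times u$ — I would follow verbatim the treatment of $IV_3$ through $IV_{10}$ in the proof of \eqref{3701}: integrate by parts to move one $\p_3$ off the factor carrying $\p_{233}$ so that only $\p_{23}$-quantities (which sit in $\widehat{\mathcal{D}}_{tan}^{3}(t)$) remain at top order, use the divergence-free conditions and the identities \eqref{3306}--\eqref{3308} to rewrite the $\nabla(u\cdot\nabla)\times b$ and $\nabla(b\cdot\nabla)\times u$ contributions, and then apply the anisotropic Sobolev inequality \eqref{ie:Sobolev} to distribute the available tangential and normal derivatives, with the highest-order factor always falling inside $\sqrt{\mathcal{E}^m(t)}$ (here $m\ge 5$ is exactly what is needed for that factor to close). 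Each such term is then $\lesssim \sqrt{\mathcal{E}^m(t)}\,\widehat{\mathcal{D}}_{tan}^{3}(t)$, with $x_3$-boundary contributions from the extra integrations by parts killed by \eqref{bd01}--\eqref{bd02} and \eqref{bd-wuwb}. Collecting the time-derivative term, the three families of linear terms, and the nonlinear remainder, and absorbing $\tfrac14\|\p_{23}\wu\|_{L^2}^2$ into the left side, yields \eqref{4201}. The main obstacle is purely the bookkeeping: at this bottom level one cannot leave any $\p_{233}$-derivative on a factor not already carrying an $\ep$ or $\sqrt{\mathcal{E}^m(t)}$ prefactor, so every integration by parts must be chosen to convert the top-order normal derivative either into the dissipative $\p_{23}\wu$/$\p_{23}\wb$ or into an $\ep$-weighted product, all while checking that each $x_3$-boundary term vanishes via the vorticity boundary relations.
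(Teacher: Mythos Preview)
Your proposal is correct and follows essentially the same route as the paper: derive the identity by substituting $\p_2\wu$ and $\p_t\wu$ from \eqref{eqwb}--\eqref{eqwu}, track the linear terms $N_1,N_2$ exactly as you describe (yielding the $\|(\p_{23}\wb,\p_{113}\wu,\Delta_h\p_3\wb)\|_{L^2}^2$ and $\ep\|(\p_{22}\wu,\p_{33}\wu)\|_{L^2}\|\p_{233}\wb\|_{L^2}$ contributions), and bound the nonlinear pieces $N_3$--$N_{10}$ by $\sqrt{\mathcal{E}^m}\,\widehat{\mathcal{D}}_{tan}^{3}$ via the anisotropic inequalities \eqref{ie:Sobolev}. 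One small caveat: the nonlinear estimates are not quite ``verbatim'' the $IV_3$--$IV_{10}$ arguments, since here the dissipation must land in $\widehat{\mathcal{D}}_{tan}^{3}$ rather than $\md^{m}$; in particular, for the dangerous piece $u_3\p_{33}\wu\cdot\p_{23}\wb$ in $N_3$ the paper invokes the interpolation $\eqref{ie:Sobolev}_4$ to produce a factor $\|Z_3^2\p_{13}\wu\|_{L^2}^{1/4}$, and it is precisely this extra $Z_3^2$ that forces $m\ge5$ (so that $Z_3^2\p_{13}\wu$ is controlled by $\md^m$ and hence by $\sqrt{\mathcal{E}^m}$ after pairing with the remaining dissipative factors).
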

			\begin{proof}
				Using the equations \eqref{eqwu} and \eqref{eqwb}, it is easy to check that
				\beq\label{4202}
				\begin{aligned}
					&-\frac{d}{dt}\i \p_3 \wb \cdot \p_{23} \wu dx
					+\i |\p_{23}  \wu|^2 dx\\
					=&-
					\i  (\p_1^2 \wu+\ep \p_2^2 \wu+\ep \p_3^2 \wu+\p_2 \wb)
					\cdot \p_{233}  \wb dx
					+\i  (\Delta_h \wb+\ep \p_3^2 \wb)\cdot \p_{233} \wu dx\\
					&+\i (u\cdot \nabla \wu )\cdot  \p_{233}  \wb dx
					+\i (-\wu\cdot \nabla u)\cdot  \p_{233}  \wb dx
					+\i (-b \cdot \nabla \wb)\cdot  \p_{233}  \wb dx\\
					&+\i (\wb\cdot \nabla b)\cdot  \p_{233}  \wb dx
					+\i (-u\cdot \nabla \wb)\cdot \p_{233}  \wu dx
					+\i (-\nabla(u\cdot \nabla)\times b)\cdot \p_{233}  \wu dx\\
					&+\i (b\cdot \nabla \wu) \cdot \p_{233}  \wu dx
					+\i (\nabla(b\cdot \nabla)\times u)\cdot \p_{233}  \wu dx
					:=\sum_{i=1}^{10} N_i.
				\end{aligned}
				\deq
				Integrating by part and using the H\"{o}lder inequality, we have
				\beqq
				N_1\le (\|\p_{113} \wu\|_{L^2}+\|\p_{23}\wb\|_{L^2})\|\p_{23}\wb\|_{L^2}
				+\ep \|(\p_{22}\wu, \p_{33}\wu)\|_{L^2} \|\p_{233}  \wb\|_{L^2},
				\deqq
				and
				\beqq
				N_2
				\le \|\Delta_h \p_3 \wb\|_{L^2}\|\p_{23}\wb\|_{L^2}
				+\ep \| \p_{33}\wu\|_{L^2} \|\p_{233}  \wb\|_{L^2}.
				\deqq
				Integrating by part and using the anisotropic type
				inequality \eqref{ie:Sobolev}, we have for $m \ge 5$
				\beqq
				\begin{aligned}
					N_3=
					&-\i (\p_3 u \cdot \nabla \wu+u_h \cdot \nabla_h \p_3 \wu
					+u_3\p_{33} \wu )\cdot  \p_{23}  \wb dx\\
					\lesssim
					&\|\p_3 u\|_{L^2}^{\frac14}\|\p_{13} u\|_{L^2}^{\frac14}
					\|\p_{33} u\|_{L^2}^{\frac14}\|\p_{133} u\|_{L^2}^{\frac14}
					\|\p_{23}  \wb\|_{L^2}
					\|\nabla \wu\|_{L^2}^{\frac12}\|\p_2\nabla \wu\|_{L^2}^{\frac12}\\
					&+\|u_h\|_{L^2}^{\frac12}\|\p_3 u_h\|_{L^2}^{\frac12}
					\|\nabla_h \p_3 \wu \|_{L^2}^{\frac12}\|\p_1 \nabla_h \p_3 \wu \|_{L^2}^{\frac12}
					\|\p_{23}  \wb\|_{L^2}^{\frac12}\|\p_{223}  \wb\|_{L^2}^{\frac12}\\
					&+\|u_3\|_{L^2}^{\frac14}\|\p_2 u_3\|_{L^2}^{\frac14}
					\|\p_3 u_3\|_{L^2}^{\frac14}\|\p_{23} u_3\|_{L^2}^{\frac14}
					\|\p_{23}  \wb\|_{L^2}\|Z_3 \p_{3} \wu\|_{L^2}^{\frac12}
					(\|\p_{13} \wu\|_{L^2}^{\frac14}\|Z_3^2 \p_{13} \wu\|_{L^2}^{\frac14} + \|\p_{13} \wu\|_{L^2}^{\frac12})\\
					&+\|\p_3 u_3\|_{L^2}^{\frac14}\|\p_{23} u_3\|_{L^2}^{\frac14}
					\|\p_{33} u_3\|_{L^2}^{\frac14}\|\p_{233} u_3\|_{L^2}^{\frac14}
					\|\p_{23}  \wb\|_{L^2}\|Z_3 \p_{3} \wu\|_{L^2}^{\frac12}
					\|Z_3 \p_{13} \wu\|_{L^2}^{\frac12}\\
					\lesssim
					&\sqrt{\mathcal{E}^m(t)}\widehat{\mathcal{D}}_{tan}^{3}(t).
				\end{aligned}
				\deqq
				Similarly, it is easy to check that
				\beqq
				N_4, N_5, N_6, N_7, N_9
				\lesssim  \sqrt{\mathcal{E}^m(t)}\widehat{\mathcal{D}}_{tan}^{3}(t).
				\deqq
				Integrating by part and using the anisotropic type
				inequality \eqref{ie:Sobolev}, we have
				\beqq
				\begin{aligned}
					N_8
					=
					&\i (\nabla \p_3 u_i  \times \p_i b
					+\nabla u_i  \times \p_i \p_3 b)\cdot \p_{23}  \wu dx\\
					\lesssim
					&\|\nabla_h \p_3 u\|_{L^2}^{\frac12}\|\nabla_h \p_{33} u\|_{L^2}^{\frac12}
					\|\nabla b\|_{L^2}^{\frac12}\|\p_2 \nabla b\|_{L^2}^{\frac12}
					\|\p_{23}  \wu \|_{L^2}^{\frac12}\|\p_{123}  \wu \|_{L^2}^{\frac12}\\
					&+\|\nabla \p_3 u\|_{L^2}^{\frac12}\|\nabla \p_{23} u\|_{L^2}^{\frac12}
					\|\nabla_h b\|_{L^2}^{\frac12}\|\nabla_h \p_3 b\|_{L^2}^{\frac12}
					\|\p_{23}  \wu \|_{L^2}^{\frac12}\|\p_{123}  \wu \|_{L^2}^{\frac12}\\
					&+\|\nabla_h u\|_{L^2}^{\frac12}\|\nabla_h \p_3 u\|_{L^2}^{\frac12}
					\|\nabla \p_3 b\|_{L^2}^{\frac12}\|\nabla \p_{23 } b\|_{L^2}^{\frac12}
					\|\p_{23}  \wu \|_{L^2}^{\frac12}\|\p_{123}  \wu \|_{L^2}^{\frac12}\\
					&+\|\nabla u\|_{L^2}^{\frac12}\|\nabla \p_2 u\|_{L^2}^{\frac12}
					\|\nabla_h \p_3 b\|_{L^2}^{\frac12}\|\nabla_h \p_{33} b\|_{L^2}^{\frac12}
					\|\p_{23}  \wu \|_{L^2}^{\frac12}\|\p_{123}  \wu \|_{L^2}^{\frac12}\\
					\lesssim
					&\sqrt{\mathcal{E}^m(t)}\widehat{\mathcal{D}}_{tan}^{3}(t).
				\end{aligned}
				\deqq
				Similarly, it is easy to check that
				\beqq
				N_{10}
				\lesssim
				\sqrt{\mathcal{E}^m(t)}\widehat{\mathcal{D}}_{tan}^{3}(t).
				\deqq
				Thus, substituting the estimates for $N_{1}$
				through $N_{10}$ into \eqref{4202}, we complete the proof of this lemma.
			\end{proof}
			
			Finally, we establish the estimate for the second order
			normal derivative of $(u, b)$ in negative Sobolev space.

			\begin{lemm}
				Under the assumption of \eqref{decay-assumption-new}, the smooth solution $(u, b)$ of equation \eqref{eqr} has for all $m \ge 5$
				\beq\label{4301}
				\begin{aligned}
					&\frac{1}{2}\|(\Lambda_h^{-s}\p_3 \wu,\Lambda_h^{-s} \p_3\wb)(t)\|_{L^2}^2
					+\int_0^t \|(\p_1 \Lambda_h^{-s} \p_3 \wu, \nabla_h \Lambda_h^{-s} \p_3 \wb)\|_{L^2}^2 d\tau\\
					&+\ep \int_0^t  \|(\p_2 \Lambda_h^{-s} \p_3 \wu, \p_3 \Lambda_h^{-s} \p_3\wu,
					\p_3 \Lambda_h^{-s} \p_3 \wb)\|_{L^2}^2d\tau\\
					\lesssim
					&\frac{1}{2}\|(\Lambda_h^{-s}\p_3 \wu,\Lambda_h^{-s} \p_3\wb)(0)\|_{L^2}^2
					+\frac12\mathcal{E}^m(u^\ep, b^\ep)(0)^2+\delta^2.
				\end{aligned}
				\deq
			\end{lemm}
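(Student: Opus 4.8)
The plan is to run the same weighted negative–tangential–Sobolev energy argument that produced \eqref{3901}, but now applied to one extra normal derivative of the vorticity system. Concretely, I would apply the purely horizontal Fourier multiplier $\Lambda_h^{-s}$ and then $\p_3$ to the equations \eqref{eqwu} and \eqref{eqwb}, test the resulting identities against $\Lambda_h^{-s}\p_3\wu$ and $\Lambda_h^{-s}\p_3\wb$ respectively, and add. Since $\Lambda_h^{-s}$ commutes with $\p_3$ and with every spatial derivative and preserves all the boundary conditions, $\Lambda_h^{-s}\wu_h$ and $\p_3\Lambda_h^{-s}\wu_3$ still vanish on $\{x_3=0\}$ by \eqref{bd-wuwb}, and the horizontal components of $\Lambda_h^{-s}$ applied to the two nonlinear vector fields in \eqref{bd01}--\eqref{bd02} also vanish there. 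This is exactly the structure exploited in the proof of \eqref{3301}, so all integrations by parts in $x_3$ produce no boundary terms, while the linear coupling cancels via $\i \Lambda_h^{-s}\p_3\p_2\wb\cdot\Lambda_h^{-s}\p_3\wu\,dx+\i\Lambda_h^{-s}\p_3\p_2\wu\cdot\Lambda_h^{-s}\p_3\wb\,dx=0$. One thus obtains an energy identity whose left-hand side is $\frac{d}{dt}\frac12\|(\Lambda_h^{-s}\p_3\wu,\Lambda_h^{-s}\p_3\wb)\|_{L^2}^2$ together with the good dissipation $\|(\p_1\Lambda_h^{-s}\p_3\wu,\nabla_h\Lambda_h^{-s}\p_3\wb)\|_{L^2}^2+\ep\|(\p_2\Lambda_h^{-s}\p_3\wu,\p_3\Lambda_h^{-s}\p_3\wu,\p_3\Lambda_h^{-s}\p_3\wb)\|_{L^2}^2$, and whose right-hand side is the sum of the eight nonlinear terms coming from $\p_3(u\cdot\nabla\wu)$, $\p_3(\wu\cdot\nabla u)$, $\p_3(b\cdot\nabla\wb)$, $\p_3(\wb\cdot\nabla b)$ and the four contractions involving $\p_3(\nabla(\cdot\,\nabla)\times\cdot)$.

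For each nonlinear term I would reduce the $\Lambda_h^{-s}$ pairing to a two–dimensional estimate exactly as in \eqref{3903}: Hölder together with the Hardy–Littlewood–Sobolev inequality \eqref{a16} of Lemma \ref{H-L} bounds a typical term by $\int_0^{+\infty}\|\,\cdot\,\|_{L^{1/(1/2+s/2)}(\mathbb{R}^2)}\,\|\Lambda_h^{-s}\p_3(\wu\ \text{or}\ \wb)\|_{L^2(\mathbb{R}^2)}\,dx_3$, after which Minkowski/Hölder in $x_3$ and the anisotropic embedding \eqref{ie:Sobolev} distribute the derivatives so that one factor lands in $\widehat{\mathcal{D}}_{tan}^{3}$, one in $\sqrt{\mathcal{E}^m}$, and one in $\|\Lambda_h^{-s}\p_3(\wu,\wb)\|_{L^2}$. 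The only terms not immediately of this type are those in which $\p_3$ falls twice on the vorticity factor, i.e. the pieces $\int u_3\,\p_{33}\wu\cdot(\ldots)$ coming from $\p_3(u\cdot\nabla\wu)$ and the analogous $b_3\p_{33}$ pieces. For these I would reuse the boundary decomposition of \eqref{3905}: split $u_3\p_{33}\wu=u_3\p_{33}\wu\,\chi(x_3)+\varphi^{-1}u_3\,(\varphi\p_3)\p_3\wu\,(1-\chi(x_3))$ with $\chi$ a cutoff near $\{x_3=0\}$ and write $u_3=\int_0^{x_3}\p_3 u_3\,d\xi$ on $\mathrm{supp}\,\chi$, yielding the pointwise bound $|u_3\p_{33}\wu|\lesssim(\|\p_3 u_3\|_{L^\infty(\mathbb{R}_+)}+|u_3|)\,|Z_3\p_3\wu|$; the term is then closed with the same $L^{1/(1/2+s/2)}(\mathbb{R}^2)\times L^2(\mathbb{R}^2)$ machinery, the conormal factors $Z_3\p_3\wu$ and, one level up, $Z_3^3\p_3\wu$ being controlled by $\|\p_3(\wu,\wb)\|_{H^{m-2}_{co}}\le\sqrt{\mathcal{E}^m}$ — which is precisely where the hypothesis $m\ge 5$ is needed. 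The four $\nabla(\cdot\,\nabla)\times\cdot$ terms are handled as in the estimates of $K_6$ and $K_8$ above after invoking \eqref{3306}--\eqref{3308} to trade the bad $\p_3 u_3$, $\p_3 b_3$ for horizontal divergences.

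Collecting these bounds gives a differential inequality of the form $\frac{d}{dt}\frac12\|(\Lambda_h^{-s}\p_3\wu,\Lambda_h^{-s}\p_3\wb)\|_{L^2}^2+(\text{good dissipation})\lesssim\sqrt{\mathcal{E}^m}\,\widehat{\mathcal{D}}_{tan}^{3}+(\text{terms carrying a }\|\Lambda_h^{-s}\p_3(\wu,\wb)\|_{L^2}\text{ factor})$. Integrating over $[0,t]$, pulling the $\|\Lambda_h^{-s}\p_3(\wu,\wb)\|_{L^2}$ factor out of the time integral as a supremum and absorbing half of its square into the left-hand side by Young's inequality, and then using the a priori assumption \eqref{decay-assumption-new} together with the global bound $\mathcal{E}^m(t)\lesssim\mathcal{E}^m(0)$ furnished by \eqref{uniform_estimate}, the remaining time integrals are summed just as in \eqref{3907} and \eqref{3803}: the weight $(1+\tau)^{\sigma}$ in \eqref{decay-assumption-new} makes $\int_0^t\widehat{\mathcal{D}}_{tan}^{3}\,d\tau$ and its fractional-power analogues integrable, which produces the $\delta^2$ on the right-hand side, while the remaining constants collapse into $\mathcal{E}^m(u^\ep,b^\ep)(0)^2$; this gives \eqref{4301}. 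I expect the main technical obstacle to be the bookkeeping for the double–normal–derivative terms $\int u_3\p_{33}\wu\cdot(\ldots)$ and $\int b_3\p_{33}\wu\cdot(\ldots)$, where one must simultaneously use the conormal identity $\varphi\p_3=Z_3$ near the boundary, the divergence-free structure to extract a factor $x_3$ from $u_3$, and the higher conormal regularity (hence $m\ge5$) — all while keeping the $\Lambda_h^{-s}$ smoothing intact; nothing here is conceptually new beyond \eqref{3901} and \eqref{3905}, but the bookkeeping is delicate.
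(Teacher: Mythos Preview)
Your outline matches the paper's proof in structure --- the energy identity for $\Lambda_h^{-s}\p_3(\wu,\wb)$, the Hardy--Littlewood--Sobolev reduction as in \eqref{3903}, the cutoff/Hardy trick of \eqref{3905} for the $u_3\p_{33}\omega$ pieces (this is indeed where $m\ge 5$ enters, via $Z_3^3\p_3\omega$), and the use of \eqref{3306}--\eqref{3308} for the curl--commutator terms are all exactly what the paper does.

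There is one point you should correct. You write that in each bilinear estimate ``one factor lands in $\sqrt{\mathcal{E}^m}$'', and your schematic differential inequality has $\sqrt{\mathcal{E}^m}\,\widehat{\mathcal{D}}_{tan}^{3}$ on the right. In the paper the energy factor is not the merely bounded $\mathcal{E}^m$ but the \emph{decaying} tangential energy $\widehat{\mathcal{E}}_{tan}^{3}$: every nonlinear term is bounded either by
$\sqrt{\widehat{\mathcal{E}}_{tan}^{3}}\,\sqrt{\widehat{\mathcal{D}}_{tan}^{3}}\,\|\Lambda_h^{-s}\p_3(\wu,\wb)\|_{L^2}$
or by
$\widehat{\mathcal{E}}_{tan}^{3}(t)^{7/12}\,\widehat{\mathcal{D}}_{tan}^{3}(t)^{1/4}\,\|(\p_3\omega,Z_3^3\p_3\omega)\|_{L^2}^{1/3}\,\|\Lambda_h^{-s}\p_3(\wu,\wb)\|_{L^2}$.
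After pulling out $\sup_\tau\|\Lambda_h^{-s}\p_3(\wu,\wb)\|_{L^2}$, the remaining time integrals close precisely because \eqref{decay-assumption-new} provides \emph{both} $(1+t)^s\widehat{\mathcal{E}}_{tan}^{3}\le\delta$ and $\int(1+\tau)^\sigma\widehat{\mathcal{D}}_{tan}^{3}\le\delta$, giving $\int_0^t\sqrt{\widehat{\mathcal{E}}_{tan}^{3}}\sqrt{\widehat{\mathcal{D}}_{tan}^{3}}\lesssim\delta$ and $\int_0^t\widehat{\mathcal{E}}_{tan}^{3}{}^{7/12}\widehat{\mathcal{D}}_{tan}^{3}{}^{1/4}\lesssim\delta^{5/6}$ (using $\frac{9}{10}<\sigma<s<1$). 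If you only use the bounded prefactor $\sqrt{\mathcal{E}^m}$ you are led to control $\int_0^t\sqrt{\widehat{\mathcal{D}}_{tan}^{3}}\,d\tau$, which by Cauchy--Schwarz is only $\lesssim(1+t)^{(1-\sigma)/2}\sqrt{\delta}$ and does not stay bounded. The conormal factor $\|(\p_3\omega,Z_3^3\p_3\omega)\|_{L^2}^{1/3}$ is the sole place where the uniform bound \eqref{uniform_estimate} is invoked, and it is what produces the $\mathcal{E}(u^\ep,b^\ep)(0)$ contribution on the right of \eqref{4301}. With this adjustment your plan goes through.
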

			\begin{proof}
				Using the equations \eqref{eqwu} and \eqref{eqwb}, we have
				\beq\label{4302}
				\begin{aligned}
					&\frac{d}{dt}\frac{1}{2}\i(|\Lambda_h^{-s}\p_3 \wu|^2+|\Lambda_h^{-s} \p_3 \wb|^2)dx
					+\ep \i (|\p_2 \Lambda_h^{-s} \p_3 \wu|^2+|\p_3 \Lambda_h^{-s} \p_3\wu|^2) dx\\
					&+\i |\p_1 \Lambda_h^{-s} \p_3\wu|^2 dx
					+\i |\nabla_h \Lambda_h^{-s} \p_3\wb|^2 dx+\ep \i |\p_3 \Lambda_h^{-s} \p_3\wb|^2 dx\\
					=&\i \Lambda_h^{-s}\p_3(-u\cdot \nabla \wu) \cdot \Lambda_h^{-s} \p_3\wu \ dx
					+\i \Lambda_h^{-s}\p_3(\wu\cdot \nabla u
					+b \cdot \nabla \wb-\wb\cdot \nabla b) \cdot \Lambda_h^{-s} \p_3\wu \ dx\\
					&
					+\i \Lambda_h^{-s}\p_3(-u\cdot \nabla \wb-\nabla(u\cdot \nabla)\times b
					+b\cdot \nabla \wu+\nabla(b\cdot \nabla)\times u) \cdot \Lambda_h^{-s} \p_3\wb \ dx\\
					:=&M_1+M_2+M_3.
				\end{aligned}
				\deq
				It is easy to check that
				\beqq
				\begin{aligned}
					M_1
					=
					&\i \Lambda_h^{-s}(-\p_3 u_h \cdot \nabla_h \wu) \cdot \Lambda_h^{-s} \p_3\wu \ dx
					+\i \Lambda_h^{-s}(-\p_3 u_3 \p_3 \wu) \cdot \Lambda_h^{-s} \p_3\wu \ dx\\
					&+\i \Lambda_h^{-s}(-u_h \cdot \nabla_h  \p_3 \wu) \cdot \Lambda_h^{-s} \p_3\wu \ dx
					+\i \Lambda_h^{-s}(-u_3 \p_{33} \wu) \cdot \Lambda_h^{-s} \p_3\wu \ dx\\
					:=
					&M_{11}+M_{12}+M_{13}+M_{14}.
				\end{aligned}
				\deqq
				The H\"{o}lder's inequality and  inequality \eqref{a16} in Lemma \ref{H-L} yield directly
				\beqq
				\begin{aligned}
					M_{11}
					\le &(\|\p_3 u_h\|_{L^2}\|\p_{23} u_h\|_{L^2}
					+\|\p_{13} u_h\|_{L^2}\|\p_{123} u_h\|_{L^2})^{\frac{1-s}{2}}
					\|\p_3 u_h\|_{L^2}^{\frac{2s-1}{2}}
					\|\p_{33}u_h\|_{L^2}^{\frac12}
					\|\nabla_h \wu\|_{L^2}\|\Lambda_h^{-s} \p_3\wu\|_{L^2}\\
					\lesssim &\sqrt{\widehat{\mathcal{E}}_{tan}^{3}(t)}
					\sqrt{\widehat{\mathcal{D}}_{tan}^{3}(t)}
					\|\Lambda_h^{-s} \p_3\wu\|_{L^2},\\
					M_{12}
					\le &(\|\p_3 u_3\|_{L^2}\|\p_{23} u_3\|_{L^2}
					+\|\p_{13} u_3\|_{L^2}\|\p_{123} u_3\|_{L^2})^{\frac{1-s}{2}}
					\|\p_3 u_3\|_{L^2}^{\frac{2s-1}{2}}
					\|\p_{33} u_3\|_{L^2}^{\frac12}
					\|\p_3 \wu\|_{L^2}\|\Lambda_h^{-s} \p_3\wu\|_{L^2}\\
					\lesssim &\sqrt{\widehat{\mathcal{E}}_{tan}^{3}(t)}
					\sqrt{\widehat{\mathcal{D}}_{tan}^{3}(t)}
					\|\Lambda_h^{-s} \p_3\wu\|_{L^2},\\
					M_{13}
					\le &(\|u_h\|_{L^2}\|\p_{2} u_h\|_{L^2}
					+\|\p_{1} u_h\|_{L^2}\|\p_{12} u_h\|_{L^2})^{\frac{1-s}{2}}
					\|u_h\|_{L^2}^{\frac{2s-1}{2}}
					\|\p_{3}u_h\|_{L^2}^{\frac12}
					\|\nabla_h \p_3 \wu\|_{L^2}\|\Lambda_h^{-s} \p_3\wu\|_{L^2}\\
					\lesssim &\sqrt{\widehat{\mathcal{E}}_{tan}^{3}(t)}
					\sqrt{\widehat{\mathcal{D}}_{tan}^{3}(t)}
					\|\Lambda_h^{-s} \p_3\wu\|_{L^2}.
				\end{aligned}
				\deqq
				Similar to the estimate of term $K_1$ in \eqref{termk1}, it is easy to check that
				\beqq
				\begin{aligned}
					M_{14}
					\le
					&  (\|(u_3, \p_3 u_3)\|_{L^2}\|\p_2 (u_3, \p_3 u_3)\|_{L^2}
					+\|\p_1 (u_3, \p_3 u_3)\|_{L^2}\|\p_{12} (u_3, \p_3 u_3)\|_{L^2})^{\frac{1-s}{2}}
					\|(u_3, \p_3 u_3)\|_{L^2}^{\frac{2s-1}{2}}\\
					&\times \|\p_3 (u_3, \p_3 u_3)\|_{L^2}^{\frac12}
					\|(\p_3 \wu, Z_3^3 \p_3 \wu)\|_{L^2}^{\frac13}\|\p_3 \wu\|_{L^2}^{\frac23}
					\|\Lambda_h^{-s} \p_3\wu\|_{L^2}\\
					\lesssim
					&   {\widehat{\mathcal{E}}_{tan}^{3}(t)}^{\frac{7}{12}}
					{\widehat{\mathcal{D}}_{tan}^{3}(t)}^{\frac14}
					\|(\p_3 \wu, Z_3^3 \p_3 \wu)\|_{L^2}^{\frac13}
					\|\Lambda_h^{-s} \p_3\wu\|_{L^2}.
				\end{aligned}
				\deqq
				Thus, we can obtain the estimate
				\beqq
				\begin{aligned}
					M_1
					\lesssim
					\sqrt{\widehat{\mathcal{E}}_{tan}^{3}(t)}
					\sqrt{\widehat{\mathcal{D}}_{tan}^{3}(t)}
					\|\Lambda_h^{-s} \p_3\wu\|_{L^2}
					+{\widehat{\mathcal{E}}_{tan}^{3}(t)}^{\frac{7}{12}}
					{\widehat{\mathcal{D}}_{tan}^{3}(t)}^{\frac14}
					\|(\p_3 \wu, Z_3^3 \p_3 \wu)\|_{L^2}^{\frac13}
					\|\Lambda_h^{-s} \p_3\wu\|_{L^2}.
				\end{aligned}
				\deqq
				Similarly, it is easy to check that
				\beqq
				\begin{aligned}
					M_2
					\lesssim
					&\sqrt{\widehat{\mathcal{E}}_{tan}^{3}(t)}
					\sqrt{\widehat{\mathcal{D}}_{tan}^{3}(t)}
					\|\Lambda_h^{-s} \p_3\wu\|_{L^2}
					+{\widehat{\mathcal{E}}_{tan}^{3}(t)}^{\frac{7}{12}}
					{\widehat{\mathcal{D}}_{tan}^{3}(t)}^{\frac14}
					\|(\p_3 \wb, Z_3^3 \p_3 \wb)\|_{L^2}^{\frac13}
					\|\Lambda_h^{-s} \p_3\wu\|_{L^2},\\
					M_3
					\lesssim
					&\sqrt{\widehat{\mathcal{E}}_{tan}^{3}(t)}
					\sqrt{\widehat{\mathcal{D}}_{tan}^{3}(t)}
					\|\Lambda_h^{-s} \p_3\wb\|_{L^2}
					+{\widehat{\mathcal{E}}_{tan}^{3}(t)}^{\frac{7}{12}}
					{\widehat{\mathcal{D}}_{tan}^{3}(t)}^{\frac14}
					\|(\p_3 \wu, \p_3 \wb, Z_3^3 \p_3 \wu, Z_3^3 \p_3 \wb)\|_{L^2}^{\frac13}
					\|\Lambda_h^{-s} \p_3\wb\|_{L^2}.
				\end{aligned}
				\deqq
				Substituting the estimates of terms for $M_1$ through $M_3$ into \eqref{4302}, we have
				\beqq
				\begin{aligned}
					&\frac{1}{2}\|(\Lambda_h^{-s}\p_3 \wu,\Lambda_h^{-s} \p_3\wb)(t)\|_{L^2}^2
					+\int_0^t \|(\p_1 \Lambda_h^{-s} \p_3 \wu, \nabla_h \Lambda_h^{-s} \p_3 \wb)\|_{L^2}^2 d\tau\\
					&+\ep \int_0^t  \|(\p_2 \Lambda_h^{-s} \p_3 \wu, \p_3 \Lambda_h^{-s} \p_3\wu,
					\p_3 \Lambda_h^{-s} \p_3 \wb)\|_{L^2}^2d\tau\\
					\lesssim
					&\frac{1}{2}\|(\Lambda_h^{-s}\p_3 \wu,\Lambda_h^{-s} \p_3\wb)(0)\|_{L^2}^2
					+\underset{0\le \tau \le t}{\sup}\|(\Lambda_h^{-s} \p_3\omega^u,
					\Lambda_h^{-s} \p_3 \omega^b)(\tau)\|_{L^2}
					\int_0^t\sqrt{\widehat{\mathcal{E}}_{tan}^{3}(\tau)}
					\sqrt{\widehat{\mathcal{D}}_{tan}^{3}(\tau)}d\tau\\
					&+\underset{0\le \tau \le t}{\sup}\|(\Lambda_h^{-s} \p_3\omega^u,
					\Lambda_h^{-s} \p_3\omega^b)(\tau)\|_{L^2}
					\underset{0\le \tau \le t}{\sup}
					\|(\p_3 \wu, \p_3 \wb, Z_3^3 \p_3 \wu, Z_3^3 \p_3 \wb)(\tau)\|_{L^2}^{\frac13}
					\int_0^t  {\widehat{\mathcal{E}}_{tan}^{3}(\tau)}^{\frac{7}{12}}
					{\widehat{\mathcal{D}}_{tan}^{3}(\tau)}^{\frac14} d\tau.
				\end{aligned}
				\deqq
				Under the assumption \eqref{decay-assumption-new}, we have
				\beqq
				\begin{aligned}
					&\int_0^t\sqrt{\widehat{\mathcal{E}}_{tan}^{3}(\tau)}
					\sqrt{\widehat{\mathcal{D}}_{tan}^{3}(\tau)}d\tau\\
					\lesssim
					&\underset{0\le \tau \le t}{\sup}[{\widehat{\mathcal{E}}_{tan}^{3}(\tau)(1+\tau)^s}]^{\frac12}
					\left\{\int_0^t \widehat{\mathcal{D}}_{tan}^{3}(\tau) (1+\tau)^{\sigma}d\tau\right\}^{\frac12}
					\left\{\int_0^t (1+\tau)^{-(\sigma+s)}d\tau\right\}^{\frac12}
					\lesssim \delta,
				\end{aligned}
				\deqq
				and
				\beqq
				\begin{aligned}
					&\int_0^t  {\widehat{\mathcal{E}}_{tan}^{3}(\tau)}^{\frac{7}{12}}
					{\widehat{\mathcal{D}}_{tan}^{3}(\tau)}^{\frac14} d\tau\\
					\lesssim
					&\underset{0\le \tau \le t}{\sup}[{\widehat{\mathcal{E}}_{tan}^{3}(\tau)(1+\tau)^s}]
					^{\frac{7}{12}}
					\left\{\int_0^t \widehat{\mathcal{D}}_{tan}^{3}(\tau) (1+\tau)^{\sigma}d\tau\right\}^{\frac14}
					\left\{\int_0^t (1+\tau)^{-\frac{3\sigma+7s}{9}}d\tau\right\}^{\frac34}
					\lesssim \delta^{\frac56},
				\end{aligned}
				\deqq
				where we have used the condition $\frac{9}{10}<\sigma <s <1$.
				Then, using the H\"{o}lder inequality and estimate \eqref{uniform_estimate},
				we complete the proof of this lemma.
			\end{proof}
			
			Finally, let us establish the decay estimate for the quantity
			$\widehat{\mathcal{E}}^{3}_{tan}(t)$.
			\begin{lemm}
				Under the assumption \eqref{decay-assumption-new},
				the smooth solution $(u, b)$ of equation \eqref{eqr} has the estimate
				\beq\label{4401}
				\begin{aligned}
					(1+t)^{s}\widehat{\mathcal{E}}^{3}_{tan}(t)
					+\kappa \int_0^t (1+\tau)^{\sigma} \widehat{\mathcal{D}}_{tan}^{3}(\tau) d\tau
					+\int_0^t (1+\tau)^{\sigma} \widehat{\mathcal{D}}_{tan, \ep}^{3}(\tau)d\tau
					\le C \widehat{C}_0.
				\end{aligned}
				\deq
				where the constant $C$ independent of parameter $\ep$ and time $t$
				and $\widehat{C}_0$ is defined in \eqref{4103}.
			\end{lemm}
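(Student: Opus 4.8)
The plan is to reproduce, at the low conormal level of $\widehat{\mathcal{E}}_{tan}^{3}$, the three–step mechanism already used for $\mathcal{E}_{tan}^{m-1}$ in the proofs of \eqref{31001} and \eqref{31101}: first assemble one clean differential inequality, then convert it into an ODE with polynomial decay via the negative–Sobolev bounds, and finally integrate it against the weight $(1+\tau)^{\sigma}$.

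For the first step I would combine \eqref{3101} with $k=3$, \eqref{3201} with $k=2$ and \eqref{4101}, together with the enhanced–dissipation estimates \eqref{3501} summed over $|\alpha_h|\le 2$, \eqref{3601} summed over $|\alpha_h|\le 1$, and \eqref{4201}. As in the construction of $\widetilde{\mathcal{E}}_{tan}^{m-1}$, I add to $\widehat{\mathcal{E}}_{tan}^{3}(t)$ the small corrections
\beqq
\kappa\sum_{0\le|\alpha_h|\le 2}\i\p_2 Z^{\ah}b\cdot Z^{\ah}u\,dx+\kappa\sum_{0\le|\alpha_h|\le 1}\i\p_2 Z^{\ah}\wb\cdot Z^{\ah}\wu\,dx-\kappa\i\p_3\wb\cdot\p_{23}\wu\,dx
\deqq
with $\kappa>0$ small, obtaining a functional $\widetilde{\widehat{\mathcal{E}}}_{tan}^{3}(t)$ equivalent to $\widehat{\mathcal{E}}_{tan}^{3}(t)$ whose time derivative produces the enhanced dissipations $\|\p_2 u\|_{H^2_{tan}}^2$, $\|\p_2\wu\|_{H^1_{tan}}^2$ and $\|\p_{23}\wu\|_{L^2}^2$. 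The non-dissipative remainders generated by \eqref{3501}, \eqref{3601} and \eqref{4201}—the terms $\|(\nabla_h b,\nabla_h^2 b,\p_1 u)\|^2$, $\|(\nabla_h\wb,\nabla_h^2\wb,\p_{11}\wu)\|^2$, $\|(\p_{23}\wb,\p_{113}\wu,\Delta_h\p_3\wb)\|_{L^2}^2$ and their $\ep$-weighted companions—are all dominated, since $\kappa$ is small, by the order-one dissipation $\widehat{\mathcal{D}}_{tan}^{3}(t)+\widehat{\mathcal{D}}_{tan,\ep}^{3}(t)$ furnished by \eqref{3101}, \eqref{3201} and \eqref{4101} (for instance $\|\p_{113}\wu\|_{L^2}^2\le\|\p_{13}\wu\|_{H^1_{tan}}^2$ and $\|\Delta_h\p_3\wb\|_{L^2}^2\lesssim\|\nabla_h\p_3\wb\|_{H^1_{tan}}^2$), while the $\ep$-coupled product $\ep\|(\p_{22}\wu,\p_{33}\wu)\|_{L^2}\|\p_{233}\wb\|_{L^2}$ is split by Young's inequality into $\ep$-dissipation terms already contained in $\widehat{\mathcal{D}}_{tan,\ep}^{3}$. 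Since $\sqrt{\mathcal{E}^m(t)}\lesssim\sqrt{\delta}$ by \eqref{uniform_estimate}, all genuinely nonlinear contributions are likewise absorbed, and one arrives at
\beqq
\frac{d}{dt}\widetilde{\widehat{\mathcal{E}}}_{tan}^{3}(t)+\kappa\,\widehat{\mathcal{D}}_{tan}^{3}(t)+\widehat{\mathcal{D}}_{tan,\ep}^{3}(t)\le 0.
\deqq

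For the second step, since each space $H_{tan}^{k}$ consists of purely horizontal derivatives, every summand of $\widehat{\mathcal{E}}_{tan}^{3}(t)$ carrying at least one horizontal derivative is already controlled by $\widehat{\mathcal{D}}_{tan}^{3}(t)$ (e.g. $\|\nabla_h u\|_{H^2_{tan}}^2\lesssim\|\p_1 u\|_{H^3_{tan}}^2+\|\p_2 u\|_{H^2_{tan}}^2$ and $\|\nabla_h\p_3(\wu,\wb)\|_{L^2}^2\lesssim\|\p_{23}\wu\|_{L^2}^2+\|(\p_{13}\wu,\nabla_h\p_3\wb)\|_{L^2}^2$), so that $\widehat{\mathcal{E}}_{tan}^{3}(t)\lesssim\|(u,b,\wu,\wb,\p_3\wu,\p_3\wb)(t)\|_{L^2}^2+\widehat{\mathcal{D}}_{tan}^{3}(t)$. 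The bounds \eqref{3801}, \eqref{3901} and \eqref{4301} give $\|\Lambda_h^{-s}(u,b,\wu,\wb,\p_3\wu,\p_3\wb)(t)\|_{L^2}^2\le C\widehat{C}_0$, and the interpolation $\|g\|_{L^2}\lesssim\|\Lambda_h^{-s}g\|_{L^2}^{\frac1{1+s}}\|\nabla_h g\|_{L^2}^{\frac{s}{1+s}}$ (with each $\|\nabla_h g\|_{L^2}^2$ lying in $\widehat{\mathcal{D}}_{tan}^{3}$), together with the a priori smallness $\widehat{\mathcal{D}}_{tan}^{3}(t)\lesssim\mathcal{E}^m(t)\lesssim\delta$ valid for $m\ge 5$, yields $\widehat{\mathcal{E}}_{tan}^{3}(t)\lesssim\widehat{C}_0^{\frac1{1+s}}\widehat{\mathcal{D}}_{tan}^{3}(t)^{\frac{s}{1+s}}$, i.e. $\widehat{\mathcal{D}}_{tan}^{3}(t)\gtrsim\widehat{C}_0^{-\frac1s}\widehat{\mathcal{E}}_{tan}^{3}(t)^{1+\frac1s}$. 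Feeding this into the differential inequality gives $\frac{d}{dt}\widetilde{\widehat{\mathcal{E}}}_{tan}^{3}+c\,\widehat{C}_0^{-\frac1s}(\widetilde{\widehat{\mathcal{E}}}_{tan}^{3})^{1+\frac1s}\le 0$, whence $\widehat{\mathcal{E}}_{tan}^{3}(t)\lesssim\widehat{C}_0(1+t)^{-s}$.

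For the final step I would multiply the differential inequality of Step 1 by $(1+t)^{\sigma}$ and integrate over $[0,t]$, obtaining
\beqq
\begin{aligned}
&(1+t)^{\sigma}\widetilde{\widehat{\mathcal{E}}}_{tan}^{3}(t)+\kappa\int_0^t(1+\tau)^{\sigma}\widehat{\mathcal{D}}_{tan}^{3}(\tau)d\tau+\int_0^t(1+\tau)^{\sigma}\widehat{\mathcal{D}}_{tan,\ep}^{3}(\tau)d\tau\\
&\le\widetilde{\widehat{\mathcal{E}}}_{tan}^{3}(0)+\sigma\int_0^t(1+\tau)^{\sigma-1}\widetilde{\widehat{\mathcal{E}}}_{tan}^{3}(\tau)d\tau,
\end{aligned}
\deqq
and since $\sigma<s$ the last integral is bounded by $\big(\sup_{0\le\tau\le t}(1+\tau)^{s}\widehat{\mathcal{E}}_{tan}^{3}(\tau)\big)\int_0^{\infty}(1+\tau)^{\sigma-1-s}d\tau\lesssim\widehat{C}_0$ by Step 2; together with $\widehat{\mathcal{E}}_{tan}^{3}(0)\lesssim\widehat{C}_0$ and the equivalence $\widetilde{\widehat{\mathcal{E}}}_{tan}^{3}\approx\widehat{\mathcal{E}}_{tan}^{3}$ this yields \eqref{4401}. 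The main obstacle is Step 1: one must choose $\kappa$ small enough that the dissipative terms coming out of the three correction integrals persist while the non-dissipative remainders—especially the $\ep$-coupled term in \eqref{4201}—are absorbed into the order-one dissipation of \eqref{3101}, \eqref{3201} and \eqref{4101}, which requires careful bookkeeping of exactly which horizontal and $\ep$-weighted norms land in $\widehat{\mathcal{D}}_{tan}^{3}$ versus $\widehat{\mathcal{D}}_{tan,\ep}^{3}$.
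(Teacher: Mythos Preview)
Your proposal is correct and follows essentially the same three-step route as the paper's proof: assemble the modified energy $\widehat{\mathcal{E}}^{3,q}_{tan}$ by adding the cross-term corrections (the paper cites \eqref{31010} together with \eqref{4101} and \eqref{4201}, which unwinds to exactly the combination of \eqref{3101}, \eqref{3201}, \eqref{3501}, \eqref{3601} at the levels you specify), then interpolate against the negative-Sobolev bounds \eqref{4301} plus \eqref{uniform_estimate} to obtain the nonlinear ODE and the $(1+t)^{-s}$ decay, and finally multiply by $(1+t)^{\sigma}$ and integrate. One cosmetic point: in Step~2 the paper interpolates directly as in \eqref{31007}, bounding the first factor by $\widehat{C}_0$ via \eqref{uniform_estimate}, rather than splitting $\widehat{\mathcal{E}}^{3}_{tan}$ additively and then invoking $\widehat{\mathcal{D}}^{3}_{tan}\lesssim\mathcal{E}^m\lesssim\widehat{C}_0$; your additive route works too, but note that the relevant bound you need there is $\widehat{\mathcal{D}}^{3}_{tan}(t)\lesssim\widehat{C}_0$ (which follows from \eqref{uniform_estimate} since $\mathcal{E}(u^\ep,b^\ep)(0)\le\widehat{C}_0$), not merely $\lesssim\delta$.
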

			\begin{proof}
				The combination of estimates \eqref{31010}, \eqref{4101} and \eqref{4201}
                yields directly
				\beq\label{4402}
				\begin{aligned}
					&\frac{d}{dt}\widehat{\mathcal{E}}^{3,q}_{tan}(t)
					+\kappa \widehat{\mathcal{D}}_{tan}^{3}(t)
					+\widehat{\mathcal{D}}_{tan, \ep}^{3}(t)
					\le 0,
				\end{aligned}
				\deq
				where the norms $\widehat{\mathcal{E}}^{3,q}_{tan}(t)$
				and $\widehat{\mathcal{D}}_{tan, \ep}^{3}(t)$ are defined as
				\beqq
				\begin{aligned}
					\widehat{\mathcal{E}}^{3,q}_{tan}(t)
					:=&\|(u, b)(t)\|_{H^{3}_{tan}}^2+\|(\wu, \wb)(t)\|_{H^{2}_{tan}}^2
					+\|(\p_3 \wu, \p_3 \wb)\|_{H^1_{tan}}^2
					+2\kappa \i \p_{23} \wb \cdot \p_3 \wu dx\\
					&+\sum_{0\le |\alpha_h| \le 2} 2\kappa  \i \p_2 Z^{\ah}b \cdot  Z^{\ah}u \ dx
					+\sum_{0\le |\alpha_h| \le 1} 2\kappa \i \p_2 Z^{\ah} \wb \cdot Z^{\ah} \wu dx,
				\end{aligned}
				\deqq
				and
				\beqq
				\begin{aligned}
					\widehat{\mathcal{D}}_{tan, \ep}^{3}(t)
					:=\ep \|(\p_2 u, \p_3 u, \p_3 b)(t)\|_{H^{3}_{tan}}^2
					+\ep\|(\p_2 \wu, \p_3 \wu, \p_3 \wb)\|_{H^{2}_{tan}}^2
					+\ep \|(\p_2 \p_3 \wu, \p_3^2 \wu, \p_3^2 \wb)\|_{H^1_{tan}}^2.
				\end{aligned}
				\deqq
				Here the $\kappa$ is a small positive constant.
				The combination of estimate \eqref{uniform_estimate} and \eqref{4301} yields directly
				\beqq
				\begin{aligned}
					\|\Lambda_h^{-s}(u, b, \wu, \wb, \p_3 \wu, \p_3 \wb)\|_{L^2}^2
					+\|\nabla_h(u, b)\|_{H^{2}_{tan}}^2
					+\|\nabla_h(\wu, \wb)\|_{H^{1}_{tan}}^2
					+\|\nabla_h(\p_3 \wu, \p_3 \wb)\|_{L^2}^2
					\le C \widehat{C}_0,
				\end{aligned}
				\deqq
				where the constant $\widehat{C}_0$ is defined by
				\beq\label{4103}
				\widehat{C}_0:=
				\|(\Lambda_h^{-s}\p_3 \wu,\Lambda_h^{-s} \p_3\wb)(0)\|_{L^2}^2
				+\mathcal{E}(u^\ep, b^\ep)(0)+\delta^2.
				\deq
				Similar to the estimate \eqref{31007}, it is easy to check that
				\beqq
				\begin{aligned}
					&\widehat{\mathcal{E}}^{3,q}_{tan}(t)
					\lesssim
					\widehat{\mathcal{E}}^{3}_{tan}(t)\\
					\lesssim
					&(\|\Lambda_h^{-s}(u, b, \wu, \wb, \p_3 \wu, \p_3 \wb)\|_{L^2}^2
					+\|\nabla_h(u, b)\|_{H^{2}_{tan}}^2
					+\|\nabla_h(\wu, \wb)\|_{H^{1}_{tan}}^2
					+\|\nabla_h(\p_3 \wu, \p_3 \wb)\|_{L^2}^2)^{\frac{1}{1+s}}\\
					& \times(\|\nabla_h(u,b)\|_{H^{2}_{tan}}^2
					+\|\nabla_h(\wu,\wb)\|_{H^{1}_{tan}}^2
					+\|\nabla_h(\p_3 \wu, \p_3 \wb)\|_{L^2}^2)^{\frac{s}{1+s}}\\
					\lesssim
					&\widehat{C}_0^{\frac{1}{1+s}} \widehat{\mathcal{D}}_{tan}^{3}(t)^{\frac{s}{1+s}},
				\end{aligned}
				\deqq
				which, together with \eqref{4402}, we can obtain
				\beqq
				\frac{d}{dt}\widehat{\mathcal{E}}^{3,q}_{tan}(t)
				+\kappa \widehat{C}_0^{-\frac{1}{s}}
				\widehat{\mathcal{E}}^{3,q}_{tan}(t)^{1+\frac{1}{s}}\le 0.
				\deqq
				Thus, we can obtain the decay estimate
				\beqq
				\widehat{\mathcal{E}}^{3}_{tan}(t)
				\lesssim
				\widehat{\mathcal{E}}^{3,q}_{tan}(t)
				\lesssim \widehat{C}_0(1+t)^{-s}.
				\deqq
				For $\frac{9}{10}<\sigma<s<1$, multiplying \eqref{4402} by $(1+t)^{\sigma}$, we have
				\beqq
				\begin{aligned}
					\frac{d}{dt}[(1+t)^{\sigma}\widehat{\mathcal{E}}^{3,q}_{tan}(t)]
					+\kappa (1+t)^{\sigma} \widehat{\mathcal{D}}_{tan}^{3}(t)
					+(1+t)^{\sigma} \widehat{\mathcal{D}}_{tan, \ep}^{3}(t)
					\le \sigma (1+t)^{\sigma-1}\widehat{\mathcal{E}}^{3,q}_{tan}(t).
				\end{aligned}
				\deqq
				Integrating over $[0, t]$, then we have
				\beqq
				\begin{aligned}
					&(1+t)^{\sigma}\widehat{\mathcal{E}}^{3,q}_{tan}(t)
					+\kappa \int_0^t (1+\tau)^{\sigma} \widehat{\mathcal{D}}_{tan}^{3}(\tau) d\tau
					+\int_0^t (1+\tau)^{\sigma} \widehat{\mathcal{D}}_{tan, \ep}^{3}(\tau)d\tau\\
					\le
					&\widehat{\mathcal{E}}^{3,q}_{tan}(0)
					+\sigma \underset{0\le \tau \le t}{\sup}
					[(1+\tau)^{s}\widehat{\mathcal{E}}^{3,q}_{tan}(\tau)]
					\int_0^t (1+\tau)^{\sigma-1-s}d\tau\\
					\le
					&C\widehat{C}_0.
				\end{aligned}
				\deqq
				Therefore, we complete the proof of this lemma.
			\end{proof}

			\begin{proof}[\textbf{Proof of closed energy estimate.}]
				Let us choose
				\beqq
				\delta:=4C(\|(\Lambda_h^{-s}\p_3 \wu,\Lambda_h^{-s} \p_3\wb)(0)\|_{L^2}^2
				+\mathcal{E}(u^\ep, b^\ep)(0))
				\le \min\left\{1, \frac{1}{4C}\right\},
				\deqq
				then the estimate \eqref{4401} implies directly
				\beq\label{decay-assumption-new-close}
				\begin{aligned}
					&(1+t)^{s}\widehat{\mathcal{E}}^{3}_{tan}(t)
					+\kappa \int_0^t (1+\tau)^{\sigma} \widehat{\mathcal{D}}_{tan}^{3}(\tau) d\tau
					+\int_0^t (1+\tau)^{\sigma} \widehat{\mathcal{D}}_{tan, \ep}^{3}(\tau)d\tau\\
					\le
					&C (\|(\Lambda_h^{-s}\p_3 \wu,\Lambda_h^{-s} \p_3\wb)(0)\|_{L^2}^2+\mathcal{E}(u^\ep, b^\ep)(0))
					+ C\delta^2\\
					\le
					&\frac{\delta}{4}+\frac{\delta}{4}=\frac{\delta}{2}.
				\end{aligned}
				\deq
				The combination of estimates \eqref{decay-assumption-new} and
				\eqref{decay-assumption-new-close} help us to establish the closed estimate.
				Thus, we establish the time decay rate estimate \eqref{decay-assumption-new-close}
				for the system \eqref{eqr}.
				Since the estimate \eqref{decay-assumption-new-close} is independent of $\ep$,
			one can apply this method to the system \eqref{eqr0} to establish the decay estimate
				\beq\label{decay-limit}
				\begin{aligned}
					&(1+t)^{s}(\|(u^0, b^0)(t)\|_{H^2}^2+\|\p_{33}\nabla_h(u^0, b^0)(t)\|_{L^2}^2)\\
					&+\int_0^t (1+\tau)^{\sigma}(\|(\nabla_h u^0, \nabla_h b^0)(\tau)\|_{H^2}^2
					+\|(\p_{133}\nabla_h u^0, \p_{33} \nabla_{h}^2 b^0)(\tau)\|_{L^2}^2)d\tau
					\le C,
				\end{aligned}
				\deq
				for all $\frac{9}{10}<\sigma<s<1$ and $C$ is a positive constant independent of time.
			\end{proof}

			\subsection{Convergence rate of solution}
			In this subsection, we will establish the convergence rate
			of the solutions between  equation \eqref{eqr} and \eqref{eqr0}.
			Let us define $(\bu, \bb):=(u^\ep-u^0, b^\ep-b^0)$
			and $(\bwu, \bwb):=(\nabla \times \bu, \nabla \times \bb)$
			and apply the equations \eqref{eqr} and \eqref{eqr0},
			then we can obtain
			\begin{equation}\label{401}
				\left\{
				\begin{array}{*{4}{ll}}
					\p_t \bu-\p_{11} \bu+\nabla (p^\ep-p^0)-\p_2 \bb
					=f,\\
					\p_t \bb-\Delta_h \bb-\p_2 \bu
					=g,\\
					\nabla \cdot \bu= 0,\quad  \nabla \cdot \bb= 0,
				\end{array}
				\right.
			\end{equation}
			and
			\begin{equation}\label{402}
				\left\{
				\begin{array}{*{4}{ll}}
					\p_t \bwu-\p_{11} \bwu-\p_2 \bwb
					=\nabla \times f,\\
					\p_t \bwb-\Delta_h \bwb-\p_2 \bwu
					=\nabla \times g,
				\end{array}
				\right.
			\end{equation}
			where the source terms $f$ and $g$ are defined by
			\beqq
			\begin{aligned}
				f:=&-u^\ep \cdot \nabla \bu-\bu \cdot \nabla u^0+b^\ep\cdot \nabla \bb+\bb\cdot \nabla b^0
				+\ep \p_{22}u^\ep+\ep \p_{33} u^\ep,\\
				g:=&-u^\ep \cdot \nabla \bb-\bu \cdot \nabla b^0+b^\ep \cdot \nabla \bu+\bb\cdot \nabla u^0
				+\ep \p_{33}b^\ep.
			\end{aligned}
			\deqq
			
			Let us define the notations
			\beqq
			\begin{aligned}
				\mathcal{\overline{E}}(t)
				&:=\|(\bu, \bb)(t)\|_{H^1_{tan}}^2+\|(\bwu, \bwb)(t)\|_{L^2}^2,\\
				\mathcal{\overline{D}}(t)
				&:=\|(\p_1 \bu, \nabla_h \bb)(t)\|_{H^1_{tan}}^2
				+\|(\p_2 \bu, \p_1 \bwu, \nabla_h \bwb)(t)\|_{L^2}^2,
			\end{aligned}
			\deqq
			and
			\beqq
			\mathcal{{B}}(t)
			:=\|(u^0, b^0, u^\ep, b^\ep)(t)\|_{H^2}, \quad
			\mathcal{{B}}_h(t)
			:=\|\nabla_h (u^0, b^0, u^\ep, b^\ep)(t)\|_{H^2}.
			\deqq
			First of all, let us establish the estimate for $(\bu, \bb)$ in $L^2$ norm.
			\begin{lemm}\label{lemma32}
				Under the conditions of Theorem \ref{main_result_two},
				then it holds
				\beq\label{4501}
				\begin{aligned}
					\frac{d}{dt}\|(\bu, \bb)\|_{L^2}^2 +\|(\p_1 \bu, \nabla_h \bb)\|_{L^2}^2
					\lesssim
					\nu \mathcal{\overline{D}}(t)
					+ \mathcal{{B}}(t)\mathcal{{B}}_h(t)
					\mathcal{\overline{E}}(t)
					+\ep \|(\p_{22}u^\ep, \p_{33} u^\ep, \p_{33}b^\ep)\|_{H^1}
					\mathcal{\overline{E}}(t)^{\frac12},
				\end{aligned}
				\deq
				where $\nu$ is a small positive constant.
			\end{lemm}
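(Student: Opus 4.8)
The plan is to run a plain $L^2$ energy estimate on the difference system \eqref{401}. I would take the $L^2(\mathbb{R}^3_+)$ inner product of $\eqref{401}_1$ with $\bu$ and of $\eqref{401}_2$ with $\bb$, add the two identities and integrate by parts. The pressure term disappears since $\nabla\cdot\bu=0$ together with $\bu_3|_{x_3=0}=0$; the two linear coupling terms cancel because $\i\p_2\bb\cdot\bu\,dx+\i\p_2\bu\cdot\bb\,dx=0$ (integrate the first by parts in $x_2$); and the horizontal dissipation yields exactly $\|(\p_1\bu,\nabla_h\bb)\|_{L^2}^2$. This reduces the estimate to
\beqq
\bal
\frac{d}{dt}\frac12\|(\bu,\bb)\|_{L^2}^2+\|(\p_1\bu,\nabla_h\bb)\|_{L^2}^2
=\i f\cdot\bu\,dx+\i g\cdot\bb\,dx,
\dal
\deqq
with $f$ and $g$ as defined right after \eqref{402}.

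Next I would exploit the cancellations in $\i f\cdot\bu\,dx+\i g\cdot\bb\,dx$. The pure transport pieces vanish, $\i(u^\ep\cdot\nabla\bu)\cdot\bu\,dx=\i(u^\ep\cdot\nabla\bb)\cdot\bb\,dx=0$, using $\nabla\cdot u^\ep=0$ and $\bu_3,\bb_3|_{x_3=0}=0$, and the magnetic coupling also cancels, $\i(b^\ep\cdot\nabla\bb)\cdot\bu\,dx+\i(b^\ep\cdot\nabla\bu)\cdot\bb\,dx=0$, after one integration by parts using $\nabla\cdot b^\ep=0$ and $b^\ep_3|_{x_3=0}=0$. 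What is left are the reaction terms
\beqq
\bal
&\i(\bu\cdot\nabla u^0)\cdot\bu\,dx,\quad
\i(\bb\cdot\nabla b^0)\cdot\bu\,dx,\\
&\i(\bu\cdot\nabla b^0)\cdot\bb\,dx,\quad
\i(\bb\cdot\nabla u^0)\cdot\bb\,dx,
\dal
\deqq
together with the viscous remainder $\ep\i(\p_{22}u^\ep+\p_{33}u^\ep)\cdot\bu\,dx+\ep\i\p_{33}b^\ep\cdot\bb\,dx$. The viscous remainder is controlled directly by the Cauchy--Schwarz inequality, giving the contribution $\ep\|(\p_{22}u^\ep,\p_{33}u^\ep,\p_{33}b^\ep)\|_{H^1}\,\mathcal{\overline{E}}(t)^{\frac12}$. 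For each reaction term I would split the advecting field into its horizontal part and its third component, write the third component via $\bu_3=-\int_0^{x_3}\nabla_h\cdot\bu_h\,d\xi$ (and similarly for $\bb_3$) so that it carries a horizontal derivative, and then apply the anisotropic Sobolev inequality \eqref{ie:Sobolev}. In this way the ``full'' norms always fall on $(u^0,b^0)$ and are paired with a factor that contains a horizontal derivative of $(u^0,b^0,u^\ep,b^\ep)$; these combine into the factor $\mathcal{{B}}(t)\mathcal{{B}}_h(t)$, while the remaining $\bu,\bb$ factors assemble into $\mathcal{\overline{E}}(t)$. The few places where a derivative is forced onto the difference produce terms bounded by $\|(\p_1\bu,\p_2\bu,\nabla_h\bb)\|_{L^2}\,(\cdots)$, which after Young's inequality are of the form $\nu\,\mathcal{\overline{D}}(t)+C_\nu\mathcal{{B}}(t)\mathcal{{B}}_h(t)\mathcal{\overline{E}}(t)$; choosing $\nu$ small yields \eqref{4501}.

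The bilinear bookkeeping is by now routine and parallels Section~\ref{global-estimate}, so the calculations themselves are not the obstacle. The one point that must be handled with care is the \emph{distribution of derivatives} in the reaction terms: the large-data norm has to appear precisely as the product $\mathcal{{B}}(t)\mathcal{{B}}_h(t)$ (rather than $\mathcal{{B}}(t)^2$), because it is this combination --- a bounded factor times a horizontally differentiated, hence time-decaying, factor --- that will make the Gr\"onwall exponential in \eqref{204} finite and \emph{uniform in $t$} once the decay estimates of Theorems~\ref{main_result_one}--\ref{main-coro} are invoked; the hypothesis $m\ge5$ in Theorem~\ref{main_result_two} is exactly what guarantees that $\mathcal{{B}}(t)$ and $\mathcal{{B}}_h(t)$ are controlled and that $\mathcal{{B}}_h$ enjoys the needed decay. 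The only other thing to watch is the stack of boundary integrals on $\{x_3=0\}$ generated by the various integrations by parts, all of which vanish because $\bu_3$, $\bb_3$, $b^\ep_3$ vanish there and the pressure boundary term is paired with $\bu_3$. Collecting everything and taking $\nu$ suitably small completes the proof.
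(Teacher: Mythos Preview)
Your proposal is correct and follows essentially the same route as the paper's proof: the same energy identity \eqref{4502}, the same cancellations for the transport and magnetic-coupling terms, the same anisotropic Sobolev/Young treatment of the four reaction terms, and the same Cauchy--Schwarz bound on the viscous remainder. The only cosmetic difference is that the paper does not invoke the integral representation $\bu_3=-\int_0^{x_3}\nabla_h\cdot\bu_h\,d\xi$; instead it applies the triple-product inequality $\eqref{ie:Sobolev}_3$ so that a factor $\|\p_3\bu_3\|_{L^2}^{1/2}$ (resp.\ $\|\p_3\bb_3\|_{L^2}^{1/2}$) appears naturally and is then rewritten as $\|\nabla_h\cdot\bu_h\|_{L^2}^{1/2}$ via the divergence-free condition (see \eqref{4503}--\eqref{45051}), which is a cleaner way to place the horizontal derivative on the difference.
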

			\begin{proof}
				The equation \eqref{401} yields directly
				\beq\label{4502}
				\begin{aligned}
					&\frac{d}{dt}\frac{1}{2}\i (|\bu|^2+|\bb|^2)dx
					+\i (|\p_1 \bu|^2+|\nabla_h \bb|^2)dx\\
					=&\i (-\bu \cdot \nabla u^0+\bb \cdot \nabla b^0
					+\ep \p_{22}u^\ep+\ep \p_{33} u^\ep)\cdot \bu \ dx
					+\i (-\bu \cdot \nabla b^0+ \bb\cdot \nabla u^0+\ep \p_{33}b^\ep)\cdot \bb \ dx,
				\end{aligned}
				\deq
				where we have used the basic fact
				\beqq
				\i (u^\ep \cdot \nabla) \bu \cdot \bu \ dx
				=\i (u^\ep \cdot \nabla) \bb \cdot \bb \  dx
				=\i (b^\ep \cdot \nabla) \bb \cdot \bu \  dx
				+\i (b^\ep \cdot \nabla) \bu \cdot \bb \  dx
				=0.
				\deqq
				Using the anisotropic type inequality \eqref{ie:Sobolev}
				and divergence-free condition, then we have
				\beq\label{4503}
				\begin{aligned}
					&-\i (\bu \cdot \nabla) u^0\cdot \bu \ dx
					=-\i (\bu_h \cdot \nabla_h) u^0 \cdot \bu \ dx
					-\i \bu_3 \p_3  u^0 \cdot \bu \ dx \\
					\lesssim
					&\|\bu_h\|_{L^2}^{\frac12}\|\p_2 \bu_h\|_{L^2}^{\frac12}
					\|\nabla_h u^0\|_{L^2}^{\frac12}\|\p_3 \nabla_h u^0\|_{L^2}^{\frac12}
					\|\bu\|_{L^2}^{\frac12}\|\p_1 \bu\|_{L^2}^{\frac12}\\
					&+\|\bu_3\|_{L^2}^{\frac12}\|\p_3 \bu_3\|_{L^2}^{\frac12}
					\|\p_3  u^0\|_{L^2}^{\frac12}\|\p_2 \p_3  u^0\|_{L^2}^{\frac12}
					\|\bu\|_{L^2}^{\frac12}\|\p_1 \bu\|_{L^2}^{\frac12}\\
					\lesssim
					&
					\nu\|(\p_1 \bu,\p_2 \bu)\|_{L^2}^2
					+\|\bu \|_{L^2}^2\|\nabla u^0\|_{L^2}\|\nabla_h \p_3 u^0\|_{L^2},
				\end{aligned}
				\deq
				and
				\beq\label{4504}
				\begin{aligned}
					&\i (\bb \cdot \nabla) b^0 \cdot \bu \ dx
					=\i (\bb_h \cdot \nabla_h) b^0 \cdot \bu \ dx
					+\i \bb_3 \p_3 b^0 \cdot \bu \ dx\\
					\lesssim
					&\|\bb_h\|_{L^2}^{\frac12}\|\p_2 \bb_h\|_{L^2}^{\frac12}
					\|\nabla_h b^0\|_{L^2}^{\frac12}\|\p_3 \nabla_h b^0\|_{L^2}^{\frac12}
					\|\bu\|_{L^2}^{\frac12}\|\p_1 \bu\|_{L^2}^{\frac12}\\
					&+\|\bb_3\|_{L^2}^{\frac12}\|\p_3 \bb_3\|_{L^2}^{\frac12}
					\|\p_3 b^0\|_{L^2}^{\frac12}\|\p_{23} b^0\|_{L^2}^{\frac12}
					\|\bu\|_{L^2}^{\frac12}\|\p_1 \bu \|_{L^2}^{\frac12}\\
					\lesssim
					&\nu \|(\p_1 \bu, \nabla_h \bb)\|_{L^2}^2
					+\|(\bu, \bb)\|_{L^2}^2\|\nabla b^0\|_{L^2}
					\|\nabla_h \p_3 b^0\|_{L^2}.
				\end{aligned}
				\deq\label{4505}
				Similarly, we have
				\beq \label{45051}
				\begin{aligned}
					&
					-\i (\bu \cdot \nabla) b^0\cdot \bb \ dx
					\lesssim \nu \|(\p_1 \bu, \nabla_h \bb)\|_{L^2}^2
					+\|(\bu, \bb)\|_{L^2}^2\|\nabla b^0\|_{L^2}
					\|\nabla_h \p_3 b^0\|_{L^2},\\
					&\i (\bb\cdot \nabla) u^0\cdot \bb \ dx
					\lesssim
					\nu \|\nabla_h \bb \|_{L^2}^2
					+\|\bb \|_{L^2}^2\|\nabla u^0\|_{L^2}\|\nabla_h \p_3 u^0\|_{L^2}.
				\end{aligned}
				\deq
				The H\"{o}lder inequality gives directly
				\beq\label{4506}
				\ep \i ( \p_{22}u^\ep+\p_{33} u^\ep)\cdot \bu \ dx
				+\ep\i \p_{33}b^\ep \cdot \bb \ dx
				\le \ep \|(\p_{22}u^\ep, \p_{33} u^\ep, \p_{33}b^\ep)\|_{L^2}\|(\bu, \bb)\|_{L^2}.
				\deq
				Substituting estimates \eqref{4503}-\eqref{4506} into \eqref{4502}, then we complete the proof of this lemma.
			\end{proof}
			
			Next, we will establish the dissipation estimate for the quantity $\p_2 \bu$ in $L^2-$norm.
			\begin{lemm}\label{lemma32}
				Under the conditions of Theorem \ref{main_result_two},
				then it holds
				\beq\label{4601}
				\begin{aligned}
					&\frac{d}{dt}\i  \p_2  \bb \cdot \bu \ dx+\i |\p_2 \bu|^2 dx\\
					\lesssim
					&\|(\p_1 \bu, \nabla_h \bb)\|_{H^1}^2
					+ \mathcal{{B}}(t)\mathcal{{B}}_h(t)
					\mathcal{\overline{E}}(t)
					+\ep \|(\p_{22}u^\ep, \p_{33} u^\ep, \p_{33}b^\ep)\|_{H^1}
					\mathcal{\overline{E}}(t)^{\frac12}.
				\end{aligned}
				\deq
			\end{lemm}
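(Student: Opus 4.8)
The plan is to adapt the enhanced-dissipation mechanism used for $\p_2 u$ in Section~\ref{global-estimate} (compare the identity \eqref{3504} behind \eqref{3501}) to the difference system \eqref{401}. The point is that the dissipation $\|\p_2\bu\|_{L^2}^2$ is not produced by the natural energy estimate \eqref{4501}, so one instead reads it off the magnetic equation $\eqref{401}_2$ and converts the arising time derivative, via the velocity equation $\eqref{401}_1$, into $\frac{d}{dt}\i\p_2\bb\cdot\bu\,dx$ plus terms that either sit inside the dissipative norm $\|(\p_1\bu,\nabla_h\bb)\|_{H^1}^2$, are controlled by the cubic quantity $\mathcal{{B}}(t)\mathcal{{B}}_h(t)\mathcal{\overline{E}}(t)$, or carry an explicit factor $\ep$.

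Concretely, from $\eqref{401}_2$ write $\p_2\bu=\p_t\bb-\Delta_h\bb-g$, pair with $\p_2\bu$, integrate over $\mathbb{R}^3_+$, and integrate by parts in $x_2$ (no boundary term, since $x_2\in\mathbb{R}$). On the term containing $\p_t\bb$, integrate by parts in time and substitute $\p_t\bu=\p_{11}\bu-\nabla(p^\ep-p^0)+\p_2\bb+f$ from $\eqref{401}_1$; the pressure contribution vanishes because, after one integration by parts, it equals $\i(p^\ep-p^0)\,\p_2(\nabla\cdot\bb)\,dx$ up to a boundary integral over $\{x_3=0\}$ that is also zero, using $\nabla\cdot\bb=0$ together with $\bb_3|_{x_3=0}=0$ (both $b^\ep$ and $b^0$ vanish in the third component on the boundary) and the tangentiality of $\p_2$. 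This produces the key identity
\beqq
\begin{aligned}
\frac{d}{dt}\i\p_2\bb\cdot\bu\,dx+\i|\p_2\bu|^2\,dx
=&\i|\p_2\bb|^2\,dx+\i\p_{11}\bu\cdot\p_2\bb\,dx+\i f\cdot\p_2\bb\,dx\\
&-\i\p_2\bu\cdot\Delta_h\bb\,dx-\i\p_2\bu\cdot g\,dx.
\end{aligned}
\deqq

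Next I would estimate the right-hand side. The terms not involving $f,g$ are linear in $\bb$: $\i|\p_2\bb|^2\,dx\le\|\nabla_h\bb\|_{H^1}^2$; after integrating $\p_1$ by parts, $\i\p_{11}\bu\cdot\p_2\bb\,dx=-\i\p_1\bu\cdot\p_{12}\bb\,dx\lesssim\|\p_1\bu\|_{L^2}^2+\|\nabla_h\bb\|_{H^1}^2$; and by Cauchy--Schwarz $\i\p_2\bu\cdot\Delta_h\bb\,dx\le\frac18\|\p_2\bu\|_{L^2}^2+C\|\nabla_h\bb\|_{H^1}^2$, the first piece absorbed on the left. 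For the forcing I split $f=f^{NL}+\ep(\p_{22}u^\ep+\p_{33}u^\ep)$ and $g=g^{NL}+\ep\,\p_{33}b^\ep$, where $f^{NL},g^{NL}$ collect the transport and stretching differences. The $\ep$-part of $f$ is paired with $\p_2\bb$, hence bounded directly by $\ep\|(\p_{22}u^\ep,\p_{33}u^\ep)\|_{L^2}\|\p_2\bb\|_{L^2}\lesssim\ep\|(\p_{22}u^\ep,\p_{33}u^\ep)\|_{H^1}\mathcal{\overline{E}}(t)^{\frac12}$; the $\ep$-part of $g$ is paired with $\p_2\bu$, so I first move $\p_2$ by parts onto $\bu$, giving $\ep\i\bu\cdot\p_{233}b^\ep\,dx\lesssim\ep\|\p_{33}b^\ep\|_{H^1}\mathcal{\overline{E}}(t)^{\frac12}$ — this redistribution is exactly why the $H^1$-norm, rather than an $L^2$-norm, of the dissipative correctors appears in \eqref{4601}.

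The main obstacle is the genuinely nonlinear part, $\i f^{NL}\cdot\p_2\bb\,dx$ and $\i\p_2\bu\cdot g^{NL}\,dx$: each is a sum of triple products of two ``difference'' factors drawn from $\{\bu,\bb,\nabla\bu,\nabla\bb\}$ and one ambient factor drawn from $\{u^0,b^0,u^\ep,b^\ep\}$. I would bound these with the anisotropic Sobolev inequality \eqref{ie:Sobolev} in the spirit of \eqref{4503}--\eqref{45051}, allocating the fractional-derivative slots so that the ambient factor together with one of its (horizontal, then normal) derivatives is absorbed into $\mathcal{{B}}(t)\mathcal{{B}}_h(t)$, while the two difference factors produce either $\mathcal{\overline{E}}(t)^{\frac12}$ or a small constant times $\|\p_2\bu\|_{L^2}$ paired with a factor bounded by $\|(\p_1\bu,\nabla_h\bb)\|_{H^1}$; Young's inequality then lands everything inside $\|(\p_1\bu,\nabla_h\bb)\|_{H^1}^2+\mathcal{{B}}(t)\mathcal{{B}}_h(t)\mathcal{\overline{E}}(t)$. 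The delicate cases are those in which $\p_2\bu$ (dissipation level) is paired against a gradient of the difference unknowns — for instance $\i(b^\ep\cdot\nabla\bu)\cdot\p_2\bu\,dx$ inside $\i\p_2\bu\cdot g^{NL}\,dx$; there I would either integrate by parts first (the boundary contributions at $\{x_3=0\}$ drop since $\bu_3|_{x_3=0}=0$ and the surviving derivatives are tangential) or, equivalently, use an anisotropic $L^\infty$ bound $\|b^\ep\|_{L^\infty}^2\lesssim\mathcal{{B}}(t)\mathcal{{B}}_h(t)$ together with the uniform smallness of $\mathcal{{B}}(t)\mathcal{{B}}_h(t)$ (a consequence of Theorem~\ref{main_result_one} and the decay estimates of this section) to absorb the resulting dissipation-level multiples on the left. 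Collecting all bounds and absorbing the controlled fractions of $\|\p_2\bu\|_{L^2}^2$ into the left-hand side yields \eqref{4601}; the hypothesis $m\ge5$ enters only to guarantee that $\mathcal{{B}}(t)$ and $\mathcal{{B}}_h(t)$ are finite and time-uniformly small.
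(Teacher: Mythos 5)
Your proposal reproduces the paper's argument: you derive precisely the paper's identity \eqref{4602} by reading $\p_2\bu$ off the magnetic equation $\eqref{401}_2$, trading the $\p_t\bb$ term for $\frac{d}{dt}\i\p_2\bb\cdot\bu\,dx$ via integration by parts in $x_2$, and substituting $\p_t\bu$ from $\eqref{401}_1$ (with the pressure dropped exactly as you explain), and you then estimate each term by the anisotropic Sobolev inequality \eqref{ie:Sobolev} plus Young, as the paper does in \eqref{4603}--\eqref{4605}; your treatment of the $\ep$-terms (integrating $\p_2$ off $\p_2\bu$ before bounding, which is where the $H^1$-norm of the correctors comes from) fills in a step the paper leaves implicit. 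One minor caveat on your alternative route for the "delicate cases": the Sobolev inequality $\eqref{ie:Sobolev}_1$ yields $\|b^\ep\|_{L^\infty}^2\lesssim\mathcal{B}^{1/2}\mathcal{B}_h^{3/2}$ rather than $\mathcal{B}\,\mathcal{B}_h$, so the precise power count there is off — but your primary route (peel off a small multiple of $\|\p_2\bu\|_{L^2}^2$ by Young plus the other dissipative pieces, as in \eqref{4605}) is what the paper actually uses and is sound.
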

			\begin{proof}
				The equation $\eqref{401}_2$ yields directly
				\beqq
				-\i \p_t \bb \cdot \p_2 \bu \ dx+\i |\p_2 \bu|^2 dx
				=-\i \Delta_h \bb \cdot \p_2 \bu \ dx-\i g \cdot \p_2 \bu \ dx.
				\deqq
				The equation $\eqref{401}_1$ yields directly
				\beqq
				\begin{aligned}
					-\i \p_t \bb \cdot \p_2 \bu \ dx
					&=\frac{d}{dt}\i  \p_2  \bb \cdot \bu \ dx
					-\i  \p_2  \bb \cdot \p_t \bu \ dx\\
					&=\frac{d}{dt}\i  \p_2  \bb \cdot \bu \ dx
					-\i  \p_2  \bb \cdot (f+\p_1^2 \bu+\p_2 \bb) \ dx.
				\end{aligned}
				\deqq
				Thus, we can obtain the equation
				\beq\label{4602}
				\begin{aligned}
					&\frac{d}{dt}\i  \p_2  \bb \cdot \bu \ dx+\i |\p_2 \bu|^2 dx\\
					=&-\i \Delta_h \bb \cdot \p_2 \bu \ dx
					+\i  |\p_2  \bb|^2 \ dx
					+\i  \p_2  \bb \cdot \p_1^2 \bu \ dx\\
					&+\i  \p_2  \bb \cdot (-u^\ep \cdot \nabla \bu-\bu \cdot \nabla u^0
					+b^\ep\cdot \nabla \bb+\bb\cdot \nabla b^0
					+\ep \p_{22}u^\ep+\ep \p_{33} u^\ep) \ dx\\
					&-\i \p_2 \bu \cdot (-u^\ep \cdot \nabla \bb-\bu \cdot \nabla b^0
					+b^\ep \cdot \nabla \bu+\bb\cdot \nabla u^0+\ep \p_{33}b^\ep) \ dx
				\end{aligned}
				\deq
				Using the anisotropic type inequality \eqref{ie:Sobolev}, we have
				\beq\label{4603}
				\begin{aligned}
					\i  \p_2  \bb \cdot (u^\ep \cdot \nabla \bu) dx
					\lesssim
					&\|\p_2  \bb\|_{L^2}^{\frac12}\|\p_{23}  \bb\|_{L^2}^{\frac12}
					\|u^\ep\|_{L^2}^{\frac12}\|\p_2 u^\ep\|_{L^2}^{\frac12}
					\|\nabla \bu\|_{L^2}^{\frac12}\|\p_1 \nabla \bu\|_{L^2}^{\frac12}\\
					\lesssim
					&\frac14\|(\p_1 \nabla \bu, \p_{23}  \bb)\|_{L^2}^2
					+\|(\p_2  \bb, \nabla \bu)\|_{L^2}
					\|u^\ep\|_{L^2}\|\p_2 u^\ep\|_{L^2}.
				\end{aligned}
				\deq
				Similarly, it is easy to check that
				\beq\label{4604}
				\begin{aligned}
					&\i  \p_2  \bb \cdot (-\bu \cdot \nabla u^0
					+b^\ep\cdot \nabla \bb+\bb\cdot \nabla b^0) \ dx\\
					\lesssim
					&\frac14 \|(\p_1 \bu, \nabla_h \bb, \nabla_h \nabla \bb)\|_{L^2}^2
					+\|(\bu, \bb, \nabla \bb)\|_{L^2}^2
					\|(\nabla u^0, \nabla b^0, b^\ep)\|_{L^2}
					\|\nabla_h (\nabla u^0, \nabla b^0,  b^\ep)\|_{L^2},
				\end{aligned}
				\deq
				and
				\beq\label{4605}
				\begin{aligned}
					&\i \p_2 \bu \cdot (-u^\ep \cdot \nabla \bb-\bu \cdot \nabla b^0
					+b^\ep \cdot \nabla \bu+\bb\cdot \nabla u^0) \ dx\\
					\lesssim
				&\frac14(\|\p_2 \bu\|_{L^2}^2+\|(\p_1 \nabla \bu, \nabla_h \p_3 \bb)\|_{L^2}^2)
					+\|(\bu, \bb)\|_{H^1}^2\|(u^\ep, b^\ep, u^0, b^0)\|_{H^1}
					\|\nabla_h(u^\ep, b^\ep, u^0, b^0)\|_{H^1}.
				\end{aligned}
				\deq
				Substituting the estimates \eqref{4603}, \eqref{4604} and \eqref{4605}
				into \eqref{4602}, we complete the proof of this lemma.
			\end{proof}

			Next, we will establish the tangential estimate for the quantity $(\bu, \bb)$ in $L^2-$norm.
			\begin{lemm}\label{lemma32}
				Under the conditions of Theorem \ref{main_result_two},
				then it holds
				\beq\label{4701}
				\begin{aligned}
					\frac{d}{dt}\|\p_i(\bu, \bb)\|_{L^2}^2
					+\|\p_i(\p_1 \bu, \nabla_h \bb)\|_{L^2}^2
					\lesssim
					\nu \mathcal{\overline{D}}(t)
					+ \mathcal{{B}}(t)\mathcal{{B}}_h(t)
					\mathcal{\overline{E}}(t)
					+\ep \|(\p_{22}u^\ep, \p_{33} u^\ep, \p_{33}b^\ep)\|_{H^1}
					\mathcal{\overline{E}}(t)^{\frac12},
				\end{aligned}
				\deq
				where  $i=1,2$, and $\nu$ is a small positive constant.
			\end{lemm}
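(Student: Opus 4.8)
The plan is to repeat the energy argument behind the $L^2$ estimate \eqref{4501}, now carrying one tangential derivative $\p_i$ with $i\in\{1,2\}$. First I would apply $\p_i$ to the difference system \eqref{401}, test the velocity equation against $\p_i\bu$ and the magnetic equation against $\p_i\bb$, and add the two identities. The pressure term disappears after integration by parts since $\nabla\cdot\p_i\bu=0$, and the linear coupling cancels through
$$
\i \p_i\p_2\bb\cdot\p_i\bu\,dx+\i \p_i\p_2\bu\cdot\p_i\bb\,dx=\i \p_2(\p_i\bb\cdot\p_i\bu)\,dx=0 .
$$
The dissipative terms generate $\|\p_i(\p_1\bu,\nabla_h\bb)\|_{L^2}^2$ on the left-hand side, so the whole task reduces to controlling the nonlinear/viscous remainder $\i \p_i f\cdot\p_i\bu\,dx+\i \p_i g\cdot\p_i\bb\,dx$.

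For the $\ep$-pieces $\ep\p_i(\p_{22}u^\ep+\p_{33}u^\ep)$ and $\ep\p_i\p_{33}b^\ep$, a H\"older estimate immediately gives the contribution $\ep\|(\p_{22}u^\ep,\p_{33}u^\ep,\p_{33}b^\ep)\|_{H^1}\,\mathcal{\overline{E}}(t)^{\frac12}$, which is precisely the last term of \eqref{4701}. For the transport and stretching pieces I would expand $\p_i$ by the Leibniz rule. The terms in which $\p_i$ lands on the factor carrying a full gradient — $u^\ep\cdot\nabla\p_i\bu$ tested against $\p_i\bu$, and the pair $b^\ep\cdot\nabla\p_i\bb$, $b^\ep\cdot\nabla\p_i\bu$ tested against $\p_i\bu$, $\p_i\bb$ — either vanish by the divergence-free conditions (since $\i u^\ep\cdot\nabla\p_i\bu\cdot\p_i\bu\,dx=-\tfrac12\i|\p_i\bu|^2\,\nabla\cdot u^\ep\,dx=0$) or combine into a null term through $\nabla\cdot b^\ep=0$, exactly as in the $L^2$ computation. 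What is left are commutators of the schematic form ``one derivative of the difference $\times$ one derivative of a profile $(u^0,b^0,u^\ep,b^\ep)$''. Each of these I would estimate with the anisotropic Sobolev inequality \eqref{ie:Sobolev}, distributing the six factors so that genuinely top-order derivatives of $(\bu,\bb)$ — namely $\p_1\nabla\bu$, $\nabla_h\bb$, $\nabla_h^2\bb$, $\p_2\bu$, together with the vorticity quantities $\p_1\bwu$, $\nabla_h\bwb$ that arise when one reconstructs $\p_3\bu_h$ from $\bwu$ and $\nabla_h\bu$ — appear only to half power and, after Young's inequality with a small constant $\nu$, get absorbed into $\nu\mathcal{\overline{D}}(t)$; the remaining factors built from the profiles are grouped as $\|(u^0,b^0,u^\ep,b^\ep)\|_{H^2}\,\|\nabla_h(u^0,b^0,u^\ep,b^\ep)\|_{H^2}=\mathcal{{B}}(t)\mathcal{{B}}_h(t)$, and the leftover $L^2$ norms of $(\bu,\bb,\bwu,\bwb,\p_i\bu,\p_i\bb)$ assemble into $\mathcal{\overline{E}}(t)$. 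Collecting all the bounds yields \eqref{4701}.

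The delicate point, as already in \eqref{4603}--\eqref{4605}, will be the vertical components: the commutators $\p_i(\bu\cdot\nabla u^0)$, $\p_i(\bb\cdot\nabla b^0)$ and $\p_i(b^\ep\cdot\nabla\bu)$ produce pieces such as $\p_i\bu_3\,\p_3u^0$, $\p_i\bb_3\,\p_3b^0$ and $b_3^\ep\,\p_3\p_i\bu$ in which a normal derivative of the difference is not directly covered by the dissipation budget. To handle them I would use $\p_3\bu_3=-\nabla_h\cdot\bu_h$ together with the boundary value $\bu_3|_{x_3=0}=0$ (so that $\bu_3=-\int_0^{x_3}\nabla_h\cdot\bu_h\,d\xi$) to trade the vertical component for horizontal derivatives, and then split the anisotropic interpolation so that at most one half-power of a top-order derivative of $(\bu,\bb)$ is ever produced, matched by a low-order factor of the difference and an $H^2$-bounded profile. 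Ensuring, uniformly across all these terms, that one stays strictly within the $\mathcal{\overline{D}}(t)$-budget — rather than needing quantities like $\p_{33}(\bu,\bb)$ that are not at our disposal here — is the main obstacle; everything else is the same anisotropic-Sobolev bookkeeping carried out one derivative higher than in \eqref{4502}--\eqref{4506}.
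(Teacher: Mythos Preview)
Your proposal is correct and follows the same scheme as the paper: differentiate \eqref{401} by $\p_i$, test against $(\p_i\bu,\p_i\bb)$, use the divergence-free cancellations for the top-order transport terms and the pairing $\i(b^\ep\cdot\nabla\p_i\bb)\cdot\p_i\bu+\i(b^\ep\cdot\nabla\p_i\bu)\cdot\p_i\bb=0$, then estimate the remaining commutators with the anisotropic inequality \eqref{ie:Sobolev} and Young's inequality; the $\ep$-terms go by H\"older.

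One clarification on your ``delicate point'': the piece you list as $b_3^\ep\,\p_3\p_i\bu$ is precisely part of $b^\ep\cdot\nabla\p_i\bu$, which you already observed cancels against its partner, so it never needs to be estimated. What actually remains from $\p_i(b^\ep\cdot\nabla\bu)$ is the commutator $\p_i b^\ep\cdot\nabla\bu$, whose vertical component $\p_i b_3^\ep\,\p_3\bu$ only carries $\p_3\bu$ on the difference, and that is already in $\mathcal{\overline{E}}(t)$ via $\bwu$ and $\nabla_h\bu$. The same goes for the analogues coming from $\p_i(\bu\cdot\nabla u^0)$ and $\p_i(\bb\cdot\nabla b^0)$: the vertical contributions involve at most $\p_3(\bu,\bb)$ at the energy level, and one uses $\p_3\bu_3=-\nabla_h\cdot\bu_h$, $\p_3\bb_3=-\nabla_h\cdot\bb_h$ when a $\p_3$ hits the third component. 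The integral representation $\bu_3=-\int_0^{x_3}\nabla_h\cdot\bu_h$ is therefore not needed here; the paper proceeds exactly as in \eqref{4703}--\eqref{4707} with direct anisotropic-Sobolev interpolation.
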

			\begin{proof}
				For $i=1,2$, the equation \eqref{401} yields directly
				\beq\label{4702}
				\begin{aligned}
					&\frac{d}{dt}\frac{1}{2}\i (|\p_i \bu|^2+|\p_i \bb|^2)dx
					+\i (|\p_1 \p_i \bu|^2+|\nabla_h \p_i \bb|^2)dx\\
					=&\i \p_i (-u^\ep \cdot \nabla \bu-\bu \cdot \nabla u^0+b^\ep\cdot \nabla \bb+\bb\cdot \nabla b^0
					+\ep \p_{22}u^\ep+\ep \p_{33} u^\ep)\cdot \p_i \bu \ dx\\
					&+\i \p_i (-u^\ep \cdot \nabla \bb-\bu \cdot \nabla b^0+b^\ep \cdot \nabla \bu
					+\bb\cdot \nabla u^0+\ep \p_{33}b^\ep)\cdot \p_i  \bb \ dx.
				\end{aligned}
				\deq
				Integrating by part and using the anisotropic type inequality \eqref{ie:Sobolev}, we have
				\beq\label{4703}
				\begin{aligned}
					-\i \p_i (u^\ep \cdot \nabla \bu)\cdot \p_i \bu \ dx
					=&-\i (\p_i u^\ep \cdot \nabla) \bu \cdot \p_i \bu \ dx
					+\frac{1}{2}\i |\p_i \bu|^2 {\rm div}u^\ep \ dx\\
					\lesssim
					&\|\p_i u^\ep\|_{L^2}^{\frac14}\|\p_3 \p_i u^\ep\|_{L^2}^{\frac14}
					\|\p_2 \p_i u^\ep\|_{L^2}^{\frac14}\|\p_{23} \p_i u^\ep\|_{L^2}^{\frac14}
					\|\nabla \bu\|_{L^2}
					\|\p_i \bu\|_{L^2}^{\frac12}\|\p_1 \p_i \bu\|_{L^2}^{\frac12}\\
					\lesssim
					&\nu(\|\p_1 \p_i \bu\|_{L^2}^2+\|\p_i \bu\|_{L^2}^2)
					+\|\bu\|_{H^1}^2 \|\p_i u^\ep\|_{H^1}\|\p_2 \p_i u^\ep\|_{H^1}.
				\end{aligned}
				\deq
				Similarly, it is easy to check that
				\beq\label{4704}
				\begin{aligned}
					&\i \p_i (-\bu \cdot \nabla u^0+\bb\cdot \nabla b^0)\cdot \p_i \bu \ dx\\
					\lesssim
					&\nu(\|\p_1 \p_i \bu\|_{L^2}^2+\|\nabla_h \bu\|_{L^2}^2+\|\p_i \p_3 \bb\|_{L^2}^2)
					+\|(\bu, \bb)\|_{H^1}^2\|(u^0, b^0)\|_{H^2}\|\nabla_h(u^0, b^0)\|_{H^2},
				\end{aligned}
				\deq
				and
				\beq\label{4705}
				\begin{aligned}
					&\i \p_i(-u^\ep \cdot \nabla \bb-\bu \cdot \nabla b^0+\bb\cdot \nabla u^0)\cdot \p_i  \bb \ dx\\
					\lesssim
					&\nu(\|\p_{ii} \bb\|_{L^2}^2+\|\p_2 \nabla \bb\|_{L^2}^2)
					+\|(\bu, \bb)\|_{H^1}^2
					(\|\nabla (u^0, b^0)\|_{H^1_{tan}} \|\nabla_h \nabla(u^0, b^0)\|_{H^1_{tan}}
					+\|u^\ep\|_{H^1}\|\p_1 u^\ep\|_{H^1}).
				\end{aligned}
				\deq
				Integrating by part and using the condition ${\nabla \cdot} b^\ep=0$, we have
				\beq\label{4706}
				\begin{aligned}
					&\i \p_i (b^\ep\cdot \nabla \bb)\cdot \p_i \bu \ dx
					+\i \p_i (b^\ep \cdot \nabla \bu)\cdot \p_i  \bb \ dx\\
					=
					&\i (\p_i b^\ep\cdot \nabla \bb)\cdot \p_i \bu \ dx
					+\i (\p_i b^\ep \cdot \nabla \bu)\cdot \p_i  \bb \ dx\\
					&+\i (b^\ep\cdot \nabla \p_i \bb)\cdot \p_i \bu \ dx
					+\i (b^\ep \cdot \nabla  \p_i \bu)\cdot \p_i  \bb \ dx\\
					\lesssim
					&\nu\|(\p_1 \nabla \bu, \p_2 \nabla \bb)\|_{L^2}^2
					+\|(\bu, \bb)\|_{H^1}^2\|\nabla b^\ep\|_{H^1}\|\nabla_h \nabla b^\ep\|_{H^1}.
				\end{aligned}
				\deq
				Finally, we apply the H\"{o}lder inequality to obtain
				\beq\label{4707}
				\begin{aligned}
					&\ep  \i \p_i (\p_{22}u^\ep+\p_{33} u^\ep)\cdot \p_i \bu \ dx
					+\ep  \i  \p_{33} \p_i b^\ep \cdot \p_i  \bb \ dx\\
					\lesssim
					&\ep (\|(\p_i \p_{22}u^\ep, \p_i \p_{33} u^\ep)\|_{L^2}\|\p_i \bu \|_{L^2}
					+\|\p_i \p_{33} b^\ep\|_{L^2}\|\p_i  \bb\|_{L^2}).
				\end{aligned}
				\deq
				Substituting the estimates \eqref{4703}-\eqref{4707}
				into \eqref{4702}, then we complete the proof of this lemma.
			\end{proof}
			
			Finally, we will establish the estimate for the quantity $(\bwu, \bwb)$ in $L^2-$norm.
			\begin{lemm}\label{lemma32}
				Under the conditions of Theorem \ref{main_result_two},
				then it holds
				\beq\label{4801}
				\begin{aligned}
					&\frac{d}{dt} \|(\bwu, \bwb)|_{L^2}^2
					+\|(\p_1 \bwu, \nabla_h \bwb)\|_{L^2}^2\\
					\lesssim
					&\nu \mathcal{\overline{D}}(t)
					+(\mathcal{{B}}(t)\mathcal{{B}}_h(t)
					+\|\nabla_h (u^0, u^\ep)\|_{H^1}^{\frac{2}{3}}
					\|\nabla_h^2 (u^0, u^\ep)\|_{H^1}^{\frac{2}{3}})
					\mathcal{\overline{E}}(t)
					+\ep \|(\p_{22}u^\ep, \p_{33} u^\ep, \p_{33}b^\ep)\|_{H^1}
					\mathcal{\overline{E}}(t)^{\frac12},
				\end{aligned}
				\deq
				where $\nu$ is a small positive constant.
			\end{lemm}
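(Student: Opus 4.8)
Perform the standard energy estimate on the vorticity‑difference system \eqref{402}: test the $\bwu$‑equation against $\bwu$, the $\bwb$‑equation against $\bwb$ and integrate over $\mathbb R^3_+$. Since the only diffusion present is $\p_{11}$ acting on $\bwu$ and $\Delta_h$ acting on $\bwb$, all integrations by parts producing coercive quantities are performed in the tangential variables $x_1,x_2$, so no boundary term from $\{x_3=0\}$ is generated; likewise the coupling terms integrate by parts in $x_2$ and cancel, $\i(-\p_2\bwb)\cdot\bwu\,dx+\i(-\p_2\bwu)\cdot\bwb\,dx=-\i\p_2(\bwb\cdot\bwu)\,dx=0$. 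This leaves the energy identity
\beqq
\frac{d}{dt}\frac12\|(\bwu,\bwb)\|_{L^2}^2+\|\p_1\bwu\|_{L^2}^2+\|\nabla_h\bwb\|_{L^2}^2=\i(\nabla\times f)\cdot\bwu\,dx+\i(\nabla\times g)\cdot\bwb\,dx,
\deqq
so the whole problem reduces to estimating the two curl–source integrals.

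Next I would expand $\nabla\times f$ and $\nabla\times g$ by the commutator identity $\nabla\times(v\cdot\nabla w)=v\cdot\nabla(\nabla\times w)+\sum_k(\nabla v_k)\times\p_k w$ together with the explicit formulas \eqref{3306}--\eqref{3308}. The genuine transport pieces are harmless: $\i(u^\ep\cdot\nabla\bwu)\cdot\bwu\,dx=0$ by $\nabla\cdot u^\ep=0$ and $u^\ep_3|_{x_3=0}=0$, and the two magnetic transport pieces combine into $\i b^\ep\cdot\nabla(\bwb\cdot\bwu)\,dx=0$ using $\nabla\cdot b^\ep=0$ and $b^\ep_3|_{x_3=0}=0$. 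What remains splits into (a) commutator terms, bilinear in $(\nabla u^\ep,\nabla\bu)$, $(\nabla b^\ep,\nabla\bb)$ and $(\nabla b^\ep,\nabla\bu)$; (b) the background–gradient terms coming from $\nabla\times(\bu\cdot\nabla u^0)$, $\nabla\times(\bb\cdot\nabla b^0)$, $\nabla\times(\bb\cdot\nabla u^0)$, $\nabla\times(\bu\cdot\nabla b^0)$, which I would reorganize by one integration by parts so that no third derivative lands on $(u^0,b^0)$; and (c) the viscous remainders $\nabla\times(\ep\p_{22}u^\ep+\ep\p_{33}u^\ep)=\ep\p_{22}\wu^\ep+\ep\p_{33}\wu^\ep$ and $\nabla\times(\ep\p_{33}b^\ep)=\ep\p_{33}\wb^\ep$.

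For (c), Cauchy--Schwarz gives $\ep\|(\p_{22}\wu^\ep,\p_{33}\wu^\ep,\p_{33}\wb^\ep)\|_{L^2}\|(\bwu,\bwb)\|_{L^2}\lesssim\ep\|(\p_{22}u^\ep,\p_{33}u^\ep,\p_{33}b^\ep)\|_{H^1}\mathcal{\overline{E}}(t)^{\frac12}$, the last term of \eqref{4801}. For (a) and (b) I would bound every multilinear integral by the anisotropic Sobolev inequality \eqref{ie:Sobolev}, arranging the derivative bookkeeping so that the good derivatives $\p_1\bwu$, $\nabla_h\bwb$, $\p_2\bu$ always fall on the perturbation factors (exactly the entries of $\mathcal{\overline{D}}(t)$, absorbed with the small constant $\nu$), while the leftover derivatives are loaded onto the background factors $(u^0,b^0,u^\ep,b^\ep)$ and controlled in $H^2$, respectively $\nabla_h H^2$. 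Using $\|\nabla\bu\|_{L^2}\lesssim\|\bwu\|_{L^2}+\|\nabla_h\bu\|_{L^2}\lesssim\mathcal{\overline{E}}(t)^{\frac12}$, most of these terms close as $\mathcal{{B}}(t)\mathcal{{B}}_h(t)\mathcal{\overline{E}}(t)$.

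The hard part will be the vortex–stretching/commutator term of the schematic form $\i(\bwu\cdot\nabla)(u^0,u^\ep)\cdot\bwu\,dx$ (and one or two relatives), where both copies of $\bwu$ are available only in $L^2$, a crude $L^\infty$ bound on the background velocity gradient is out of reach, and, the dissipation being purely horizontal, one cannot spend a normal derivative either. I expect the remedy is to apply \eqref{ie:Sobolev} with a version that places two horizontal derivatives on the background velocity and then finish by a sharp anisotropic Gagliardo--Nirenberg interpolation, trading that factor for $\|\nabla_h(u^0,u^\ep)\|_{H^1}^{\frac23}\|\nabla_h^2(u^0,u^\ep)\|_{H^1}^{\frac23}$ after a Young step $\|\p_1\bwu\|_{L^2}^{\frac12}(\cdots)\le\nu\|\p_1\bwu\|_{L^2}^2+C_\nu(\cdots)^{\frac43}$ that produces precisely the exponent $\frac23$. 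Matching these exponents, checking that the interpolated third derivatives of the background are indeed dominated by $\|\nabla_h^2(u^0,u^\ep)\|_{H^1}$, and verifying that the integration by parts in (b) introduces no forbidden derivative on $(u^0,b^0)$, are the delicate points; collecting (a)--(c) and choosing $\nu$ small then yields \eqref{4801}.
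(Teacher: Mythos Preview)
Your proposal is correct and follows essentially the same approach as the paper: the same energy identity, the same cancellations of the transport pieces, the same Cauchy--Schwarz treatment of the $\ep$-remainders, and---most importantly---the same mechanism for the vortex-stretching term, where the anisotropic triple-product inequality yields a factor $\|\p_1\bwu\|_{L^2}^{1/2}\|\bwu\|_{L^2}^{3/2}$ times an $L^\infty$-type product on $\nabla_h u^\ep$, and Young with exponents $(4,4/3)$ produces exactly the $\|\nabla_h u^\ep\|_{H^1}^{2/3}\|\nabla_h^2 u^\ep\|_{H^1}^{2/3}$ coefficient.

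The only organizational difference is that the paper does not literally expand $\nabla\times f$ via the commutator identity $\nabla\times(v\cdot\nabla w)=v\cdot\nabla(\nabla\times w)+\sum_k(\nabla v_k)\times\p_k w$ applied to the mixed products; instead it takes the vorticity equations for $u^\ep$ and $u^0$ separately (where the stretching term appears in the classical form $\omega\cdot\nabla u$), subtracts, and regroups. Thus the $\bwu$-equation carries the explicit stretching terms $\bwu\cdot\nabla u^\ep+w^{u^0}\cdot\nabla\bu$ rather than the commutator pieces $\sum_j\nabla u^\ep_j\times\p_j\bu$ that your expansion would give. The two sets of terms are algebraically equivalent and equally amenable to the anisotropic estimates, so this is purely a matter of bookkeeping; your identification of the ``hard'' term as schematically $\i(\bwu\cdot\nabla)u^\ep\cdot\bwu\,dx$ matches the paper's \eqref{4804} directly.
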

			\begin{proof}
				The equation \eqref{402} yields directly
				\beq\label{4802}
				\begin{aligned}
					&\frac{d}{dt}\frac12 \i (|\bwu|^2+|\bwb|^2)dx
					+\i (|\p_1 \bwu|^2 +|\nabla_h \bwb|^2) dx\\
					=
					&\i(-u^\ep \cdot \nabla \bwu-\bu \cdot \nabla w^{u^0}
					+\bwu \cdot \nabla u^\ep+ w^{u^0}\cdot \nabla \bu)\cdot \bwu dx\\
					&+\i(b^\ep \cdot \nabla \bwb +\bb \cdot \nabla w^{b^0}
					-\bwb \cdot \nabla b^\ep-w^{b^0}\cdot \nabla \bb)\cdot \bwu dx\\
					&+\i (-u^\ep \cdot \nabla \bwb-\bu \cdot \nabla w^{b^0}
					-\nabla\bu_i\times \p_i b^\ep-\nabla u^0_i \times \p_i \bb)\cdot \bwb \ dx\\
					&+\i (b^\ep \cdot \nabla \bwu+\bb\cdot \nabla w^{u^0}
					+\nabla \bb_i \times \p_i u^\ep
					+\nabla b^0_i \times \p_i \bu)\cdot \bwb \ dx\\
					&+\ep \i (\p_{22}w^{u^\ep}+\p_{33}w^{u^\ep})\cdot \bwu \ dx
					+\ep \i \p_{33}w^{b^\ep} \cdot \bwb \ dx.
				\end{aligned}
				\deq
				Integrating by part and using the divergence-free condition, we write
				\beqq
				\i (u^\ep \cdot \nabla) \bwu \cdot \bwu dx
				=\i (u^\ep \cdot \nabla) \bwb \cdot \bwb dx=0,
				\deqq
				and
				\beqq
				\i(b^\ep \cdot \nabla)\bwb \cdot \bwu dx
				+\i (b^\ep \cdot \nabla) \bwu \cdot \bwb \ dx=0.
				\deqq
				Using the anisotropic type inequality \eqref{ie:Sobolev}, it is easy to check that
				\beq\label{4803}
				\begin{aligned}
					&\i (\bu \cdot \nabla) w^{u^0} \cdot \bwu dx\\
					=&\i (\bu_h \cdot \nabla_h) w^{u^0} \cdot \bwu dx
					+\i  \bu_3 \p_3 w^{u^0} \cdot \bwu dx\\
					\lesssim
					&\|\bu_h\|_{L^2}^{\frac12}\|\p_2 \bu_h\|_{L^2}^{\frac12}
					\|\nabla_h w^{u^0}\|_{L^2}^{\frac12}\|\p_3 \nabla_h w^{u^0}\|_{L^2}^{\frac12}
					\|\bwu\|_{L^2}^{\frac12}\|\p_1 \bwu\|_{L^2}^{\frac12}\\
					&+\|\bu_3\|_{L^2}^{\frac12}\|\p_3 \bu_3\|_{L^2}^{\frac12}
					\|\p_3 w^{u^0}\|_{L^2}^{\frac12}\|\p_2  \p_3 w^{u^0}\|_{L^2}^{\frac12}
					\|\bwu\|_{L^2}^{\frac12}\|\p_1\bwu\|_{L^2}^{\frac12}\\
					\lesssim
					&\nu(\|\p_1\bwu\|_{L^2}^2+\|\nabla_h \bu\|_{L^2}^2)
					+\|\bu\|_{H^1}^2\|u^0\|_{H^2}\|\nabla_h u^0\|_{H^2},
				\end{aligned}
				\deq
				and
				\beq\label{4804}
				\begin{aligned}
					&\i (\bwu \cdot \nabla) u^\ep \cdot \bwu dx\\
					=&\i (\bwu_h \cdot \nabla_h) u^\ep \cdot \bwu dx
					+\i \bwu_3 \p_3 u^\ep \cdot \bwu dx\\
					\lesssim
					&\|\nabla_h u^\ep\|_{L^2}^{\frac14}\|\p_3\nabla_h u^\ep\|_{L^2}^{\frac14}
					\|\p_2 \nabla_h u^\ep\|_{L^2}^{\frac14}\|\p_{23}\nabla_h u^\ep\|_{L^2}^{\frac14}
					\|\bwu\|_{L^2}^{\frac32}\|\p_1 \bwu\|_{L^2}^{\frac12}\\
					&+\|\bwu_3\|_{L^2}
					\|\p_3 u^\ep\|_{L^2}^{\frac14}\|\p_{33} u^\ep\|_{L^2}^{\frac14}
					\|\p_{23} u^\ep\|_{L^2}^{\frac14}\|\p_{233} u^\ep\|_{L^2}^{\frac14}
					\|\bwu\|_{L^2}^{\frac12}\|\p_1 \bwu\|_{L^2}^{\frac12}\\
					\lesssim
					&\nu(\|\p_1 \bwu\|_{L^2}^2+\|\nabla_h \bu\|_{L^2}^2)
					+\|\bu\|_{H^1}^2(\|\nabla_h u^\ep\|_{H^1}^{\frac{2}{3}}
					\|\nabla_h^2 u^\ep\|_{H^1}^{\frac{2}{3}}
					+\|u^\ep\|_{H^2}\|\nabla_h u^\ep\|_{H^2}).
				\end{aligned}
				\deq
				Similarly, we may write
				\beq\label{4805}
				\begin{aligned}
					\i(w^{u^0}\cdot \nabla) \bu\cdot \bwu dx
					\lesssim
					&\nu(\|\p_1 \bwu\|_{L^2}^2+\|\nabla_h \bu\|_{L^2}^2)
					+\|\bu\|_{H^1}^2(\|\nabla_h u^0\|_{H^1}^{\frac{2}{3}}
					\|\nabla_h^2 u^0\|_{H^1}^{\frac{2}{3}}
					+\|u^0\|_{H^2}\|\nabla_h u^0\|_{H^2}),\\
					\i(\bwb \cdot \nabla) b^\ep \cdot \bwu dx
					\lesssim
					&\nu(\|\p_1 \bwu\|_{L^2}^2+\|\nabla_h \bwb\|_{L^2}^2 )
					+\|(\bu, \bb)\|_{H^1}^2\|b^\ep\|_{H^1}\|\nabla_h b^\ep\|_{H^1},
				\end{aligned}
				\deq
				and
				\beq\label{4806}
				\begin{aligned}
					&|\i(\bb \cdot \nabla w^{b^0}-w^{b^0}\cdot \nabla \bb)\cdot \bwu dx|
					+|\i (\bu \cdot \nabla) w^{b^0}\cdot \bwb \ dx|
					+|\i (\bb\cdot \nabla) w^{u^0} \cdot \bwb \ dx|\\
					\lesssim
					&\nu(\|(\p_1 \bwu, \nabla_h \bwb)\|_{L^2}^2+\|(\nabla_h \bb, \p_1 \bu)\|_{H^1}^2)
					+\|(\bu, \bb)\|_{H^1}^2
					\|(u^0, b^0)\|_{H^2}\|\nabla_h (u^0, b^0)\|_{H^2}.
				\end{aligned}
				\deq
				Using the anisotropic type inequality \eqref{ie:Sobolev}, we have
				\beq\label{4807}
				\begin{aligned}
					&\i \nabla\bu_i\times \p_i b^\ep\cdot \bwb \ dx\\
					\lesssim
					&\|\nabla_h \bu\|_{L^2}
					\|\nabla b^\ep\|_{L^2}^{\frac14}\|\p_1\nabla b^\ep\|_{L^2}^{\frac14}
					\|\p_3 \nabla b^\ep\|_{L^2}^{\frac14}\|\p_{13}\nabla b^\ep\|_{L^2}^{\frac14}
					\|\bwb\|_{L^2}^{\frac12}\|\p_2 \bwb\|_{L^2}^{\frac12}\\
					&+\|\nabla \bu\|_{L^2}^{\frac12}\|\p_1 \nabla \bu\|_{L^2}^{\frac12}
					\|\nabla_h b^\ep\|_{L^2}^{\frac12}\|\p_3 \nabla_h b^\ep\|_{L^2}^{\frac12}
					\|\bwb\|_{L^2}^{\frac12}\|\p_2 \bwb\|_{L^2}^{\frac12}\\
					\lesssim
					&\nu(\|\p_2 \bwb\|_{L^2}^2+\|(\nabla_h \bu, \p_1 \nabla \bu)\|_{L^2}^2)
					+\|(\bu, \bb)\|_{H^1}^2\|b^\ep\|_{H^2}\|\nabla_h b^\ep\|_{H^2}.
				\end{aligned}
				\deq
				Similarly, it is easy to deduce that
				\beq\label{4808}
				\begin{aligned}
					\i \nabla \bb_i \times \p_i u^\ep \cdot \bwb \ dx
					\lesssim
					\nu(\|\p_2 \bwb\|_{L^2}^2+\|\nabla_h \nabla \bb\|_{L^2}^2)
					+\|(\bu, \bb)\|_{H^1}^2\|u^\ep\|_{H^1}\|\nabla_h u^\ep\|_{H^1},
				\end{aligned}
				\deq
				and
				\beq\label{4809}
				\begin{aligned}
					&|\i \nabla u^0_i \times \p_i \bb \cdot \bwb \ dx|
					+|\i \nabla b^0_i \times \p_i \bu \cdot \bwb \ dx|\\
					\lesssim
					&\nu(\|(\p_1 \bwu, \p_2 \bwb)\|_{L^2}^2+\|\nabla_h \bb\|_{H^1}^2
					+\|\nabla_h \bu\|_{L^2}^2+\|\p_1 \nabla \bu\|_{L^2}^2)\\
					&+\|(\bu, \bb)\|_{H^1}^2\|(u^0, b^0)\|_{H^2}\|\nabla_h(u^0, b^0)\|_{H^2}.
				\end{aligned}
				\deq
				Using the H\"{o}lder inequality, we have
				\beq\label{4810}
				\begin{aligned}
					&\ep \i (\p_{22}w^{u^\ep}+\p_{33}w^{u^\ep})\cdot \bwu \ dx
					+\ep \i \p_{33}w^{b^\ep} \cdot \bwb \ dx\\
					\le&
					\ep \|(\p_{22}w^{u^\ep}, \p_{33}w^{u^\ep}, \p_{33}w^{b^\ep})\|_{L^2}\|(\bwu, \bwb)\|_{L^2}.
				\end{aligned}
				\deq
				Substituting the estimates \eqref{4803}-\eqref{4810}
				into \eqref{4802}, then we complete the proof of this lemma.
			\end{proof}	
			
			\begin{proof}[\textbf{Proof of Theorem \ref{main_result_two}}]
				Let us define the energy norm
				$$
				\mathcal{\overline{E}}_q(t):=\mathcal{\overline{E}}(t)+\kappa\i  \p_2  \bb \cdot \bu \ dx,
				$$
				where $\kappa$ is a small constant.
				Then, it is easy to check that $\mathcal{\overline{E}}_q(t)$
				is equivalent to $\mathcal{\overline{E}}(t)$.
				Due to the smallness of $\nu$, the combination of estimates
				\eqref{4501}, \eqref{4601}, \eqref{4701} and \eqref{4801}
				yields directly
				\beqq
				\begin{aligned}
					\frac{d}{dt}\mathcal{\overline{E}}_q(t)
					\le C(\mathcal{{B}}(t)\mathcal{{B}}_h(t)
					+\|\nabla_h (u^0, u^\ep)\|_{H^1}^{\frac{2}{3}}
					\|\nabla_h^2 (u^0, u^\ep)\|_{H^1}^{\frac{2}{3}})
					\mathcal{\overline{E}}_q(t)
					+C\ep \|(\p_{22}u^\ep, \p_{33} u^\ep, \p_{33}b^\ep)\|_{H^1}
					\mathcal{\overline{E}}(t)^{\frac12},
				\end{aligned}
				\deqq
				which, together with the Gronwall inequality, yields directly
				\beq\label{403}
				\begin{aligned}
					\mathcal{\overline{E}}_q(t)
					\le
					&C \ep \int_0^t \|(\p_{22}u^\ep, \p_{33} u^\ep, \p_{33}b^\ep)(\tau)\|_{H^1}\mathcal{\overline{E}}(\tau)^{\frac12} d\tau\\
					&\times\exp\left\{\int_0^t (\mathcal{{B}}(\tau)\mathcal{{B}}_h(\tau)
					+\|\nabla_h (u^0, u^\ep)(\tau)\|_{H^1}^{\frac{2}{3}}
					\|\nabla_h^2 (u^0, u^\ep)(\tau)\|_{H^1}^{\frac{2}{3}}) d\tau\right\}.
				\end{aligned}
				\deq
				With the help of the decay estimates
				\eqref{decay-assumption-new-close} and \eqref{decay-limit}, it is easy to check that
				\beq\label{404}
				\begin{aligned}
					&\int_0^t \mathcal{{B}}(\tau)\mathcal{{B}}_h(\tau)d\tau
					\lesssim
					\underset{0\le \tau \le t}{\sup}[(1+\tau)^{\frac{s}{2}}\mathcal{{B}}(\tau)]
					\left\{\int_0^t (1+\tau)^{\sigma} \mathcal{{B}}_h(\tau)^2 d\tau\right\}^{\frac12}
					\left\{\int_0^t (1+\tau)^{-(\sigma+s)} d\tau\right\}^{\frac12}
					\lesssim 1,\\
					&\int_0^t \|\nabla_h (u^0, u^\ep)(\tau)\|_{H^2}^{\frac{4}{3}}d\tau
					\lesssim
					\left\{\int_0^t (1+\tau)^{\sigma}\|\nabla_h (u^0, u^\ep)(\tau)\|_{H^2}^2d\tau
					\right\}^{\frac{2}{3}}
					\left\{\int_0^t (1+\tau)^{-2\sigma}d\tau
					\right\}^{\frac{1}{3}}
					\lesssim 1,
				\end{aligned}
				\deq
				and
				\beq\label{405}
				\begin{aligned}
					&\sqrt{\ep} \int_0^t \|(\p_{22}u^\ep, \p_{33} u^\ep, \p_{33}b^\ep)(\tau)\|_{H^1}\mathcal{\overline{E}}(\tau)^{\frac12} d\tau\\
					\lesssim
					&
					\underset{0\le \tau \le t}{\sup}[(1+\tau)^{\frac{s}{2}}\|(u^0, b^0, u^\ep, b^\ep)(\tau)\|_{H^1}]
					\left\{\int_0^t (1+\tau)^{-(\sigma+s)}d\tau\right\}^{\frac12}\\
					&\times \left\{\ep \int_0^t (1+\tau)^{\sigma}\|(\p_{22}u^\ep, \p_{33} u^\ep, \p_{33}b^\ep)(\tau)\|_{H^1}^2d\tau\right\}^{\frac12}
					\lesssim 1.
				\end{aligned}
				\deq
				Substituting the estimates \eqref{404} and \eqref{405} into \eqref{403}, we have
				\beqq
				\mathcal{\overline{E}}(t)
				\le
				C\mathcal{\overline{E}}_q(t)
				\le
				C \ep^{\frac12},
				\deqq
				which, together with \eqref{ie:Sobolev}, \eqref{uniform_estimate}
				and \eqref{decay-limit}, yields directly
				\beqq
				\|(\bu, \bb)(t)\|_{L^\infty}
				\lesssim \|(\bu, \bb)(t)\|_{H^1}^{\frac12}
				\|\nabla^2(\bu, \bb)(t)\|_{L^2}^{\f38}
				\|\p_{123}(\bu, \bb)(t)\|_{L^2}^{\f18}
				\le
				C \ep^{\frac{1}{8}}.
				\deqq
				Therefore, we complete the proof of Theorem \ref{main_result_two}.
			\end{proof}

			\section*{Acknowledgments}
			Jincheng Gao was partially supported by the National Key Research and Development Program of China(2021YFA1002100), Guangdong Special Support Project (2023TQ07A961) and Guangzhou Science and Technology Program (2024A04J6410).
			Jiahong Wu was partially supported by the
            National Science Foundation of the United States (DMS 2104682, DMS 2309748).
			Zheng-an Yao was partially supported by
			National Key Research and Development Program of China (2020YFA0712500).

           \section*{Data Availibility}
           Data sharing is not applicable to this article as no new data were created or analysed in this study.

           \section*{Conflict of interest}
           The authors declared that they have no Conflict of interest to this work.

			\begin{appendices}
				\section{Some useful inequalities}\label{usefull-inequality}	
		Now let us state some  anisotropic Sobolev inequalities used frequently in our paper.
				\begin{lemm}\label{lemm:sobolev-ie}
					For any suitable functions $(f(x), g(x), h(x))$ defined on $\mathbb{R}^3_+$ and different numbers $i,j,k \in \{1, 2, 3\}$,
					the following estimates hold
					\beq \label{ie:Sobolev}
					\bal
					\|f\|_{L^\infty} &\; \lesssim \|f\|_{L^2}^{\f18}\|\p_1 f\|_{L^2}^{\f18}\|\p_2 f\|_{L^2}^{\f18}\|\p_{12} f\|_{L^2}^{\f18}
					\|\p_3 f\|_{L^2}^{\f18}\|\p_{13} f\|_{L^2}^{\f18}\|\p_{23} f\|_{L^2}^{\f18}\|\p_{123} f\|_{L^2}^{\f18},\\
					\int_{\mathbb{R}^3_+} |f g h |\, dx
					&\; \lesssim \|f\|_{L^2}
					\|g\|_{L^2}^{\frac12}
					\|\p_i g\|_{L^2}^{\frac12}
					\|h\|_{L^2}^{\frac14}
					\|\p_j h\|_{L^2}^{\frac14}
					\|\p_{k} h\|_{L^2}^{\frac14}
					\|\p_{jk} h\|_{L^2}^{\frac14},\\
					\int_{\mathbb{R}^3_+} |f g h |\, dx
					&\; \lesssim \|f\|_{L^2}^{\frac12}
					\|\p_1 f\|_{L^2}^{\frac12}
					\|g\|_{L^2}^{\frac12}
					\|\p_2 g\|_{L^2}^{\frac12}
					\|h\|_{L^2}^{\frac12}
					\|\p_3 h\|_{L^2}^{\frac12},\\
					\|Z_3 f\|_{L^2}
					&\lesssim \|f\|_{L^2}
					+\|f\|_{L^2}^{\frac34}\|Z_3^3 f\|_{L^2}^{\frac14}
					+\|f\|_{L^2}^{\frac23}\|Z_3^3 f\|_{L^2}^{\frac13},\\
					\left\|\|f\|_{L^\infty(\mathbb{R}_+)}\right\|_{L^{\frac2s}(\mathbb{R}^2)}
					&\lesssim \left(\|f\|_{L^2}\|\partial_2 f\|_{L^2}
					+\|\partial_1 f\|_{L^2}\|\partial_{12}f\|_{L^2}\right)^{\frac{1-s}{2}}
					\|f\|_{L^2}^{\frac{2s-1}{2}}
					\|\partial_3 f\|_{L^2}^{\frac12}.
					\dal
					\deq
				\end{lemm}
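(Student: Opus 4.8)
The five bounds in \eqref{ie:Sobolev} are all anisotropic Gagliardo--Nirenberg/Agmon inequalities, and the plan is to reduce each of them to repeated use of the elementary one-dimensional estimate $\|g\|_{L^\infty(I)}^2\le 2\|g\|_{L^2(I)}\|g'\|_{L^2(I)}$, together with H\"older's inequality and the fundamental theorem of calculus. On $I=\mathbb{R}_+$ this bound holds for every $g$ with $g,g'\in L^2(\mathbb{R}_+)$ with \emph{no} boundary contribution, since one writes $g(x_3)^2=-2\int_{x_3}^{\infty}gg'\,dt$ and integrates from $+\infty$ rather than from $x_3=0$; this is the device that keeps all traces of $f$ at $\{x_3=0\}$ out of the estimates. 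For the first inequality I would apply the one-dimensional bound successively in $x_1$, then in $x_2$, then in $x_3$, at each stage treating the norms in the remaining variables as fixed parameters and using that a horizontal derivative of an $L^2_{x_i}$-norm is dominated by the $L^2_{x_i}$-norm of the derivative. Each of the three stages doubles the number of norm factors and contributes a power $\tfrac12$, so after the third stage one arrives at the product of the $2^3=8$ quantities $\|\partial^a f\|_{L^2}$, $a\subseteq\{1,2,3\}$, each carrying exponent $\tfrac18$.

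The two trilinear estimates are then corollaries of the same technique. For the first of them I would write $\int_{\mathbb{R}^3_+}|fgh|\,dx\le\|f\|_{L^2}\|gh\|_{L^2}$ and bound $\|gh\|_{L^2}$ by a mixed-norm H\"older inequality, placing $g$ in $L^2_{x_jx_k}L^\infty_{x_i}$ and $h$ in $L^\infty_{x_jx_k}L^2_{x_i}$; the first mixed norm is controlled by one Agmon step in the $x_i$ variable, giving $\|g\|_{L^2}^{1/2}\|\partial_i g\|_{L^2}^{1/2}$, and the second by iterating the Agmon step in $x_j$ and $x_k$, giving the four $\tfrac14$-powers in $h$. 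For the second, I would bound $|f|,|g|,|h|$ pointwise by $\sup_{x_1}|f|,\sup_{x_2}|g|,\sup_{x_3}|h|$, apply Cauchy--Schwarz successively in $x_1$, $x_2$, $x_3$, and finish with $\int\sup_{x_i}|f|^2\le 2\|f\|_{L^2}\|\partial_i f\|_{L^2}$ for the three sup-factors.

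The fourth inequality involves the degenerate conormal field $Z_3=\varphi(x_3)\partial_3$, and here I would split $f=\chi f+(1-\chi)f$ with $\chi$ a cutoff supported near $x_3=0$. On the support of $1-\chi$ the weight $\varphi$ is bounded below, so $Z_3$ is comparable to $\partial_3$ and the classical one-dimensional interpolation $\|\partial_3 g\|_{L^2}\lesssim\|g\|_{L^2}^{2/3}\|\partial_3^3 g\|_{L^2}^{1/3}$ applies, which (after Young's inequality, which also accounts for the $\|f\|_{L^2}$ term) yields the third term on the right of \eqref{ie:Sobolev}. Near $x_3=0$ the missing ellipticity of $Z_3$ is replaced by a Hardy-type inequality for the weighted field, and this produces the term $\|f\|_{L^2}^{3/4}\|Z_3^3 f\|_{L^2}^{1/4}$; summing the two contributions gives the stated bound.

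Finally, for the fifth inequality I would first use the Agmon bound in $x_3$, namely $\|f(x_h,\cdot)\|_{L^\infty_{x_3}}^2\lesssim\|f(x_h,\cdot)\|_{L^2_{x_3}}\|\partial_3 f(x_h,\cdot)\|_{L^2_{x_3}}$, take the $L^{2/s}(\mathbb{R}^2)$ norm in $x_h$, and distribute the non-integer exponent $2/s$ by H\"older so as to peel off the factor $\|\partial_3 f\|_{L^2(\mathbb{R}^3_+)}^{1/2}$. The remaining horizontal piece is then handled by an anisotropic two-dimensional Gagliardo--Nirenberg/Ladyzhenskaya estimate obtained by interpolating between the $L^2$ and $L^4$ endpoints, together with the auxiliary bound $\|\sup_{x_1,x_2}|f|\|_{L^2_{x_3}}^2\lesssim\|f\|_{L^2}\|\partial_2 f\|_{L^2}+\|\partial_1 f\|_{L^2}\|\partial_{12}f\|_{L^2}$ (two more Agmon steps plus $\sqrt{ab}\le a+b$), which reproduces exactly the bracket raised to the power $\tfrac{1-s}{2}$ and the factor $\|f\|_{L^2}^{(2s-1)/2}$. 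The restriction $\tfrac12<s<1$ is precisely what makes all of these interpolation exponents admissible; keeping those exponents mutually consistent --- the $2/s$ power must be split simultaneously across a horizontal interpolation and a vertical Agmon inequality --- is the only part of the argument that requires genuine care, the rest being routine bookkeeping.
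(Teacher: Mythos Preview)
Your treatment of inequalities $(1)$--$(3)$ and $(5)$ is essentially the same as the paper's: iterated one-dimensional Agmon bounds for the first three (the paper cites \cite{Wu2021Advance} for these), and for $(5)$ an Agmon step in $x_3$ followed by a horizontal H\"older/interpolation step, which is exactly the computation \eqref{a6}--\eqref{a7} in the paper.

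The real divergence is in $(4)$. Your cutoff strategy is not the paper's approach, and as written it has a gap. Away from the boundary you correctly observe that $Z_3\sim\partial_3$ and invoke the classical interpolation $\|\partial_3 g\|\lesssim\|g\|^{2/3}\|\partial_3^3 g\|^{1/3}$; this accounts for the $\|f\|^{2/3}\|Z_3^3 f\|^{1/3}$ term. But near the boundary you appeal to an unspecified ``Hardy-type inequality for the weighted field'' to produce the $\|f\|^{3/4}\|Z_3^3 f\|^{1/4}$ term, and it is not at all clear what inequality you have in mind or why it would yield precisely those exponents. Standard Hardy inequalities control $f/x_3$ by $\partial_3 f$, not $x_3\partial_3 f$ by powers of $f$ and $(x_3\partial_3)^3 f$. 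You also have not dealt with the commutators between $Z_3^j$ and the cutoff $\chi$, which would feed lower-order $Z_3^k f$ terms back into the estimate.

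The paper's argument for $(4)$ is much simpler and avoids cutoffs altogether. One integrates by parts \emph{directly} with $Z_3$: since $\varphi(0)=0$ there is no boundary term, and
\[
\|Z_3 f\|_{L^2}^2=-\int_{\mathbb{R}^3_+}\!\!f\,\bigl(\varphi' Z_3 f+Z_3^2 f\bigr)\,dx
\ \lesssim\ \|f\|_{L^2}\|Z_3 f\|_{L^2}+\|f\|_{L^2}\|Z_3^2 f\|_{L^2},
\]
hence $\|Z_3 f\|\lesssim\|f\|+\|f\|^{1/2}\|Z_3^2 f\|^{1/2}$. Applying the same identity with $Z_3 f$ in place of $f$ gives $\|Z_3^2 f\|\lesssim\|Z_3 f\|+\|Z_3 f\|^{1/2}\|Z_3^3 f\|^{1/2}$, and substituting the first bound into the second yields $\|Z_3^2 f\|\lesssim\|f\|+\|f\|^{1/2}\|Z_3^3 f\|^{1/2}+\|f\|^{1/3}\|Z_3^3 f\|^{2/3}$. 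Feeding this back into the first bound produces exactly the three terms in $\eqref{ie:Sobolev}_4$, and in particular explains where the $3/4$--$1/4$ exponents come from: they arise from the two-step iteration, not from a Hardy inequality.
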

				
				\begin{proof}
					First of all, one can follow the idea in \cite[Lemma 1.2]{Wu2021Advance} to establish the inequalities $\eqref{ie:Sobolev}_1$-$\eqref{ie:Sobolev}_3$.
					Let us give the proof of inequality $\eqref{ie:Sobolev}_4$ and $\eqref{ie:Sobolev}_5$.
					Indeed, integrating by part, it holds
					\begin{equation*}
						\|Z_3 f\|_{L^2}^2=-\int_{\mathbb{R}_+^3}(\varphi' Z_3 f+Z_3^2 f)f dx
						\lesssim \|Z_3 f\|_{L^2}\|f\|_{L^2}
						+\|Z_3^2 f\|_{L^2}\|f\|_{L^2},
					\end{equation*}
					which yields directly
					\begin{equation}\label{a10}
						\|Z_3 f\|_{L^2}
						\lesssim \|f\|_{L^2}
						+\|f\|_{L^2}^{\frac12}\|Z_3^2 f\|_{L^2}^{\frac12}.
					\end{equation}
					Applying estimate \eqref{a10} twice, it is easy to check that
					\begin{equation*}
						\begin{aligned}
							\|Z_3^2 f\|_{L^2}
							&\lesssim \|Z_3 f\|_{L^2}
							+\|Z_3 f\|_{L^2}^{\frac12}\|Z_3^3 f\|_{L^2}^{\frac12}\\
							&\lesssim \|f\|_{L^2}
							+\|f\|_{L^2}^{\frac12}\|Z_3^2 f\|_{L^2}^{\frac12}
							+\left(\|f\|_{L^2}
							+\|f\|_{L^2}^{\frac12}\|Z_3^2 f\|_{L^2}^{\frac12}\right)^{\frac12}
							\|Z_3^3 f\|_{L^2}^{\frac12}\\
							&\lesssim \frac12 \|Z_3^2 f\|_{L^2}+\|f\|_{L^2}
							+\|f\|_{L^2}^{\frac12}\|Z_3^3 f\|_{L^2}^{\frac12}
							+\|f\|_{L^2}^{\frac13}\|Z_3^3 f\|_{L^2}^{\frac23},
						\end{aligned}
					\end{equation*}
					which implies directly
					\begin{equation}\label{a11}
						\|Z_3^2 f\|_{L^2}
						\lesssim \|f\|_{L^2}
						+\|f\|_{L^2}^{\frac12}\|Z_3^3 f\|_{L^2}^{\frac12}
						+\|f\|_{L^2}^{\frac13}\|Z_3^3 f\|_{L^2}^{\frac23}.
					\end{equation}
					Then, the combination of estimates \eqref{a10} and \eqref{a11} yields directly
					\begin{equation*}
						\begin{aligned}
							\|Z_3 f\|_{L^2}
							&\lesssim \|f\|_{L^2}
							+\|f\|_{L^2}^{\frac12}
							\left\{\|f\|_{L^2}
							+\|f\|_{L^2}^{\frac12}\|Z_3^3 f\|_{L^2}^{\frac12}
							+\|f\|_{L^2}^{\frac13}\|Z_3^3 f\|_{L^2}^{\frac23}\right\}^{\frac12}\\
							&\lesssim \|f\|_{L^2}
							+\|f\|_{L^2}^{\frac34}\|Z_3^3 f\|_{L^2}^{\frac14}
							+\|f\|_{L^2}^{\frac23}\|Z_3^3 f\|_{L^2}^{\frac13},
						\end{aligned}
					\end{equation*}
					which yields the estimate $\eqref{ie:Sobolev}_4$.
					Finally, let us establish the estimate $\eqref{ie:Sobolev}_5$.
					Indeed, it holds
					\begin{equation}\label{a6}
						\begin{aligned}
							\left\|\|f\|_{L^\infty(\mathbb{R}_+)}\right\|_{L^{\frac2s}(\mathbb{R}^2)}^{\frac2s}
							\lesssim \left\|\|f\|_{L^2(\mathbb{R}_+)}^{\frac12}
							\|\partial_3 f\|_{L^2(\mathbb{R}_+)}^{\frac12}\right\|_{L^{\frac2s}(\mathbb{R}^2)}^{\frac2s}
							\lesssim \|\partial_3 f\|_{L^2}^{\frac1s}
							\left\|\|f\|_{L^2(\mathbb{R}_+)}\right\|_{L^{\frac{2}{2s-1}}
								(\mathbb{R}^2)}^{\frac1s},
						\end{aligned}
					\end{equation}
					and
					\begin{equation}\label{a7}
						\begin{aligned}
							\left\|\|f\|_{L^2(\mathbb{R}_+)}\right\|_{L^{\frac{2}{2s-1}}(\mathbb{R}^2)}
							^{\frac{2}{2s-1}}
							&\lesssim \left\|\|f\|_{L^\infty(\mathbb{R}^2)}\right\|_{L^2(\mathbb{R}_+)}
							^{\frac{4-4s}{2s-1}}\|f\|_{L^2}^2\\
							&\lesssim \left(\|f\|_{L^2}^{\frac12}\|\partial_2 f\|_{L^2}^{\frac12}
							+\|\partial_1 f\|_{L^2}^{\frac12}\|\partial_{12}f\|_{L^2}^{\frac12}\right)
							^{\frac{4-4s}{2s-1}}\|f\|_{L^2}^2.
						\end{aligned}
					\end{equation}
					The combination of estimates \eqref{a6} and \eqref{a7} gives
					\begin{equation*}
						\begin{aligned}
							\left\|\|f\|_{L^\infty(\mathbb{R}_+)}\right\|_{L^{\frac2s}(\mathbb{R}^2)}
							&\lesssim \|\partial_3 f\|_{L^2}^{\frac12}
							\left\|\|f\|_{L^2(\mathbb{R}_+)}\right\|_{L^{\frac{2}{2s-1}}
								(\mathbb{R}^2)}^{\frac12}\\
							&\lesssim \left(\|f\|_{L^2}\|\partial_2 f\|_{L^2}
							+\|\partial_1 f\|_{L^2}\|\partial_{12}f\|_{L^2}\right)^{\frac{1-s}{2}}
							\|f\|_{L^2}^{\frac{2s-1}{2}}
							\|\partial_3 f\|_{L^2}^{\frac12}.
						\end{aligned}
					\end{equation*}
					Therefore, we complete the proof of this lemma.
				\end{proof}
				
				Next, we introduce the Hardy-Littlewood-Sobolev inequality
				in \cite[pp. 119, Theorem 1]{Stein1970} as follow.
				\begin{lemm}\label{H-L}
					Let $0<\alpha<2 , 1<p<q<\infty, \frac{1}{q}+\frac{\alpha}{2}=\frac{1}{p}$, then
					\begin{equation}\label{a16}
						\|\Lambda_h^{-\alpha}f\|_{L^q(\mathbb{R}^2)}\lesssim \|f\|_{L^p(\mathbb{R}^2)}.
					\end{equation}
				\end{lemm}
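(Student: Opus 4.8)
The final statement to prove is Lemma \ref{H-L}, the Hardy--Littlewood--Sobolev inequality on $\mathbb{R}^2$ for the horizontal Riesz-type potential $\Lambda_h^{-\alpha}$.

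\medskip

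\textbf{Plan of proof.} The plan is to reduce the statement to the classical Hardy--Littlewood--Sobolev inequality, which is exactly the form quoted from Stein's book \cite[pp.~119, Theorem 1]{Stein1970}. The starting point is the observation that, by definition of $\Lambda_h^{-\alpha}$ through the Fourier transform on $\mathbb{R}^2$, one has the representation as a convolution with a Riesz kernel: $\Lambda_h^{-\alpha} f = c_{\alpha,2}\, |x_h|^{-(2-\alpha)} * f$ on $\mathbb{R}^2$, valid for $0<\alpha<2$, where the convolution is taken in the two horizontal variables and $c_{\alpha,2}>0$ is the standard normalizing constant (this is the identification $\widehat{|\xi_h|^{-\alpha}} = c\,|x_h|^{-(2-\alpha)}$ in dimension two). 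First I would record this kernel identity, being careful about the dimension ($n=2$ here) so that the homogeneity exponent of the kernel is $-(n-\alpha) = -(2-\alpha)$, which lies strictly between $-2$ and $0$ precisely because $0<\alpha<2$.

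\medskip

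Next, with the convolution representation in hand, the inequality $\|\Lambda_h^{-\alpha} f\|_{L^q(\mathbb{R}^2)} \lesssim \| |x_h|^{-(2-\alpha)} * f\|_{L^q(\mathbb{R}^2)}$ becomes, up to the harmless constant $c_{\alpha,2}$, an application of the fractional integration theorem: for $0<\lambda<n$ with $n=2$, $\lambda = 2-\alpha$, and exponents $1<p<q<\infty$ satisfying $\tfrac1q = \tfrac1p - \tfrac{n-\lambda}{n} = \tfrac1p - \tfrac{\alpha}{2}$, one has $\big\| |x_h|^{-\lambda} * f \big\|_{L^q} \lesssim \|f\|_{L^p}$. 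The scaling relation $\tfrac1q + \tfrac{\alpha}{2} = \tfrac1p$ in the statement is exactly the admissibility condition for this theorem, so I would simply verify that the hypotheses of \cite[pp.~119, Theorem 1]{Stein1970} are met with the above choice of parameters and invoke it directly. This yields $\|\Lambda_h^{-\alpha} f\|_{L^q(\mathbb{R}^2)} \lesssim \|f\|_{L^p(\mathbb{R}^2)}$, which is \eqref{a16}.

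\medskip

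\textbf{Main obstacle.} There is no substantive analytic difficulty: the result is a verbatim consequence of a classical theorem. The only point requiring a little care is the bookkeeping of the dimension and of the homogeneity exponents — making sure that the Fourier multiplier $|\xi_h|^{-\alpha}$ corresponds to the kernel $|x_h|^{-(2-\alpha)}$ in $\mathbb{R}^2$ (not $\mathbb{R}^3$), and that the condition $0<\alpha<2$ is exactly what guarantees the kernel is locally integrable near the origin and decaying at infinity so that Stein's theorem applies. One should also note that $\Lambda_h^{-\alpha}$ acts only in the horizontal variables, so when this lemma is applied in the body of the paper (e.g.\ in the negative-derivative estimates of Section~\ref{global-estimate}) it is applied slice-by-slice in $x_3$ and then integrated; but that is a usage remark, not part of the proof of the lemma itself. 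Hence the proof is short: establish the kernel identity, check the scaling hypothesis, and cite \cite{Stein1970}.
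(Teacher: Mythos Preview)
Your proposal is correct and matches the paper's treatment: the paper does not give an independent proof but simply records the inequality as the classical Hardy--Littlewood--Sobolev theorem, citing \cite[pp.~119, Theorem~1]{Stein1970} directly. Your additional remarks on the Riesz potential kernel representation $\Lambda_h^{-\alpha} f = c_{\alpha,2}\,|x_h|^{-(2-\alpha)}*f$ and the verification of the scaling relation are accurate and make explicit the identification implicit in the citation, but the approach is the same.
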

				In this paper, taking $q=2$ in \eqref{a16}, then
				$
				p=\frac{1}{\frac12+\frac{\alpha}{2}}=\frac{2}{1+\alpha}
				$
				should satisfy the condition
				$$
				1<p=\frac{2}{1+\alpha} <2.
				$$
				This implies the index $\alpha \in (0, 1)$.

				\section{Proof of claimed estimate}\label{claim-estimates}
				In this section, we will give proof for the claimed estimate \eqref{claim-estimate}.
				This process is complicated and long, we will establish the estimate in detail.
				
				\textbf{Estimate of term $II_1$.} Since the velocity field only has
				lower order dissipative structure in the $x_2$ direction.
				Thus, it is somewhat complicated to deal with $II_1$ term.
				Indeed, it holds
				\beqq
				\begin{aligned}
					II_1=
					&\sum_{0\le \beta \le \alpha}C^{\beta}_{\alpha}
					\i (-\p_3Z^{\beta}u_h \cdot Z^{\alpha-\beta}\nabla_h \wu
					-Z^{\beta}u_h \cdot \p_3 Z^{\alpha-\beta}\nabla_h \wu)\p_3 Z^{\alpha} \wu dx\\
					&+\sum_{0\le \beta \le \alpha}C^{\beta}_{\alpha}
					\i (-\p_3Z^{\beta}u_3 \cdot Z^{\alpha-\beta}\p_3 \wu
					-Z^{\beta}u_3 \cdot \p_3 Z^{\alpha-\beta}\p_3 \wu)\p_3 Z^{\alpha} \wu dx\\
					:=&II_{11}+II_{12}.
				\end{aligned}
				\deqq
				Estimate of term $II_{11}$.
				If $\beta=0$, integrating by part and using the anisotropic type
				inequality \eqref{ie:Sobolev}, we have
				\beqq
				\begin{aligned}
					&|\i (-\p_3 u_h \cdot Z^{\alpha}\nabla_h \wu
					-u_h \cdot \p_3 Z^{\alpha}\nabla_h \wu)\p_3 Z^{\alpha} \wu dx|\\
					\lesssim
					&\|\p_3 u_h\|_{L^2}^{\frac14}
					\|\p_{33} u_h\|_{L^2}^{\frac14}
					\|\p_{23} u_h\|_{L^2}^{\frac14}
					\|\p_{233} u_h\|_{L^2}^{\frac14}
					\|Z^{\alpha}\nabla_h \wu\|_{L^2}
					\|\p_3 Z^{\alpha} \wu\|_{L^2}^{\frac12}
					\|\p_{13} Z^{\alpha} \wu\|_{L^2}^{\frac12}\\
					&+\|\nabla_h \cdot u_h\|_{L^2}^{\frac14}
					\|\p_3\nabla_h \cdot u_h\|_{L^2}^{\frac14}
					\|\p_2\nabla_h \cdot u_h\|_{L^2}^{\frac14}
					\|\p_{23}\nabla_h \cdot u_h\|_{L^2}^{\frac14}
					\|\p_3 Z^{\alpha} \wu\|_{L^2}^{\frac32}
					\|\p_{13} Z^{\alpha} \wu\|_{L^2}^{\frac12}\\
					\lesssim
					&\sqrt{\me^m(t)}\md^{m}(t).
				\end{aligned}
				\deqq
				If $0<\beta \le \alpha$, then we may write
				\beqq
				\begin{aligned}
					&|\i (\p_3Z^{\beta}u_h \cdot Z^{\alpha-\beta}\nabla_h \wu
					+Z^{\beta}u_h \cdot \p_3 Z^{\alpha-\beta}\nabla_h \wu)\p_3 Z^{\alpha} \wu dx|\\
					\lesssim
					&\left(\|\p_3Z^{\beta}u_h\|_{L^2}^{\frac12}
					\|\p_{23} Z^{\beta}u_h\|_{L^2}^{\frac12}
					\|Z^{\alpha-\beta}\nabla_h \wu\|_{L^2}^{\frac12}
					\|\p_3 Z^{\alpha-\beta}\nabla_h \wu\|_{L^2}^{\frac12}\right.\\
					&\left.+\|Z^{\beta}u_h\|_{L^2}^{\frac14}
					\|\p_2 Z^{\beta}u_h\|_{L^2}^{\frac14}
					\|\p_3 Z^{\beta}u_h\|_{L^2}^{\frac14}
					\|\p_{23} Z^{\beta}u_h\|_{L^2}^{\frac14}
					\|\p_3 Z^{\alpha-\beta}\nabla_h \wu\|_{L^2}\right)\\
					&\times \|\p_3 Z^{\alpha} \wu\|_{L^2}^{\frac12}
					\|\p_{13} Z^{\alpha} \wu\|_{L^2}^{\frac12}\\
					\lesssim
					&\sqrt{\me^m(t)}\md^{m}(t).
				\end{aligned}
				\deqq
				Thus, the term $II_{11}$ can be bounded by
				\beq\label{3403}
				|II_{11}|\lesssim \sqrt{\me^m(t)}\md^{m}(t).
				\deq
				Estimate of term $II_{12}$.
				If $\beta=0$, using the divergence-free condition
				and integrating by part, we have
				\beq\label{3404}
				\begin{aligned}
					&-\i \p_3 u_3  Z^{\alpha}\p_3 \wu \cdot \p_3 Z^{\alpha} \wu dx\\
					=&\i \p_1 u_1 Z^{\alpha}\p_3 \wu \cdot \p_3 Z^{\alpha} \wu  dx
					+\i \p_2 u_2 Z^{\alpha}\p_3 \wu \cdot \p_3 Z^{\alpha} \wu  dx\\
					\lesssim
					&\|u_1\|_{L^2}^{\frac14}\|\p_3 u_1\|_{L^2}^{\frac14}
					\|\p_2 u_1\|_{L^2}^{\frac14}\|\p_{23} u_1\|_{L^2}^{\frac14}
					\|\p_1 Z^{\alpha}\p_3 \wu\|_{L^2}
					\|\p_3 Z^{\alpha} \wu\|_{L^2}^{\frac12}
					\|\p_{13} Z^{\alpha} \wu\|_{L^2}^{\frac12}\\
					&+\|u_1\|_{L^2}^{\frac12}\|\p_3 u_1\|_{L^2}^{\frac12}
					\|\p_2 u_1\|_{L^2}^{\frac12}\|\p_{23} u_1\|_{L^2}^{\frac12}
					\|Z^{\alpha}\p_3 \wu\|_{L^2}^{\frac12}
					\|\p_1 Z^{\alpha}\p_3 \wu\|_{L^2}^{\frac12}
					\|\p_{13} Z^{\alpha} \wu\|_{L^2}\\
					&+\i |\p_2 u_2 Z^{\alpha}\p_3 \wu \cdot \p_3 Z^{\alpha} \wu|dx\\
					\lesssim
					&\sqrt{\me^m(t)}\md^{m}(t)+\i |\p_2 u_2 Z^{\alpha}\p_3 \wu \cdot \p_3 Z^{\alpha} \wu|dx.
				\end{aligned}
				\deq
				On the other hand, it is easy to check that
				\beqq
				\begin{aligned}
					&-\i u_3 \p_3 Z^{\alpha}\p_3 \wu \p_3 Z^{\alpha} \wu dx\\
					=&-\i u_3 \p_{33} Z^{\alpha} \wu \p_3 Z^{\alpha} \wu dx
					+\i u_3 \p_3 (\p_3 Z^{\alpha} \wu-Z^{\alpha}\p_3 \wu)\p_3 Z^{\alpha} \wu dx\\
					=&-\frac12\i \p_3 u_3  |\p_3 Z^{\alpha} \wu|^2 dx
					+\i u_3 \p_3 (\p_3 Z^{\alpha} \wu-Z^{\alpha}\p_3 \wu)\p_3 Z^{\alpha} \wu dx,
				\end{aligned}
				\deqq
				and
				\beqq
				\p_3 Z^{\alpha} \wu-Z^{\alpha}\p_3 \wu
				=\sum_{0< \beta_3 \le \alpha_3}C^{\beta_3}_{\alpha_3}
				Z^{\beta_3}(\frac{1}{\varphi})Z^{\alpha_3-\beta_3+e_3+\alpha_h}\wu.
				\deqq
				Thus, we can obtain
				\beq\label{3405}
				\begin{aligned}
					&\i u_3 \p_3 (\p_3 Z^{\alpha} \wu-Z^{\alpha}\p_3 \wu)\p_3 Z^{\alpha} \wu dx\\
					=&C^{e_3}_{\alpha_3}
					\i u_3 Z^{e_3}(\frac{1}{\varphi})|\p_3  Z^{\alpha}\wu |^2dx
					+C^{e_3}_{\alpha_3}
					\i u_3 \p_3 Z^{e_3}(\frac{1}{\varphi})Z^{\alpha}\wu\p_3 Z^{\alpha} \wu dx\\
					&+\sum_{1< \beta_3 \le \alpha_3}C^{\beta_3}_{\alpha_3}
					\i u_3 \p_3 Z^{\beta_3}(\frac{1}{\varphi})Z^{\alpha_3-\beta_3+e_3+\alpha_h}\wu
					\p_3 Z^{\alpha} \wu dx\\
					&+\sum_{1< \beta_3 \le \alpha_3}C^{\beta_3}_{\alpha_3}
					\i u_3  Z^{\beta_3}(\frac{1}{\varphi}) \p_3 Z^{\alpha_3-\beta_3+e_3+\alpha_h}\wu
					\p_3 Z^{\alpha} \wu dx\\
					\lesssim
					&\sqrt{\me^m(t)}\md^{m}(t)
					+|\i u_3 Z^{e_3}(\frac{1}{\varphi})|\p_3  Z^{\alpha}\wu |^2dx|.
				\end{aligned}
				\deq
				If $0<\beta \le \alpha$, using the anisotropic type inequality \eqref{ie:Sobolev}
				and divergence-free condition, we get
				\beq\label{3406}
				\begin{aligned}
					&|\i \p_3Z^{\beta}u_3 \cdot Z^{\alpha-\beta}\p_3 \wu \cdot \p_3 Z^{\alpha} \wu dx|\\
					\lesssim
					&\|\p_3 Z^{\beta}u_3\|_{L^2}^{\frac12}\|\p_{33} Z^{\beta}u_3 \|_{L^2}^{\frac12}
					\|Z^{\alpha-\beta}\p_3 \wu\|_{L^2}^{\frac12}\|\p_2 Z^{\alpha-\beta}\p_3 \wu\|_{L^2}^{\frac12}
					\|\p_3 Z^{\alpha} \wu\|_{L^2}^{\frac12}\|\p_{13} Z^{\alpha} \wu\|_{L^2}^{\frac12}\\
					\lesssim
					&\sqrt{\me^m(t)}\md^{m}(t).
				\end{aligned}
				\deq
				If  $2\le |\beta|\le |\alpha|$, then we may write
				\beq\label{3407}
				\begin{aligned}
					&\i Z^{\beta}u_3 \cdot \p_3 Z^{\alpha-\beta}\p_3 \wu \p_3 Z^{\alpha} \wu dx\\
					\lesssim
					&\|\p_3Z^{\beta}u_3\|_{L^2}^{\frac12}
					\|\p_{33} Z^{\beta}u_3\|_{L^2}^{\frac12}
					\|Z^{\alpha-\beta+e_3}\p_3 \wu\|_{L^2}^{\frac12}
					\|\p_2 Z^{\alpha-\beta+e_3}\p_3 \wu\|_{L^2}^{\frac12}
					\|\p_3 Z^{\alpha} \wu\|_{L^2}^{\frac12}
					\|\p_{13} Z^{\alpha} \wu\|_{L^2}^{\frac12}\\
					&+\|Z^{\beta}u_3\|_{L^2}^{\frac12}
					\|\p_3 Z^{\beta}u_3\|_{L^2}^{\frac12}
					\|Z^{\alpha-\beta+e_3}\p_3 \wu\|_{L^2}^{\frac12}
					\|\p_2 Z^{\alpha-\beta+e_3}\p_3 \wu\|_{L^2}^{\frac12}
					\|\p_3 Z^{\alpha} \wu\|_{L^2}^{\frac12}
					\|\p_{13} Z^{\alpha} \wu\|_{L^2}^{\frac12}\\
					\lesssim
					&\sqrt{\me^m(t)}\md^{m}(t).
				\end{aligned}
				\deq
				If $\beta=e_i(i=1,2)$, which implies $\alpha$ includes $e_i(i=1,2)$, then we have
				\beq\label{3408}
				\begin{aligned}
					&-\i Z^{e_i}u_3 \cdot \p_3 Z^{\alpha-\beta}\p_3 \wu \p_3 Z^{\alpha} \wu dx\\
					\lesssim
					&\|Z^{e_i}u_3 \|_{L^2}^{\frac14}
					\|\p_3 Z^{e_i}u_3\|_{L^2}^{\frac14}
					\|\p_2 Z^{e_i}u_3 \|_{L^2}^{\frac14}
					\|\p_{23}Z^{e_i}u_3\|_{L^2}^{\frac14}\\
					&\times \|Z^{\alpha-e_i+e_3}\p_3 \wu\|_{L^2}^{\frac12}
					\|\p_1 Z^{\alpha-e_i+e_3}\p_3 \wu\|_{L^2}^{\frac12}
					\|\p_3 Z^{\alpha} \wu\|_{L^2}\\
					&+\|\p_3 Z^{e_i}u_3 \|_{L^2}^{\frac14}
					\|\p_{33} Z^{e_i}u_3\|_{L^2}^{\frac14}
					\|\p_{23} Z^{e_i}u_3 \|_{L^2}^{\frac14}
					\|\p_{233}Z^{e_i}u_3\|_{L^2}^{\frac14}\\
					&\times \|Z^{\alpha-e_i+e_3}\p_3 \wu\|_{L^2}^{\frac12}
					\|\p_1 Z^{\alpha-e_i+e_3}\p_3 \wu\|_{L^2}^{\frac12}
					\|\p_3 Z^{\alpha} \wu\|_{L^2}\\
					\lesssim
					&\sqrt{\me^m(t)}\md^{m}(t).
				\end{aligned}
				\deq
				If $\beta=e_3$, we apply the divergence-free condition to obtain
				\beq\label{3409}
				\begin{aligned}
					&-\i Z^{e_3}u_3 \cdot \p_3 Z^{\alpha-e_3}\p_3 \wu \p_3 Z^{\alpha} \wu dx\\
					=&\i (\p_1 u_1+\p_2 u_2)Z^{\alpha}\p_3 \wu \p_3 Z^{\alpha} \wu dx
					\lesssim
					\sqrt{\me^m(t)}\md^{m}(t)
+\i |\p_2 u_2 Z^{\alpha}\p_3 \wu \cdot \p_3 Z^{\alpha} \wu|dx.
				\end{aligned}
				\deq
				The combination of estimates \eqref{3404}-\eqref{3409}  yields directly
				\beqq
				II_{12}\lesssim
				\sqrt{\me^m(t)}\md^{m}(t)
+\i |\p_2 u_2 Z^{\alpha}\p_3 \wu \cdot \p_3 Z^{\alpha} \wu|dx
				+|\i u_3 Z^{e_3}(\frac{1}{\varphi})|\p_3  Z^{\alpha}\wu |^2dx|,
				\deqq
				which, together with estimate \eqref{3403}, gives directly
				\beq\label{3410}
				II_1\lesssim
				\sqrt{\me^m(t)}\md^{m}(t)
+\i |\p_2 u_2 Z^{\alpha}\p_3 \wu \cdot \p_3 Z^{\alpha} \wu|dx
				+|\i u_3 Z^{e_3}(\frac{1}{\varphi})|\p_3  Z^{\alpha}\wu |^2dx|.
				\deq
				Using the Sobolev inequality \eqref{ie:Sobolev}
				and divergence-free condition, we have
				\beqq
				\begin{aligned}
					&\i |\p_2 u_2 Z^{\alpha}\p_3 \wu \cdot \p_3 Z^{\alpha} \wu|dx\\
					\lesssim
					&\|\p_2 u_2\|_{L^2}^{\frac14}\|\p_{23} u_2\|_{L^2}^{\frac14}
					\|\p_{22} u_2\|_{L^2}^{\frac14}\|\p_{322} u_2\|_{L^2}^{\frac14}
					\|\p_{13} Z^{\alpha} \wu\|_{L^2}^{\frac{1}{2}}
					\|\p_3 \wu\|_{H^{m-2}_{co}}^{\frac32}
				\end{aligned}
				\deqq
				and
				\beqq
				\begin{aligned}
					&|\i u_3 Z^{e_3}(\frac{1}{\varphi})|\p_3  Z^{\alpha}\wu |^2dx|\\
					\lesssim
					&\|\nabla_h \cdot u_h\|_{L^2}^{\frac14}
					\|\p_3 (\nabla_h \cdot u_h)\|_{L^2}^{\frac14}
					\|\p_2 (\nabla_h \cdot u_h)\|_{L^2}^{\frac14}
					\|\p_{23} (\nabla_h \cdot u_h)\|_{L^2}^{\frac14}
					\|\p_{13} Z^{\alpha} \wu\|_{L^2}^{\frac{1}{2}}
					\|\p_3 \wu\|_{H^{m-2}_{co}}^{\frac32}.
				\end{aligned}
				\deqq
				Thus, we can obtain the estimate
				\beq\label{3411}
				II_1\lesssim
				\sqrt{\me^m(t)}\md^{m}(t)
				+\|(\nabla_h u_h, \nabla_h \p_3 u_h)\|_{H^1_{tan}}
				\|\p_{13} Z^{\alpha} \wu\|_{L^2}^{\frac{1}{2}}
				\|\p_3 \wu\|_{H^{m-2}_{co}}^{\frac32}.
				\deq
				\textbf{Estimate of term $II_2$.}
				Using the anisotropic type inequality \eqref{ie:Sobolev} repeatedly, we may conclude
				\beq\label{3412}
				\begin{aligned}
					II_2
					=&\sum_{0\le \beta < \alpha}C^{\beta}_{\alpha}
					\i \p_3 Z^\beta \wu_h \cdot Z^{\al-\beta}\nabla_h u \cdot \p_3 Z^\al \wu dx
					+\i \p_3 Z^\al \wu_h \cdot \nabla_h u \cdot \p_3 Z^\al \wu dx\\
					&+\!\!\sum_{0\le \beta \le \alpha}\!\!C^{\beta}_{\alpha}\!\!
					\i Z^\beta \wu_h \cdot \p_3 Z^{\al-\beta}\nabla_h u \cdot \p_3 Z^\al \wu dx
					+\sum_{0\le \beta \le \alpha}C^{\beta}_{\alpha}
					\i \p_3 Z^\beta \wu_3 Z^{\al-\beta}\p_3 u \cdot \p_3 Z^\al \wu dx\\
					&
					+\sum_{0< \beta \le \alpha}C^{\beta}_{\alpha}
					\i Z^\beta \wu_3 \p_3 Z^{\al-\beta}\p_3 u \cdot \p_3 Z^\al \wu dx
					+\i \wu_3 \p_3 Z^{\al}\p_3 u \cdot \p_3 Z^\al \wu dx\\
					&\lesssim
					\sqrt{\me^m(t)}\md^{m}(t)
					+\|(\nabla_h u_h, \nabla_h \p_3 u_h)\|_{H^1_{tan}}
					\|\p_3 \wu\|_{H^{m-2}_{co}}^{\frac32}\|\p_{13} Z^\al \wu\|_{L^2}^{\frac12},
				\end{aligned}
				\deq
				where we have used the following estimate
				\beqq
				\begin{aligned}
					&\i \wu_3 \p_3 Z^{\al}\p_3 u \cdot \p_3 Z^\al \wu dx\\
					=&\i \p_1 u_2 \p_3 Z^{\al}\p_3 u \cdot \p_3 Z^\al \wu dx
					-\i \p_2 u_1 \p_3 Z^{\al}\p_3 u \cdot \p_3 Z^\al \wu dx\\
					=&-\i u_2 \p_1(\p_3 Z^{\al}\p_3 u \cdot \p_3 Z^\al \wu)dx
					-\i \p_2 u_1 \p_3 Z^{\al}\p_3 u \cdot \p_3 Z^\al \wu dx\\
					\lesssim
					&\sqrt{\me^m(t)}\md^{m}(t)
					+\|(\nabla_h u_h, \nabla_h \p_3 u_h)\|_{H^1_{tan}}
					\|\p_3 \wu\|_{H^{m-2}_{co}}^{\frac32}\|\p_{13} Z^\al \wu\|_{L^2}^{\frac12}.
				\end{aligned}
				\deqq
				\textbf{Estimate of terms $II_3$ and $II_6$.}
				It is easy to check that
				\beqq
				\begin{aligned}
					II_3
					=&\sum_{0\le \beta \le \alpha}C^{\beta}_{\alpha}
					\i \{\p_3 (Z^\al b_h \cdot Z^{\alpha-\beta}\nabla_h \wb)
					+\p_3 Z^\al b_3 Z^{\alpha-\beta}\p_3 \wb\}
					\cdot \p_3 Z^\al \wu dx\\
					&+\sum_{0< \beta \le \alpha}C^{\beta}_{\alpha}
					\i  Z^\beta b_3 \p_3Z^{\alpha-\beta}\p_3 \wb
					\cdot \p_3 Z^\al \wu dx
					+\i  b_3 \p_3Z^{\alpha}\p_3 \wb\cdot \p_3 Z^\al \wu dx,
				\end{aligned}
				\deqq
				and
				\beqq
				\begin{aligned}
					II_6
					=&\sum_{0\le \beta \le \alpha}C^{\beta}_{\alpha}
					\i \{\p_3 (Z^\al b_h \cdot Z^{\alpha-\beta}\nabla_h \wu)
					+\p_3 Z^\al b_3 Z^{\alpha-\beta}\p_3 \wu\}
					\cdot \p_3 Z^\al \wb dx\\
					&+\sum_{0< \beta \le \alpha}C^{\beta}_{\alpha}
					\i  Z^\beta b_3 \p_3Z^{\alpha-\beta}\p_3 \wu
					\cdot \p_3 Z^\al \wb dx
					+\i  b_3 \p_3Z^{\alpha}\p_3 \wu \cdot \p_3 Z^\al \wb dx.
				\end{aligned}
				\deqq
				On the other hand, integration by parts yields directly
				\beq\label{3413}
				\begin{aligned}
					&\i  b_3 \p_3Z^{\alpha}\p_3 \wb\cdot \p_3 Z^\al \wu dx
					+\i  b_3 \p_3Z^{\alpha}\p_3 \wu \cdot \p_3 Z^\al \wb dx\\
					=&-\underset{L_1}{\underbrace{\i  \p_3 b_3 Z^{\alpha}\p_3 \wb\cdot Z^\al \p_3 \wu dx}}
					-\underset{L_2}{\underbrace{\i  \p_3 b_3 Z^{\alpha}\p_3 \wb\cdot(\p_3 Z^\al \wu-Z^\al \p_3 \wu)dx}}\\
					&-\underset{L_3}{\underbrace{\i  b_3 Z^{\alpha}\p_3 \wb \cdot \p_3(\p_3 Z^\al \wu-Z^\al \p_3 \wu)dx}}
					-\underset{L_4}{\underbrace{\i  \p_3 b_3 Z^{\alpha}\p_3 \wu \cdot(\p_3 Z^\al \wb-Z^\al \p_3 \wb) dx}}\\
					&-\underset{L_5}{\underbrace{\i  b_3  Z^{\alpha}\p_3 \wu \cdot \p_3(\p_3 Z^\al \wb-Z^\al \p_3 \wb) dx}}.
				\end{aligned}
				\deq
				Using the anisotropic type inequality \eqref{ie:Sobolev} repeatedly, we may conclude
				\beqq
				\begin{aligned}
					|L_1|
					\lesssim& \|\p_3 b_3\|_{L^2}^{\frac12}\|\p_{33} b_3\|_{L^2}^{\frac12}
					\|Z^{\alpha}\p_3 \wu\|_{L^2}^{\frac12}\|\p_1Z^{\alpha}\p_3 \wu\|_{L^2}^{\frac12}
					\|\p_3 Z^\al \wb\|_{L^2}^{\frac12}\|\p_{23} Z^\al \wb\|_{L^2}^{\frac12}\\
					\lesssim&\sqrt{\me^m(t)}\md^{m}(t),\\
					|L_2|
					\lesssim&\|\p_3 b_3\|_{L^2}^{\frac12}\|\p_{33} b_3\|_{L^2}^{\frac12}
					\|Z^{\alpha}\p_3 \wb\|_{L^2}^{\frac12}
					\|\p_1 Z^{\alpha}\p_3 \wb\|_{L^2}^{\frac12}\\
					&\times\|\p_3 Z^\al \wu-Z^\al \p_3 \wu\|_{L^2}^{\frac12}
					\|\p_2(\p_3 Z^\al \wu-Z^\al \p_3 \wu)\|_{L^2}^{\frac12}\\
					\lesssim&\sqrt{\me^m(t)}\md^{m}(t),\\
					|L_4|
					\lesssim&\|\p_3 b_3\|_{L^2}^{\frac12}\|\p_{33} b_3\|_{L^2}^{\frac12}
					\|Z^{\alpha}\p_3 \wu\|_{L^2}^{\frac12}
					\|\p_1 Z^{\alpha}\p_3 \wu\|_{L^2}^{\frac12}\\
					&\times\|\p_3 Z^\al \wb-Z^\al \p_3 \wb\|_{L^2}^{\frac12}
					\|\p_2(\p_3 Z^\al \wb-Z^\al \p_3 \wb)\|_{L^2}^{\frac12}\\
					\lesssim&\sqrt{\me^m(t)}\md^{m}(t),\\
					|L_3|
					\lesssim &\|(b_3, \p_3 b_3)\|_{L^2}^{\frac12}
					\|\p_3(b_3, \p_3 b_3)\|_{L^2}^{\frac12}
					\|Z^{\alpha}\p_3 \wb\|_{L^2}^{\frac12}
					\|\p_2 Z^{\alpha}\p_3 \wb\|_{L^2}^{\frac12}\\
					&\times\|Z_3(\p_3 Z^\al \wu-Z^\al \p_3 \wu)\|_{L^2}^{\frac12}
					\|\p_1 Z_3(\p_3 Z^\al \wu-Z^\al \p_3 \wu)\|_{L^2}^{\frac12}\\
					\lesssim &\sqrt{\me^m(t)}\md^{m}(t),\\
					|L_5|
					\lesssim &\|(b_3, \p_3 b_3)\|_{L^2}^{\frac12}
					\|\p_3(b_3, \p_3 b_3)\|_{L^2}^{\frac12}
					\|Z^{\alpha}\p_3 \wu\|_{L^2}^{\frac12}
					\|\p_1 Z^{\alpha}\p_3 \wu\|_{L^2}^{\frac12}\\
					&\times\|Z_3(\p_3 Z^\al \wb-Z^\al \p_3 \wb)\|_{L^2}^{\frac12}
					\|\p_2 Z_3(\p_3 Z^\al \wb-Z^\al \p_3 \wb)\|_{L^2}^{\frac12}\\
					\lesssim &\sqrt{\me^m(t)}\md^{m}(t).
				\end{aligned}
				\deqq
				Substituting the estimates for $L_1$ through $L_5$ into \eqref{3413}, we have
				\beqq
				\i  b_3 \p_3Z^{\alpha}\p_3 \wb\cdot \p_3 Z^\al \wu dx
				+\i  b_3 \p_3Z^{\alpha}\p_3 \wu \cdot \p_3 Z^\al \wb dx
				\lesssim  \sqrt{\me^m(t)}\md^{m}(t).
				\deqq
				Using anisotropic type inequality \eqref{ie:Sobolev}
				repeatedly to deal with the other term in $II_3$ and $II_6$, it holds
				\beq\label{3414}
				II_3+II_6 \lesssim \sqrt{\me^m(t)}\md^{m}(t).
				\deq
				\textbf{Estimate of term $II_4$ and $II_5$.}
				It is easy to check that
				\beqq
				\begin{aligned}
					II_5
					=&\sum_{0\le \beta \le \alpha}C^{\beta}_{\alpha}
					\i \{\p_3 (Z^\al u_h \cdot Z^{\alpha-\beta}\nabla_h \wb)
					+\p_3 Z^\al u_3 Z^{\alpha-\beta}\p_3 \wb\}
					\cdot \p_3 Z^\al \wb dx\\
					&+\sum_{0< \beta \le \alpha}C^{\beta}_{\alpha}
					\i  Z^\beta u_3 \p_3Z^{\alpha-\beta}\p_3 \wb
					\cdot \p_3 Z^\al \wb dx
					+\i  u_3 \p_3 Z^{\alpha}\p_3 \wb \cdot \p_3 Z^\al \wb dx.
				\end{aligned}
				\deqq
				Integrating by part and using the anisotropic type
				inequality \eqref{ie:Sobolev}, we obtain
				\beq\label{3415}
				\begin{aligned}
					&\i  u_3 \p_3 Z^{\alpha}\p_3 \wb \cdot \p_3 Z^\al \wb dx\\
					=&\i  u_3 \p_3 \p_3 Z^{\alpha} \wb \cdot \p_3 Z^\al \wb dx
					+\i  u_3 \p_3 (Z^{\alpha}\p_3 \wb-\p_3 Z^{\alpha}\wb)\cdot \p_3 Z^\al \wb dx\\
					\lesssim
					&\|\p_3 u_3\|_{L^2}^{\frac12}\|\p_3^2 u_3\|_{L^2}^{\frac12}
					\|\p_3 Z^{\alpha} \wb \|_{L^2}^{\frac12}\|\p_{23} Z^{\alpha} \wb \|_{L^2}^{\frac12}
					\|\p_3 Z^{\alpha} \wb \|_{L^2}^{\frac12}\|\p_{13} Z^{\alpha} \wb \|_{L^2}^{\frac12}\\
					&+\|(u_3, \p_3 u_3)\|_{L^2}^{\frac12}
					\|\p_3 (u_3, \p_3 u_3)\|_{L^2}^{\frac12}
					\|\p_3 Z^\al \wb\|_{L^2}^{\frac12}
					\|\p_{23} Z^\al \wb\|_{L^2}^{\frac12}\\
					&\times \|Z_3(Z^{\alpha}\p_3 \wb-\p_3 Z^{\alpha}\wb)\|_{L^2}^{\frac12}
					\|\p_1 Z_3(Z^{\alpha}\p_3 \wb-\p_3 Z^{\alpha}\wb)\|_{L^2}^{\frac12}\\
					\lesssim
					& \sqrt{\me^m(t)}\md^{m}(t).
				\end{aligned}
				\deq
				Using anisotropic type inequality \eqref{ie:Sobolev} repeatedly to deal with the other
				term in $II_5$ and using the estimate \eqref{3415}, we may write
				\beq\label{3416}
				II_5 \lesssim \sqrt{\me^m(t)}\md^{m}(t).
				\deq
				Similarly, it is easy to check that
				\beq\label{3417}
				II_4 \lesssim \sqrt{\me^m(t)}\md^{m}(t).
				\deq
				\textbf{Estimate of term $II_7$ and $II_8$.}
				It is easy to check that
				\beq\label{3418}
				\nabla(u\cdot \nabla)\times b
				=\nabla u_1 \times \p_1 b
				+\nabla u_2 \times \p_2 b
				+\nabla u_3 \times \p_3 b,
				\deq
				and
				\beq\label{3419}
				\begin{aligned}
					\nabla u_3 \times \p_3 b
					&=(\p_2 u_3\p_3 b_3-\p_3 u_3\p_3 b_2,
					\p_3 u_3\p_3 b_1-\p_1 u_3\p_3 b_3,
					\p_1 u_3\p_3 b_2-\p_2 u_3\p_3 b_1)^T\\
					&=(-\p_2 u_3 \nabla_h\cdot b_h+\nabla_h \cdot u_h\p_3 b_2,
					-\nabla_h\cdot u_h\p_3 b_1+\p_1 u_3 \nabla_h\cdot b_h,
					\p_1 u_3\p_3 b_2-\p_2 u_3\p_3 b_1)^T,
				\end{aligned}
				\deq
				where we have used the conditions $\nabla \cdot u=0$ and $\nabla \cdot b=0$.
				Then, due to this basic fact \eqref{3418}
				and \eqref{3419}, we can apply the anisotropic type inequality
				\eqref{ie:Sobolev} to obtain
				\beq\label{3420}
				\begin{aligned}
					II_7
					\lesssim&
					\sum_{0\le \beta \le \alpha}
					\left\{\|\p_3 Z^\beta \nabla u\|_{L^2}^{\frac12}
					\|\p_{13} Z^\beta \nabla u\|_{L^2}^{\frac12}
					\|Z^{\alpha-\beta}\nabla_h b\|_{L^2}^{\frac12}
					\|\p_3 Z^{\alpha-\beta}\nabla_h b\|_{L^2}^{\frac12}\right.\\
					&\left.+\|Z^\beta \nabla u\|_{L^2}^{\frac12}
					\|\p_1 \p_3 Z^\beta \nabla u\|_{L^2}^{\frac12}
					\|\p_3 Z^{\alpha-\beta}\nabla_h b\|_{L^2}^{\frac12}
					\|\p_{13} Z^{\alpha-\beta}\nabla_h b\|_{L^2}^{\frac12}
					\right.\\
					&\left.
					+\|\p_3Z^\beta \nabla b\|_{L^2}^{\frac12}
					\|\p_{13}Z^\beta \nabla b\|_{L^2}^{\frac12}
					\|Z^{\alpha-\beta}\nabla_h u\|_{L^2}^{\frac12}
					\|\p_3 Z^{\alpha-\beta}\nabla_h u\|_{L^2}^{\frac12}\right.\\
					&\left.+\|Z^\beta \nabla b\|_{L^2}^{\frac14}
					\|\p_3 Z^\beta \nabla b\|_{L^2}^{\frac14}
					\|\p_1 Z^\beta \nabla b\|_{L^2}^{\frac14}
					\|\p_{13} Z^\beta \nabla b\|_{L^2}^{\frac14}
					\|\p_3 Z^{\alpha-\beta}\nabla_h u\|_{L^2}
					\right\}\\
					&\times \|\p_3 Z^\alpha \wb\|_{L^2}^{\frac12}
					\|\p_{23} Z^\alpha \wb\|_{L^2}^{\frac12}\\
					\lesssim
					&\sqrt{\me^m(t)}\md^{m}(t).
				\end{aligned}
				\deq
				Similarly, it is easy to check that
				\beq\label{3421}
				II_8 \lesssim \sqrt{\me^m(t)}\md^{m}(t).
				\deq
				\textbf{Estimate of term $II_9$ and $II_{10}$.}
				If $\alpha_3=0$, we have
				$
				II_9=-\ep \i |\p_3^2 Z^\al \wu|^2 dx.
				$
				If $\alpha_3 \neq 0$, we have
				\beqq
				\begin{aligned}
					II_9
					=&-\ep \i Z^\al \p_3^2 \wu \cdot \p_3^2 Z^\al \wu dx\\
					=&-\ep \i |\p_3^2 Z^\al \wu|^2 dx
					-\ep \i (Z^\al \p_3^2 \wu-\p_3^2 Z^\al \wu) \cdot \p_3^2 Z^\al \wu dx.
				\end{aligned}
				\deqq
				If $\alpha_3 \neq 0$, then it is easy to check that
				\beqq
				\begin{aligned}
					Z^{\alpha_3}\p_3^2 \wu-\p_3^2 Z^{\alpha_3}\wu
					=&\sum_{0<\beta_3 \le \alpha_3}C^{\beta_3}_{\alpha_3}
					\p_3[Z_3^{\beta_3}(\frac{1}{\varphi})\varphi]
					\p_3 Z_3^{\alpha_3-\beta_3}\wu
					+\sum_{0<\beta_3 \le \alpha_3}C^{\beta_3}_{\alpha_3}
					[Z_3^{\beta_3}(\frac{1}{\varphi})\varphi]
					\p_3^2 Z_3^{\alpha_3-\beta_3}\wu\\
					&+\sum_{0<\beta_3 \le \alpha_3}C^{\beta_3}_{\alpha_3}
					[Z_3^{\beta_3}(\frac{1}{\varphi})\varphi]
					\p_3 Z_3^{\alpha_3-\beta_3} \p_3 \wu,
				\end{aligned}
				\deqq
				which yields directly
				\beqq
				\begin{aligned}
					&|\i (Z^\al \p_3^2 \wu-\p_3^2 Z^\al \wu) \cdot \p_3^2 Z^\al \wu dx|\\
					\lesssim
					&(\|\p_3 \wu\|_{H^{|\alpha|-1}_{co}}+\|\p_3^2 \wu\|_{H^{|\alpha|-1}_{co}})
					\|\p_3^2 Z^\al \wu\|_{L^2}\\
					\lesssim
					&\frac14\|\p_3^2 Z^\al \wu\|_{L^2}^2
					+\|\p_3 \wu\|_{H^{|\alpha|-1}_{co}}^2+\|\p_3^2 \wu\|_{H^{|\alpha|-1}_{co}}^2.
				\end{aligned}
				\deqq
				Then, the term $II_9$ can be bounded by
				\beqq
				II_9
				\lesssim
				-\ep \i |\p_3^2 Z^\al \wu|^2 dx
				+\ep (\|\p_3 \wu\|_{H^{|\alpha|-1}_{co}}^2+\|\p_3^2 \wu\|_{H^{|\alpha|-1}_{co}}^2).
				\deqq
				Similarly, it is easy to check that
				\beqq
				II_{10}
				\lesssim
				-\ep \i |\p_3^2 Z^\al \wb|^2 dx
				+\ep (\|\p_3 \wb\|_{H^{|\alpha|-1}_{co}}^2+\|\p_3^2 \wb\|_{H^{|\alpha|-1}_{co}}^2).
				\deqq
				Therefore, we complete the proof of the claimed estimate \eqref{claim-estimate}.
			\end{appendices}

			\phantomsection
			\addcontentsline{toc}{section}{\refname}

		\end{sloppypar}
	\end{document}